\newcommand{\indicateDgeA}{{\cI_{\mec d\ge\mec a}}}
\DeclareMathOperator{\SHom}{\mathscr{H}\text{\kern -3pt {\calligra\large om}}\,}
\DeclareMathOperator{\SExt}{\mathscr{E}\text{\kern -2pt {\calligra\large xt}}\,\,}
\newcommand{\naturals}{{\mathbb N}}
\newcommand{\mec}[1]{{\bf #1}}	
\newcommand{\bec}[1]{{\boldsymbol #1}}	
\theoremstyle{plain}
\newtheorem{theorem}{Theorem}[section]
\newtheorem{lemma}[theorem]{Lemma}
\newtheorem{proposition}[theorem]{Proposition}
\newtheorem{corollary}[theorem]{Corollary}
\theoremstyle{definition}
\newtheorem{definition}[theorem]{Definition}
\newtheorem{xca}{Exercise}[section]
\newtheorem{example}[theorem]{Example}
\newtheorem{convention}[theorem]{Convention}
\newcommand{\isom}{\simeq} 
\newcommand{\ignore}[1]{}
\newcommand{\Hom}{{\rm Hom}}
\newcommand{\field}{{\mathbb F}}
\newcommand{\integers}{{\mathbb Z}}
\DeclareMathAlphabet{\mathcal}{OMS}{cmsy}{m}{n}
\newcommand\cC{\mathcal{C}}
\newcommand\cD{\mathcal{D}}
\newcommand\cF{\mathcal{F}}
\newcommand\cG{\mathcal{G}}
\newcommand\cI{\mathcal{I}}
\newcommand\cK{\mathcal{K}}
\newcommand\cL{\mathcal{L}}
\newcommand\cM{\mathcal{M}}
\newcommand\cN{\mathcal{N}}
\newcommand\cO{\mathcal{O}}
\newcommand\cP{\mathcal{P}}
\newcommand\frakm{\mathfrak{m}}
\newcommand\fraks{\mathfrak{s}}
\newcommand\frakt{\mathfrak{t}}
\def\from{\colon}
\def\isom{\simeq}
\def\eqdef{\overset{\text{def}}{=}}
\DeclareMathOperator{\coker}{coker}
\DeclareMathOperator{\Ext}{Ext}
\def\Hom{\qopname\relax o{Hom}}
\DeclareMathOperator{\id}{id}
\def\implies{\Rightarrow}
\DeclareRobustCommand
\p@\hbox{.}\mkern2mu\raise7\p@\hbox{.}\mkern1mu}}
\newcommand\xhookrightarrow[2][]{\ext@arrow 0062{\hookrightarrowfill@}{#1}{#2}}
\def\hookrightarrowfill@{\arrowfill@\lhook\relbar\rightarrow}
\tikzset{cross/.style={cross out, draw=black, minimum size=2*(#1-\pgflinewidth), inner sep=0pt, outer sep=0pt},
cross/.default={1pt}}
\begin{document}

\title[Duality in Riemann Functions]
{Euler Characteristics and Duality in Riemann Functions and the
Graph Riemann-Roch Rank}

\author{Nicolas Folinsbee}
\address{Department of Mathematics, University of British Columbia,
        Vancouver, BC\ \ V6T 1Z2, CANADA. }
\curraddr{}
\email{{\tt nicolasfolinsbee@gmail.com}}
\thanks{Research supported in part by an NSERC grant.}

\author{Joel Friedman}
\address{Department of Computer Science, 
        University of British Columbia, Vancouver, BC\ \ V6T 1Z4, CANADA. }
\curraddr{}
\email{{\tt jf@cs.ubc.ca}}
\thanks{Research supported in part by an NSERC grant.}

\date{July 14, 2021} 

\subjclass[2010]{Primary: 05C99, 55N99. Secondary: 14H55.}

\keywords{Riemann function,
Riemann's theorem, graph Riemann-Roch theorem, Betti numbers,
Euler characteristic, cohomology, duality}

\begin{abstract}    
By a {\em Riemann function} we mean a function
$f\from\integers^n\to\integers$ such that
$f(\mec d)=f(d_1,\ldots,d_n)$ is equals $0$
for $\deg(\mec d)=d_1+\cdots+d_n$ sufficiently
small, and equals $d_1+\cdots+d_n+C$ for a constant, $C$---the {\em offset
of $f$}---for
$\deg(\mec d)$ sufficiently large.
By adding $1$ to the Baker-Norine rank function of a graph, one gets
an equivalent Riemann function, and similarly for related rank functions.
For such an $f$,
for any $\mec K\in\integers^n$ there is a unique Riemann function
$f^\wedge_\mec K$ such that for all $\mec d\in\integers^n$ we have
$$
f(\mec d) - f^\wedge_\mec K(\mec K-\mec d) = \deg(\mec d)+C 
$$
which we call a {\em generalized Riemann-Roch formula}.
We show that any such equation can be interpreted as an Euler
characteristic computation involving sheaves of vector 
spaces on a fixed topological space (fixed for the entire article)
consisting of five points.

This article does not assume any prior knowledge of sheaf theory.
Our models are diagrams built on five vector spaces and some
linear transformations between them.
We work entirely with these diagrams; at the end of this article we explain
that these diagrams and the methods we use
are borrowed from sheaf theoery.

To a Riemann functions $f$ we associate 
a related function $W\from\integers^n\to\integers$ via M\"obius inversion
that we call
the {\em weight} of the Riemann function.
If $f\from\integers^2\to\integers$ whose weight $W$ is non-negative,
then there is a simple family of diagrams (over any fixed
field)
$\{\cM_{W,\mec d}\}_{\mec d\in\integers^2}$ such that
$f(\mec d)=b^0(\cM_{W,\mec d})$ and
$f^\wedge_{\mec K}(\mec K-\mec d)=b^1(\cM_{W,\mec d})$.
Furthermore, the family $\{\cM_{W,\mec d}\}$ are related 
in a way that mimics the modern
formulation of the Riemann-Roch theorem for curves, making it
immediate that
$$
\chi( \cM_{W,\mec d} ) = \deg(\mec d)+ 
\chi( \cM_{W,\mec 0} )
$$
for all $\mec d$.
Furthermore we show that there is a diagram that
represents the functor $\cF\mapsto H^1(\cF)^*$, for which we prove
a full analog of
Serre duality; this also leads to an
isomorphism
$$
H^1(\cM_{W,\mec d})^* \to  H^0(\cM_{W',\mec K-\mec d})
$$
where $W'$ is the weight of $f^\wedge_\mec K$.

For Riemann functions $\integers^2\to\integers$ with a weight, $W$,
such that $W$ attains negative values, we similarly model the generalized
Riemann-Roch formulas by a {\em virtual diagram}, meaning a formal
difference of diagrams.  For Riemann functions
$\integers^n\to\integers$, we model such formulas by piecing together
restrictions of these functions to two of their variables.
The constructions of the virtual diagrams and the two-variable
restrictions require some ad hoc choices, although the equivalence
class of virtual diagrams obtained is independent of the ad hoc
choices.
The duality theorems extend to Riemann functions
$\integers^n\to\integers$.
\end{abstract}

\maketitle
\setcounter{tocdepth}{3}
\tableofcontents


\newcommand{\axiscubism}{
\begin{center}
\begin{tikzpicture}[scale=0.5]

    \coordinate (Origin)   at (0,0);
    \coordinate (XAxisMin) at (-5,0);
    \coordinate (XAxisMax) at (5,0);
    \coordinate (YAxisMin) at (0,-5);
    \coordinate (YAxisMax) at (0,5);
    \draw [thin, gray,-latex] (XAxisMin) -- (XAxisMax);
    \draw [thin, gray,-latex] (YAxisMin) -- (YAxisMax);


    \foreach \x in {-5,...,5}{
      \foreach \y in {-5,-4,...,5}{
        \node[draw,circle,inner sep=0.8pt,fill] at (1*\x,1*\y) {};
            
      }
    }

\node[draw=none,fill=none] at (0.5,.5) {$1$};
\node[draw=none,fill=none] at (-0.5,.5) {$1$};
\node[draw=none,fill=none] at (0.5,-.5) {$1$};
\node[draw=none,fill=none] at (-0.5,-.5) {$1$};

\node[draw=none,fill=none] at (1.5,.5) {$2$};
\node[draw=none,fill=none] at (.5,1.5) {$2$};
\node[draw=none,fill=none] at (-.5,1.5) {$2$};
\node[draw=none,fill=none] at (-1.5,.5) {$2$};
\node[draw=none,fill=none] at (.5,-1.5) {$2$};
\node[draw=none,fill=none] at (1.5,-.5) {$2$};
\node[draw=none,fill=none] at (-.5,-1.5) {$2$};
\node[draw=none,fill=none] at (-1.5,-.5) {$2$};

\node[draw=none,fill=none] at (2.5,.5) {$3$};
\node[draw=none,fill=none] at (1.5,1.5) {$3$};
\node[draw=none,fill=none] at (.5,2.5) {$3$};
\node[draw=none,fill=none] at (-2.5,.5) {$3$};
\node[draw=none,fill=none] at (-1.5,1.5) {$3$};
\node[draw=none,fill=none] at (-.5,2.5) {$3$};
\node[draw=none,fill=none] at (2.5,-.5) {$3$};
\node[draw=none,fill=none] at (1.5,-1.5) {$3$};
\node[draw=none,fill=none] at (.5,-2.5) {$3$};
\node[draw=none,fill=none] at (-2.5,-.5) {$3$};
\node[draw=none,fill=none] at (-1.5,-1.5) {$3$};
\node[draw=none,fill=none] at (-.5,-2.5) {$3$};

\draw[blue,thick] (-3,-1) -- (3,-1);
\draw[blue,thick] (-3,0) -- (3,0);
\draw[blue,thick] (-3,1) -- (3,1);
\draw[blue,thick] (-2,2) -- (2,2);
\draw[blue,thick] (-2,-2) -- (2,-2);
\draw[blue,thick] (-1,3) -- (1,3);
\draw[blue,thick] (-1,-3) -- (1,-3);

\draw[blue,thick] (-1,-3) -- (-1,3);
\draw[blue,thick] (0,-3) -- (0,3);
\draw[blue,thick] (1,-3) -- (1,3);
\draw[blue,thick] (2,-2) -- (2,2);
\draw[blue,thick] (-2,-2) -- (-2,2);
\draw[blue,thick] (-3,1) -- (-3,-1);
\draw[blue,thick] (3,1) -- (3,-1);

\end{tikzpicture}
\end{center}
}

\newcommand{\degreecubism}{
\begin{center}
\begin{tikzpicture}[scale=0.5]

    \coordinate (Origin)   at (0,0);
    \coordinate (XAxisMin) at (-5,0);
    \coordinate (XAxisMax) at (5,0);
    \coordinate (YAxisMin) at (0,-5);
    \coordinate (YAxisMax) at (0,5);
    \draw [thin, gray,-latex] (XAxisMin) -- (XAxisMax);
    \draw [thin, gray,-latex] (YAxisMin) -- (YAxisMax);


    \foreach \x in {-5,...,5}{
      \foreach \y in {-5,-4,...,5}{
        \node[draw,circle,inner sep=0.8pt,fill] at (1*\x,1*\y) {};
            
      }
    }
   
\node[draw=none,fill=none] at (0.5,.5) {$1$};
\node[draw=none,fill=none] at (-0.5,.5) {$1$};
\node[draw=none,fill=none] at (0.5,-.5) {$1$};
\node[draw=none,fill=none] at (-0.5,-.5) {$1$};

\node[draw=none,fill=none] at (1.5,-1.5) {$2$};
\node[draw=none,fill=none] at (.5,-1.5) {$2$};
\node[draw=none,fill=none] at (1.5,-0.5) {$2$};
\node[draw=none,fill=none] at (-1.5,1.5) {$2$};
\node[draw=none,fill=none] at (-.5,1.5) {$2$};
\node[draw=none,fill=none] at (-1.5,0.5) {$2$};

\node[draw=none,fill=none] at (-2.5,2.5) {$3$};
\node[draw=none,fill=none] at (-1.5,2.5) {$3$};
\node[draw=none,fill=none] at (-2.5,1.5) {$3$};

\node[draw=none,fill=none] at (1.5,.5) {$3$};
\node[draw=none,fill=none] at (.5,1.5) {$3$};
\node[draw=none,fill=none] at (-1.5,-.5) {$3$};
\node[draw=none,fill=none] at (-.5,-1.5) {$3$};
\node[draw=none,fill=none] at (2.5,-2.5) {$3$};
\node[draw=none,fill=none] at (1.5,-2.5) {$3$};
\node[draw=none,fill=none] at (2.5,-1.5) {$3$};

\draw[blue,thick] (-3,3) -- (-1,3);
\draw[blue,thick] (-3,2) -- (1,2);
\draw[blue,thick] (-3,1) -- (2,1);
\draw[blue,thick] (-2,0) -- (2,0);
\draw[blue,thick] (-2,-1) -- (3,-1);
\draw[blue,thick] (-1,-2) -- (3,-2);
\draw[blue,thick] (1,-3) -- (3,-3);

\draw[blue,thick] (3,-3) -- (3,-1);
\draw[blue,thick] (2,-3) -- (2,1);
\draw[blue,thick] (1,-3) -- (1,2);
\draw[blue,thick] (0,-2) -- (0,2);
\draw[blue,thick] (-1,-2) -- (-1,3);
\draw[blue,thick] (-2,-1) -- (-2,3);
\draw[blue,thick] (-3,1) -- (-3,3);
\end{tikzpicture}
\end{center}
}

\newcommand{\PicCubeZero}{
\begin{tikzpicture}[scale=0.5]

    \coordinate (Origin)   at (0,0);
    \coordinate (XAxisMin) at (-5,0);
    \coordinate (XAxisMax) at (5,0);
    \coordinate (YAxisMin) at (0,-5);
    \coordinate (YAxisMax) at (0,5);
    \draw [thin, gray,-latex] (XAxisMin) -- (XAxisMax);
    \draw [thin, gray,-latex] (YAxisMin) -- (YAxisMax);


    \foreach \x in {-5,...,5}{
      \foreach \y in {-5,-4,...,5}{
        \node[draw,circle,inner sep=0.8pt,fill] at (1*\x,1*\y) {};
            
      }
    }

\fill[red] (-5,0) circle (6pt);
\fill[red] (-4,0) circle (6pt);
\fill[red] (-3,0) circle (6pt);
\fill[red] (-2,0) circle (6pt);
\fill[red] (-1,0) circle (6pt);
\fill[red] (0,0) circle (6pt);
\fill[red] (1,0) circle (6pt);
\fill[red] (2,0) circle (6pt);
\fill[red] (3,0) circle (6pt);
\fill[red] (4,0) circle (6pt);
\fill[red] (5,0) circle (6pt);

\fill[red] (0,-5) circle (6pt);
\fill[red] (0,-4) circle (6pt);
\fill[red] (0,-3) circle (6pt);
\fill[red] (0,-2) circle (6pt);
\fill[red] (0,-1) circle (6pt);
\fill[red] (0,0) circle (6pt);
\fill[red] (0,1) circle (6pt);
\fill[red] (0,2) circle (6pt);
\fill[red] (0,3) circle (6pt);
\fill[red] (0,4) circle (6pt);
\fill[red] (0,5) circle (6pt);

\end{tikzpicture}
}

\newcommand{\PicCubeOne}{
\begin{tikzpicture}[scale=0.5]

    \coordinate (Origin)   at (0,0);
    \coordinate (XAxisMin) at (-5,0);
    \coordinate (XAxisMax) at (5,0);
    \coordinate (YAxisMin) at (0,-5);
    \coordinate (YAxisMax) at (0,5);
    \draw [thin, gray,-latex] (XAxisMin) -- (XAxisMax);
    \draw [thin, gray,-latex] (YAxisMin) -- (YAxisMax);


    \foreach \x in {-5,...,5}{
      \foreach \y in {-5,-4,...,5}{
        \node[draw,circle,inner sep=0.8pt,fill] at (1*\x,1*\y) {};
            
      }
    }

\fill[blue] (-5,0) circle (6pt);
\fill[blue] (-4,0) circle (6pt);
\fill[blue] (-3,0) circle (6pt);
\fill[blue] (-2,0) circle (6pt);
\fill[blue] (-1,0) circle (6pt);
\fill[blue] (0,0) circle (6pt);
\fill[blue] (1,0) circle (6pt);
\fill[blue] (2,0) circle (6pt);
\fill[blue] (3,0) circle (6pt);
\fill[blue] (4,0) circle (6pt);
\fill[blue] (5,0) circle (6pt);

\fill[blue] (0,-5) circle (6pt);
\fill[blue] (0,-4) circle (6pt);
\fill[blue] (0,-3) circle (6pt);
\fill[blue] (0,-2) circle (6pt);
\fill[blue] (0,-1) circle (6pt);
\fill[blue] (0,0) circle (6pt);
\fill[blue] (0,1) circle (6pt);
\fill[blue] (0,2) circle (6pt);
\fill[blue] (0,3) circle (6pt);
\fill[blue] (0,4) circle (6pt);
\fill[blue] (0,5) circle (6pt);

\fill[red] (1,1) circle (6pt);
\fill[red] (1,-1) circle (6pt);
\fill[red] (-1,1) circle (6pt);
\fill[red] (-1,-1) circle (6pt);

\end{tikzpicture}
}

\newcommand{\PicCubeTwo}{
\begin{tikzpicture}[scale=0.5]

    \coordinate (Origin)   at (0,0);
    \coordinate (XAxisMin) at (-5,0);
    \coordinate (XAxisMax) at (5,0);
    \coordinate (YAxisMin) at (0,-5);
    \coordinate (YAxisMax) at (0,5);
    \draw [thin, gray,-latex] (XAxisMin) -- (XAxisMax);
    \draw [thin, gray,-latex] (YAxisMin) -- (YAxisMax);


    \foreach \x in {-5,...,5}{
      \foreach \y in {-5,-4,...,5}{
        \node[draw,circle,inner sep=0.8pt,fill] at (1*\x,1*\y) {};
            
      }
    }

\fill[blue] (-5,0) circle (6pt);
\fill[blue] (-4,0) circle (6pt);
\fill[blue] (-3,0) circle (6pt);
\fill[blue] (-2,0) circle (6pt);
\fill[blue] (-1,0) circle (6pt);
\fill[blue] (0,0) circle (6pt);
\fill[blue] (1,0) circle (6pt);
\fill[blue] (2,0) circle (6pt);
\fill[blue] (3,0) circle (6pt);
\fill[blue] (4,0) circle (6pt);
\fill[blue] (5,0) circle (6pt);

\fill[blue] (0,-5) circle (6pt);
\fill[blue] (0,-4) circle (6pt);
\fill[blue] (0,-3) circle (6pt);
\fill[blue] (0,-2) circle (6pt);
\fill[blue] (0,-1) circle (6pt);
\fill[blue] (0,0) circle (6pt);
\fill[blue] (0,1) circle (6pt);
\fill[blue] (0,2) circle (6pt);
\fill[blue] (0,3) circle (6pt);
\fill[blue] (0,4) circle (6pt);
\fill[blue] (0,5) circle (6pt);

\fill[blue] (1,1) circle (6pt);
\fill[blue] (1,-1) circle (6pt);
\fill[blue] (-1,1) circle (6pt);
\fill[blue] (-1,-1) circle (6pt);

\fill[red] (2,1) circle (6pt);
\fill[red] (1,2) circle (6pt);
\fill[red] (2,-1) circle (6pt);
\fill[red] (1,-2) circle (6pt);
\fill[red] (-2,1) circle (6pt);
\fill[red] (-1,2) circle (6pt);
\fill[red] (-2,-1) circle (6pt);
\fill[red] (-1,-2) circle (6pt);

\end{tikzpicture}
}

\newcommand{\PicCubeThree}{
\begin{tikzpicture}[scale=0.5]

    \coordinate (Origin)   at (0,0);
    \coordinate (XAxisMin) at (-5,0);
    \coordinate (XAxisMax) at (5,0);
    \coordinate (YAxisMin) at (0,-5);
    \coordinate (YAxisMax) at (0,5);
    \draw [thin, gray,-latex] (XAxisMin) -- (XAxisMax);
    \draw [thin, gray,-latex] (YAxisMin) -- (YAxisMax);


    \foreach \x in {-5,...,5}{
      \foreach \y in {-5,-4,...,5}{
        \node[draw,circle,inner sep=0.8pt,fill] at (1*\x,1*\y) {};
            
      }
    }

\fill[blue] (-5,0) circle (6pt);
\fill[blue] (-4,0) circle (6pt);
\fill[blue] (-3,0) circle (6pt);
\fill[blue] (-2,0) circle (6pt);
\fill[blue] (-1,0) circle (6pt);
\fill[blue] (0,0) circle (6pt);
\fill[blue] (1,0) circle (6pt);
\fill[blue] (2,0) circle (6pt);
\fill[blue] (3,0) circle (6pt);
\fill[blue] (4,0) circle (6pt);
\fill[blue] (5,0) circle (6pt);

\fill[blue] (0,-5) circle (6pt);
\fill[blue] (0,-4) circle (6pt);
\fill[blue] (0,-3) circle (6pt);
\fill[blue] (0,-2) circle (6pt);
\fill[blue] (0,-1) circle (6pt);
\fill[blue] (0,0) circle (6pt);
\fill[blue] (0,1) circle (6pt);
\fill[blue] (0,2) circle (6pt);
\fill[blue] (0,3) circle (6pt);
\fill[blue] (0,4) circle (6pt);
\fill[blue] (0,5) circle (6pt);

\fill[blue] (1,1) circle (6pt);
\fill[blue] (1,-1) circle (6pt);
\fill[blue] (-1,1) circle (6pt);
\fill[blue] (-1,-1) circle (6pt);

\fill[blue] (2,1) circle (6pt);
\fill[blue] (1,2) circle (6pt);
\fill[blue] (2,-1) circle (6pt);
\fill[blue] (1,-2) circle (6pt);
\fill[blue] (-2,1) circle (6pt);
\fill[blue] (-1,2) circle (6pt);
\fill[blue] (-2,-1) circle (6pt);
\fill[blue] (-1,-2) circle (6pt);

\fill[red] (3,1) circle (6pt);
\fill[red] (2,2) circle (6pt);
\fill[red] (1,3) circle (6pt);
\fill[red] (3,-1) circle (6pt);
\fill[red] (2,-2) circle (6pt);
\fill[red] (1,-3) circle (6pt);
\fill[red] (-3,1) circle (6pt);
\fill[red] (-2,2) circle (6pt);
\fill[red] (-1,3) circle (6pt);
\fill[red] (-3,-1) circle (6pt);
\fill[red] (-2,-2) circle (6pt);
\fill[red] (-1,-3) circle (6pt);

\end{tikzpicture}
}

\newcommand{\PicDegCubeZero}{
\begin{tikzpicture}[scale=0.5]

    \coordinate (Origin)   at (0,0);
    \coordinate (XAxisMin) at (-5,0);
    \coordinate (XAxisMax) at (5,0);
    \coordinate (YAxisMin) at (0,-5);
    \coordinate (YAxisMax) at (0,5);
    \draw [thin, gray,-latex] (XAxisMin) -- (XAxisMax);
    \draw [thin, gray,-latex] (YAxisMin) -- (YAxisMax);


    \foreach \x in {-5,...,5}{
      \foreach \y in {-5,-4,...,5}{
        \node[draw,circle,inner sep=0.8pt,fill] at (1*\x,1*\y) {};
            
      }
    }

\fill[red] (0,0) circle (6pt);

\fill[red] (-4,5) circle (6pt);
\fill[red] (-3,4) circle (6pt);
\fill[red] (-2,3) circle (6pt);
\fill[red] (-1,2) circle (6pt);
\fill[red] (0,1) circle (6pt);
\fill[red] (1,0) circle (6pt);
\fill[red] (2,-1) circle (6pt);
\fill[red] (3,-2) circle (6pt);
\fill[red] (4,-3) circle (6pt);
\fill[red] (5,-4) circle (6pt);

\fill[red] (-5,4) circle (6pt);
\fill[red] (-4,3) circle (6pt);
\fill[red] (-3,2) circle (6pt);
\fill[red] (-2,1) circle (6pt);
\fill[red] (-1,0) circle (6pt);
\fill[red] (0,-1) circle (6pt);
\fill[red] (1,-2) circle (6pt);
\fill[red] (2,-3) circle (6pt);
\fill[red] (3,-4) circle (6pt);
\fill[red] (4,-5) circle (6pt);

\end{tikzpicture}
}

\newcommand{\PicDegCubeOne}{
\begin{tikzpicture}[scale=0.5]

    \coordinate (Origin)   at (0,0);
    \coordinate (XAxisMin) at (-5,0);
    \coordinate (XAxisMax) at (5,0);
    \coordinate (YAxisMin) at (0,-5);
    \coordinate (YAxisMax) at (0,5);
    \draw [thin, gray,-latex] (XAxisMin) -- (XAxisMax);
    \draw [thin, gray,-latex] (YAxisMin) -- (YAxisMax);


    \foreach \x in {-5,...,5}{
      \foreach \y in {-5,-4,...,5}{
        \node[draw,circle,inner sep=0.8pt,fill] at (1*\x,1*\y) {};
            
      }
    }

\fill[blue] (0,0) circle (6pt);

\fill[blue] (-4,5) circle (6pt);
\fill[blue] (-3,4) circle (6pt);
\fill[blue] (-2,3) circle (6pt);
\fill[blue] (-1,2) circle (6pt);
\fill[blue] (0,1) circle (6pt);
\fill[blue] (1,0) circle (6pt);
\fill[blue] (2,-1) circle (6pt);
\fill[blue] (3,-2) circle (6pt);
\fill[blue] (4,-3) circle (6pt);
\fill[blue] (5,-4) circle (6pt);

\fill[blue] (-5,4) circle (6pt);
\fill[blue] (-4,3) circle (6pt);
\fill[blue] (-3,2) circle (6pt);
\fill[blue] (-2,1) circle (6pt);
\fill[blue] (-1,0) circle (6pt);
\fill[blue] (0,-1) circle (6pt);
\fill[blue] (1,-2) circle (6pt);
\fill[blue] (2,-3) circle (6pt);
\fill[blue] (3,-4) circle (6pt);
\fill[blue] (4,-5) circle (6pt);

\fill[red] (-1,1) circle (6pt);
\fill[red] (1,-1) circle (6pt);
\fill[red] (1,1) circle (6pt);
\fill[red] (-1,-1) circle (6pt);

\end{tikzpicture}
}

\newcommand{\PicDegCubeTwo}{
\begin{tikzpicture}[scale=0.5]

    \coordinate (Origin)   at (0,0);
    \coordinate (XAxisMin) at (-5,0);
    \coordinate (XAxisMax) at (5,0);
    \coordinate (YAxisMin) at (0,-5);
    \coordinate (YAxisMax) at (0,5);
    \draw [thin, gray,-latex] (XAxisMin) -- (XAxisMax);
    \draw [thin, gray,-latex] (YAxisMin) -- (YAxisMax);


    \foreach \x in {-5,...,5}{
      \foreach \y in {-5,-4,...,5}{
        \node[draw,circle,inner sep=0.8pt,fill] at (1*\x,1*\y) {};
            
      }
    }

\fill[blue] (0,0) circle (6pt);

\fill[blue] (-4,5) circle (6pt);
\fill[blue] (-3,4) circle (6pt);
\fill[blue] (-2,3) circle (6pt);
\fill[blue] (-1,2) circle (6pt);
\fill[blue] (0,1) circle (6pt);
\fill[blue] (1,0) circle (6pt);
\fill[blue] (2,-1) circle (6pt);
\fill[blue] (3,-2) circle (6pt);
\fill[blue] (4,-3) circle (6pt);
\fill[blue] (5,-4) circle (6pt);

\fill[blue] (-5,4) circle (6pt);
\fill[blue] (-4,3) circle (6pt);
\fill[blue] (-3,2) circle (6pt);
\fill[blue] (-2,1) circle (6pt);
\fill[blue] (-1,0) circle (6pt);
\fill[blue] (0,-1) circle (6pt);
\fill[blue] (1,-2) circle (6pt);
\fill[blue] (2,-3) circle (6pt);
\fill[blue] (3,-4) circle (6pt);
\fill[blue] (4,-5) circle (6pt);

\fill[blue] (-1,1) circle (6pt);
\fill[blue] (1,-1) circle (6pt);
\fill[blue] (1,1) circle (6pt);
\fill[blue] (-1,-1) circle (6pt);

\fill[red] (-2,2) circle (6pt);
\fill[red] (2,-2) circle (6pt);
\fill[red] (0,2) circle (6pt);
\fill[red] (2,0) circle (6pt);
\fill[red] (0,-2) circle (6pt);
\fill[red] (-2,0) circle (6pt);

\end{tikzpicture}
}

\newcommand{\PicDegCubeThree}{
\begin{tikzpicture}[scale=0.5]

    \coordinate (Origin)   at (0,0);
    \coordinate (XAxisMin) at (-5,0);
    \coordinate (XAxisMax) at (5,0);
    \coordinate (YAxisMin) at (0,-5);
    \coordinate (YAxisMax) at (0,5);
    \draw [thin, gray,-latex] (XAxisMin) -- (XAxisMax);
    \draw [thin, gray,-latex] (YAxisMin) -- (YAxisMax);


    \foreach \x in {-5,...,5}{
      \foreach \y in {-5,-4,...,5}{
        \node[draw,circle,inner sep=0.8pt,fill] at (1*\x,1*\y) {};
            
      }
    }

\fill[blue] (0,0) circle (6pt);

\fill[blue] (-4,5) circle (6pt);
\fill[blue] (-3,4) circle (6pt);
\fill[blue] (-2,3) circle (6pt);
\fill[blue] (-1,2) circle (6pt);
\fill[blue] (0,1) circle (6pt);
\fill[blue] (1,0) circle (6pt);
\fill[blue] (2,-1) circle (6pt);
\fill[blue] (3,-2) circle (6pt);
\fill[blue] (4,-3) circle (6pt);
\fill[blue] (5,-4) circle (6pt);

\fill[blue] (-5,4) circle (6pt);
\fill[blue] (-4,3) circle (6pt);
\fill[blue] (-3,2) circle (6pt);
\fill[blue] (-2,1) circle (6pt);
\fill[blue] (-1,0) circle (6pt);
\fill[blue] (0,-1) circle (6pt);
\fill[blue] (1,-2) circle (6pt);
\fill[blue] (2,-3) circle (6pt);
\fill[blue] (3,-4) circle (6pt);
\fill[blue] (4,-5) circle (6pt);

\fill[blue] (-1,1) circle (6pt);
\fill[blue] (1,-1) circle (6pt);
\fill[blue] (1,1) circle (6pt);
\fill[blue] (-1,-1) circle (6pt);

\fill[blue] (-2,2) circle (6pt);
\fill[blue] (2,-2) circle (6pt);

\fill[blue] (-2,0) circle (6pt);
\fill[blue] (2,0) circle (6pt);
\fill[blue] (0,2) circle (6pt);
\fill[blue] (0,-2) circle (6pt);

\fill[red] (-3,3) circle (6pt);
\fill[red] (3,-3) circle (6pt);
\fill[red] (-1,3) circle (6pt);
\fill[red] (1,2) circle (6pt);
\fill[red] (2,1) circle (6pt);
\fill[red] (1,-3) circle (6pt);
\fill[red] (-1,-2) circle (6pt);
\fill[red] (-2,-1) circle (6pt);
\fill[red] (-3,1) circle (6pt);
\fill[red] (3,-1) circle (6pt);

\end{tikzpicture}
}

\newcommand{\ronegraph}{
\begin{center}
\begin{tikzpicture}
\tikzset{vertex/.style = {shape = circle,fill=black,minimum size=0.1cm}}
\node[vertex] (ta) at  (-3,2) {};
\node[vertex] (tb) at  (-2,2) {};
\node[vertex] (tc) at  (-1,2) {};
\node[vertex] (td) at  (0,2) {};
\node[vertex] (te) at  (1,2) {};
\node[vertex] (tf) at  (2,2) {};
\node[vertex] (tg) at  (3,2) {};

\node[vertex] (ba) at  (-3,0) {};
\node[vertex] (bb) at  (-2,0) {};
\node[vertex] (bc) at  (-1,0) {};
\node[vertex] (bd) at  (0,0) {};
\node[vertex] (be) at  (1,0) {};
\node[vertex] (bf) at  (2,0) {};
\node[vertex] (bg) at  (3,0) {};


\draw[red] (ta) to (bg);
\draw[red] (tb) to (bf);
\draw[red] (tc) to (be);
\draw[red] (td) to (bd);
\draw[red] (te) to (bc);
\draw[red] (tf) to (bb);
\draw[red] (tg) to (ba);

\node[draw=none,fill=none] at (-3.6,2) {$\boldsymbol{\cdots} $};
\node[draw=none,fill=none] at (-3,2.5) {$-3$};
\node[draw=none,fill=none] at (-2,2.5) {$-2$};
\node[draw=none,fill=none] at (-1,2.5) {$-1$};
\node[draw=none,fill=none] at (0,2.5) {$0$};
\node[draw=none,fill=none] at (1,2.5) {$1$};
\node[draw=none,fill=none] at (2,2.5) {$2$};
\node[draw=none,fill=none] at (3,2.5) {$3$};
\node[draw=none,fill=none] at (3.7,2) {$\boldsymbol{\cdots} $};

\node[draw=none,fill=none] at (-3.6,0) {$\boldsymbol{\cdots} $};
\node[draw=none,fill=none] at (-3,-0.5) {$-3$};
\node[draw=none,fill=none] at (-2,-0.5) {$-2$};
\node[draw=none,fill=none] at (-1,-0.5) {$-1$};
\node[draw=none,fill=none] at (0,-0.5) {$0$};
\node[draw=none,fill=none] at (1,-0.5) {$1$};
\node[draw=none,fill=none] at (2,-0.5) {$2$};
\node[draw=none,fill=none] at (3,-0.5) {$3$};
\node[draw=none,fill=none] at (3.7,0) {$\boldsymbol{\cdots} $};

\end{tikzpicture}
\end{center}

}

\newcommand{\rtwograph}{
\begin{center}
\begin{tikzpicture}
\tikzset{vertex/.style = {shape = circle,fill=black,minimum size=0.1cm}}

\node[vertex] (tb) at  (-2,2) {};
\node[vertex] (tc) at  (-1,2) {};
\node[vertex] (td) at  (0,2) {};
\node[vertex] (te) at  (1,2) {};
\node[vertex] (tf) at  (2,2) {};
\node[vertex] (tg) at  (3,2) {};

\node[vertex] (bb) at  (-2,0) {};
\node[vertex] (bc) at  (-1,0) {};
\node[vertex] (bd) at  (0,0) {};
\node[vertex] (be) at  (1,0) {};
\node[vertex] (bf) at  (2,0) {};
\node[vertex] (bg) at  (3,0) {};


\draw[red] (tb) to (bg);
\draw[red] (tc) to (bf);
\draw[red] (td) to (bd);
\draw[red] (te) to (be);
\draw[red] (tf) to (bc);
\draw[red] (tg) to (bb);

\node[draw=none,fill=none] at (-2.6,2) {$\boldsymbol{\cdots} $};
\node[draw=none,fill=none] at (-2,2.5) {$-2$};
\node[draw=none,fill=none] at (-1,2.5) {$-1$};
\node[draw=none,fill=none] at (0,2.5) {$0$};
\node[draw=none,fill=none] at (1,2.5) {$1$};
\node[draw=none,fill=none] at (2,2.5) {$2$};
\node[draw=none,fill=none] at (3,2.5) {$3$};
\node[draw=none,fill=none] at (3.7,2) {$\boldsymbol{\cdots} $};

\node[draw=none,fill=none] at (-2.6,0) {$\boldsymbol{\cdots} $};
\node[draw=none,fill=none] at (-2,-0.5) {$-2$};
\node[draw=none,fill=none] at (-1,-0.5) {$-1$};
\node[draw=none,fill=none] at (0,-0.5) {$0$};
\node[draw=none,fill=none] at (1,-0.5) {$1$};
\node[draw=none,fill=none] at (2,-0.5) {$2$};
\node[draw=none,fill=none] at (3,-0.5) {$3$};
\node[draw=none,fill=none] at (3.7,0) {$\boldsymbol{\cdots} $};

\end{tikzpicture}
\end{center}

}

\newcommand{\diagramone}
{

\begin{tikzpicture}[scale=0.5]
	\node (A) at (0,0) {A};
    \node (B) at (0,3) {B};
    \node (C) at (3,3) {C};
    \node (D) at (3,0) {D};
    
    \node (E) at (-6,-5) {E};
    \node (F) at (-6,-2) {F};
    \node (G) at (-3,-2) {G};
    \node (H) at (-3,-5) {H};
    
    \node (I) at (6,-5) {I};
    \node (J) at (6,-2) {J};
    \node (K) at (9,-2) {K};
    \node (L) at (9,-5) {L};

    \path [->,blue] (A) edge node[left,black] {$e_j$} (B);
    \path [->,red] (B) edge node[above,black] {$e_i$} (C);
    \path [->,red] (A) edge node {$.$} (D);
    \path [->,blue] (D) edge node {$.$} (C);
    
    \path [->,blue] (E) edge node {$.$} (F);
    \path [->,red] (F) edge node {$.$} (G);
    \path [->,red] (E) edge node {$.$} (H);
    \path [->,blue] (H) edge node {$.$} (G);

    \path [->,blue] (I) edge node {$.$} (J);
    \path [->,red] (J) edge node {$.$} (K);
    \path [->,red] (I) edge node {$.$} (L);
    \path [->,blue] (L) edge node {$.$} (K);

     \path [->,teal] (E) edge node {$.$} (A);
    \path [->,teal] (F) edge node[above,black] {$e_k$} (B);
    \path [->,teal] (G) edge node {$.$} (C);
    \path [->,teal] (H) edge node {$.$} (D);
    
    \path [->,orange] (I) edge node {$.$} (A);
    \path [->,orange] (J) edge node {$.$} (B);
    \path [->,orange] (K) edge node[above,black] {$e_{k'}$} (C);
    \path [->,orange] (L) edge node {$.$} (D);

\end{tikzpicture}

}

\newcommand{\diagramtwo}
{
\begin{tikzpicture}[scale=0.5]
	\node (A) at (0,0) {a};
    \node (B) at (0,3) {a+1};
    \node (C) at (3,3) {a+1};
    \node (D) at (3,0) {a+1};
    
    \node (E) at (-6,-5) {a-1};
    \node[text=red] (F) at (-6,-2) {a};
    \node (G) at (-3,-2) {*};
    \node[text=red] (H) at (-3,-5) {a};
    
    \node (I) at (6,-5) {**};
    \node[text=red] (J) at (6,-2) {a};
    \node (K) at (9,-2) {a};
    \node[text=red] (L) at (9,-5) {a};

    \path [->,blue] (A) edge node {$.$} (B);
    \path [->,red] (B) edge node {$.$} (C);
    \path [->,red] (A) edge node {$.$} (D);
    \path [->,blue] (D) edge node {$.$} (C);
    
    \path [->,blue] (E) edge node {$.$} (F);
    \path [->,red] (F) edge node {$.$} (G);
    \path [->,red] (E) edge node {$.$} (H);
    \path [->,blue] (H) edge node {$.$} (G);

    \path [->,blue] (I) edge node {$.$} (J);
    \path [->,red] (J) edge node {$.$} (K);
    \path [->,red] (I) edge node {$.$} (L);
    \path [->,blue] (L) edge node {$.$} (K);

     \path [->,teal] (E) edge node {$.$} (A);
    \path [->,teal] (F) edge node {$.$} (B);
    \path [->,teal] (G) edge node {$.$} (C);
    \path [->,teal] (H) edge node {$.$} (D);
    
    \path [->,orange] (I) edge node {$.$} (A);
    \path [->,orange] (J) edge node {$.$} (B);
    \path [->,orange] (K) edge node {$.$} (C);
    \path [->,orange] (L) edge node {$.$} (D);

\end{tikzpicture}
}

\newcommand{\diagramthree}
{
\begin{tikzpicture}[scale=0.5]
	\node (A) at (0,0) {a};
    \node (B) at (0,3) {a+1};
    \node (C) at (3,3) {a+1};
    \node (D) at (3,0) {a+1};
    
    \node (E) at (-6,-5) {a-1};
    \node (F) at (-6,-2) {a};
    \node (G) at (-3,-2) {a};
    \node (H) at (-3,-5) {a+1};
    
    \node (I) at (6,-5) {a};
    \node (J) at (6,-2) {a};
    \node (K) at (9,-2) {a};
    \node (L) at (9,-5) {a};
	
	\node[text=red] (M) at (0,-10) {a-1};
    \node (N) at (0,-7) {*};
    \node[text=red] (O) at (3,-7) {a};
    \node (P) at (3,-10) {**};

    \path [->,blue] (A) edge node {$.$} (B);
    \path [->,red] (B) edge node {$.$} (C);
    \path [->,red] (A) edge node {$.$} (D);
    \path [->,blue] (D) edge node {$.$} (C);
    
    \path [->,blue] (E) edge node {$.$} (F);
    \path [->,red] (F) edge node {$.$} (G);
    \path [->,red] (E) edge node {$.$} (H);
    \path [->,blue] (H) edge node {$.$} (G);

    \path [->,blue] (I) edge node {$.$} (J);
    \path [->,red] (J) edge node {$.$} (K);
    \path [->,red] (I) edge node {$.$} (L);
    \path [->,blue] (L) edge node {$.$} (K);
    
    \path [->,blue] (M) edge node {$.$} (N);
    \path [->,red] (N) edge node {$.$} (O);
    \path [->,red] (M) edge node {$.$} (P);
    \path [->,blue] (P) edge node {$.$} (O);

    \path [->,teal] (E) edge node {$.$} (A);
    \path [->,teal] (F) edge node {$.$} (B);
    \path [->,teal] (G) edge node {$.$} (C);
    \path [->,teal] (H) edge node {$.$} (D);
    
    \path [->,orange] (I) edge node {$.$} (A);
    \path [->,orange] (J) edge node {$.$} (B);
    \path [->,orange] (K) edge node {$.$} (C);
    \path [->,orange] (L) edge node {$.$} (D);

    \path [->,orange] (M) edge node {$.$} (E);
    \path [->,orange] (N) edge node {$.$} (F);
    \path [->,orange] (O) edge node {$.$} (G);
    \path [->,orange] (P) edge node {$.$} (H);
    
    \path [->,teal] (M) edge node {$.$} (I);
    \path [->,teal] (N) edge node {$.$} (J);
    \path [->,teal] (O) edge node {$.$} (K);
    \path [->,teal] (P) edge node {$.$} (L);

\end{tikzpicture}

}

\newcommand{\DiagramCDOne}{
 \begin{tikzpicture}[scale=0.5]
    \coordinate (Origin)   at (0,0);
    \coordinate (XAxisMin) at (-4,0);
    \coordinate (XAxisMax) at (6,0);
    \coordinate (YAxisMin) at (0,-4);
    \coordinate (YAxisMax) at (0,6);
    \draw [thin, black,-latex] (XAxisMin) -- (XAxisMax);
    \draw [thin, black,-latex] (YAxisMin) -- (YAxisMax);

    \clip (-5,-5) rectangle (10cm,10cm); 
    \foreach \x in {-4,-3,...,6}{
      \foreach \y in {-4,-3,...,6}{
        \node[draw,circle,inner sep=1.2pt,fill] at (1*\x,1*\y) {};
      }
    }

\fill[red] (-2,4) circle (7pt);
\fill[red] (1,1) circle (7pt);
\fill[red] (4,-2) circle (7pt);

\fill[red] (-3,6) circle (7pt);
\fill[red] (0,3) circle (7pt);
\fill[red] (3,0) circle (7pt);
\fill[red] (6,-3) circle (7pt);

\fill[red] (-4,5) circle (7pt);
\fill[red] (-1,2) circle (7pt);
\fill[red] (2,-1) circle (7pt);
\fill[red] (5,-4) circle (7pt);

\end{tikzpicture}
}

\newcommand{\DiagramCDTwo}{
\begin{tikzpicture}[scale=0.5]
    \coordinate (Origin)   at (0,0);
    \coordinate (XAxisMin) at (-4,0);
    \coordinate (XAxisMax) at (6,0);
    \coordinate (YAxisMin) at (0,-4);
    \coordinate (YAxisMax) at (0,6);
    \draw [thin, black,-latex] (XAxisMin) -- (XAxisMax);
    \draw [thin, black,-latex] (YAxisMin) -- (YAxisMax);

    \clip (-5,-5) rectangle (10cm,10cm); 
    \foreach \x in {-4,-3,...,6}{
      \foreach \y in {-4,-3,...,6}{
        \node[draw,circle,inner sep=1.2pt,fill] at (1*\x,1*\y) {};
      }
    }

\fill[red] (-2,4) circle (7pt);
\fill[red] (1,1) circle (7pt);
\fill[red] (4,-2) circle (7pt);

\fill[red] (-3,6) circle (7pt);
\fill[red] (0,3) circle (7pt);
\fill[red] (3,0) circle (7pt);
\fill[red] (6,-3) circle (7pt);

\fill[red] (-4,5) circle (7pt);
\fill[red] (-1,2) circle (7pt);
\fill[red] (2,-1) circle (7pt);
\fill[red] (5,-4) circle (7pt);

\end{tikzpicture}
}

\newcommand{\ThreeVertex}{
\begin{center}
\begin{tikzpicture}
\tikzset{vertex/.style = {shape = circle,fill=black,minimum size=0.1cm}}
\tikzset{edge/.style = {-,> = latex'}}
\node[vertex] (b) at  (1.4,2) {};
\node[vertex] (a) at (-1.4,2) {};
\node[vertex] (c) at (0,0) {};
\draw[edge] (b) to[bend left=10]  (c);
\draw[edge] (b) to[bend left=20] node[below right] {t}  (c);
\draw[edge] (b) to[bend right=10] node[below,rotate=50] {$\mathellipsis$} (c);
\draw[edge] (b) to[bend right=20] (c);

\draw[edge] (a) to[bend left=10] (b);
\draw[edge] (a) to[bend left=20] node[above] {r}  (b);
\draw[edge] (a) to[bend right=10] node[above] {$\mathellipsis$} (b);
\draw[edge] (a) to[bend right=20]  (b);

\draw[edge] (a) to[bend left=10] (c);
\draw[edge] (a) to[bend left=20]  (c);
\draw[edge] (a) to[bend right=10] node[above,rotate=-50] {$\mathellipsis$} (c);
\draw[edge] (a) to[bend right=20] node[below left]{s} (c);

\node[draw=none,fill=none] at (0.3,-0.3) {$v_3$};
\node[draw=none,fill=none] at (-1.7,2.3) {$v_1$};
\node[draw=none,fill=none] at (1.7,2.3) {$v_2$};
\end{tikzpicture}
\end{center}

}

\newcommand{\DiagramCDThree}{
\begin{tikzpicture}[scale=0.65]
  
  \draw[fill=blue!15!white,blue!15!white] (5,3)--(-4,3)--(-4,-10)--(5,-10)--cycle;   
  \draw[fill=green!15!white,green!15!white] (2,5)--(-4,5)--(-4,-10)--(2,-10)--cycle; 
  \draw[fill=teal!15!white,teal!15!white] (2,3)--(-4,3)--(-4,-10)--(2,-10)--cycle;

    \coordinate (Origin)   at (0,0);
    \coordinate (XAxisMin) at (-4,0);
    \coordinate (XAxisMax) at (10,0);
    \coordinate (YAxisMin) at (0,-10);
    \coordinate (YAxisMax) at (0,10);
    \draw [thin, gray,-latex] (XAxisMin) -- (XAxisMax);
    \draw [thin, gray,-latex] (YAxisMin) -- (YAxisMax);


    \foreach \x in {-4,-3,...,10}{
      \foreach \y in {-10,-9,...,10}{
        \node[draw,circle,inner sep=0.8pt,fill] at (1*\x,1*\y) {};
            
      }
    }

\fill[red] (0,0) circle (6pt);
\fill[red] (1,2) circle (6pt);
\fill[red] (2,3) circle (6pt);
\fill[red] (3,-2) circle (6pt);
\fill[red] (4,1) circle (6pt);
\fill[red] (5,-1) circle (6pt);
\fill[red] (6,-4) circle (6pt);
\fill[red] (7,-3) circle (6pt);

\fill[red] (8,-8) circle (6pt);
\fill[red] (9,-6) circle (6pt);
\fill[red] (10,-5) circle (6pt);

\fill[red] (-1,5) circle (6pt);
\fill[red] (-2,4) circle (6pt);
\fill[red] (-3,7) circle (6pt);
\fill[red] (-4,9) circle (6pt);

\node[draw=none,fill=none] at (2,5.3) {\footnotesize $f(2,5)$};
\node[draw=none,fill=none] at (5,3.3) {\footnotesize $g(2,5)$};

\end{tikzpicture}
}

\newcommand{\ThreeVertexTwo}{
\begin{center}
\begin{tikzpicture}
\tikzset{vertex/.style = {shape = circle,fill=black,minimum size=0.1cm}}
\tikzset{edge/.style = {-,> = latex'}}
\node[vertex] (b) at  (1.4,2) {};
\node[vertex] (a) at (-1.4,2) {};
\node[vertex] (c) at (0,0) {};
\draw[edge] (b) to[bend left=10]  (c);
\draw[edge] (b) to[bend right=10] (c);

\draw[edge] (a) to[bend left=10] (b);
\draw[edge] (a) to[bend right=10] (b);

\draw[edge] (a) to (c);

\node[draw=none,fill=none] at (0.3,-0.3) {$v_3$};
\node[draw=none,fill=none] at (-1.7,2.3) {$v_1$};
\node[draw=none,fill=none] at (1.7,2.3) {$v_2$};
\end{tikzpicture}
\end{center}
}

\newcommand{\FourVertex}{
\begin{center}
\begin{tikzpicture}
\tikzset{vertex/.style = {shape = circle,fill=black,minimum size=0.1cm}}
\tikzset{edge/.style = {-,> = latex'}}
\node[vertex] (a) at  (0,2) {};
\node[vertex] (b) at (0,0) {};
\node[vertex] (c) at (2,2) {};
\node[vertex] (d) at (2,0) {};
\draw[edge] (a) to  (c);
\draw[edge] (a) to  (b);
\draw[edge] (a) to  (d);

\draw[edge] (b) to (c);
\draw[edge] (b) to[bend left=10]  (d);
\draw[edge] (b) to[bend right=10]  (d);

\node[draw=none,fill=none] at (-0.3,2.3) {$v_1$};
\node[draw=none,fill=none] at (-0.3,-0.3) {$v_2$};
\node[draw=none,fill=none] at (2.3,2.3) {$v_3$};
\node[draw=none,fill=none] at (2.3,-0.3) {$v_4$};
\end{tikzpicture}
\end{center}

}

\newcommand{\DiagramCDFour}{
\begin{tikzpicture}[scale=0.65]

    \coordinate (Origin)   at (0,0);
    \coordinate (XAxisMin) at (-4,0);
    \coordinate (XAxisMax) at (10,0);
    \coordinate (YAxisMin) at (0,-10);
    \coordinate (YAxisMax) at (0,10);
    \draw [thin, gray,-latex] (XAxisMin) -- (XAxisMax);
    \draw [thin, gray,-latex] (YAxisMin) -- (YAxisMax);


    \foreach \x in {-4,-3,...,10}{
      \foreach \y in {-10,-9,...,10}{
        \node[draw,circle,inner sep=0.8pt,fill] at (1*\x,1*\y) {};
            
      }
    }

\fill[red] (0,0) circle (6pt);
\fill[red] (1,2) circle (6pt);
\fill[red] (2,3) circle (6pt);
\fill[red] (3,-2) circle (6pt);
\fill[red] (5,1) circle (6pt);
\fill[red] (4,-1) circle (6pt);
\fill[red] (6,-4) circle (6pt);
\fill[red] (7,-3) circle (6pt);

\fill[red] (8,-8) circle (6pt);
\fill[red] (9,-6) circle (6pt);
\fill[red] (10,-5) circle (6pt);

\fill[red] (-1,5) circle (6pt);
\fill[red] (-2,4) circle (6pt);
\fill[red] (-3,7) circle (6pt);
\fill[red] (-4,9) circle (6pt);

\end{tikzpicture}
}

\newcommand{\DiagramCDFive}{
\begin{tikzpicture}[scale=0.4]

    \coordinate (Origin)   at (0,0);
    \coordinate (XAxisMin) at (-3,0);
    \coordinate (XAxisMax) at (12,0);
    \coordinate (YAxisMin) at (0,-10);
    \coordinate (YAxisMax) at (0,12);
    \draw [thin, gray,-latex] (XAxisMin) -- (XAxisMax);
    \draw [thin, gray,-latex] (YAxisMin) -- (YAxisMax);


    \foreach \x in {-3,...,11}{
      \foreach \y in {-10,-9,...,11}{
        \node[draw,circle,inner sep=0.8pt,fill] at (1*\x,1*\y) {};
            
      }
    }

\fill[red] (-3,6) circle (6pt);
\fill[red] (-2,7) circle (6pt);
\fill[red] (-1,8) circle (6pt);

\fill[red] (0,0) circle (6pt);
\fill[red] (1,10) circle (6pt);
\fill[red] (2,11) circle (6pt);

\fill[red] (3,3) circle (6pt);
\fill[red] (4,4) circle (6pt);
\fill[red] (5,5) circle (6pt);

\fill[red] (6,-3) circle (6pt);
\fill[red] (7,-2) circle (6pt);
\fill[red] (8,-1) circle (6pt);
\fill[red] (9,-9) circle (6pt);

\fill[red] (10,1) circle (6pt);
\fill[red] (11,2) circle (6pt);

\end{tikzpicture}
}

\newcommand{\DiagramCDEight}{
\begin{tikzpicture}[scale=0.4]

    \coordinate (Origin)   at (0,0);
    \coordinate (XAxisMin) at (-3,0);
    \coordinate (XAxisMax) at (12,0);
    \coordinate (YAxisMin) at (0,-10);
    \coordinate (YAxisMax) at (0,12);
    \draw [thin, gray,-latex] (XAxisMin) -- (XAxisMax);
    \draw [thin, gray,-latex] (YAxisMin) -- (YAxisMax);


    \foreach \x in {-3,...,11}{
      \foreach \y in {-10,-9,...,11}{
        \node[draw,circle,inner sep=0.8pt,fill] at (1*\x,1*\y) {};
            
      }
    }

\fill[red] (-3,6) circle (6pt);
\fill[red] (-2,7) circle (6pt);
\fill[red] (-1,8) circle (6pt);

\fill[red] (0,0) circle (6pt);
\fill[red] (1,1) circle (6pt);
\fill[red] (2,11) circle (6pt);

\fill[red] (3,3) circle (6pt);
\fill[red] (4,4) circle (6pt);
\fill[red] (5,5) circle (6pt);

\fill[red] (6,-3) circle (6pt);
\fill[red] (7,-2) circle (6pt);
\fill[red] (8,-1) circle (6pt);
\fill[red] (9,-9) circle (6pt);

\fill[red] (10,-8) circle (6pt);
\fill[red] (11,2) circle (6pt);

\end{tikzpicture}
}

\newcommand{\DiagramCDNine}{
\begin{tikzpicture}[scale=0.4]

    \coordinate (Origin)   at (0,0);
    \coordinate (XAxisMin) at (-3,0);
    \coordinate (XAxisMax) at (12,0);
    \coordinate (YAxisMin) at (0,-10);
    \coordinate (YAxisMax) at (0,12);
    \draw [thin, gray,-latex] (XAxisMin) -- (XAxisMax);
    \draw [thin, gray,-latex] (YAxisMin) -- (YAxisMax);


    \foreach \x in {-3,...,11}{
      \foreach \y in {-10,-9,...,11}{
        \node[draw,circle,inner sep=0.8pt,fill] at (1*\x,1*\y) {};
            
      }
    }

\fill[red] (-3,6) circle (6pt);
\fill[red] (-2,7) circle (6pt);
\fill[red] (-1,8) circle (6pt);

\fill[red] (0,0) circle (6pt);
\fill[red] (1,1) circle (6pt);
\fill[red] (2,2) circle (6pt);
\fill[red] (3,3) circle (6pt);
\fill[red] (4,4) circle (6pt);
\fill[red] (5,5) circle (6pt);

\fill[red] (6,-3) circle (6pt);
\fill[red] (7,-2) circle (6pt);
\fill[red] (8,-1) circle (6pt);
\fill[red] (9,-9) circle (6pt);

\fill[red] (10,-8) circle (6pt);
\fill[red] (11,-7) circle (6pt);

\end{tikzpicture}
}

\newcommand{\DiagramCDSeven}{
\begin{tikzpicture}[scale=0.65]

    \coordinate (Origin)   at (0,0);
    \coordinate (XAxisMin) at (-2,0);
    \coordinate (XAxisMax) at (9,0);
    \coordinate (YAxisMin) at (0,-6);
    \coordinate (YAxisMax) at (0,5);
    \draw [thin, gray,-latex] (XAxisMin) -- (XAxisMax);
    \draw [thin, gray,-latex] (YAxisMin) -- (YAxisMax);


    \foreach \x in {-2,...,9}{
      \foreach \y in {-6,-5,...,5}{
        \node[draw,circle,inner sep=0.8pt,fill] at (1*\x,1*\y) {};
            
      }
    }

\fill[red] (-2,4) circle (6pt);
\fill[red] (-1,5) circle (6pt);
\fill[red] (0,0) circle (6pt);
\fill[red] (1,1) circle (6pt);
\fill[red] (2,2) circle (6pt);
\fill[red] (3,3) circle (6pt);

\fill[red] (4,-2) circle (6pt);
\fill[red] (5,-1) circle (6pt);
\fill[red] (6,-6) circle (6pt);
\fill[red] (7,-5) circle (6pt);
\fill[red] (8,-4) circle (6pt);

\end{tikzpicture}
}

\newcommand{\DiagramCDTen}{
\begin{tikzpicture}[scale=0.7]
  
  \draw[fill=blue!15!white,green!15!white] (3,2)--(-3,2)--(-3,-10)--(3,-10)--cycle;   
  \draw[fill=green!15!white,blue!15!white] (4,3)--(11,3)--(11,11)--(4,11)--cycle; 
  \draw[fill=green!15!white,gray!15!white] (3,3)--(3,11)--(-3,11)--(-3,3)--cycle; 
  \draw[fill=green!15!white,gray!15!white] (4,2)--(11,2)--(11,-10)--(4,-10)--cycle;

    \coordinate (Origin)   at (0,0);
    \coordinate (XAxisMin) at (-3,0);
    \coordinate (XAxisMax) at (12,0);
    \coordinate (YAxisMin) at (0,-10);
    \coordinate (YAxisMax) at (0,12);
    \draw [thin, gray,-latex] (XAxisMin) -- (XAxisMax);
    \draw [thin, gray,-latex] (YAxisMin) -- (YAxisMax);


    \foreach \x in {-3,...,11}{
      \foreach \y in {-10,-9,...,11}{
        \node[draw,circle,inner sep=0.8pt,fill] at (1*\x,1*\y) {};
            
      }
    }

\fill[red] (-3,6) circle (6pt);
\fill[red] (-2,7) circle (6pt);
\fill[red] (-1,8) circle (6pt);

\fill[red] (0,0) circle (6pt);
\fill[red] (1,1) circle (6pt);
\fill[red] (2,2) circle (6pt);
\fill[red] (3,3) circle (6pt);
\fill[red] (4,4) circle (6pt);
\fill[red] (5,5) circle (6pt);

\fill[red] (6,-3) circle (6pt);
\fill[red] (7,-2) circle (6pt);
\fill[red] (8,-1) circle (6pt);
\fill[red] (9,-9) circle (6pt);

\fill[red] (10,-8) circle (6pt);
\fill[red] (11,-7) circle (6pt);

\end{tikzpicture}
}


\section{Introduction}  

The main goal of this article is to develop a way understand
a large class of what we call {\em generalized Riemann-Roch formulas} as
formulas that expresses an Euler characteristic of a certain
{\em sheaves} of vector spaces; we later show that such
sheaves satisfy a property akin to Serre duality for line bundles on curves.

This article does not assume any prior knowledge of sheaf theory.
In fact, we mostly speak of {\em $k$-diagrams}, where $k$ is an
arbitrary field, which is a
structure of five $k$-vector spaces with some linear transformations
between them.  We mention sheaves only in the last section of this article,
where 
explain the connection of $k$-diagrams and their invariants to sheaf theory.

This article was motivated by the question of Baker-Norine
\cite{baker_norine} as to whether their
``graph Riemann-Roch formula'' 
can be viewed as such an Euler characteristic formula.
However, our main results apply to any such formula
that arises from a much wider and simpler class of
functions that we call {\em Riemann functions}.
Roughly speaking, our main results says that any such formula
can be modeled as such, provided that (1) one is willing to work
with ``formal differences'' of $k$-diagrams (or sheaves), and
(2) one is willing to make a number of ad hoc choices in
building the model (which we will prove do not change
the equivalence class of the formal difference of $k$-diagrams).
We therefore view this article as a first step in modeling
Riemann-Roch formulas, that we hope will ultimately lead to better---meaning
simpler and less ad hoc---models
of Riemann-Roch formulas.
Beyond this, the foundations we develop to construct our models 
have a number of interesting byproducts.

We emphasize that the main results in this article do not assume any prior
knowledge beyond some basic combinatorics and linear algebra.  We do not assume
the reader is familiar with the Baker-Norine formula for any of our main
results. 
However, some examples we use to illustrate our theorems---which are
not essential
to their statements or proofs---are chosen from
the Baker-Norine formula for graphs and related formulas; hence we
briefly describe the Baker-Norine formula and similar formulas.
We do not assume any familiarity with sheaf theory (either on graphs, as
in \cite{friedman_memoirs_hnc}, or in the classical setting) or with the
Riemann-Roch formula; however, our techniques mimic ideas from there,
and we briefly discuss these connections
in Section~\ref{se_duality_second}.

At this point let us summarize our main results, using notation
that is common in the literature and
made precise starting in the next section.

\subsection{Riemann Functions}

We use $\integers$ to denote the integers, and
$\naturals=\integers_{\ge 1}=\{1,2,\ldots\}$
for the natural numbers.
For $n\in\naturals$ we use $[n]$ to denote $\{1,\ldots,n\}$.
For 
$\mec d=(d_1,\ldots,d_n)\in\integers^n$,
the {\em degree} of $\mec d$ is defined as
$\deg(\mec d)=d_1+\cdots+d_n$, and endow $\integers^n$ with its usual
partial order, writing $\mec d'\le\mec d$ to mean $d_i'\le d_i$ for
all $i\in[n]$.

By a {\em Riemann function} we mean a function
$f\from\integers^n\to\integers$ such that:
\begin{enumerate}
\item 
$f(\mec d)=0$ for $\deg(\mec d)$ sufficiently small, and
\item
for some $C\in\integers$---called the {\em offset of $f$}---we have
$f(\mec d)=\deg(\mec d)+C$ for $\deg(\mec d)$ sufficiently large.
\end{enumerate}
If so, then for each
$\mec K\in\integers^n$ the function 
$f^\wedge_{\mec K}\from\integers^n\to\integers$ given by
$$
f_\mec K^\wedge(\mec d) = f(\mec K-\mec d)+h(\mec K-\mec d)
$$
satisfies
\begin{equation}\label{eq_Riemann_Roch_formula}
\forall\mec d\in\integers^n,\quad
f(\mec d)-f_\mec K^\wedge(\mec K-\mec d) = \deg(\mec d)+C,
\end{equation} 
and we easily see that $f_\mec K^\wedge$ is also a Riemann function.
We refer to the above formula as a {\em generalized
Riemann-Roch formula for $f$}.
We say that \eqref{eq_Riemann_Roch_formula} or $f$ is
{\em self-dual} if $f^\wedge_{\mec K}=f$.


The point of articles such as
\cite{baker_norine,amini_manjunath} is to study certain Riemann functions
of interest,
$f$, and determine if such $f=f^\wedge_{\mec K}$ for some $\mec K$.
Our approach may seem a bit ``happy-go-lucky,'' in that we develop 
combinatorics and models for any Riemann-Roch formula, 
whether or not self-duality holds.
However, as we explain below, self-duality is not preserved under
{\em restrictions}---which is how we build our models---and hence we
will be forced to consider Riemann-Roch formulas without self-duality.

The motivating example for us is that if $G=(V,E)$ is a graph with an
ordered vertex set $V=\{v_1,\ldots,v_n\}$,
then Baker-Norine \cite{baker_norine} defined the {\em rank}, a function 
$r_{{\rm BN},G}\from\integers^n\to\integers$, and $1+r_{{\rm BN},G}$ is
a Riemann function.
There is a large literature on these and related functions
\cite{baker_norine,amini_manjunath,an_baker_et_al,cori_le_borgne,backman}, 
which is
strongly related to {\em chip firing games} and the {\em sandpile model};
see \cite{cori_le_borgne,backman} and the references there for 
more historical context.
Although we have organized this article primarily for the reader interested in
the Baker-Norine rank and related functions, our results
do not require any knowledge of such functions.
Our motivation
for the term {\em Riemann function} is the classical {\em Riemann's theorem}
for curves.

\subsection{Weights and 
Models for Riemann functions $\integers^2\to\integers$ that are 
Perfect Matchings}

We model Riemann functions $\integers^2\to\integers$ by starting
with a particularly simple case of functions, related to what we call
{\em perfect matchings}.
To describe this case, we note that for each Riemann function 
$f\from\integers^2\to\integers$ there is a unique function
$W\from\integers^2\to\integers$ such that for all $\mec d\in\integers^2$
we have
$$
f(\mec d) = \sum_{\mec d'\le\mec d} W(\mec d);
$$
furthermore, $W(\mec d)\ge 0$ for all $\mec d$ holds iff
there is a bijection $\pi\from\integers\to\integers$ such that
$W(i,j)=1$ if $j=\pi(i)$, and otherwise $W(i,j)=0$.
In this case we call $W$ a {\em perfect matching}.

If $W$ is a perfect matching, then the formula
\eqref{eq_Riemann_Roch_formula} can be viewed as an {\em Euler characteristic}
in a natural way: namely,
we will define a family of $k$-diagrams (which are essentially
sheaves of $k$-vector spaces on a fixed diagram) 
$\{\cM_{W,\mec d}\}_{\mec d\in\integers}$ indexed on
$\mec d\in\integers^2$ such that: 
\begin{enumerate}
\item 
for all $\mec d\in\integers^2$,
$\cM_{W,\mec d}$ has {\em Betti numbers}, $b^i(\cM_{W,\mec d})$,
which vanish except for $i=0,1$;
\item 
for all $\mec d\in\integers^2$,
\begin{equation}
\label{eq_cM_models_zeroth_Betti_number}
f(\mec d)=b^0(\cM_{W,\mec d}),
\end{equation} 
and
\item 
for all $\mec d\in\integers^2$, and any $\mec K\in\integers^2$,
\begin{equation}
\label{eq_cM_models_first_Betti_number}
f^\wedge_{\mec K}(\mec K-\mec d) = b^1(\cM_{W,\mec d})
\end{equation} 
(the left-hand-side is independent of $\mec K$).
\end{enumerate}
We will explain what this means, and we assume no knowledge of 
sheaf theory or Betti numbers.
Defining the {\em Euler characteristic}, $\chi(\cM)$ of a 
$k$-diagram (or sheaf), $\cM$, as usual, i.e.,
as the alternating sum of its Betti numbers,
\eqref{eq_Riemann_Roch_formula} is equivalent to
$$
\chi(\cM_{W,\mec d}) = \deg(\mec d)+C.
$$

The construction of the $W_{\mec d}$ have two additional important
properties: first, for $j=1,2$, one has a simple relationship
between $\cM_{W,\mec d+\mec e_j}$ and $\cM_{W,\mec d}$ 
(involving a {\em skyscraper} $k$-diagram) that immediately implies
\begin{equation}\label{eq_chi_relations_from_skyscrapers}
\chi(\cM_{W,\mec d+\mec e_j}) = \chi(\cM_{W,\mec d})+1;
\end{equation} 
hence as soon
as one verifies that $\chi(\cM_{W,\mec d})=\deg(\mec d)+C$
for some $C\in\integers$
and for a single $\mec d\in\integers^2$, it immediately follows that
this holds for all $\mec d\in\integers^2$.
Second,
as part of our discussion of weights, it will turn out that 
for any $\mec K\in\integers^2$, setting $\mec L=\mec K+(1,1)$,
the weight of $f^\wedge_{\mec K}$ is the function $W^*_{\mec L}$
given by $W^*_{\mec L}(\mec d)=W(\mec L-\mec d)$.
It will follow that one has, for $i=0,1$, 
$$
b^i(\cM_{W^*_{\mec L},\mec K-\mec d})=b^{1-i}(\cM_{W,\mec d}) .
$$
We will show that this equality of integers actually arises from
an isomorphism
\begin{equation}\label{eq_duality_cohomology_intro}
H^i(\cM_{W^*_{\mec L},\mec K-\mec d})^* \to
H^{1-i}(\cM_{W,\mec d}) ,
\end{equation} 
which in turn arises from a statement
akin to Serre duality, that states
that for any $k$-diagram, $\cF$, 
and for $i=0,1$, there is an isomorphism 
\begin{equation}\label{eq_Serre_duality_analog_intro}
H^i(\cF)^* \to \Ext^{1-i}(\cF,\underline k_{/B_1,B_2}),
\end{equation} 
where $\underline k_{/B_1,B_2}$ is a $k$-diagram that therefore
plays the role of the
{\em canonical sheaf} in Serre duality.

\subsection{Models for General Riemann Functions $\integers^2\to\integers$}

If $f\from\integers^2\to\integers$ is a general Riemann function,
its weight, $W$, may have negative values.  In this case one can 
model $f$ by Euler characteristics provided that one passes to
{\em virtual $k$-diagrams} and {\em virtual Euler characteristics}
in the following sense:
by a {\em virtual $k$-diagram} or
{\em formal difference of $k$-diagrams} we mean a pair of $k$-diagrams
$(\cF_1,\cF_2)$, where we consider
$(\cF_1,\cF_2)$ to be equivalent $(\cF_1',\cF_2')$ if
for some $k$-diagram $\cG$ we have
$$
\cF_1 \oplus \cF_2' \oplus \cG \isom
\cF_2 \oplus \cF_1' \oplus \cG 
$$
(one often calls this the {\rm Grothendieck group} arising from
a commutative monoid);
assuming that we work over the category of $k$-diagrams
with finite Betti numbers,
we define
$$
b^i(\cF_1,\cF_2) = b^i(\cF_1)-b^i(\cF_2),
$$
which is independent of the equivalence class of $(\cF_1,\cF_2)$.
We will prove that for any weight $W\from\integers^2\to\integers$
of a Riemann function $\integers^2\to\integers$ can be written as
\begin{equation}\label{eq_W_as_alternating_sum_of_perfect_intro}
W = W_1 + \cdots + W_k - \tilde W_1 - \cdots - \tilde W_{k-1}
\end{equation} 
for some $k\ge 1$, where each $W_i$ and $\tilde W_i$ are 
perfect matchings;  we then define the formal difference
$$
\cM_{W,\mec d} = \bigl(\cF_{\mec d},\widetilde\cF_{\mec d}\bigr),
$$
where 
$$
\cF_{\mec d} =  \cM_{W_1,\mec d} \oplus \cdots \oplus  \cM_{W_k,\mec d} 
,\quad 
\tilde\cF_{\mec d} 
= \cM_{\tilde W_1,\mec d} \oplus\cdots\oplus \cM_{\tilde W_{k-1},\mec d};
$$
it is easy to verify that, up to equivalence,
$(\cF_{\mec d},\widetilde\cF_{\mec d})$ is independent of the 
way one writes $W$ in
in \eqref{eq_W_as_alternating_sum_of_perfect_intro}.
Then the formal difference $\cM_{W,\mec d}$, or really
the equivalence class $[\cM_{W,\mec d}]$, models $f(\mec d)$
in the sense 
that~\eqref{eq_cM_models_zeroth_Betti_number},
\eqref{eq_cM_models_first_Betti_number},
and~\eqref{eq_chi_relations_from_skyscrapers}
hold with $[\cM_{W,\mec d}]$ replacing $\cM_{W,\mec d}$.

\subsection{Modeling Riemann functions $\integers^n\to\integers$ for
$n\ge 3$}

To model a Riemann function $f\from\integers^n\to\integers$, we
piece together various {\em two-variable restrictions of $f$} in the
following sense:
for any distinct $i,j\in [n]$ and $\mec d\in\integers^n$, we define the
{\em two-variable restriction of $f$ at $i,j,\mec d$}
to be the function $f_{i,j,\mec d}\from\integers^2\to\integers$
given by
$$
f_{i,j,\mec d}(a_i,a_j)\eqdef
f\bigl(\mec d+a_i\mec e_i + a_j\mec e_j \bigr)
$$
(and hence $f_{i,j,\mec d}(0,0)=f(\mec d)$).
If $W$ is the weight of $f_{i,j,\mec d}$,
we use $[\cM_{f;i,j,\mec d}]$ to denote the virtual $k$-diagram
$[\cM_{W,\mec 0}]$.
It follows that 
$[\cM_{f;i,j,\mec d}]$ is a family of $k$-diagrams that
satisfies
$$
b^0([\cM_{f;i,j,\mec d}]) = f(\mec d),
$$
$$
\chi([\cM_{f;i,j,\mec d}]) = \deg(\mec d)+C
$$
where $C$ is the offset of $f$, and for any $\mec K\in \integers^2$
we have
$$
b^1([\cM_{f;i,j,\mec d}])
=  f^\wedge_\mec K(\mec K-\mec d).
$$
Of course, 
$\cM_{f;i,j,\mec d}$ appears to depend on the choice
of $i,j$; 
however, we will prove that for any 
$j'\in[n]$ distinct from $i,j$, 
$[\cM_{f;i,j,\mec d}]$ is equivalent to $[\cM_{f;i,j',\mec d}]$,
and hence the equivalence class
$[\cM_{f;i,j,\mec d}]$
is independent of the
choice of $i,j$.
It easily follows that $[\cM_{f;i,j,\mec d}]$ is independent of
the choice of $i,j$.

Moreover, in case
$\cM_{f;i,j,\mec d}$ and
$\cM_{f;i,j',\mec d}$ are $k$-diagrams, not just virtual $k$-diagrams
(i.e., the weights of $f_{i,j,\mec d}$ and $f_{i,j',\mec d}$ are
perfect matchings), we will prove that
$\cM_{f;i,j,\mec d}$ and
$\cM_{f;i,j',\mec d}$ are isomorphic as $k$-diagrams.

We therefore use the notation $[\cM_{f{\rm \;at\;} \mec d}]$
to denote the equivalence class of $[\cM_{f;i,j,\mec d}]$, which is
independent of $i,j$.
We will show that 
\eqref{eq_duality_cohomology_intro} gives rise, for $i=0,1$ to an isomorphism
\begin{equation}\label{eq_first_duality_theorem_n_var_intro}
H^i\bigl([\cM_{f {\rm \;at\;}\mec d}]\bigr)^* \to
H^{1-i}\bigl([\cM_{f^\wedge_{\mec K} {\rm \;at\;}\mec K- \mec d}]\bigr) .
\end{equation} 
This involves the following fundamental fact:  if
$f\from\integers^n\to\integers$ is a Riemann function, and if we
fix $d_3,\ldots,d_n$ and $\mec K\in\integers^n$, and we set
$g(d_1,d_2)=f(\mec d)$ viewing $d_1,d_2$ as variables, then the 
resulting generalized Riemann-Roch formula for $g$ is
\begin{equation}\label{eq_generalized_rr_two_var_intro}
g(d_1,d_2) - g^\wedge_{(K_1,K_2)}(K_1-d_1,K_2) = d_1+d_2+C_g
\end{equation} 
where $C_g$ is the offset of $g$.
It is not hard to see that formula is the restriction of
\eqref{eq_Riemann_Roch_formula}, in the sense that
$$
d_1+d_2+C_g = \deg(\mec d)+C_f
$$
where $C_f$ is the offset of $f$, and
\begin{equation}\label{eq_g_two_dim_restrict_intro}
g(d_1,d_2)=f(\mec d),
\quad
g^\wedge_{(K_1,K_2)}(K_1-d_1,K_2-d_2) = f^\wedge_\mec K(\mec K-\mec d).
\end{equation} 
It follows that the generalized Riemann-Roch formulas
\eqref{eq_Riemann_Roch_formula} restricts to two-variable generalized
Riemann-Roch formulas, and that all the two-variable formulas (i.e., fixing
some $n-2$ variables and varying the two remaining variables) determine
the all the $n$-variable formulas.

The articles \cite{baker_norine,amini_manjunath} focus on proving
that the Riemann functions there are self-dual.  We remark that the
notion of self-duality is not well-behaved under two-variable restrictions:
indeed, if $f\from\integers^n\to\integers$ satisfies $f^\wedge_\mec K=f$
for some $K\in\integers^n$, then 
\eqref{eq_g_two_dim_restrict_intro} implies that for $d_3,\ldots,d_n$
and $\mec K$ fixed we have
$$
g(d_1,d_2)=f(d_1,\ldots,d_n)
\quad
g^\wedge_{(K_1,K_2)}(K_1-d_1,K_2-d_2)
=
f(K_1-d_1,\ldots, K_n-d_n).
$$
Hence $g$ is not generally self-dual.  Hence if $f$ is self-dual, the
two-variable restrictions in a single generalized Riemann-Roch formula
still come in pairs, $g$ and $g^\wedge_{(K_1,K_2)}$.

\subsection{Additional Remarks and Future Work}

The invariants of $k$-diagrams that we compute---such as their
Betti numbers, and Euler characteristics---all arise from their
cohomology groups, which to each $k$-diagram, $\cM$, are computed
as the kernel and cokernel of an associated linear transformation
$\tau_\cM$.  Therefore the reader who prefers can translate our
entire discussion and use of $k$-diagrams into equivalent statements
regarding the kernel and cokernel of the associated linear transformations.

We also remark our duality theorems as stated above may seem trivial:
for example, given that for a perfect matching $W$ we have
$$
b^i(\cM_{W^*_{\mec L},\mec K-\mec d})=b^{1-i}(\cM_{W,\mec d}) ,
$$
it is immediate that the spaces
$$
H^i(\cM_{W^*_{\mec L},\mec K-\mec d})
\quad\mbox{and}\quad 
H^{1-i}(\cM_{W,\mec d}) 
$$
and their duals are all isomorphic, since these
are all $k$-vector spaces of this same dimension.  Hence in our theorems
and their proofs, it is also important to note the way we construct
these duality isomorphisms; often we make this explicit in the
statement of the theorem (see, e.g., Theorem~\ref{th_first_duality_cM}).

The Riemann functions $f\from\integers^n\to\integers$
associated to the Baker-Norine rank
\cite{baker_norine} and its generalizations studied by
Amini and Manjunath \cite{amini_manjunath}
are {\em periodic} (in a sense described in 
Subsection~\ref{su_periodicity_trans}).  In this case the $k$-diagrams
$\cM_{W,\mec d}$ associated the two-variable restrictions of $f$
have a much stronger structure: namely, they are $\cO$-modules,
where $\cO$ is a $k$-diagram of rings.
In this case we believe that the
diagrams $\cM_{W,\mec d}$ themselves act as canonical $k$-diagrams
in a form of Serre duality; we explain this at the end of this
article, and plan to address this in a future work.
For this reason, when $W\from\integers^2\to\integers$ is periodic,
when writing $W$ as a difference of a sum of perfect matchings
\eqref{eq_W_as_alternating_sum_of_perfect_intro}, we will be
interested in showing that the $W_i$ and $\tilde W_i$ can be chosen
with the same periodicity.

A good challenge for future work is to develop models that
explain generalized Riemann-Roch formulas as a type of sheaf
or diagram of $k$-vector spaces that does not have all the ad hoc
choices we make, and that does not need to pass to virtual
diagrams or virtual sheaves.

Another---perhaps independent challenge---is to use the theory
of diagrams or sheaves to give proofs of self-duality, such as
in the Baker-Norine formula \cite{baker_norine} and some more 
general situations, such as those studied by
Amini and Manjunath \cite{amini_manjunath}.

\subsection{Organization of the Rest of this Article}

In Section~\ref{se_basic_weights} we introduce some basic notation
and state some theorems about the {\em weight} of a Riemann function,
referring the reader to \cite{folinsbee_friedman_weights} for the
proofs.
In Section~\ref{se_weight_decomp}
we will prove some theorems regarding the weights of 
Riemann functions $\integers^2\to\integers$ that we will use.
In Section~\ref{se_diagrams_Betti_perfect}
we give some conventions regarding the sheaves we build---that
we call $k$-diagrams---and
show use them to model
a Riemann function $\integers^2\to\integers$ whose weight is
non-negative, i.e., is a {\em perfect matching}.
In Section~\ref{se_morphisms_etc} we discuss morphisms between
$k$-diagrams, and a number of related ideas needed later on;
in particular, to define virtual $k$-diagrams we need to know 
some facts about direct sums and isomorphisms of $k$-diagrams.
In Section~\ref{se_indicator}
we introduce {\em indicator} $k$-diagrams that gives an
alternate way to view the $k$-diagrams that we use to model
Riemann functions $\integers^2\to\integers$; we will need them when
we prove duality theorems later on.
In Section~\ref{se_virtual_two_dim_Riemann}
we describe our conventions about virtual $k$-diagrams and
show that any Riemann function $\integers^2\to\integers$
can be modeled by a single equivalence class of virtual $k$-diagrams.
In Section~\ref{se_higher_Riemann}
we model any Riemann function $\integers^n\to\integers$ by
diagrams obtained by fixing any $n-2$ of its variables
and modeling the resulting Riemann function $\integers^2\to\integers$.
In Section~\ref{se_duality_first}
we will prove the $i=1$ case of
\eqref{eq_Serre_duality_analog_intro},
\eqref{eq_duality_cohomology_intro}, and
\eqref{eq_first_duality_theorem_n_var_intro}.
In Section~\ref{se_duality_second}
we prove 
the $i=0$ case of \eqref{eq_Serre_duality_analog_intro}
and explain the connection
of $k$-diagrams to sheaf theory; we tie up a few other loose ends,
including a discussion of periodic Riemann functions and a
{\em Serre functor} computation that yields 
$\underline k_{/B_1,B_2}$.

\section{Basic Terminology and Weights}
\label{se_basic_weights}

In this section we introduce some basic terminology used throughout
this paper, including the definition of a {\em Riemann function} and
its {\em weight function}.
Then we derive some combinatorial results about the weights of Riemann
functions that we will need to construct our models.

The weight function of a Riemann function is quite interesting for
its own sake, and we refer to 
\cite{folinsbee_friedman_weights} for a fuller discussion of weights
of Riemann functions.

\subsection{Basic Notation}

We use $\integers,\naturals$ to denote the integers and positive
integers; for $a\in\integers$, we use $\integers_{\le a}$ to denote
the integers less than or equal to $a$, and similarly for the
subscript $\ge a$.
For $n\in\naturals$ we use $[n]$ to denote $\{1,\ldots,n\}$.
We use bold face $\mec d=(d_1,\ldots,d_n)$ to denote elements of $\integers^n$, 
using plain face for the components of $\mec d$;
by the {\em degree} of $\mec d$,
denoted $\deg(\mec d)$ or at times $|\mec d|$, we mean
$d_1+\ldots+d_n$.

We set
$$
\integers^n_{\deg 0} = \{ \mec d\in\integers^n \ | \ \deg(\mec d)=0 \},
$$
and for $a\in \integers$ we similarly set
$$
\integers^n_{\deg a} = \{ \mec d\in\integers^n \ | \ \deg(\mec d)=a \},
\quad
\integers^n_{\deg \le a} = 
\{ \mec d\in\integers^n \ | \ \deg(\mec d)\le a \}.
$$

We use $\mec e_i\in\integers^n$ (with $n$ understood) be the $i$-th 
standard basis vector (i.e., whose $j$-th component is $1$ if $j=i$ and
$0$ otherwise), and for $I\subset [n]$ (with $n$ understood) we set
\begin{equation}\label{eq_e_I_notation}
\mec e_I = \sum_{i\in I} \mec e_i;
\end{equation} 
hence in case $I=\emptyset$ is the empty set, then 
$\mec e_\emptyset=\mec 0=(0,\ldots,0)$,
and similarly $e_{[n]}=\mec 1=(1,\ldots,1)$.

For $n\in\naturals$,
we endow $\integers^n$ with the usual partial order, that is
$$
\mec d'\le \mec d \quad\mbox{iff}\quad d'_i\le d_i\ \forall i\in[n],
$$
where $[n]=\{1,2,\ldots,n\}$.

\subsection{Riemann Functions, Generalized
Riemann-Roch Formulas, and Self-Duality}
\label{su_riemann_functions}

In this section we define {\em Riemann functions} and 
{\em generalized Riemann-Roch formulas} and give some examples.

\begin{definition}
We say that a function $f\from\integers^n\to\integers$ is 
a Riemann function if for some $C,a,b\in\integers$ we have
\begin{enumerate}
\item $f(\mec d)=0$ if $\deg(\mec d)\le a$; and
\item $f(\mec d)=\deg(\mec d)+C$ if $\deg(\mec d)\ge b$;
\end{enumerate}
we refer to $C$ as the {\em offset} of $f$.
\end{definition}

In our study of Riemann functions,
it will be useful to introduce the following terminology.

\begin{definition}
If $f,g$ are functions $\integers^n\to\integers$, we say that
{\em $f$ equals $g$ initially} (respectively, {\em eventually})
if $f(\mec d)=g(\mec d)$ for $\deg(\mec d)$ sufficiently small
(respectively, sufficiently large); similarly, we say that
that $f$ is {\em initially zero}
(respectively {\em eventually zero})
if $f(\mec d)=0$ for $\deg(\mec d)$ sufficiently small
(respectively, sufficiently large).
\end{definition}
Therefore $f\from \integers^n\to\integers$ 
is a Riemann function iff it is initially zero
and it eventually equals the function $\deg(\mec d)+C$
for a constant $C\in\integers$ that we call the {\em offset} of $f$.

In particular, Riemann's theorem, which is a precursor to the
classical Riemann-Roch theorem, gives examples of Riemann functions.
In the next subsection we will give a number of examples 
of Riemann functions, including those associated to the
Baker-Norine rank function of a graph \cite{baker_norine}
and related functions.
Before doing so, we give some of the basic properties 
of Riemann functions.

\begin{definition}\label{de_generalized_Riemann_Roch_formula}
Let $f\from\integers^n\to\integers$ be a Riemann function
with offset $C$,
and $\mec K\in\integers^n$.  The
{\em $\mec K$-dual of $f$}, denoted $f^\wedge_{\mec K}$,
refers to the function $\integers^n\to\integers$ given by
\begin{equation}\label{eq_first_dual_formulation}
f^{\wedge}_{\mec K}(\mec d)=f(\mec K-\mec d)-\deg(\mec K-\mec d)-C.
\end{equation}
Replacing $\mec d$ with $\mec K-\mec d$ we equivalently write
\begin{equation}\label{eq_generalized_riemann_roch}
f(\mec d) - f^{\wedge}_{\mec K}(\mec K-\mec d) = \deg(\mec d) + C
\end{equation}
and refer to this equation as a {\em generalized Riemann-Roch formula}.
We say that {\em $f$ is self-dual at $\mec K$} if $f^\wedge_\mec K=f$.
\end{definition}

If $f$ is a Riemann functions that is self-dual at $\mec K$, then
\eqref{eq_generalized_riemann_roch} reads
\begin{equation}\label{eq_usual_Riemann_Roch}
f(\mec d) - f(\mec K-\mec d) = \deg(\mec d) + C,
\end{equation} 
which resembles the classical Riemann-Roch formula
and the Baker-Norine analog \cite{baker_norine} and related formulas.
We remark that in \eqref{eq_generalized_riemann_roch},
$f^{\wedge}_{\mec K}(\mec K-\mec d)$ equals
$f(\mec d)-\deg(\mec d)-C$, which is independent of $\mec K$.

\begin{proposition}\label{pr_properties_of_wedge_sub_mec_K}
Let $f\from\integers^n\to\integers$ be a Riemann function
with offset $C$,
and $\mec K\in\integers^n$.  Then:
\begin{enumerate}
\item
$f^\wedge_{\mec K}$ is a Riemann function with offset $-\deg(\mec K)-C$;
\item
$(f^\wedge_{\mec K})^\wedge_\mec K=f$;
\item
for any other Riemann function, $g\from\integers^n\to\integers$,
$f=g$ iff for some (equivalently any) $\mec K\in\integers^n$,
$f^\wedge_\mec K=g^\wedge_\mec K$.
\end{enumerate}
\end{proposition}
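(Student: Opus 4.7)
The plan is to prove the three claims in order directly from the defining formula
$f^\wedge_\mec K(\mec d) = f(\mec K-\mec d) - \deg(\mec K-\mec d) - C$,
using the basic fact that $\deg$ is additive, i.e., $\deg(\mec K-\mec d)=\deg(\mec K)-\deg(\mec d)$.

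For part (1), I would check the two asymptotic conditions separately.
When $\deg(\mec d)$ is very large, $\deg(\mec K-\mec d)$ is very small, so the Riemann property of $f$ gives $f(\mec K-\mec d)=0$; substituting into the defining formula yields
$f^\wedge_\mec K(\mec d) = -\deg(\mec K-\mec d) - C = \deg(\mec d) + \bigl(-\deg(\mec K)-C\bigr)$,
which shows that $f^\wedge_\mec K$ eventually equals $\deg(\mec d)$ plus the offset $-\deg(\mec K)-C$.
Symmetrically, when $\deg(\mec d)$ is very small, $\deg(\mec K-\mec d)$ is very large, so $f(\mec K-\mec d)=\deg(\mec K-\mec d)+C$, and the two terms cancel to give $f^\wedge_\mec K(\mec d)=0$.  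Hence $f^\wedge_\mec K$ is a Riemann function with the claimed offset.

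For part (2), I would simply compute $(f^\wedge_\mec K)^\wedge_\mec K(\mec d)$ by unwinding the definition twice.  Using the offset $C'=-\deg(\mec K)-C$ of $f^\wedge_\mec K$ from part (1), we have
$(f^\wedge_\mec K)^\wedge_\mec K(\mec d)
= f^\wedge_\mec K(\mec K-\mec d) - \deg(\mec K-(\mec K-\mec d)) - C'
= f(\mec d) - \deg(\mec d) - C - \deg(\mec d) + \deg(\mec d) + \deg(\mec K)+C,$
and the $\deg$ and constant terms collapse to leave $f(\mec d)$.  (I would lay this out as a short chain of equalities rather than one line.)

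For part (3), the "only if" direction is immediate from the definition of $f^\wedge_\mec K$.  For the converse, suppose $f^\wedge_\mec K = g^\wedge_\mec K$ as functions $\integers^n\to\integers$ for some particular $\mec K$.  By part (1), the common offset of $f^\wedge_\mec K$ and $g^\wedge_\mec K$ is both $-\deg(\mec K)-C_f$ and $-\deg(\mec K)-C_g$, forcing $C_f=C_g$; then the dualization formula used in defining $(f^\wedge_\mec K)^\wedge_\mec K$ is literally the same operation on each side, so applying part (2) gives $f=(f^\wedge_\mec K)^\wedge_\mec K=(g^\wedge_\mec K)^\wedge_\mec K=g$.  This establishes equivalence with the statement for any $\mec K$, since once we know $f=g$ we trivially get $f^\wedge_{\mec K'}=g^\wedge_{\mec K'}$ for every $\mec K'$.

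None of the three parts presents a genuine obstacle; this proposition is essentially a bookkeeping verification about the involution $f\mapsto f^\wedge_\mec K$.  The only mild subtlety to be careful about is in part (3), namely that one must first extract equality of the offsets from $f^\wedge_\mec K=g^\wedge_\mec K$ before invoking part (2), so that the double dual is computed with matching offset constants.
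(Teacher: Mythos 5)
Your proposal follows the same computational route as the paper: verify the two asymptotic regimes directly for part (1), unwind the defining formula twice for part (2), and deduce part (3) by applying $\,^\wedge_\mec K$ to both sides and citing part (2). The approach is correct.

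One slip in your part (2) chain: when you apply $\,^\wedge_\mec K$ to $f^\wedge_\mec K$, the first line should read
$(f^\wedge_\mec K)^\wedge_\mec K(\mec d) = f^\wedge_\mec K(\mec K-\mec d) - \deg(\mec K-\mec d) - C'$,
not $-\deg(\mec K-(\mec K-\mec d))$; the degree term in \eqref{eq_first_dual_formulation} is always $\deg(\mec K-\mec d)$, not the degree of the argument of $f$. As written your expansion has $-\deg(\mec d)$ in the position where $-\deg(\mec K)$ should appear, and if tracked literally the terms collapse to $f(\mec d)-\deg(\mec d)+\deg(\mec K)$ rather than $f(\mec d)$. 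With the term corrected to $-\deg(\mec K-\mec d)=-\deg(\mec K)+\deg(\mec d)$, everything cancels correctly. Also note that in part (3) the extra care about offsets is automatic: once $f^\wedge_\mec K$ and $g^\wedge_\mec K$ are equal as functions they have the same offset, so applying $\,^\wedge_\mec K$ is the same operation on both sides without further argument.
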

\begin{proof}
This proof is a straightforward calculation.
For $\mec d$ sufficiently small we have
$$
f(\mec K-\mec d)=\deg(\mec K-\mec d)+C,
$$
which by \eqref{eq_first_dual_formulation} implies that $f^\wedge_\mec K$
is initially zero.
For $\mec d$ with $\deg(\mec d)$ sufficiently large we have
$f(\mec K-\mec d)=0$, and hence
\eqref{eq_first_dual_formulation} implies that for $\deg(\mec d)$
sufficiently large
$$
f^\wedge_{\mec K}(\mec d)=\deg(\mec K-\mec d) -C
=\deg(\mec d)-\deg(\mec K)-C.
$$
Hence $f^\wedge_\mec K$ is a Riemann function with offset 
$-\deg(\mec K)-C$.

To see claim~(2), since $f^\wedge_\mec K$ is a Riemann function with
offset $-\deg(\mec K)-C$, by
\eqref{eq_first_dual_formulation} we have
$$
(f^\wedge_\mec K)^\wedge_\mec K (\mec d)=
f^\wedge_\mec K(\mec K-\mec d)-\deg(\mec K - \mec d)-\bigl(\deg(\mec K)-C\bigr)
=f^\wedge_\mec K(\mec K-\mec d) + \deg(\mec d)+C,
$$
which \eqref{eq_first_dual_formulation} implies
$$
=f(\mec d)-\deg(\mec d)-C +\deg(\mec d)+C = f(\mec d).
$$

To prove claim~(3), if $f=g$ then we may apply $\,^\wedge_\mec K$
to both to conclude that $f^\wedge_\mec K=g^\wedge_\mec K$.
Conversely, if $f^\wedge_\mec K=g^\wedge_\mec K$, then applying
$\,^\wedge_\mec K$ and using claim~(2) we get $f=g$.
\end{proof}

\subsection{Examples of Riemann functions}

We briefly give some examples of Riemann functions.
This section is not essential to the rest of this paper,
although these examples are helpful for intuition;
we will refer to some of these examples to
illustrate some of our results.

\begin{example}\label{ex_baker_norine}
Let $G=(V,E)$ be a connected graph with $V=\{v_1,\ldots,v_n\}$.
Let $L={\rm Image}(\Delta_G)$ be the image of the Laplacian of
$G$, $\Delta_G$.  
Say that $\mec d\in\integers^n$ is 
{\em effective} if there is a $\mec d'\ge\mec 0$
(i.e., $d'_i\ge 0$ for all $i\in[n]$) such that
$\mec d-\mec d'\in L$, and otherwise say that $\mec d$ is
{\em not effective}.
Let $\cN\subset\integers^n$ be the subset of elements that are
not effective.  Let
\begin{equation}\label{eq_f_is_distance_to_cN}
f(\mec d) = {\rm Distance}_{L^1}(\mec d,\cN)
= \min_{\mec d'\in\cN} \|\mec d-\mec d'\|_{L^1},
\end{equation} 
where $L^1$ is the usual $L^1$-norm,
$$
\|\mec x \|_{L^1} = |x_1|+\cdots+|x_n|.
$$
Then $r_{{\rm BN},G}=-1+f$ 
is the usual {\em Baker-Norine rank} \cite{baker_norine} of $G$,
and the Baker-Norine Graph Riemann-Roch formula 
\cite{baker_norine}
asserts that
$$
f(\mec d)-f(\mec K-\mec d) = \deg(\mec d)+1-g
$$
where $g=1+|E|-|V|$ (which is non-negative since $G$ is connected), and
where
\begin{equation}\label{eq_baker_norine_canonical}
\mec K = \bigl( \deg_G(v_1)-2,\ldots,\deg_G(v_n)-2 \bigr) .
\end{equation} 
Since $\cN$ contains all elements of $\integers^n$ of negative degree,
it follows that $f$ is initially zero; the Baker-Norine formula
implies that $f$ is a Riemann function with offset $1-g$ that is
self-dual at $K$.
\end{example}

\begin{example}\label{ex_amini_manjunath}
Let $L$ be, more generally, any lattice of rank $n-1$ in
$\integers^n_0$.  Then the same definitions work---effective, not effective,
$\cN$, and furthermore
Amini and Manjurath \cite{amini_manjunath} show that $f$ as in
\eqref{eq_f_is_distance_to_cN} is a Riemann function with
offset $1-g_{\rm max}$ defined on page~5 there.
They give conditions---which hold sometimes, but not always---for
$f$ to be self-dual at some $\mec K\in\integers^n$.
\end{example}

One can slightly generalize this construction of Riemann functions, $f$, in
\eqref{eq_f_is_distance_to_cN}
by allowing $\cN\subset\integers^n$ to satisfy some weaker conditions;
see \cite{folinsbee_friedman_weights}.

\begin{example}\label{ex_classical_Riemann_function}
Let $P_1,\ldots,P_n$ be $n$ points of an algebraic curve over an
algebraically closed field, $k$, and let
$K$ denote the function field of the curve.  Let
\begin{equation}\label{eq_classical_Riemann_function}
f(\mec d) =  
\dim_k\{ g\in K \ | \ (g)\ge -d_1P_1-\cdots-d_n P_n \}
\end{equation} 
where $(g)$ is the (Weil) divisor associated with $g$
(and we view $(0)$ as larger than any divisor).  Then the classical
{\em Riemann theorem} states that $f$ is a Riemann function, and
that its offset equals
$1-g$, where $g$ is the genus of the curve.
\end{example}
The above example was our motivation for the name {\em Riemann function}.

\subsection{Restrictions of Riemann functions and Alternating Sums}

In this subsection we give examples of obtaining Riemann functions
and constructing new Riemann functions.
Both ideas are fundamental to the way we construct the models
in this article.

\begin{example}\label{ex_Riemann_function_restriction}
Let $f\from\integers^n\to\integers$ be any Riemann function with
$f(\mec d)=\deg(\mec d)+C$ for $\deg(\mec d)$ sufficiently large.
Then for any distinct $i,j\in[n]$ and $\mec d\in\integers^n$,
the function $f_{i,j,\mec d}\from\integers^2\to\integers$ given as
\begin{equation}\label{eq_two_variable_restriction}
f_{i,j,\mec d}(a_i,a_j) =
f\bigl(\mec d + a_i\mec e_i + a_j\mec e_j \bigr)
\end{equation}
is a Riemann function $\integers^2\to\integers$,
and for $a_i+a_j$ large we have
\begin{equation}\label{eq_two_variable_restriction_constant}
f_{i,j,\mec d}(a_i,a_j) = a_i+a_j+ C',\quad\mbox{where}\quad
C'=\deg(\mec d)+ C.
\end{equation}
We call $f_{i,j,\mec d}$ a {\em two-variable restriction} of $f$;
we may similarly restrict $f$ to one variable or three or more
variables; clearly any restriction of a Riemann is again
a Riemann function.
(We write $a_i,a_j$ as the arguments for $f_{i,j,\mec d}$
instead of, say, $a_1,a_2$, to stress that
$a_i$ corresponds to adding $a_i\mec e_i$ in
\eqref{eq_two_variable_restriction}, and similarly for $a_j$).
\end{example}
In the above example, it will be crucial to us that
$C'$ depends only on $\mec d$ and
not on $i,j$.

\begin{example}\label{ex_alternating_sum_of_Riemann_functions}
If for some $s,n\in\naturals$, 
$f_1,\ldots,f_s$ and $\tilde f_1,\ldots,\tilde f_{s-1}$
are Riemann
functions $\integers^n\to\integers$, then so is
$$
f = f_1 + \cdots + f_s - (\tilde f_1+\cdots+\tilde f_{s-1}) .
$$
Moreover, the offset, $C$, of $f$ is given as
\begin{equation}\label{eq_offset_of_difference_of_sums}
C = (C_1+\cdots+C_s) - (\tilde C_1+\cdots+\tilde C_{s-1}),
\end{equation} 
where $C_i$ is the offset of $f_i$ and $\tilde C_i$ is the offset
of $\tilde f_i$.
\end{example}

\subsection{The Weight of a Riemann function}

Our main technique of modeling Riemann functions involves their
weights.  In this article we are concerned with weights of
Riemann functions $\integers^2\to\integers$, but the foundations
of weights apply to Riemann functions in any number of 
variables; see \cite{folinsbee_friedman_weights}.

If $f\from\integers^n\to\integers$ is initially zero, then there is a unique
initially zero $W\from\integers^n\to\integers$ for which
\begin{equation}\label{eq_define_weight}
\forall\mec d\in\integers^n,
\quad
f(\mec d)=\sum_{\mec d'\le\mec d} W(\mec d'),
\end{equation} 
since we can determine $W(\mec d)$ inductively on $\deg(\mec d)$,
setting $W$ initially zero 
(in degrees where $f$ is initially zero), and using the equation
\begin{equation}\label{eq_inductively_define_W_from_f}
W(\mec d) = f(\mec d)-\sum_{\mec d'\le\mec d,\ \mec d'\ne \mec d} W(\mec d').
\end{equation} 

\begin{definition}
Let $f\from\integers^n\to\integers$ be initially zero.
By the {\em weight of $f$} we mean the unique initially zero
function $W\from\integers^n\to\integers$ satisfying
\eqref{eq_define_weight}.
\end{definition}

Recall from \eqref{eq_e_I_notation}
the notation $\mec e_I$ for $I\subset [n]$.

\begin{proposition}\label{pr_Mobius_inversion}
Consider the operator $\frakm$ on functions $f\from\integers^n\to\integers$
defined via 
\begin{equation}\label{eq_define_mu}
(\frakm f)(\mec d) = \sum_{I\subset [n]} (-1)^{|I|} f(\mec d-\mec e_I),
\end{equation} 
and the operator on functions $W\from\integers^n\to\integers$ that
are initially zero given by
\begin{equation}\label{eq_define_s}
(\fraks W)(\mec d) = \sum_{\mec d'\le\mec d} W(\mec d').
\end{equation} 
If $f$ is any initially zero function, and $W$ is the weight of $f$,
then we have
$f=\fraks W$ and $W=\frakm f$.
\end{proposition}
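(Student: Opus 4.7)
The plan is to observe that the first equality is essentially a restatement of the definition, and then verify the second by a standard Möbius--type interchange of summation. For the first equality $f=\fraks W$: the weight $W$ of $f$ was \emph{defined} as the unique initially zero function satisfying \eqref{eq_define_weight}, and \eqref{eq_define_weight} is literally the statement $f(\mec d)=(\fraks W)(\mec d)$ for all $\mec d$.

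For the second equality, since we have already shown $f=\fraks W$, it suffices to prove that $\frakm\fraks$ is the identity on initially zero functions. So the plan is to compute
\[
(\frakm\fraks W)(\mec d)
 =\sum_{I\subset[n]}(-1)^{|I|}\sum_{\mec d'\le \mec d-\mec e_I} W(\mec d')
\]
by swapping the order of the two sums. For fixed $\mec d'\le\mec d$, the set of $I\subset[n]$ with $\mec d'\le\mec d-\mec e_I$ is exactly the set of $I\subset S(\mec d',\mec d)$, where
\[
S(\mec d',\mec d)=\{\,i\in[n]\ :\ d'_i<d_i\,\},
\]
since the inequality $\mec d'\le\mec d-\mec e_I$ coordinate-wise reads $d'_i\le d_i$ for $i\notin I$ (automatic) and $d'_i<d_i$ for $i\in I$. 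Hence the coefficient of $W(\mec d')$ becomes $\sum_{I\subset S(\mec d',\mec d)}(-1)^{|I|}=(1-1)^{|S(\mec d',\mec d)|}$, which is $0$ unless $S(\mec d',\mec d)=\emptyset$, i.e.\ unless $\mec d'=\mec d$. Only the term $\mec d'=\mec d$ survives, with coefficient $1$, so $(\frakm\fraks W)(\mec d)=W(\mec d)$ as desired.

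The one technical point to check, and essentially the only thing that requires any care, is the legitimacy of swapping the two sums, since the inner sum $\sum_{\mec d'\le\mec d-\mec e_I}$ ranges over an infinite set. Here the hypothesis that $W$ is initially zero is essential: if $W$ vanishes whenever $\deg(\mec d')<a$, then the $\mec d'\le\mec d$ with $W(\mec d')\ne 0$ satisfy simultaneously $d'_i\le d_i$ for each $i$ and $\deg(\mec d')\ge a$, and these two conditions together cut out a finite set (each $d'_i$ is bounded above by $d_i$ and bounded below by $a-\sum_{j\ne i}d_j$). Therefore the double sum has only finitely many nonzero terms and Fubini is trivial. I do not anticipate any real obstacle; the argument is classical Möbius inversion on the poset $\integers^n$, and the only substantive ingredient beyond bookkeeping is the vanishing of $(1-1)^{|S|}$ for $S\ne\emptyset$.
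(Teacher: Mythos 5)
Your proof is correct. Note that the paper does not actually exhibit a proof of this proposition in the present article; it states only that ``the proof is an easy computation'' and defers to the companion paper \cite{folinsbee_friedman_weights}, so there is no in-text argument to compare against. Your argument is the standard one: the identity $f=\fraks W$ is a restatement of \eqref{eq_define_weight}, and $W=\frakm f$ reduces to showing $\frakm\fraks=\mathrm{id}$ on initially zero functions, which you verify by interchanging the double sum and observing that the coefficient of $W(\mec d')$ collapses to $\sum_{I\subseteq S(\mec d',\mec d)}(-1)^{|I|}=(1-1)^{|S(\mec d',\mec d)|}$, nonzero only when $\mec d'=\mec d$; the justification that the double sum has finitely many nonzero terms (each $d_i'$ lies in a finite window $a-\sum_{j\ne i}d_j\le d_i'\le d_i$) is exactly the point where the ``initially zero'' hypothesis is used, and you handle it correctly. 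The one alternative reading the paper itself offers, immediately after the proposition, is the factorization $\frakm=(1-\frakt_1)\cdots(1-\frakt_n)$; writing $\fraks$ analogously as a coordinate-wise product of one-sided summation operators gives the same identity $\frakm\fraks=\mathrm{id}$ by telescoping in one variable at a time, which is a reorganization of your interchange-of-summation calculation rather than a genuinely different argument.
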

The proof is an easy computation; see \cite{folinsbee_friedman_weights}
for details.
One may also write 
$$
(\frakm f) = (1-\frakt_1)\ldots(1-\frakt_n) f,
$$
where $\frakt_i$ is the operator taking $f$ to the function
$$
(\frakt_i f)(\mec d) = f(\mec d-\mec e_i);
$$
it easily
follows that for $n\ge 2$, if $W\from\integers^n\to\integers$ is the weight
of any Riemann function
$f\integers^n\to\integers$, then $W$ is eventually zero.

\subsection{Weights and the Riemann-Roch formulas}

\begin{definition}
If $W\from\integers^n\to\integers$ is any function and
$\mec L\in\integers^n$,
the {\em $\mec L$-dual weight of $W$}, denoted
$W^*_{\mec L}$ refers to the function given by
$$
W^*_{\mec L}(\mec d)=W(\mec L-\mec d).
$$
\end{definition}
It is immediate that $(W^*_{\mec L})^*_{\mec L}=W$.

\begin{theorem}\label{th_easy_dual_functions_theorem}
Let $f\from\integers^n\to\integers$ be a generalized Riemann function, and
$W=\frakm f$.
Let $\mec K\in\integers^n$ and let $\mec L = \mec K + \mec 1$.
\begin{enumerate}
\item We have
\begin{equation}\label{eq_dual_weight_equation}
\frakm\bigl(f^\wedge_{\mec K}\bigr)  = (-1)^n W^*_\mec L
= (-1)^n (\frakm f)^*_{\mec L}.
\end{equation}
\item
$f^\wedge_{\mec K}=f$ iff
$W^*_{\mec L}=(-1)^n W$.
\end{enumerate}
\end{theorem}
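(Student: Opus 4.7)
My plan for part (1) is a direct computation using the definition (\ref{eq_define_mu}) of $\frakm$. First I would expand
$$\frakm(f^\wedge_{\mec K})(\mec d) = \sum_{I \subset [n]} (-1)^{|I|} f^\wedge_{\mec K}(\mec d - \mec e_I),$$
and substitute the explicit formula $f^\wedge_{\mec K}(\mec d - \mec e_I) = f(\mec K - \mec d + \mec e_I) - \deg(\mec K - \mec d + \mec e_I) - C$ from (\ref{eq_first_dual_formulation}). This splits the right-hand side into three pieces: a sum of $f$-values, a sum of the affine term $\deg(\mec K - \mec d + \mec e_I) = \deg(\mec K - \mec d) + |I|$, and a sum of the constant $C$.

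Next I would dispose of the last two pieces by the standard identities $\sum_{I\subset [n]}(-1)^{|I|}=0$ (for $n\ge 1$) and $\sum_{I\subset [n]}(-1)^{|I|}|I|=0$ (for $n\ge 2$); the former follows from $(1-1)^n=0$, and the latter from differentiating $(1+y)^n$ at $y=-1$. Hence the $C$-contribution, the $\deg(\mec K-\mec d)$-contribution, and the $|I|$-contribution all vanish, leaving
$$\frakm(f^\wedge_{\mec K})(\mec d) = \sum_{I \subset [n]} (-1)^{|I|} f(\mec K - \mec d + \mec e_I).$$
The pivotal step is now to reindex by the complement $J = [n] \setminus I$, so that $(-1)^{|I|} = (-1)^n (-1)^{|J|}$ and $\mec e_I = \mec 1 - \mec e_J$. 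Writing $\mec L = \mec K + \mec 1$, one has $\mec K - \mec d + \mec e_I = \mec L - \mec d - \mec e_J$, yielding
$$\frakm(f^\wedge_{\mec K})(\mec d) = (-1)^n \sum_{J \subset [n]} (-1)^{|J|} f(\mec L - \mec d - \mec e_J) = (-1)^n (\frakm f)(\mec L - \mec d) = (-1)^n W^*_{\mec L}(\mec d),$$
which is part~(1).

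Part~(2) is then an immediate consequence. By Proposition~\ref{pr_properties_of_wedge_sub_mec_K}(1), $f^\wedge_{\mec K}$ is itself a Riemann function and so is initially zero; hence Proposition~\ref{pr_Mobius_inversion} applies to both $f$ and $f^\wedge_{\mec K}$, and each is determined by its weight. Thus $f^\wedge_{\mec K}=f$ iff $\frakm(f^\wedge_{\mec K}) = \frakm f$, and by part~(1) this reads $(-1)^n W^*_{\mec L} = W$, equivalently $W^*_{\mec L} = (-1)^n W$ since $(-1)^{2n}=1$.

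I do not expect any real obstacle: the entire argument is bookkeeping around an index-complement involution. The only delicate point is the edge case $n=1$, where $\sum_I (-1)^{|I|}|I|=-1$ rather than $0$; since the rest of the paper's framework (notably the ``eventually zero'' property of $W$) is developed for $n\ge 2$, I would either restrict the statement to $n\ge 2$ or add a small correction term in the $n=1$ case. Nothing beyond the definitions and the two cited propositions is needed.
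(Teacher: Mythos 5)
Your proof is correct. The paper does not give its own argument for this theorem; it defers to \cite{folinsbee_friedman_weights} with the remark that the proof is ``a straightforward computation,'' and the computation you carry out --- expanding via \eqref{eq_define_mu}, killing the affine and constant contributions with $\sum_{I\subset[n]}(-1)^{|I|} = \sum_{I\subset[n]}(-1)^{|I|}|I| = 0$, and reindexing by the complement $J = [n]\setminus I$ so that $(-1)^{|I|} = (-1)^n(-1)^{|J|}$ and $\mec e_I = \mec 1 - \mec e_J$ --- is the natural instantiation of that remark. Your flag on $n=1$ is a genuine subtlety that the theorem statement silently glosses over: for $n=1$ one gets $\frakm\bigl(f^\wedge_{\mec K}\bigr) = -W^*_{\mec L} + 1$ rather than $-W^*_{\mec L}$, so \eqref{eq_dual_weight_equation} does require $n\ge 2$; this is consistent with the paper's own observation after Proposition~\ref{pr_Mobius_inversion} that only for $n\ge 2$ is the weight of a Riemann function eventually zero.
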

The proof is a straightforward computation; see
\cite{folinsbee_friedman_weights} for details.

We remark that it is immediate that the map
$W\mapsto W^*_\mec L$ is an involution,
i.e., applying it twice gives the same function; furthermore,
the map $W\mapsto W^*_\mec L$ 
is defined on all functions $\integers^n\to\integers$.
By contrast, the fact that $\,^\wedge_\mec K$ is an involution
requires a bit more computation (in the proof of
Proposition~\ref{pr_properties_of_wedge_sub_mec_K}), and
it is only defined on Riemann functions
(at least as we have defined it)
since the definition of $f^\wedge_\mec K$ 
in \eqref{eq_first_dual_formulation} requires us know the
offset, $C$, of $f$.
[In \cite{folinsbee_friedman_weights},
$f^\wedge_\mec K$ is defined on a more
general class of functions, $f$, namely $f$ that are initially zero
and whose weight, $W$, is eventually zero.]
This gives two indications that working with the weight of a Riemann
function has advantages over working with the Riemann function itself.

\subsection{Translation Invariance and Periodicity}
\label{su_periodicity_trans}

This subsection has two goals: first, in case $f=f^\wedge_\mec K$,
then we study the uniqueness of such a $\mec K$.
Second, we will introduce the related notation of {\em periodicity}
which will be useful in a future work to show that certain
generalized Riemann-Roch formulas have a second type of Serre
duality beyond what we cover in this article; 
we will briefly explain this in 
Section~\ref{se_duality_second}.

\begin{definition}
Let $f\from\integers^n\to\integers$ be any function.  We say that
$f$ is {\em invariant by translation under $\mec t\in\integers^n$}
if 
$$
\forall \mec d\in\integers^n, \quad
f(\mec d+\mec t)=f(\mec d).
$$
We define the {\em set of invariant translations of $f$} to be
the set of all such $\mec t$.
\end{definition}
In the above definition, we easily that the set, $T$, of invariant
translations of a function, $f$, is a lattice, i.e., $T$ is closed
under addition, and if $\mec t\in T$ then also $-\mec t\in T$.
Furthermore, if $f$ is translation invariant by $\mec t$,
then for any $\mec d\in\integers^n$ we have
$f(\mec d)=f(\mec d+m\mec t)$ for any $m\in\integers$; it
follows that if $f$ is non-zero but initially zero,
then any such $\mec t$ must lie in $\integers^n_{\deg 0}$.

\begin{proposition}
Let $f\from\integers^n\to\integers$ be any Riemann function,
and $W=\frakm f$ its weight.  
Let $T$ be the set of invariant translations of $f$.  Then
\begin{enumerate}
\item 
$T$ equals the set of invariant translations of $W$;
\item 
for any $\mec L_1,\mec L_2\in\integers^n$,
$W^*_{\mec L_1}=W^*_{\mec L_2}$ iff 
$\mec L_1-\mec L_2\in T$;
\item 
for any $\mec K_1,\mec K_2\in\integers^n$,
$f^\wedge_{\mec K_1}=f^\wedge_{\mec K_2}$ iff 
$\mec K_1-\mec K_2\in T$;
\item
if for some $\mec K\in\integers^n$ we have $f=f^\wedge_{\mec K}$, then
for any $\mec K'\in\integers^n$,
$f=f^\wedge_{\mec K'}$ iff $\mec K'-\mec K\in T$.
\item
if for some $\mec L\in\integers^n$ we have $W=W^*_{\mec L}$, then
for any $\mec L'\in\integers^n$,
$W=W^*_{\mec L'}$ iff $\mec L'-\mec L\in T$.
\end{enumerate}
\end{proposition}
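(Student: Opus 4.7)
The plan is to prove the five claims in order, with claim~(1) giving the identification of invariant translations of $f$ with those of $W$, claims~(2) and~(3) being the core computations relating two different $\mec L$-duals or $\mec K$-duals, and claims~(4) and~(5) following immediately from~(3) and~(2) respectively by the transitivity of equality.

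First, for claim~(1), I would observe that the operators $\frakm$ and $\fraks$ from Proposition~\ref{pr_Mobius_inversion} are built from the translation operators $\frakt_i$, all of which commute with translation by any $\mec t\in\integers^n$. Thus if $f$ is invariant by $\mec t$ then so is $W=\frakm f$, and conversely if $W$ is invariant by $\mec t$ then so is $f=\fraks W$. This proves the two sets coincide.

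For claim~(2), unravel definitions: $W^*_{\mec L_1}(\mec d)=W(\mec L_1-\mec d)$ and similarly for $\mec L_2$, so $W^*_{\mec L_1}=W^*_{\mec L_2}$ precisely when $W(\mec d'+(\mec L_1-\mec L_2))=W(\mec d')$ for all $\mec d'$ (substituting $\mec d'=\mec L_2-\mec d$). By claim~(1), this is equivalent to $\mec L_1-\mec L_2\in T$.

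Claim~(3) is the main step and the only place where the Riemann property of $f$ is actually used. Expanding \eqref{eq_first_dual_formulation} and setting $\mec t=\mec K_1-\mec K_2$, the equality $f^\wedge_{\mec K_1}=f^\wedge_{\mec K_2}$ is equivalent to
$$
f(\mec d'+\mec t)-f(\mec d')=\deg(\mec t) \qquad \forall\,\mec d'\in\integers^n.
$$
The main obstacle is ruling out $\deg(\mec t)\neq 0$: taking $\mec d'$ with $\deg(\mec d')$ very negative makes both $f(\mec d')$ and $f(\mec d'+\mec t)$ vanish (since $f$ is initially zero), forcing $\deg(\mec t)=0$. Once this is established, the displayed equation becomes exact translation invariance of $f$ by $\mec t$, equivalent to $\mec t\in T$.

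Finally, claims~(4) and~(5) are immediate: for~(4), if $f=f^\wedge_{\mec K}$, then $f=f^\wedge_{\mec K'}$ iff $f^\wedge_{\mec K}=f^\wedge_{\mec K'}$, which by claim~(3) occurs iff $\mec K-\mec K'\in T$; claim~(5) is the analogous deduction from claim~(2).
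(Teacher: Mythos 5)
Your proof is correct. Claims~(1), (4), and~(5) match the paper's argument. For claim~(2), your computation by direct substitution is essentially the same as the paper's (which phrases it through the involution property $(W^*_{\mec L_1})^*_{\mec L_2}=W$), just presented slightly differently.

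The genuine difference is in claim~(3). The paper disposes of it in one line by invoking claims~(1) and~(2), which implicitly relies on Theorem~\ref{th_easy_dual_functions_theorem}: via $\frakm f^\wedge_\mec K = (-1)^n W^*_{\mec K+\mec 1}$ and the injectivity of $\frakm$, the condition $f^\wedge_{\mec K_1}=f^\wedge_{\mec K_2}$ reduces to $W^*_{\mec K_1+\mec 1}=W^*_{\mec K_2+\mec 1}$, and then claim~(2) applies. You instead prove claim~(3) by a direct expansion of \eqref{eq_first_dual_formulation}, reducing the condition to $f(\mec d'+\mec t)-f(\mec d')=\deg(\mec t)$, and then rule out $\deg(\mec t)\ne 0$ by evaluating at $\mec d'$ of very negative degree using the initially-zero property of $f$. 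Your route is more elementary and self-contained: it uses only the definition of the $\mec K$-dual and the Riemann property, and avoids the detour through the weight $W$ and the (externally proven) Theorem~\ref{th_easy_dual_functions_theorem}. It also makes explicit where the constraint $\deg(\mec t)=0$ comes from, which the paper leaves implicit. Both are valid.
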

\begin{proof}
Claim~(1) follows by observing that $\frakm$ and $\fraks$ commute
with translation by $\mec t$, (i.e., the operator taking $f$ to $\tilde f$
given by
$\tilde f(\mec d)=f(\mec d+\mec t)$).

To prove claim~(2), we see that $W^*_{\mec L_1}=W^*_{\mec L_2}$
iff 
$$
(W^*_{\mec L_1})^*_{\mec L_2}= (W^*_{\mec L_2})^*_{\mec L_2}=W
$$
iff, for all $\mec d\in\integers^n$ we have
$$
W(\mec d) = (W^*_{\mec L_1})^*_{\mec L_2}(\mec d)
=(W^*_{\mec L_1})(\mec L_2-\mec d)=
W\bigl(\mec L_1 - (\mec L_2-\mec d) \bigr)
=
W(\mec d+\mec L_1-\mec L_2),
$$
i.e., $\mec L_1-\mec L_2\in T$.

Claim~(3) follows from claims~(1) and~(2).
Claim~(4) follows from claim~(3), and claim~(5) from claim~(2).
\end{proof}

\begin{definition}
We say that a function $f\from\integers^n\to\integers$ is
{\em $r$-periodic} for an $r\in\integers$ if for all
$i,j\in[n]$ we have that $f$ is invariant under
translation by $r(\mec e_i-\mec e_j)$.
\end{definition}

\begin{example}
In Examples~\ref{ex_baker_norine} 
and~\ref{ex_amini_manjunath}, $f$ as in
\eqref{eq_f_is_distance_to_cN} is translation invariant by
$L\subset\integers^n_{\deg 0}$ where
$\integers^n_{\deg 0}/L$ is finite (i.e., $L$ is of rank $n-1$).
If $p = |\integers^n_{\deg 0}/L|$, then any element of
$\integers^n_{\deg 0}/L$ is of order divisible by $p$, and
hence $f$ is $p$-periodic.
\end{example}

\begin{example}
If in Example~\ref{ex_classical_Riemann_function} we take an 
elliptic curve, and $P_1$ is any point, then $P_2-P_1$ has
finite order for only countably many $P_2$; hence
$f$ is $r$-periodic for some $r\ge 1$ for only countably many $P_2$.
\end{example}

\subsection{Weights of Riemann Functions $\integers^2\to\integers$}

We will be especially interested in 
Riemann functions $\integers^2\to\integers$ and their weights
$W=\frakm f$.  It is useful to notice that for such functions we that
that for any fixed $d_1$ and $d_2'$ sufficiently large,
$$
f(d_1,d_2')-f(d_1-1,d_2') = 1,
$$
and that for any $d_1,d_2'\in\integers$ we have
$$
f(d_1,d_2')-f(d_1-1,d_2') 
= \sum_{d_2=-\infty}^{d_2'}W(d_1,d_2).
$$
Hence,
for fixed $d_1$,
\begin{equation}\label{eq_two_dim_row_sums}
\sum_{d_2=-\infty}^\infty W(d_1,d_2) = 1,
\end{equation} 
and similarly, for fixed $d_2$ we have
\begin{equation}\label{eq_two_dim_col_sums}
\sum_{d_1=-\infty}^\infty W(d_1,d_2) = 1.
\end{equation} 
We easily check the converse, i.e., if 
$W\from\integers^2\to\integers$ is initially and eventually
zero and satisfies~\eqref{eq_two_dim_row_sums}
and~\eqref{eq_two_dim_col_sums}, then for $d_1'+d_2'$ fixed and sufficiently
large we have
$$
f(d_1',d_2')+1
=
f(d_1'+1,d_2')
=
f(d_1',d_2'+1),
$$
and we conclude that $f$ is a Riemann function.
Viewing $W$ as a two-dimensional infinite array of numbers indexed in
$\integers\times\integers$, one can therefore say that
$W\from\integers^2\to\integers$ is the weight of a Riemann function iff
all its ``row sums'' \eqref{eq_two_dim_row_sums}
and all its ``column sums'' \eqref{eq_two_dim_col_sums}
equal $1$.

\subsection{Weights of Slowly Growing 
Riemann Functions $\integers^2\to\integers$ and Perfect Matchings}

In this subsection we make some remarks on weights that we call
``perfect matchings.''
In \cite{folinsbee_friedman_weights}, these ideas were used to
compute the weight of the Baker-Norine rank on graphs of two
vertices (jointed by some number of edges).
Here we will just state the definitions and an easy proposition.

\begin{definition}\label{de_slowly_growing}
We say that a function $f\from\integers^n\to\integers$ is 
{\em slowly growing} if for all $\mec d\in\integers^n$ and $i\in[n]$
we have
$$
f(\mec d) \le f(\mec d+\mec e_i) \le f(\mec d)+1.
$$
\end{definition}

\begin{definition}\label{de_perfect_matching}
Let $W$ be a function $\integers^2\to\integers$ that is initially
and eventually zero.  We say that $W$ is a {\em perfect matching}
if there exists a permutation (i.e., a bijection)
$\pi\from\integers\to\integers$ such that
\begin{equation}\label{eq_W_perfect_and_pi}
W(i,j) = \left\{ \begin{array}{ll}
1 & \mbox{if $j=\pi(i)$, and} \\
0 & \mbox{otherwise.}
\end{array}
\right.
\end{equation}
\end{definition}
It follows that for $\pi$ as above,
$\pi(i)+i$ is bounded above and below, since $W$ is initially
and eventually $0$.
Conversely, if $\pi\from\integers\to\integers$ is a bijection
with $\pi(i)+i$ bounded independently of $i$, then
\eqref{eq_W_perfect_and_pi} is a perfect matching.

\begin{proposition}\label{pr_W_either_zero_one_minus_one}
Let $f\from\integers^2\to\integers$ be a slowly
growing Riemann function.
Let $W=\frakm f$ be the weight of $f$.  Then $W$ takes only the
values $0$ and $\pm 1$.  Furthermore, for any $\mec d\in\integers^2$,
let $a=f(\mec d)$; then
\begin{equation}\label{eq_W_is_one}
W(\mec d)=1 \iff
f(\mec d-\mec e_1)=f(\mec d-\mec e_2)=f(\mec d - \mec e_1 - \mec e_2)=a-1,
\end{equation}
and
\begin{equation}\label{eq_W_is_minus_one}
W(\mec d)=-1 \iff
f(\mec d-\mec e_1)=f(\mec d-\mec e_2)=a=f(\mec d - \mec e_1 - \mec e_2)+1.
\end{equation}
If $W$ is everywhere non-negative, i.e., 
$W(\mec d)\ge 0$ for all $0$,
then $W$ is a perfect matching.
\end{proposition}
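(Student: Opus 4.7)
The plan is to compute $W(\mec d)$ directly from the M\"obius inversion formula of Proposition~\ref{pr_Mobius_inversion} and then do a brief case analysis governed by the slowly growing hypothesis. For $n=2$, \eqref{eq_define_mu} gives
\begin{equation*}
W(\mec d) = f(\mec d) - f(\mec d-\mec e_1) - f(\mec d-\mec e_2) + f(\mec d-\mec e_1-\mec e_2).
\end{equation*}
Setting $a = f(\mec d)$ and writing $b_i = a - f(\mec d-\mec e_i)$ for $i=1,2$ and $c = a - f(\mec d-\mec e_1-\mec e_2)$, one has $W(\mec d) = b_1 + b_2 - c$. By Definition~\ref{de_slowly_growing}, $b_1,b_2 \in \{0,1\}$; moreover, applying the slowly growing condition along the two paths from $\mec d$ to $\mec d-\mec e_1-\mec e_2$ yields $c - b_1, c - b_2 \in \{0,1\}$, so $\max(b_1,b_2) \le c \le \min(b_1,b_2)+1$.

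Now I would run through the four cases for $(b_1,b_2) \in \{0,1\}^2$. In the case $b_1 = b_2 = 0$, $c \in \{0,1\}$, giving $W \in \{0,-1\}$, and $W=-1$ happens precisely when $c=1$, which is the content of \eqref{eq_W_is_minus_one}. In the case $b_1=b_2=1$, $c \in \{1,2\}$, giving $W \in \{1,0\}$, and $W=1$ happens precisely when $c=1$, which is the content of \eqref{eq_W_is_one}. In the two mixed cases, $c$ is forced to equal $1$ and $W=0$. This handles both the claim that $W\in\{0,\pm 1\}$ and the two ``iff'' characterizations.

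For the final statement, suppose $W\ge 0$ everywhere, so that $W$ takes values only in $\{0,1\}$. Since $f$ is a Riemann function, $f$ is initially zero and eventually equal to $\deg(\mec d)+C$; applying $\frakm = (1-\frakt_1)(1-\frakt_2)$ shows $W$ is both initially and eventually zero, so the support of $W$ lies in a strip $\{(i,j) : |i+j| \le N\}$ for some $N$. By \eqref{eq_two_dim_row_sums} and \eqref{eq_two_dim_col_sums}, each row and each column of $W$ sums to $1$; combined with $W(i,j) \in \{0,1\}$, this means that for each $i$ there is a unique $j$ with $W(i,j)=1$, and for each $j$ there is a unique $i$ with $W(i,j)=1$. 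Defining $\pi(i)$ to be this unique $j$ gives a bijection $\pi\from\integers\to\integers$ satisfying \eqref{eq_W_perfect_and_pi}, and the strip condition forces $\pi(i)+i$ to be bounded, so $W$ is a perfect matching in the sense of Definition~\ref{de_perfect_matching}.

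The only subtle point I anticipate is bookkeeping in the case analysis to make sure the equivalences in \eqref{eq_W_is_one} and \eqref{eq_W_is_minus_one} are genuine ``iff''s rather than just implications; but since the four cases on $(b_1,b_2)$ exhaust the possibilities and each case pins down $W$ uniquely from $c$, both directions follow at once. Everything else is a direct unpacking of definitions.
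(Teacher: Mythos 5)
Your proof is correct and takes essentially the same approach as the paper: you compute $W(\mec d) = f(\mec d) - f(\mec d-\mec e_1) - f(\mec d-\mec e_2) + f(\mec d-\mec e_1-\mec e_2)$ and do casework from the slowly growing hypothesis, merely conditioning on $(b_1,b_2)$ and then pinning down $c$, whereas the paper conditions on $c$ first and then on $(b_1,b_2)$. Your argument for the perfect-matching conclusion (row/column sums equal to $1$ plus $W\in\{0,1\}$ plus the strip bound) is the standard one and fills in a step the paper's sketch defers to the reference.
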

\begin{proof}
For the proof, see 
\cite{folinsbee_friedman_weights}; for ease for reading,
we give the main idea: namely,
since $f$ is slowly growing, if $a=f(\mec d)$,
then
$$
a-2 \le f(\mec d-\mec e_1-\mec e_2) \le a.
$$
In case $f(\mec d-\mec e_1-\mec e_2)=a-2$, then for $j=1,2$,
$f(\mec d-\mec e_j)$ must equal $a-1$ (since $f$ is slowly growing),
in which case $W(\mec d)=0$.
Similarly if $f(\mec d-\mec e_1-\mec e_2)=a$ then
$f(\mec d-\mec e_j)$ must equal $a$, and again $W(\mec d)=0$.
This leaves the case $f(\mec d-\mec e_1-\mec e_2)=a-1$, whereupon for $j=1,2$
each $f(\mec d-\mec e_j)$ is either $a$ or $a-1$; 
this gives four cases to check, and after checking them we see that
$W(\mec d)=1$ iff for $j=1,2$ both $f(\mec d-\mec e_j)$ are $a-1$,
and
$W(\mec d)=-1$ iff for $j=1,2$ both $f(\mec d-\mec e_j)$ are $a$.
\end{proof}

Of course, if $W$ is $r$-periodic, i.e., for all $\mec d\in\integers^2$,
$W(\mec d)=W(\mec d+(r,-r))$, then $\pi$ is {\em skew-periodic}
in the sense that $\pi(i+r)=\pi(i)-r$ for all $i\in\integers$.

\subsection{Riemann Functions of Genus $1$ and their Two-Variable
Restrictions}

There is a collection of Riemann functions that will be 
especially helpful to give concrete examples
in Section~\ref{se_higher_Riemann}.
This section can be skipped until then; however, the reader may
want to read this now to get some more concrete examples of
Riemann functions.

\begin{definition}
Let $b\in\integers$.
We say that a Riemann function $f\from\integers^n\to\integers$
is {\em of generalized genus $1$ of shift $b$} if 
\begin{enumerate}
\item for all $\mec d\in\integers^n$ with $\deg(\mec d)\ne b$,
$$
f(\mec d) = \max\bigl( 0 , \deg(\mec d) -b \bigr),
$$
\item
for all $\mec d\in\integers^n$ with $\deg(\mec d)=b$,
$f(\mec d)\in\{0,1\}$;
\end{enumerate}
if so, we say that $f$ is {\em of genus $1$} if $b=0$.
\end{definition}

\begin{example}\label{ex_classical_genus_one}
If in Example~\ref{ex_classical_Riemann_function},
the curve is of genus $1$, and $P_1,\ldots,P_n$
are any points on the curve, then 
$f$ in \eqref{eq_classical_Riemann_function} is a Riemann function
of genus $1$.  If $\mec d\in\integers$ with $\deg(\mec d)=0$, then
$$
f(\mec d)=1 \iff
d_1 P_1 + \cdots + d_n P_n = 0
$$
where $+$ is with respect to the group law in the curve with respect
to a some point (i.e., one chooses a point $P_\infty$ and one defines
$P_3=P_1+P_2$ if $P_3$ is linearly equivalent to
$P_1+P_2-P_\infty$), and $0$ is
the identity element in the group law (i.e., the point $P_\infty$).
\end{example}

\begin{example}\label{ex_cycle_Baker_Norine}
Let $G$ be a cycle of length $n\ge 2$, whose vertices in are, in cyclic order
(of which there are $2n$ choices for $n\ge 3$)
$v_1,\ldots,v_n$.  If $r_{\rm BN}$ is the Baker-Norine rank, then
$f(\mec d)=1+r_{\rm BN}(\mec d)$ is a Riemann function of genus $g$.
If $\deg(\mec d)=0$, then we easily see that 
$$
f(\mec d)=0 \quad\iff\quad
\mbox{$d_1+2d_2+\cdots+(n-1)d_{n-1}$ is divisible by $n$}
$$
(since $d_1+\cdots+d_n=0$, the above divisibility by $n$ is independent
of which cyclic order we choose for $v_1,\ldots,v_n$).
Hence $f$ is the same $f$ as in
Example~\ref{ex_classical_genus_one} where
$P_1$ is any point of order $n$ on the curve, and
$P_i = i P_1$ in the group law on the curve
(hence we require the elliptic curve to have a point of order $n$,
which is always the case in the classical case over the complex numbers).
\end{example}

\begin{example}
The two-variable restriction of any function $\integers^n\to\integers$
that is of generalized genus $1$ is again a function
$\integers^2\to\integers$ that is of generalized genus $1$.
(Similarly for a restriction to any number of variables.)
\end{example}

\begin{proposition}
Let $f\from\integers^2\to\integers$ be a Riemann function of
genus $1$.  Then $f$ is slowly growing,
and the weight, $W$, of $f$ attains a negative somewhere iff for some
$d_1\in\integers$ we have
$$
f(-d_1,d_1)=f(-d_1-1,d_1+1)=1.
$$
\end{proposition}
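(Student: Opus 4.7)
The plan is to proceed in three steps: first check slow growth directly from the genus $1$ definition, then invoke Proposition~\ref{pr_W_either_zero_one_minus_one} to characterize where $W$ can take the value $-1$, and finally narrow down the admissible locations to a single degree stratum and reparametrize to match the stated condition.

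First, to see that $f$ is slowly growing, I would break into cases according to $\deg(\mec d)$. Fix $i\in\{1,2\}$. If $\deg(\mec d)\le -2$ then $f(\mec d)=f(\mec d+\mec e_i)=0$. If $\deg(\mec d)=-1$ then $f(\mec d)=0$ and $f(\mec d+\mec e_i)\in\{0,1\}$. If $\deg(\mec d)=0$ then $f(\mec d)\in\{0,1\}$ and $f(\mec d+\mec e_i)=1$. If $\deg(\mec d)\ge 1$ then both values are determined by $\deg$, giving $f(\mec d+\mec e_i)-f(\mec d)=1$. In every case the difference lies in $\{0,1\}$, so $f$ is slowly growing.

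Next, by Proposition~\ref{pr_W_either_zero_one_minus_one}, $W(\mec d)=-1$ holds exactly when, writing $a=f(\mec d)$,
\[
f(\mec d-\mec e_1)=f(\mec d-\mec e_2)=a,\quad f(\mec d-\mec e_1-\mec e_2)=a-1.
\]
In particular $a\ge 1$, so $f(\mec d)\ge 1$, which forces $\deg(\mec d)\ge 1$ by the genus~$1$ formula. I would then rule out $\deg(\mec d)\ge 2$ as follows: in that range $f$ agrees with the linear function $\deg(\,\cdot\,)$, so $f(\mec d-\mec e_j)=f(\mec d)-1$, not $f(\mec d)$. Hence $W(\mec d)=-1$ forces $\deg(\mec d)=1$ exactly.

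In the remaining case $\deg(\mec d)=1$, write $\mec d=(a,b)$ with $a+b=1$; then $f(\mec d)=1$ automatically, $f(\mec d-\mec e_1-\mec e_2)=0$ automatically (its degree is $-1$), and the only genuine condition is $f(a-1,b)=f(a,b-1)=1$. These two points both have degree~$0$ and differ by $(1,-1)$. Setting $d_1:=-a$ (so that $d_1=b-1$), one checks $(-d_1,d_1)=(a,b-1)$ and $(-d_1-1,d_1+1)=(a-1,b)$, which gives the forward implication. Conversely, given $d_1$ with $f(-d_1,d_1)=f(-d_1-1,d_1+1)=1$, take $\mec d=(-d_1,d_1+1)$: then $\deg(\mec d)=1$, $f(\mec d)=1$, $f(\mec d-\mec e_1)=f(-d_1-1,d_1+1)=1$, $f(\mec d-\mec e_2)=f(-d_1,d_1)=1$, and $f(\mec d-\mec e_1-\mec e_2)=0$, so the characterization of Proposition~\ref{pr_W_either_zero_one_minus_one} yields $W(\mec d)=-1$.

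I do not expect any real obstacle; the only step requiring care is the bookkeeping of the reparametrization $d_1=-a$, since the two points $(-d_1,d_1)$ and $(-d_1-1,d_1+1)$ in the statement differ by $(-1,1)$, whereas the two points $\mec d-\mec e_1$ and $\mec d-\mec e_2$ coming from Proposition~\ref{pr_W_either_zero_one_minus_one} differ by $(1,-1)$, so one must pick the sign of the shift correctly.
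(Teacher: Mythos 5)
Your proof follows essentially the same route as the paper: verify slow growth case by case in $\deg(\mec d)$, invoke Proposition~\ref{pr_W_either_zero_one_minus_one} to characterize $W(\mec d)=-1$, pin $\deg(\mec d)$ down to $1$, and reparametrize to the stated degree-$0$ condition. There is, however, one incorrect inference: you assert that $f(\mec d)\ge 1$ forces $\deg(\mec d)\ge 1$ ``by the genus $1$ formula.'' That is not so --- the genus $1$ condition explicitly permits $f(\mec d)=1$ when $\deg(\mec d)=0$, so this step does not exclude the degree-$0$ case. The exclusion does hold, but for a reason you already have at your disposal rather than the one you stated: if $\deg(\mec d)=0$, then $\deg(\mec d-\mec e_j)=-1$, so $f(\mec d-\mec e_j)=0$, contradicting the requirement $f(\mec d-\mec e_j)=a\ge 1$ coming from the $W(\mec d)=-1$ characterization. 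Once that line is patched, the rest --- ruling out $\deg(\mec d)\ge 2$ by linearity of $f$ in positive degree, the reparametrization $d_1=-a$, and the converse construction of $\mec d=(-d_1,d_1+1)$ --- is correct. For comparison, the paper squeezes $a$ to $1$ first (using $a-1\ge 0$ on one side and $a<2$ on the other, the latter from $f(\mec d-\mec e_1)\ge 2\implies f(\mec d)=f(\mec d-\mec e_1)+1$) and then reads off $\deg(\mec d)=1$ from the three equalities $f(\mec d)=f(\mec d-\mec e_1)=f(\mec d-\mec e_2)=1$; your route is equivalent in substance and differs only in the order of these small observations.
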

\begin{proof}
We easily see that $f$ is slowly growing.
To see that its weight, $W$, is non-negative,
assume that $W(\mec d)<0$; then
then for some $a\in\integers$ 
\eqref{eq_W_is_minus_one} holds;
but in this case, we have $a\ge 1$ (since $f$ cannot attain the value
$-1$, and we have $a<2$, because if $f(\mec d-\mec e_1)\ge 2$,
then $f(\mec d)=f(\mec d-\mec e_1)+1$.
Hence we necessarily have $a=1$; since
$$
f(\mec d)=f(\mec d-\mec e_1)=f(\mec d-\mec e_2)=a=1,
$$
we have that $\deg(\mec d)=1$.
\end{proof}

\begin{corollary}
\label{co_non_negative_weights_restrict_genus_one}
Let $f\from\integers^n\to\integers$ be of genus $1$ such that for
for some (necessarily distinct)
$i,j\in[n]$ and all $\mec d\in\integers^n$ of degree $0$ we have
$$
f(\mec d)=1 \quad\implies\quad f(\mec d+\mec e_i-\mec e_j)=0.
$$
Then any restriction $f_{i,j,\mec d}$ has non-negative weight, i.e.,
its weight is a perfect matching.
\end{corollary}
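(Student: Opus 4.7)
The plan is to reduce the $n$-variable statement to the two-variable proposition just established. Fix $\mec d \in \integers^n$ and set $g = f_{i,j,\mec d}$. Since $f$ is of genus $1$, the identities
$$g(a_i, a_j) = f(\mec d + a_i\mec e_i + a_j\mec e_j), \qquad \deg(\mec d + a_i\mec e_i + a_j\mec e_j) = \deg(\mec d) + a_i + a_j$$
show that $g$ is of generalized genus $1$ with shift $b = -\deg(\mec d)$; in particular $g$ is slowly growing and, by Proposition~\ref{pr_W_either_zero_one_minus_one}, its weight takes only the values $0, \pm 1$.

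Next I would translate coordinates by setting $\tilde g(a_i, a_j) = g(a_i + b, a_j)$. Since $\frakm$ commutes with translation, the weights of $g$ and $\tilde g$ differ only by a shift in the argument, so $g$ has non-negative weight iff $\tilde g$ does. One checks directly that $\tilde g$ is of genus $1$ (shift $0$), so the previous proposition applies: $\tilde g$ has non-negative weight iff there is no $a \in \integers$ with $\tilde g(-a, a) = \tilde g(-a-1, a+1) = 1$.

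The core step is translating this obstruction back to $f$. Assume such an $a$ exists, and set
$$\mec d' = \mec d - (a + \deg(\mec d))\mec e_i + a\mec e_j, \qquad \mec d'' = \mec d' - \mec e_i + \mec e_j.$$
Unwinding definitions yields $\deg(\mec d') = \deg(\mec d'') = 0$, $f(\mec d') = \tilde g(-a, a) = 1$, and $f(\mec d'') = \tilde g(-a-1, a+1) = 1$. Applying the standing hypothesis to $\mec d''$---which has degree $0$ and $f$-value $1$---forces $f(\mec d'' + \mec e_i - \mec e_j) = 0$; but $\mec d'' + \mec e_i - \mec e_j = \mec d'$, contradicting $f(\mec d') = 1$. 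Hence no such $a$ exists, the weight of $f_{i,j,\mec d}$ is non-negative, and by Proposition~\ref{pr_W_either_zero_one_minus_one} it is therefore a perfect matching.

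The only real obstacle is bookkeeping, not any deep idea: one must track the shift $b = -\deg(\mec d)$ carefully through the coordinate translation, so that the ``bad diagonal pair'' for $\tilde g$ corresponds precisely to a pair of degree-$0$ vectors in $\integers^n$ related by $\mec e_i - \mec e_j$, at which point the hypothesis applies verbatim.
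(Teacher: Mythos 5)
Your proof is correct and is the natural derivation from the preceding proposition; the paper itself supplies no proof for this corollary, so your argument fills that gap in exactly the expected way. The bookkeeping checks out: with $b=-\deg(\mec d)$ the restriction $g=f_{i,j,\mec d}$ is of generalized genus $1$ with shift $b$, the translate $\tilde g(a_i,a_j)=g(a_i+b,a_j)$ is of genus $1$, and since $\frakm$ commutes with translation, non-negativity of the weight transfers between $g$ and $\tilde g$. The contradiction step is also right: a bad pair $\tilde g(-a,a)=\tilde g(-a-1,a+1)=1$ pulls back to $f(\mec d')=f(\mec d'')=1$ with $\mec d',\mec d''\in\integers^n_{\deg 0}$ and $\mec d'=\mec d''+\mec e_i-\mec e_j$, directly contradicting the hypothesis applied at $\mec d''$. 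One could streamline slightly by avoiding the explicit translation and simply observing that the criterion in the preceding proposition, stated for genus $1$, translates verbatim to shift $b$ (the obstruction becomes a pair of consecutive $1$'s on the anti-diagonal of degree $b$), but that is purely cosmetic.
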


\begin{example}\label{ex_weights_genus_one_examples}
Let $n=4$ in Example~\ref{ex_cycle_Baker_Norine} of a cycle of length $4$,
or the equivalent special case of
Example~\ref{ex_classical_genus_one}, where $P_1$ is order $4$, and
$P_i=iP_1$ for $i=2,3,4$.
We easily see that 
$f$ satisfies the hypothesis of
Corollary~\ref{co_non_negative_weights_restrict_genus_one}
for all distinct $i,j\in [n]$.
Then the two-variable restriction of $f$, $f_{1,2,\mec 0}$, is a Riemann
function, and we easily see that for $a_1\in\integers$ we have
$$
f_{1,2,\mec 0}(a_1,-a_1)=1
\iff
a_1 \bmod 4 = 0.
$$
Similarly for $f_{1,3,\mec 0}$, except that
$$
f_{1,3,\mec 0}(a_1,-a_1)=1
\iff
a_1 \bmod 2 = 0.
$$
We easily see that the weight, $W$, of $f_{1,2,\mec 0}$ has
period $4$ and $W(0,0)=W(1,1)=W(-1,2)=W(-2,1)=1$
(which determines $W$ everywhere).
By contrast, the weight, $W'$ of $f_{1,3,\mec 0}$ has period
$2$ and satisfies $W'(0,0)=W'(1,1)=1$.
Hence
$W'(-1,3)=W'(-2,2)=1$, and so $W'\ne W$.
\end{example}

\section{Weight Decomposition Theorems}
\label{se_weight_decomp}

In this section we prove two theorems about decomposing
the weight of Riemann functions $\integers^2\to\integers$ into
an alternating sum of perfect matchings.  These are fundamental
steps in modeling an arbitrary Riemann function $\integers^2\to\integers$.

\subsection{$r$-fold Matchings and Infinite Versions of Hall's Theorem}

In this subsection we define $r$-fold matchings, which feature
prominently in our models; we also 
give two infinite versions of Hall's Theorem
which help understand $r$-fold matchings
but are not essential to the rest of this article.

\begin{definition}
Let $W\from\integers^2\to\integers$ 
be initially and eventually zero.  For $i\in\integers$, the
{\em $i$-th row sum of $W$} (respectively, {\em column sum})
is $\sum_{j\in\integers} W(i,j)$ (respectively, 
$\sum_{j\in\integers} W(j,i)$).
For any $r\in\naturals$, we say that
$W$ is an {\em $r$-fold matching} 
all values of $W$ are non-negative and all the row sums and all the
column sums of $W$ equal $r$.
\end{definition}
Of course, a perfect matching (Definition~\ref{de_perfect_matching})
is a $1$-fold matching, and the sum of $r$ perfect matchings
is an $r$-fold matching.  The rest of this subsection is devoted
to proving the converse, both for general $r$-fold matchings
and for $p$-periodic matchings (as equaling a sum of $r$ $p$-periodic 
perfect matchings).

\begin{definition}
If $W$ is a perfect matching, we refer to the $\pi$ satisfying
\eqref{eq_W_perfect_and_pi} as the {\em bijection associated to $W$};
if $\pi\from\integers\to\integers$ is a bijection with $\pi(i)+i$
bounded independent of $i$, we say that $W$ in
\eqref{eq_W_perfect_and_pi} is the {\em perfect matching associated
to $\pi$}.
\end{definition}

Of course, if $W$ is a perfect matching with associated bijection
$\pi$, then for any $p\ge 1$, $W$ is $p$-periodic iff
for all $i\in\integers$ we have $\pi(i+p)=\pi(i)-p$.


\begin{theorem}\label{th_halls_theorem_infinite_periodic}
Let $W$ be an $r$-fold matching that is $p$-periodic.  
Then there exist $p$-periodic perfect matchings
$W_1,\ldots,W_r$ whose sum is $W$.
\end{theorem}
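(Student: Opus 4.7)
The plan is to reduce this infinite decomposition to a finite one by quotienting out the $p$-periodicity, applying König's edge-coloring theorem to the resulting finite bipartite multigraph, and then lifting the decomposition back to $\integers^2$.

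Concretely, I will define a quotient weight $\bar W\from (\integers/p\integers)\times(\integers/p\integers)\to\integers_{\ge 0}$ by
$$
\bar W(\bar i,\bar j) \eqdef \sum_{k\in\integers} W(i,\, j+kp),
$$
for any representatives $i\in\bar i$ and $j\in\bar j$; the sum is finite because $W$ is initially and eventually zero, and the $p$-periodicity $W(i+p,j-p)=W(i,j)$ shows that $\bar W$ is independent of the choice of representatives. A short computation (using periodicity to reindex the column-direction sum) shows that every row sum and every column sum of $\bar W$ equals $r$, so $\bar W$ encodes an $r$-regular finite bipartite multigraph on $p+p$ vertices. König's edge-coloring theorem then decomposes it as $\bar W=\bar W_1+\cdots+\bar W_r$, where each $\bar W_\ell$ is a perfect matching on $\integers/p\integers$, corresponding to a bijection $\bar\pi_\ell$ of $\integers/p\integers$.

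Next I will lift each $\bar W_\ell$ to a $p$-periodic perfect matching $W_\ell$ of $\integers$. For each pair $(\bar i,\bar j)$ with $\bar W(\bar i,\bar j)=\mu>0$, the set $K_{\bar i,\bar j}=\{\ell:\bar\pi_\ell(\bar i)=\bar j\}$ has size $\mu$, and the multiset $M_{\bar i,\bar j}$ whose elements are $j+sp$ (for $s\in\integers$) with multiplicity $W(i,j+sp)$ also has total size $\mu$. I will choose any bijection $\phi_{\bar i,\bar j}\from K_{\bar i,\bar j}\to M_{\bar i,\bar j}$, and for each $\ell\in K_{\bar i,\bar j}$ declare $W_\ell(i,\phi_{\bar i,\bar j}(\ell))=1$, with $W_\ell$ set to $0$ elsewhere on the row indexed by $i$; the function $W_\ell$ is then extended to all of $\integers^2$ by the rule $W_\ell(i'+p,j'-p)=W_\ell(i',j')$. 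One then checks that each $W_\ell$ is a perfect matching on $\integers$ (using $\bar\pi_\ell$ together with its inverse and the unique representative/offset decomposition $i'=\bar i+np$), and that $\sum_\ell W_\ell=W$ reduces, by periodicity, to equality on a single fundamental domain, where it is immediate from the defining bijections $\phi_{\bar i,\bar j}$.

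The main obstacle is the bookkeeping of the lift: one has to confirm both that each $W_\ell$ really is a perfect matching of $\integers$---not merely a function whose quotient is a perfect matching of $\integers/p\integers$---and that the sum of the lifts recovers $W$ on $\integers^2$ rather than merely $\bar W$ on the quotient. Both facts follow because the fiber bijections $\phi_{\bar i,\bar j}$ may be chosen independently in each fiber, and because the $(i,j)\mapsto(i+p,j-p)$ symmetry automatically upgrades a matching on the fundamental domain $\{0,\ldots,p-1\}$ to a matching on all of $\integers$.
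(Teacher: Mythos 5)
Your proof is correct, and it shares with the paper the central idea of collapsing the $p$-periodicity to a finite $r$-regular bipartite multigraph on $\{0,\ldots,p-1\}\amalg\{0,\ldots,p-1\}$ (your $\bar W$ is exactly the paper's edge-count $e(i,j)$). The difference is in how you extract and lift the decomposition. The paper applies Hall's theorem once, obtains a \emph{single} perfect matching $\pi$ of the quotient multigraph, lifts it by choosing for each $i\in\{0,\ldots,p-1\}$ any $j\equiv\pi(i)\bmod p$ with $W(i,j)\ge1$, extends by the rule $\pi_1(i+pm)=\pi_1(i)-pm$, and then passes to $W-W_1$ by induction on $r$. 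You instead invoke K\"onig's edge-coloring theorem to get all $r$ quotient matchings at once, and then perform one simultaneous lift via fiber bijections $\phi_{\bar i,\bar j}\from K_{\bar i,\bar j}\to M_{\bar i,\bar j}$; the bookkeeping you flag as the main obstacle is handled correctly, since the fibers partition both the color set and the positions, and the extension rule $W_\ell(i'+p,j'-p)=W_\ell(i',j')$ forces exactly one $1$ per row and per column. Each approach has a natural tradeoff: the paper's iterative extraction needs only the simplest possible lift (pick any representative) at the cost of an induction, while your one-shot extraction avoids induction at the cost of tracking fiber bijections. Since K\"onig's theorem is itself typically proved by iterating Hall's condition, the underlying combinatorics is the same; the difference is packaging.
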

\begin{proof}
We will first prove that
there is a perfect matching $W_1$ such that
$W_1(i,j)\le W(i,j)$ for all $i,j$; if so, then $W-W_1$ is an
$(r-1)$-fold matching, and hence we prove the theorem
by induction on $r$.

Let $G=(V,E)$ be the bipartite graph, where $V=V_L\amalg V_R$
and all edges run from left to right, where 
$V_L=V_R=\{0,1,\ldots,p-1\}$, and the number of edges running from
$i\in V_L$ to $j\in V_R$ is just 
$$
e(i,j)=\sum_{m\equiv j \bmod p} W(i,j) .
$$
Then $G$ is a finite bipartite graph that is $r$-regular on both sides,
i.e., each vertex is incident upon exactly $r$ edges.
It then follows that if $V'\subset V_L$, and
$\Gamma(V')$ denotes the set of neighbours of $V_L$, i.e., of vertices
(in $V_R$) adjacent to some vertex of $V'$, then
$|\Gamma(V')|\ge |V'|$ (since $V'$ is incident upon $r|V'|$ edges, whose
right endpoints span at least $|V'|$ vertices).
Similarly if $V'\subset V_R$, then also $|\Gamma(V')|\ge |V'|$.
Then Hall's theorem implies that $G$ has a perfect matching, i.e.,
a subgraph $G'=(V,E')$ where each vertex is adjacent to
exactly one vertex.
This gives us a bijection $\pi\from V_L\to V_R$ such that 
$e(i,\pi(i))\ge 1$.  For each $i=0,\ldots,p-1$ 
choose a $j=\pi_1(i)$ such that
$j\equiv \pi(i)\bmod p$ such that
$W(i,j)\ge 1$.
Now extend $\pi_1(i): \{0,\ldots,p-1\}\to\integers$ as a function
$\integers\to\integers$ by setting for all $m\in\integers$ and 
$i\in\{0,\ldots,p-1\}$
$$
\pi_i(i+pm) = \pi(i)-pm.
$$
It follows that $\pi$ is a bijection, and that its associated
weight, $W_1$, satisfies $W(i,j)\ge 1$ whenever $W_1(i,j)=1$.
Hence $W_1\le W$ and we have our desired $W_1$.
\end{proof}

The next case we prove the same theorem without the assumption
of periodicity.

\begin{theorem}\label{th_halls_theorem_infinite_non_periodic}
Let $W$ be an $r$-fold matching.  Then there exist perfect matchings
$W_1,\ldots,W_r$ whose sum is $W$.
\end{theorem}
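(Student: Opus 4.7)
The plan is to reduce to showing that any $r$-fold matching $W$ (with $r \ge 1$) contains a perfect matching $W_1$ with $W_1 \le W$ pointwise; then $W - W_1$ is an $(r-1)$-fold matching and induction on $r$ (with the trivial base $r=0$) yields the desired decomposition.

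To construct such a $W_1$, I would view $W$ as the edge-multiplicity function of a bipartite multigraph $G$ on vertex set $V_L \sqcup V_R$ with $V_L = V_R = \integers$, placing $W(i,j)$ parallel edges between $i \in V_L$ and $j \in V_R$. Because $W$ is initially and eventually zero, its support is confined to a band $\{(i,j) : A \le i+j \le B\}$ for some $A \le B$, so every vertex of $G$ has degree exactly $r$ and at most $B-A+1$ neighbors; in particular $G$ is locally finite. Moreover, for any finite $F \subset V_L$, double-counting the $r|F|$ edges leaving $F$ (each endpoint in $V_R$ absorbing at most $r$) yields $|N_G(F)| \ge |F|$, and symmetrically on $V_R$, so Hall's condition holds on both sides.

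With these properties in hand, the proof can be completed in either of two equivalent ways. Most cleanly, one invokes K\"onig's edge-coloring theorem for locally finite bipartite multigraphs: any such graph of maximum degree $r$ admits a proper edge-coloring with $r$ colors, and in the $r$-regular case each color class is automatically a perfect matching, yielding $W_1$. For a self-contained argument, one applies the finite version of K\"onig's theorem to the subgraphs $G_N$ obtained by restricting to $V_L^N := \{-N,\ldots,N\}$ and $V_R^N := \{A-N,\ldots,B+N\}$, chooses a color class $M_N$ of $G_N$ saturating $V_L^N$, and extracts a pointwise limit $W_1$ using compactness of $\{0,1\}^{\integers^2}$; each row sum of $W_1$ equals $1$ because every $i \in V_L$ has only finitely many possible neighbors and is matched in every $M_N$ with $N$ sufficiently large.

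The main obstacle is ensuring that $W_1$ saturates both $V_L$ and $V_R$ simultaneously, rather than just one side, so that it is truly a perfect matching. In the finite subgraph $G_N$, vertices of $V_L^N$ have full degree $r$ while ``boundary'' vertices of $V_R^N$ may have smaller degree, so a given color class could omit some of them. This is addressed by enlarging the window before selecting the color class: in $G_{N'}$ for $N' \gg N$, every vertex of $\{-N,\ldots,N\} \subset V_R^{N'}$ has full degree $r$ in $G_{N'}$, so K\"onig's theorem forces every color class of $G_{N'}$ to saturate these vertices as well. Taking $M_N$ to be the restriction of such a color class and passing to the limit produces a $W_1$ whose row sums \emph{and} column sums all equal $1$, completing the construction and hence the induction.
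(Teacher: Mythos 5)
Your proof is correct and follows essentially the same strategy as the paper's: reduce by induction to extracting a single perfect submatching $W_1 \le W$, obtain finite matchings on ever-larger finite portions of the associated locally finite bipartite graph via a finite bipartite theorem, and pass to a pointwise limit by compactness/diagonalization. The only real difference is the choice of finite tool and the bookkeeping for two-sided saturation: you invoke K\"onig's $r$-edge-coloring theorem on finite restrictions $G_{N'}$ (so that any color class automatically saturates every full-degree vertex, and enlarging the window makes the desired vertices on both sides full degree), whereas the paper builds the finite matchings by the augmenting-path proof of Hall's theorem and handles both sides at once by enumerating all of $V_L\sqcup V_R$ as a single sequence $v_1,v_2,\ldots$ and saturating increasing prefixes --- interchangeable devices leading to the same compactness limit.
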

The proof is well-known, based on the
a general principle that Philip Hall's ``marriage theorem''
holds on a bipartite
graph with countably many vertices on each side, provided that
each vertex has finitely many neighbours
(namely, in our case, at most $r$ neighbours);
see Marshall Hall's textbook
(e.g., Theorem~5.1.2 of \cite{marshall_hall_textbook}); we give a
proof---in terms of our language---for ease of reading this article.
\begin{proof}
Again, it suffices to show that there is a perfect matching $W_1$ such that
$W_1(i,j)\le W(i,j)$ for all $i,j$, and then to prove the above theorem
by induction on $r$.

Consider the bipartite graph, $G=(V,E)$, $V=V_L\amalg V_R$
with $V_L=V_R=\integers$, and where the number of edges from $i\in V_L$
to $j\in V_R$ is $W(i,j)$.
Let $V$ be enumerated as $v_1,v_2,\ldots$ .
Again, $|\Gamma(V')|\ge |V'|$ since $G$ is $r$-regular on both sides,
and hence the augmenting path technique to prove Hall's theorem shows
that for any $m$ there is a matching $G_m=(V_m,E_m)\subset G$,
meaning that each vertex of $V_m$ is adjacent via $E_m$ to exactly
one other vertex of $V_m$, with the property that
$\{v_1,\ldots,v_m\}\subset V_m$.

Now we use $G_1,G_2,\ldots$ to build a perfect matching
$G'\subset G$.
Namely, since $v_1\subset V_m$ for $m\ge 2$, and since $v_1$ is adjacent
to at most $r$-vertices in $G$, there is an (infinite) subsequence of
$G_1,G_2,\ldots$
in which $v_1$ is adjacent to some fixed vertex of $V$;
next we choose a further infinite subsequence
in which $v_2$ is adjacent to some fixed vertex of $V$;
we similarly apply this process to $v_3,v_4,\ldots$.
This gives a fixed matching defined on $v_1,v_2,\ldots$,
and therefore on all of $V$.
Hence we get a bijection $\pi_1\from\integers\to\integers$
such that for all $i\in\integers$, $W(i,\pi_1(i))\ge 1$.
Hence we take $W$ to be the weight associated to $\pi_1$.
\end{proof}

\subsection{Main Lemmas about Weights
$\integers^2\to\integers$ as an Alternating Sum of Perfect Matchings}

In this subsection we prove that any slowly growing Riemann function
has a weight that can be written as the alternating sum of perfect
matchings.  It will be convenient to prove a more general result,
namely Lemma~\ref{le_row_col_sums_weight_as_perfect} below.

\begin{definition}
Let $r\in\integers$.  By an {\em $r$-regular weight} we mean a
function $W\from\integers^2\to\integers$ such that:
\begin{enumerate}
\item $W$ is initially and eventually zero;
\item 
each row sum and each column sum of $W$ equals $r$;
\item 
for some $C\in\naturals$, for all $\mec d\in\integers^2$ we have
$|W(\mec d)|\le C$.
\end{enumerate}
\end{definition}

The rest of this subsection is devoted to proving the following lemma.

\begin{lemma}\label{le_row_col_sums_weight_as_perfect}
Let $r\in\integers$,
and let $W$ be a $r$-regular weight.
Then
for some $\ell\in\naturals$ we may write
\begin{equation}\label{eq_W_alternating_sum}
W = (W_1+W_2 + \cdots + W_\ell)  - (\tilde{W_1} + \cdots + \tilde{W}_{\ell-r})
\end{equation}
where $W_1,\ldots,W_{\ell}$ and $\tilde{W_1}, \ldots, \tilde{W}_{\ell-r}$  
are perfect matchings.
Moreover, if $W$ is $p$-periodic, then we may take each
$W_i$ and $\tilde{W_i}$ to be $p$-periodic.
\end{lemma}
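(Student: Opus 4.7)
The plan is to reduce the lemma to the infinite Hall-type results already established (Theorems \ref{th_halls_theorem_infinite_non_periodic} and \ref{th_halls_theorem_infinite_periodic}) by adding a carefully chosen non-negative auxiliary matching $M$ to $W$, so that $W+M$ becomes a non-negative $s$-fold matching for some $s \ge \max(1-r,0)$.

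First I would observe that, since $W$ is initially and eventually zero, its support lies in a strip $a \le \deg(\mec d) \le b$ for some integers $a \le b$. For each $d \in [a,b]$, the ``anti-diagonal'' function $M_d(i,j) := 1$ if $i+j=d$ and $0$ otherwise is a perfect matching (corresponding to the bijection $i \mapsto d-i$), and it is $1$-periodic, hence $p$-periodic for every $p$. Letting $C$ be the bound on $|W|$ given in the definition of $r$-regular, I set
$$
M := C \sum_{d=a}^{b} M_d,
$$
adding extra anti-diagonals $M_d$ at additional values of $d$ if necessary so that the common row/column sum $s$ satisfies $s \ge 1 - r$ (needed only when $r$ is very negative, to guarantee $\ell := r+s \in \naturals$). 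Then $M$ is a non-negative function, initially and eventually zero, with all row and column sums equal to $s$; i.e.\ an $s$-fold matching.

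Since $M(\mec d) \ge C \ge |W(\mec d)|$ on the strip, and $W \equiv 0$ off the strip, $W+M$ is non-negative everywhere, initially and eventually zero, with row and column sums equal to $r+s = \ell$; hence $W+M$ is an $\ell$-fold matching. By Theorem \ref{th_halls_theorem_infinite_non_periodic}, we may write $W+M = W_1 + \cdots + W_\ell$ and $M = \tilde W_1 + \cdots + \tilde W_s$ as sums of perfect matchings. Subtracting yields
$$
W = (W_1 + \cdots + W_\ell) - (\tilde W_1 + \cdots + \tilde W_{\ell - r}),
$$
since $\ell - r = s$, which is the desired decomposition.

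For the periodic case, since each $M_d$ is $p$-periodic for every $p$, $M$ is automatically $p$-periodic, hence so is $W+M$ when $W$ is. We then appeal instead to Theorem \ref{th_halls_theorem_infinite_periodic} to carry out the two decompositions via $p$-periodic perfect matchings. The argument is essentially routine once the infinite Hall theorems are in hand; the only real point requiring care is arranging that $M$ simultaneously dominates $|W|$ on its support, has constant row and column sums, and inherits whatever periodicity $W$ has---which the uniform-height stack of shifted anti-diagonals handles cleanly. The main ``obstacle,'' such as it is, is the book-keeping to guarantee $\ell \ge 1$ when $r$ is negative, which is why I allow $s$ to be enlarged beyond $C(b-a+1)$.
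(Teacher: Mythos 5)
Your proof is correct, and it takes a genuinely different route from the paper's.  Both proofs handle negative values of $W$ the same way: adding a stack of $1$-periodic anti-diagonals ($C(W_a+\cdots+W_b)$ in the paper's notation, your $M$) to push $W$ into the non-negative regime, then subtracting them back off at the end.  Where you diverge is in how the resulting non-negative $\ell$-fold matching is decomposed into perfect matchings.  You invoke Theorems~\ref{th_halls_theorem_infinite_non_periodic} and~\ref{th_halls_theorem_infinite_periodic} directly, which is the natural thing to do and gives a short proof.  The paper deliberately avoids those theorems (it even remarks, just before stating them, that the infinite Hall theorems ``are not essential to the rest of this article''), and instead proves the non-negative case by a double induction on the width of the degree-support strip and on $\max_t W(t,-t)$, using Lemmas~\ref{le_support_degree_within_one} and~\ref{le_inductive_step} to peel off $U_S$-type differences supported in three consecutive degrees.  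The upshot is that the paper's argument is longer and more hands-on but self-contained (it never needs the compactness/subsequence argument used to prove the infinite non-periodic Hall theorem), while yours is simply cleaner and shorter at the cost of making those Hall theorems load-bearing.  Your bookkeeping to ensure $\ell\ge 1$ (padding $M$ with extra anti-diagonals when $r$ is very negative, so that $s\ge 1-r$) is also fine and handles the edge case $r\le 0$ correctly; note that $M$ itself decomposes trivially into $s$ anti-diagonals, so invoking Hall for $M$, as you briefly do, is unnecessary but harmless.
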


Lemma~\ref{le_row_col_sums_weight_as_perfect} is immediate if
there exists an $a\in\integers$,
such that $W(\mec d)=0$ whenever $\deg(\mec d)\ne a$, for then
we must have $W(\mec d)=r$ whenever $\deg(\mec d)=a$.
It will be helpful to introduce some definitions and notation
related to this simple observation.

\begin{definition}
For $b\in\integers$, the
{\em perfect matching in degree $b$}, denoted $W_b$,
refers to the perfect matching given by
\begin{equation}\label{eq_W_b_one_periodic}
W_b(\mec d)=
\left\{
\begin{array}{ll}
1 & \mbox{if $\deg(\mec d)=b$, and} \\
0 & \mbox{otherwise.} \\
\end{array}\right.
\end{equation}
\end{definition}
Hence each $W_b$ is $1$-periodic (and any perfect matching that is
$1$-periodic is of this form).

\begin{definition}
Let $W\from\integers^2\to\integers$ be any function.  The {\em support of $W$}
is the set of $\mec d\in\integers^2$ such that $W(\mec d)\ne 0$.
If $a,b\in\integers$ with $a<b$,
we say that $W$ {\em is supported in degrees $a$ through $b$}
if the support of $W$ is a subset of those $\mec d\in\integers^2$
with $a\le \deg(\mec d)\le b$.
\end{definition}

\begin{lemma}\label{le_support_degree_within_one}
Let $W$ be an $r$-regular weight for some $r\in\integers$, 
and say that
there exists an $a\in\integers$ such that $W$ is supported in
degrees $a$ and $a+1$.
Then for some $c\in\integers$ we have
$$
W(\mec d) =
\left\{
\begin{array}{ll}
c & \mbox{if $\deg(\mec d)=a$,} \\
r-c & \mbox{if $\deg(\mec d)=a+1$, and} \\
0 & \mbox{otherwise.}
\end{array}\right.
$$
Moreover, $W$ can be written as a difference of a 
sum of perfect matchings, each of which equals either $W_a$ or 
$W_{a+1}$ with notation as in \eqref{eq_W_b_one_periodic}.
\end{lemma}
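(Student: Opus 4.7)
My plan is first to pin down the exact form of $W$ using the row-sum and column-sum constraints, and then to package the resulting formula as a difference of copies of $W_a$ and $W_{a+1}$.

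First step: since $W$ is supported only in degrees $a$ and $a+1$, for each fixed row index $i$ the only entries that can be nonzero are $u_i := W(i, a-i)$ and $v_i := W(i, a+1-i)$, and the row-sum condition gives $u_i + v_i = r$. Similarly, for each fixed column index $j$ the only possibly nonzero entries are $W(a-j, j)$ and $W(a+1-j, j)$, so the column-sum condition reads $u_{a-j} + v_{a+1-j} = r$; substituting $i = a+1-j$ (so $j = a+1-i$ and $a-j = i-1$) rewrites this as $u_{i-1} + v_i = r$. Subtracting from $u_i + v_i = r$ yields $u_i = u_{i-1}$ for every $i \in \integers$, so $u_i$ is a constant $c$ and hence $v_i = r - c$. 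This gives exactly the formula claimed in the statement.

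Second step: with this formula in hand we have $W = c\, W_a + (r-c)\, W_{a+1}$ where $W_a$ and $W_{a+1}$ are the perfect matchings from \eqref{eq_W_b_one_periodic}. Writing each of the (possibly negative) integers $c$ and $r-c$ as a difference of non-negative integers in the obvious way, $c = c^+ - c^-$ and $r - c = (r-c)^+ - (r-c)^-$, we get
$$
W = \bigl( c^+\, W_a + (r-c)^+\, W_{a+1} \bigr) - \bigl( c^-\, W_a + (r-c)^-\, W_{a+1} \bigr),
$$
which realizes $W$ as the difference of two non-negative integer combinations of $W_a$ and $W_{a+1}$, as required.

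I do not anticipate any serious obstacle: everything reduces to two linear equations on diagonals and a trivial sign bookkeeping at the end. The only point worth a sentence of care is that the support hypothesis reduces the a priori infinite row and column sums to two terms each, so that the sum conditions make sense term by term and give genuine equalities of integers.
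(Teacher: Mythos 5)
Your proof is correct and follows essentially the same approach as the paper: use the row-sum and column-sum constraints along the two diagonals $\deg=a$ and $\deg=a+1$ to show every degree-$a$ entry equals a constant $c$ (and every degree-$(a+1)$ entry equals $r-c$), then write $W = cW_a + (r-c)W_{a+1}$. Your sign-splitting $c = c^+ - c^-$, $r-c = (r-c)^+ - (r-c)^-$ is in fact marginally more robust than the paper's version (which implicitly leans on $r\ge 0$ so that at least one of $c$, $r-c$ is non-negative), but it is the same idea.
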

\begin{proof}
Let $W(0,a)=c$.  Then $W(0,a+1)=W(1,a)=r-c$, given that the $0$-th column
sum and $a$-th row sum of $W$ both equal $r$.
Similarly, $W(1,a-1)=W(-1,a+1)=c$.  It then follows by induction on
$m=2,3,\ldots$ that $W(m,a-m)=W(-m,a+m)=c$ and $W(m,a-m+1)=W(-m,a+m+1)=r-c$.

For the second claim, we have
\begin{equation}\label{eq_W_as_degree_a_a_plus_one_perfect}
W = c W_a + (r-c) W_{a+1};
\end{equation} 
if $r,c-r$ are both non-negative, 
then \eqref{eq_W_as_degree_a_a_plus_one_perfect}
expresses $W$ as a sum
of $r$ perfect matchings; 
otherwise 
\eqref{eq_W_as_degree_a_a_plus_one_perfect} expresses $W$
as a difference of two sums
of perfect matchings (since $r\ge 0$, and hence at least one of $r,c-r$
is non-negative).
\end{proof}

To prove
Lemma~\ref{le_row_col_sums_weight_as_perfect}
we will use
induction on $b-a$ where $W$ is supported on elements of 
degrees between $a$ and $b$.  The discussion above deals with the
cases where $b=a$ or $b=a+1$.  Let us explain the inductive step.
For this it will be helpful to introduce the following notation:
first, let $U\from\integers^2\to\integers$ be the function
$$
U(\mec d) =
\left\{
\begin{array}{ll}
1 & \mbox{if $\mec d=(0,0),(1,1)$,} \\
-1 & \mbox{if $\mec d=(1,0),(0,1)$, and} \\
0 & \mbox{otherwise.} \\
\end{array}\right. 
$$
[For intuition, note that all row and columns sums of $U$ equal $0$.]
For a doubly-infinite sequence $S=\{\ldots,s_{-1},s_0,s_1,\ldots\}$ of
integers we use the notation $U_S$ to denote
the function $\integers^2\to\integers$ given by
$$
U_S(\mec d) = 
\sum_{i\in\integers} U\bigl(\mec d + (i,-i) \bigr) s_i 
$$
[for intuition, it may help to observe that
$U_S$ is the convolution of $U$ with the function supported in
degree $0$ taking $(i,-i)$ to
$s_{-i}$].
Hence we have $U_S$ is supported on $\mec d$ of degrees $0,1,2$, and
for all $a\in\integers$, $U_S(a,-a)=s_a$.
Clearly all row and column sums of $U$ are zero, and hence the same
holds of $U_S$.

Here is the essential ingredient in our inductive step.

\begin{lemma}\label{le_inductive_step}
Let $S=\{\ldots,s_{-1},s_0,s_1,\ldots\}$ be any doubly-infinite sequence
of elements of $\{0,1\}$.
Then $U_S$ can be written as the difference of a sum of perfect matchings
supported in degrees $0$ through $2$.
Furthermore if for some $p\in\naturals$ we have 
$s_{a+p}=s_a$ for all $a\in\integers$, then the perfect matchings in
the sums can be taken to be $p$-periodic.
\end{lemma}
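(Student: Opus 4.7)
The plan is to exhibit an auxiliary $2$-fold matching $V$ such that $V-U_S$ is again a $2$-fold matching, and then to invoke Theorem~\ref{th_halls_theorem_infinite_non_periodic} (or Theorem~\ref{th_halls_theorem_infinite_periodic} in the periodic case) to split off that difference.

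Concretely, I would take $V=W_0+W_2$ in the notation of~\eqref{eq_W_b_one_periodic}, i.e., the sum of the $1$-periodic perfect matchings supported on the degree-$0$ and degree-$2$ diagonals of $\integers^2$. A short direct computation from the definition of $U_S$ (in which, for each point on one of the three relevant diagonals, at most one term in $\sum_i U(\mec d+(i,-i))s_i$ survives) shows that $U_S$ is supported in degrees $0,1,2$ and takes values
$$
U_S(a,-a)=s_{-a},\qquad U_S(a,1-a)=-s_{-a}-s_{1-a},\qquad U_S(a,2-a)=s_{1-a}.
$$
Since $s_i\in\{0,1\}$, subtracting these from $V$ yields $(V-U_S)$-values $1-s_{-a}$, $s_{-a}+s_{1-a}$, $1-s_{1-a}$ at the three diagonal points (and zero elsewhere), all lying in $\{0,1,2\}$, and a one-line check confirms that the row and column sums of $V-U_S$ are each $2$. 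Hence $V-U_S$ is a $2$-fold matching, supported in degrees $0$ through $2$.

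At this stage Theorem~\ref{th_halls_theorem_infinite_non_periodic} produces perfect matchings $\tilde W_1,\tilde W_2$ with $V-U_S=\tilde W_1+\tilde W_2$, and each $\tilde W_i$ inherits the support of $V-U_S$ and so is supported in degrees $0$ through $2$. Rearranging gives
$$
U_S=W_0+W_2-\tilde W_1-\tilde W_2,
$$
which is the required expression. For the periodic refinement, the hypothesis $s_{a+p}=s_a$ for all $a$ translates, via the explicit formulas above, into $U_S$ being invariant under translation by $(p,-p)$; since $W_0$ and $W_2$ are $1$-periodic, so is $V$, and hence $V-U_S$ is $p$-periodic. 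Theorem~\ref{th_halls_theorem_infinite_periodic} then provides $p$-periodic $\tilde W_1,\tilde W_2$, and the same identity supplies the $p$-periodic decomposition.

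The one non-mechanical step is the choice of the auxiliary $V$: symmetric-looking candidates such as $2W_1$ (twice the degree-$1$ matching) force $V-U_S$ to be negative somewhere in degrees $0$ and $2$, whereas $V=W_0+W_2$ is precisely balanced to absorb the two non-negative diagonals of $U_S$ while letting the negative diagonal contribute only a non-negative residue. Once $V$ is in hand, the lemma reduces to the short computation above together with the two infinite Hall-type theorems already established in this section.
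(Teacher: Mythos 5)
Your proof is correct and takes a genuinely different route from the paper's. The key computation---that $U_S(a,-a)=s_{-a}$, $U_S(a,1-a)=-s_{-a}-s_{1-a}$, $U_S(a,2-a)=s_{1-a}$, all else zero---is accurate, and from it the three checks follow cleanly: $V-U_S=W_0+W_2-U_S$ is non-negative (each of $1-s_{-a}$, $s_{-a}+s_{1-a}$, $1-s_{1-a}$ lies in $\{0,1,2\}$), has every row and column sum equal to $2$, and is supported in degrees $0$ through $2$. Thus $V-U_S$ is a $2$-fold matching, and Theorem~\ref{th_halls_theorem_infinite_non_periodic} (or Theorem~\ref{th_halls_theorem_infinite_periodic} once you observe that $s_{a+p}=s_a$ forces $U_S$, and hence $V-U_S$, to be $p$-periodic, since $V$ is $1$-periodic) splits it into two perfect matchings, each of which inherits the support of $V-U_S$; rearranging gives the desired decomposition.

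The paper's proof is instead a hands-on construction: it first treats the case where $s_a=0$ on the odd indices (building an explicit $W$ with $U_S=W-W_1$), then splits $S=S_{\rm even}+S_{\rm odd}$, and for the periodic assertion it must treat $p=1$, even $p\ge 2$, and odd $p\ge 3$ as three separate cases because the even/odd split destroys periodicity when $p$ is odd. Your approach buys uniformity: a single argument covers the non-periodic and periodic statements and needs no parity casework, and at the same time it demonstrates the utility of the two infinite Hall theorems established just above in the section. The trade-off is that the paper's route produces the perfect matchings explicitly (useful if one wants to inspect them) and keeps the number of matchings at two per elementary piece, whereas yours yields exactly $W_0+W_2-\tilde W_1-\tilde W_2$ via an abstract existence statement; for the purposes of Lemma~\ref{le_row_col_sums_weight_as_perfect}, either suffices. (One small remark: the paper's text asserts $U_S(a,-a)=s_a$, but the convolution description it gives and the explicit formula both yield $s_{-a}$, as you found; this is an inconsequential indexing slip in the paper, not an issue with your argument.)
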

\begin{proof}
First consider the lemma in case where $S$ is not assumed to be
periodic, in the special case where
for all $a\in\integers$ with $a$ odd we have $s_a=0$;
let us prove the lemma in this situation.
Let $W_1$ be as 
\eqref{eq_W_b_one_periodic} and let $W$ be given as follows:
for all $t\in\integers$
\begin{enumerate}
\item
if $s_{2t}=1$, 
$W(2t,-2t)=W(2t+1,-2t+1) = 1$;
\item
if $s_{2t}=0$,
$W(2t+1,-2t)=W(2t,-2t+1) = 1$;
\item
all other values of $W$ not specified above are zero.
\end{enumerate}
We easily check that:
\begin{enumerate}
\item
$W$ and $W_1$ are perfect matchings;
\item 
$W,W_1$ are supported
in degrees $0$ through $2$; 
\item 
we have $W-W_1=U_S$.
\end{enumerate}
Hence this proves the lemma in this special case of $S$.

We may similarly show the case where for all $a\in\integers$ with $a$ even we
have $s_a=0$ (i.e., by translating
the construction in the last paragraph by $(1,-1)$).

In general we can write $S=S_{\rm even}+S_{\rm odd}$ (where $+$ means
adding the sequences element-by-element), where
\begin{align*}
S_{\rm even} = (\ldots,s_{-2}, 0,& s_0,0,s_2,0\ldots) \\
S_{\rm odd} = (\ldots,0,s_{-1}, & 0,s_1,0,s_3,\ldots) 
\end{align*}
As such we have
$$
U_S = U_{S_{\rm even}} + U_{S_{\rm odd}},
$$
and now we can write $U_{S_{\rm even}}$ and
$U_{S_{\rm odd}}$ each as the difference of two
perfect matchings.
This proves the lemma in the non-periodic case.

Next consider the case when $S$ is $p$-periodic.  

If $p=1$, then
$s_a$ are all $1$ or $0$; if they are all $0$ then $U_S$ is identically
zero, and otherwise $U_S=W_0$ with notation as in
\eqref{eq_W_b_one_periodic}.

Hence we may assume $p\ge 2$.  
If $p$ is even, then we can write $U_S$ as above, and notice that
$S_{\rm even}$ and $S_{\rm odd}$ are $p$-periodic.
Then follows that when we write 
$U_{S_{\rm even}}=W-W_1$ as above, both $W$ and $W_1$ are $p$-periodic
($W_1$ is $1$-periodic),
and similarly for $U_{S_{\rm odd}}$.
This solves the lemma in this case.

The only case that remains is when $S$ is $p$-periodic
when $p\ge 3$ is odd (in which case
$S_{\rm even},S_{\rm odd}$ are not $p$-periodic).
In this case we take a similar
approach, being careful to have the matchings all $p$-periodic as follows:
first consider the special
case of $S$ for which $s_a=0$ whenever $a\in\integers$ is not divisible by $p$.
For each $t\in\integers$ we let $W$ be the following perfect matching:
\begin{enumerate}
\item
if $s_{pt}=1$,
$W(pt,-pt)=W(pt+1,-pt+1) = 1$;
\item
if $s_{pt}=0$,
$W(pt+1,-pt)=W(pt,-pt+1) = 1$;
\item
for all $a$ with $a\bmod p\ne 0,1$, $W(a,1-a)=1$;
\item
all other values of $W$ not specified above are zero.
\end{enumerate}
We then have that $W$ is $p$-periodic, and $U_S=W-W_1$.
This solves the lemma in this case.

Similarly, we solve the lemma in case for some $i=1,\ldots,p-1$
we have $s_a=0$ for all $a\in\integers$ with $a \bmod p\ne i$.

For the general case of $S$ $p$-periodic with $p\ge 3$ odd,
we write
$$
S = S_0 + \cdots + S_{p-1},
$$
where for $i=0,\ldots,p-1$, $(S_i)_a=0$ if $a\bmod p\ne i$.
Then we write each $U_{S_i}$ as a difference of periodic matchings
supported in degrees $0$ through $2$, and use
$$
U_S = U_{S_0} + \cdots + U_{S_{p-1}} 
$$
to write $U_S$ as the difference of
sum of $p$ perfect matchings, each $p$-periodic.
\end{proof}

\begin{proof}[Proof of Lemma~\ref{le_row_col_sums_weight_as_perfect}]
First let us prove the lemma in the case where $W$ attains only
non-negative values.

Let us prove the lemma in this case by induction on $m=0,1,\ldots$
for all $W$ supported in degrees $a$ through $a+m$.
The cases $m=0,1$ are given in
Lemma~\ref{le_support_degree_within_one}.
Now consider the inductive step, where the lemma holds for $m-1$
for some $m\ge 2$ and we wish to prove it for $m$.
By translating $W$ we may assume that it is supported in
degrees $0$ through $m$.
Let 
$$
C=\max_{t\in\integers} W(t,-t).
$$
Now let us prove our desired inductive step, i.e., that the 
lemma holds for $W$ supported in degrees $0$ through $m$,
by using induction on $C=0,1,\ldots$
For $C=0$, it follows that $W$ is supported in degrees $1$ through $m$,
and hence by translation we can reduce the theorem to the case $m-1$.

Now say the claim holds for some value of $m$ and $C\ge 0$,
and say that $W(t,-t)\le C+1$.  Let
$S=\{\ldots,s_{-1},s_0,s_1,\ldots\}$ be given by
$$
s_t = \max\bigl( 1, W(t,-t) \bigr) .
$$
Then $s_t\in\{0,1\}$ for all $t$, and if $W$ is $p$-periodic then
$s_{t+p}=s_t$ for all $t\in\integers$.
According to Lemma~\ref{le_inductive_step} we can find
a difference of sums of perfect matchings supported in degrees $0$
through $2$---all $p$-periodic if
$W$ is $p$-periodic---whose value at $(a,-a)$ equal $s_a$.
Subtracting this difference of sums from $W$ we get $W'$ where
$W'(t,-t)=W(t,-t)-s_t$, to which we can apply the inductive claim.

This proves the lemma assuming $W(\mec d)\ge 0$ for all $\mec d$.
If $W$ is supported in degrees $a$ and $b$ and is bounded in absolute
value by $C\in\naturals$, then with
notation in \eqref{eq_W_b_one_periodic} we
have 
$$
W' = W+ C(W_a+\cdots+W_b)
$$
attains only non-negative values for some $C$ sufficiently large,
and is an $r'$-fold matching for $r'=r+C(b-a+1)$.
Hence we apply the lemma to $W'$, and then subtract $C(W_a+\cdots+W_b)$.
\end{proof}

\section{Diagrams, Betti Numbers, and Models for Riemann Functions Whose 
Weight is a Perfect Matchings}
\label{se_diagrams_Betti_perfect}

In this section we introduce our basic models and develop some of
their properties.
We will especially study those related to Riemann functions
$f\from\integers^2\to\integers$ whose weight is a perfect matching;
such functions have a number of especially remarkable
properties.

\subsection{Conventions Regarding Linear Algebra: Cohomology,
Betti Numbers, Fredholm Maps, and Direct Sums}

In this subsection we some basic concepts in linear algebra that
we will need to compute the sheaf invariants of interest to us.
Our motivation is that the invariants of the sheaves that we use
can be computed as the kernel and the cokernel\footnote{
  If $\tau\from B\to A$ is a linear map of vector spaces, the
  {\em cokernel} of $\tau$ is $A/{\rm Image(\tau)}$.
  }
of an associated linear map.

Let $k$ be a field and $\tau\from B\to A$ a linear map
of $k$-vector spaces $B,A$.
For $i=0,1$
we define the {\em $i$-th cohomology group
of $\tau$} to be, respectively
$$
H^0(\tau)\eqdef \ker(\tau), \quad
H^1(\tau)\eqdef \coker(\tau),
$$
and the {\em $i$-th Betti number} of $\tau$ to be
$b^i(\tau)=\dim_k H^i(\tau)$;
we say that $\tau$ is a {\em $k$-Fredholm map},
or simply {\em Fredholm}, if both Betti numbers are finite,
and if so we define
the {\em Euler characteristic of $\tau$} (also known is its {\em index})
to be
$$
\chi(\tau)=b^0(\tau)-b^1(\tau) ;
$$
if exactly one of $b^0(\tau)$ and $b^1(\tau)$ is infinite, we
may also define $\chi(\tau)$ as $\pm\infty$ accordingly.

[Hence the $i$-th cohomology group and $i$-th Betti number
of $\tau$ is the usual notion when we view $\tau$ as a chain
$$
\cdots \to 0 \to B \to A \to 0 \to \cdots
$$
with $B$ positioned in degree $0$.]

If $\{B_i\}_{i\in I}$ is a family of $k$-vector spaces indexed on a
set $I$, we define its {\em direct sum}, denoted
$\oplus_{i\in I} B_i$ as usual, i.e., the vector
space of tuples $\{b_i\}_{i\in I}$ such that each $b_i\in B_i$ and
all but finitely many of the $b_i$ are zero.
If $\tau_i\from B_i\to A_i$ is a family of $k$-linear maps of vector
spaces indexed on $i\in I$, we define the {\em direct sum} of
$\{\tau_i\}_{i\in I}$ as usual, i.e., as the map
$$
\bigoplus_{i\in I} \tau_i
\from
\bigoplus_{i\in I} B_i
\to
\bigoplus_{i\in I} A_i;
$$
we easily check that for $j=0,1$ we have a simple isomororphism
$$
H^j\biggl( \bigoplus_{i\in I}\tau_i \biggr) \isom
\bigoplus_{i\in I} H^j(\tau_i),
$$
and hence Betti numbers
$$
b^j\biggl( \bigoplus_{i\in I}\tau_i \biggr) =
\sum_{i\in I} b^j(\tau_i),
$$
so that if all the $\tau_i$ are Fredholm maps, where all but finitely
many of the $\tau_i$ have both Betti numbers equal to zero, we get a
finite and well-defined Euler characteristic
$$
\chi \biggl( \bigoplus_{i\in I}\tau_i \biggr) =
\sum_{i\in I} \chi(\tau_i).
$$

\subsection{$k$-Diagrams: Diagrams of $k$-Vector Spaces}
\label{su_k_diagram_basics}

Our models of Riemann functions will be $k$-linear maps
$\tau\from B\to A$ which are built from one fixed type 
of ``diagram'' of vector spaces, depicted in
Figure~\ref{fi_our_diagrams_new_again}
and which we now make precise.
 \begin{figure}
\hspace*{2cm}\begin{tikzpicture}[scale=0.75]
\node (B1) at (0,2) {$\cF(B_1)$};
\node (B2) at (0,-2) {$\cF(B_2)$};
\node (B3) at (0,0) {$\cF(B_3)$};
\node (A1) at (8,1) {$\cF(A_1)$};
\node (A2) at (8,-1) {$\cF(A_2)$};
\draw [->] (B1) -- (A1) node [midway,above] {$\cF(\rho_{1,1})$} ;
\draw [->] (B2) -- (A2) node [midway,below] {$\cF(\rho_{2,2})$} ;
\draw [->] (B3) -- (A1) node [midway,above] {$\cF(\rho_{3,1})$} ;
\draw [->] (B3) -- (A2) node [midway,below] {$\cF(\rho_{3,2})$} ;
\end{tikzpicture}
\caption{Our Diagrams}
\label{fi_our_diagrams_new_again}
\end{figure}
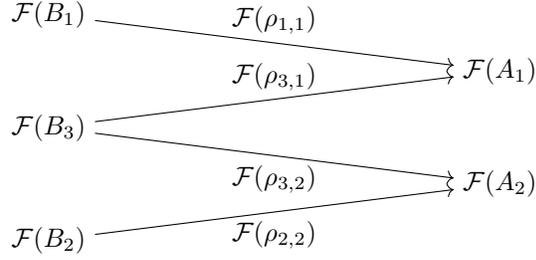
%
%

\begin{definition}\label{de_diagram_k_vs}  
Let $k$ be a field.
By a {\em diagram of $k$-vector spaces}, or simply a
{\em $k$-diagram} we mean a collection, $\cF$,
of data consisting of:
\begin{enumerate}
\item
five $k$-vector spaces, 
$$
\cF(B_1),\cF(B_2),\cF(B_3),\cF(A_1),\cF(A_2)
$$
called the {\em values} of $\cF$;
and
\item
$k$-linear maps $\cF(\rho_{i,j})\from \cF(B_i)\to \cF(A_j)$ for the pairs 
$(i,j)$ where $(i,j) \in \{ (1,1),(2,2),(3,1),(3,2) \}$  
(i.e., $\cF(\rho_{1,2})$ and $\cF(\rho_{2,1})$ don't exist); we call the
$\cF(\rho_{ij})$ the {\em restriction maps} of $\cF$.
\end{enumerate}
To this diagram we associate the vector spaces
$$
\cF(B)=\cF(B_1)\oplus\cF(B_2)\oplus\cF(B_3),\quad
\cF(A)=\cF(A_1)\oplus\cF(A_2),
$$
and the
linear transformation
$\cF(\partial)\from \cF(B)\to \cF(A)$, called the
{\em differential of $\cF$}, given by
\begin{equation}\label{eq_formula_for_cF_partial}
\cF(\partial)(b_1,b_2,b_3) =
\bigl(
\cF(\rho_{1,1})(b_1)-\cF(\rho_{3,1})(b_3) ,
\cF(\rho_{2,2})(b_2) - \cF(\rho_{3,2})(b_3)
\bigr),
\end{equation} 
and define the {\em zeroth and first cohomology groups of $\cF$}
to be, respectively
$$
H^0(\cF) \eqdef \ker\bigl( \cF(\partial) \bigr), 
\quad
H^1(\cF) \eqdef \coker\bigl( \cF(\partial) \bigr),
$$
i.e.,  
the kernel and cokernel of
$\cF(\partial)$.
If $(b_1,b_2,b_3)\in H^0(\cF)$, and $a_j=\cF(\rho_{jj})b_j$ for 
$j=1,2$, 
then the tuple $(b_1,b_2,b_3,a_1,a_2)$ satisfies
$\cF(\rho_{ij})b_i=a_j$ whenever $\cF(\rho_{ij})$ is defined,
and we refer to $(b_1,b_2,b_3,a_1,a_2)$ as a
{\em global section of $\cF$}.
\end{definition}

\begin{convention}
When we speak of a {\em $k$-vector space}
or a {\em $k$-diagram} without prior reference to $k$, we understand
$k$ to be an arbitrary field.
\end{convention}

Note that if $(b_1,b_2,b_3,a_1,a_2)$ is a global section, then
$(b_1,b_2,b_3)\in H^0(\cF)$; this therefore gives a 
bijection between global sections of $\cF$ and $H^0(\cF)$, i.e.,
the kernel of the differential $\cF(\partial)$ of $\cF$.
[Global sections tend to be conceptually more useful, 
but equivalent
descriptions are useful in certain computations; we will
later give another equivalent description of global sections
as elements of $\Hom(\underline k,\cF)$.]

In Section~\ref{se_duality_second} we will explain that our choice of
$\cF(\partial)$ and $H^1(\cF)$ are not canonical, 
but involve a
choice of basis for each of two one-dimensional vector spaces;
see the proof of
Lemma~\ref{le_projective_resolution_of_underline_k} and the remark
after its proof;
however, this choice does not affect $H^0(\cF)$, i.e.,
the kernel of $\cF(\partial)$.

\subsection{Conventions on Sets, Multisets, Induced Vector Spaces,
and $\cM_{W,\mec d}$ for
Non-Negative Weights}

\begin{definition}
Let $k$ be a field.  If $S$ is a set, we use $k^{\oplus S}$ to
denote the $k$-vector space that is direct sum of one copy of $k$ for
each element of $S$, i.e., whose elements are collections
$\{v_s\}_{s\in S}$ with $v_s\ne 0$ for at most finitely many
values of $s$; for $s\in S$, we use $\mec e_s$ to denote the
vector that is $1$ in component $s$ and $0$ elsewhere.
If $T$ is another set and $\alpha\from S\to T$ a map of sets, then
$\alpha$ gives rise to a unique
$k$-linear transformation, denoted $k^{\oplus\alpha}$,
from $k^{\oplus S}\to k^{\oplus T}$
taking $\mec e_s$ to $\mec e_{\alpha(s)}$.
If $S\subset T$, then the inclusion map $\iota\from S\to T$
gives an injection $k^{\oplus\iota}$ which
we call the {\em inclusion map (of $k^{\oplus S}$ to $k^{\oplus T}$)}.
\end{definition}
In the above one easily checks that if 
$\alpha$ is an injection, surjection, or bijection, then 
the same is true of $k^{\oplus\alpha}$.
Next we fix a convention for multisets (any reasonable 
convention would suffice).

\begin{definition}
Let $S_1,S_2$ be sets, and $W\from S_1\times S_2\to 
\integers_{\ge 0}\cup\{\infty\}$.
The {\em multiset on $S_1\times S_2$ with multiplicities $W$}
refers to the set
\begin{equation}\label{eq_multiset_convention}
{\rm Multi}(W)
= \{ (s_1,s_2,i) \in S_1\times S_2\times\naturals \ | \ i\le W(s_1,s_2) \} ,
\end{equation} 
where if $W(s_1,s_2)=\infty$, then we view all $i$ as satisfying 
$i\le W(s_1,s_2)$.
We refer to the maps 
${\rm Multi}(W)\to S_1$ and 
${\rm Multi}(W) \to S_2$
taking $(s_1,s_2,i)$ to, respectively, $s_1$ and $s_2$,
as, respectively, the {\em first and second projections}.
We use the notation $k^{\oplus W}$ to denote $k^{\oplus{\rm Multi}(W)}$,
which comes with maps 
\begin{equation}\label{eq_oplus_W_notation}
{\rm proj}_i \from k^{\oplus W}\to k^{\oplus S_i}
\end{equation} 
induced by the first and second projections.
The {\em support of $W$} is the set of $(s_1,s_2)\in S_1\times S_2$
such that $W(s_1,s_2)\ge 1$.
When $W$ takes on only the values $\{0,1\}$, then with mild abuse of notation
we may identify ${\rm Multi}(W)$ with its support, which is a subset of
$S_1\times S_2$, since in this case $W$ is determined by its support.
\end{definition}

\begin{example}
If $W\from\integers^2\to\integers$ is a perfect matching,
and $\pi\from\integers\to\integers$ is its associated bijection,
then $k^{\oplus W}$ has one copy of $k$ for each pair
$(a_1,\pi(a_1))\in\integers^2$ varying over all $a_1\in\integers$.
In this case we may identify $k^{\oplus W}$ with $k^\integers$,
where the first projection is the identity map on $k^\integers$,
and the second projection is the map $k^\integers\to k^\integers$
takes $\mec e_{a_1}$ to $\mec e_{\pi(a_1)}$.
Hence both maps
\eqref{eq_oplus_W_notation} are isomorphisms.
\end{example}

\begin{definition}\label{de_cM_W_d}
Let $k$ be a field,
$W\from\integers^2\to\integers_{\ge 0}\cup\{\infty\}$, 
and $\mec d\in\integers^2$.  
We use $\cM_{W,\mec d}$ to denote the following $k$-diagram
(Definition~\ref{de_diagram_k_vs}): 
\begin{enumerate}
\item 
for $i=1,2$,
$\cM_{W,\mec d}(B_i)=k^{\oplus\integers_{\le d_i}}$, 
$\cM_{W,\mec d}(A_i)=k^{\oplus\integers}$,
$\rho_{i,i}$ is the inclusion,
\item 
$B_3=k^{\oplus W}$, and for $j=1,2$, $\rho_{3,j}$ are
the projection maps
(as in \eqref{eq_oplus_W_notation}).
\end{enumerate}
\end{definition}
We depict these $k$-diagrams in Figure~\ref{fi_cM_W_d}.

 \begin{figure}
\hspace*{2cm}
\begin{tikzpicture}[scale=0.60]
\node (B1) at (0,2) {$\cM_{W,\mec d}(B_1)=k^{\oplus\integers_{\le d_1}}$};
\node (B2) at (0,-2) {$\cM_{W,\mec d}(B_2)=k^{\oplus\integers_{\le d_2}}$};
\node (B3) at (0,0) {$\cM_{W,\mec d}(B_3)=k^{\oplus W}$};
\node (A1) at (10,1) {$k^{\oplus\integers}=\cM_{W,\mec d}(A_1$)};
\node (A2) at (10,-1) {$k^{\oplus\integers}=\cM_{W,\mec d}(A_2$)};
\draw [->] (B1) -- (A1) node [midway,above] {$\rho_{1,1}={\rm inclusion}$} ;
\draw [->] (B2) -- (A2) node [midway,below] {$\rho_{2,2}={\rm inclusion}$} ;
\draw [->] (B3) -- (A1) node [midway,above] {$\rho_{3,1}$} ;
\draw [->] (B3) -- (A2) node [midway,below] {$\rho_{3,2}$} ;
\end{tikzpicture}
\caption{The $k$-Diagram $\cM_{W,\mec d}$.}
\label{fi_cM_W_d}
\end{figure}

The cohomology groups of $\cM_{W,\mec d}$ are therefore the kernel and
cokernel of the maps
$$
\tau_{W,\mec d}= \cM_{W,\mec d}(\partial)
\from
k^{\oplus\integers_{\le d_1}}
\oplus
k^{\oplus W}
\oplus
k^{\oplus\integers_{\le d_2}}
\to
k^{\oplus\integers}
\oplus
k^{\oplus\integers} 
$$
given as the map
$$
(b_1,b_3,b_2) \mapsto 
\bigl(b_1- k^{{\rm pr}_1}(b_3),b_2- k^{{\rm pr}_2}(b_3) \bigr),
$$
where ${\rm pr}_i$ denotes the $i$-th projection 
$k^{\oplus W}\to k^{\oplus\integers}$.

\subsection{The Euler Characteristic of
$\cM_{W,\mec d}$ as a Function of $\mec d$ and Riemann Functions}

Before computing the Betti numbers of $\cM_{W,\mec d}$ for specific
$W$ of interest, we wish to point out some general properties
of their Betti numbers and Euler characteristics.
In particular,
we will prove that if for some $\mec d$ and $W$
we have that $\chi(\cM_{W,\mec d})$ is well-defined,
i.e., at least one of the Betti numbers of $\cM_{W,\mec d}$ is finite, then
\begin{equation}\label{eq_Euler_characteristic_fixed_W_varying_mec_d}
\chi(\cM_{W,\mec d+\mec e_1})=
\chi(\cM_{W,\mec d+\mec e_2})=
\chi(\cM_{W,\mec d}) + 1 .
\end{equation} 
This is an easy consequence of the following lemma.

\begin{lemma}\label{le_codimension_one_cons}
Let $\tau\from B\to A$ be a linear map of $k$-vector spaces, 
and let $B'\subset B$ be a subspace of codimension one, and 
let $\tau'=\tau|_{B'}$, i.e., the restriction of $\tau$ to $B'$.
Then if either $\chi(\tau)$ or $\chi(\tau')$ is well defined, then so
is the other, and 
\begin{equation}\label{eq_chi_of_tau_and_one_dim_restriction}
\chi(\tau)=\chi(\tau')+1.
\end{equation} 
In more detail, 
either
\begin{equation}\label{eq_first_case_Betti_number_shift}
b^0(\tau)=
b^0(\tau')+1 \quad\mbox{and}\quad
b^1(\tau)=
b^1(\tau')
\end{equation} 
or
\begin{equation}\label{eq_second_case_Betti_number_shift}
b^0(\tau)=
b^0(\tau') \quad\mbox{and}\quad
b^1(\tau)=
b^1(\tau')-1 ,
\end{equation} 
where we allow for these Betti numbers to equal $\infty$, in which case
$\infty\pm 1$ is taken to $\infty$.
\end{lemma}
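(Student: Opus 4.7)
The plan is to choose any $v\in B\setminus B'$, so that $B=B'\oplus kv$, and then split into two cases according to whether $\tau(v)$ lies in the image of $\tau'$ or not. In the first case I expect to gain a new kernel element, and in the second a new image element, which accounts exactly for the two possibilities \eqref{eq_first_case_Betti_number_shift} and \eqref{eq_second_case_Betti_number_shift}.

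More concretely, suppose first that $\tau(v)\in\image(\tau')$, say $\tau(v)=\tau(b')$ with $b'\in B'$. Then $w\eqdef v-b'$ lies in $\ker(\tau)$ and, since $v\notin B'$, also $w\notin B'$, so $w\notin\ker(\tau')$. A short argument using $B=B'\oplus kv$ would then give $\ker(\tau)=\ker(\tau')\oplus kw$, hence $b^0(\tau)=b^0(\tau')+1$ (including the case $b^0(\tau')=\infty$). Since $\image(\tau)$ is spanned by $\image(\tau')$ together with $\tau(v)\in\image(\tau')$, one gets $\image(\tau)=\image(\tau')$, so $b^1(\tau)=b^1(\tau')$. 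This yields \eqref{eq_first_case_Betti_number_shift}.

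In the remaining case $\tau(v)\notin\image(\tau')$, the sum $\image(\tau')+k\tau(v)$ is direct and equals $\image(\tau)$, so $b^1(\tau)=b^1(\tau')-1$ (again allowing $\infty$). For the kernel, any element of $\ker(\tau)$ has the form $b'+cv$ with $b'\in B'$, $c\in k$; applying $\tau$ gives $c\tau(v)=-\tau(b')\in\image(\tau')$, which forces $c=0$ by the case hypothesis, and then $b'\in\ker(\tau')$. Thus $\ker(\tau)=\ker(\tau')$ and $b^0(\tau)=b^0(\tau')$, giving \eqref{eq_second_case_Betti_number_shift}.

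In either case $\chi(\tau)=\chi(\tau')+1$ whenever either Euler characteristic is defined, so \eqref{eq_chi_of_tau_and_one_dim_restriction} follows. The only point to watch is the bookkeeping with the value $\infty$: one should check directly in each case that finiteness of one Betti number on one side is equivalent to finiteness of the corresponding Betti number on the other side, which is immediate from the identifications of kernels and images given above. I do not expect any real obstacle; the statement is essentially the rank-nullity theorem for the one-dimensional quotient $B/B'$.
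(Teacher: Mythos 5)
Your proposal is correct and follows essentially the same idea as the paper's proof: the paper observes that $\ker(\tau')=\ker(\tau)\cap B'$ has codimension $0$ or $1$ in $\ker(\tau)$ and splits into those two cases, and your dichotomy $\tau(v)\in\image(\tau')$ versus $\tau(v)\notin\image(\tau')$ is exactly the same case distinction (the first is equivalent to $\ker(\tau)\not\subset B'$, the second to $\ker(\tau)\subset B'$). Your write-up simply carries out the verification that the paper leaves as a one-line claim.
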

\begin{proof}
The proof of this lemma
is straightforward: since $B'$ has codimension $1$ in $B$,
$\ker(\tau')=\ker(\tau)\cap B'$ has either codimension $1$ or $0$ in
$\ker(\tau)$; we easily verify that codimension $1$ implies
\eqref{eq_first_case_Betti_number_shift} and codimension $0$ implies
\eqref{eq_second_case_Betti_number_shift}.
Both cases~\eqref{eq_first_case_Betti_number_shift}
and~\eqref{eq_second_case_Betti_number_shift} imply
\eqref{eq_chi_of_tau_and_one_dim_restriction}.
\end{proof}

\begin{corollary}\label{co_cM_Euler_char}
Let $W\from\integers^2\to\integers_{\ge 0}$.
Then for any $\mec d\in\integers^2$ and $i=1,2$, we have
that the conclusions of Lemma~\ref{le_codimension_one_cons}
hold for $\tau=\cM_{W,\mec d+\mec e_i}(\partial)$ and
$\tau'=\cM_{W,\mec d}(\partial)$.
In particular, for any $\mec d$ and $i=1,2$ we have
\begin{equation}\label{eq_Betti_zero_cM_slowly_growing}
b^0(\cM_{W,\mec d}) \le 
b^0(\cM_{W,\mec d+\mec e_i}) \le 
b^0(\cM_{W,\mec d}) + 1
\end{equation} 
(which makes sense if these Betti numbers equal $+\infty$, in which
case the above reads $+\infty\le+\infty\le+\infty$) and
\begin{equation}\label{eq_Betti_one_cM_slowly_decreasing}
b^1(\cM_{W,\mec d}) + 1 \ge 
b^1(\cM_{W,\mec d+\mec e_i}) \ge 
b^1(\cM_{W,\mec d}) .
\end{equation} 
In particular, if at least one of the Betti numbers of $\cM_{W,\mec d}$
is finite, or one of 
$\cM_{W,\mec d+\mec e_1}$ or 
$\cM_{W,\mec d+\mec e_2}$,
then \eqref{eq_Euler_characteristic_fixed_W_varying_mec_d}
holds.
\end{corollary}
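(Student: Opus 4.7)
The plan is to reduce everything to a direct application of Lemma~\ref{le_codimension_one_cons}. The core observation is that when one passes from $\mec d$ to $\mec d+\mec e_i$, only one of the five vector spaces in the diagram $\cM_{W,\mec d}$ changes, and it grows by exactly one dimension. Specifically, for $i\in\{1,2\}$ we have
\[
\cM_{W,\mec d+\mec e_i}(B_i) = k^{\oplus\integers_{\le d_i+1}}, \qquad \cM_{W,\mec d}(B_i) = k^{\oplus\integers_{\le d_i}},
\]
and the latter is a codimension-one subspace of the former (the quotient being spanned by the basis vector $\mec e_{d_i+1}$). The remaining values of the two diagrams, namely $B_{3-i}$, $B_3$, $A_1$, and $A_2$, coincide, as do all the restriction maps $\rho_{j,\ell}$ (since inclusion maps $k^{\oplus\integers_{\le d_i}}\hookrightarrow k^{\oplus\integers}$ and $k^{\oplus\integers_{\le d_i+1}}\hookrightarrow k^{\oplus\integers}$ agree on the common subspace).

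From the formula \eqref{eq_formula_for_cF_partial} for $\cF(\partial)$, it follows that the total source space $\cM_{W,\mec d}(B)$ is a codimension-one subspace of $\cM_{W,\mec d+\mec e_i}(B)$, the total target space $\cM_{W,\mec d}(A) = \cM_{W,\mec d+\mec e_i}(A)$ is unchanged, and $\cM_{W,\mec d}(\partial)$ is precisely the restriction of $\cM_{W,\mec d+\mec e_i}(\partial)$ to this codimension-one subspace. Thus the hypotheses of Lemma~\ref{le_codimension_one_cons} are met with $\tau = \cM_{W,\mec d+\mec e_i}(\partial)$ and $\tau' = \cM_{W,\mec d}(\partial)$.

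Applying Lemma~\ref{le_codimension_one_cons} directly yields the two-case dichotomy \eqref{eq_first_case_Betti_number_shift}--\eqref{eq_second_case_Betti_number_shift}, and the bounds \eqref{eq_Betti_zero_cM_slowly_growing} and \eqref{eq_Betti_one_cM_slowly_decreasing} are immediate consequences of taking these two cases together (in both cases $b^0$ either stays the same or increases by $1$, and $b^1$ either stays the same or decreases by $1$). The final assertion about \eqref{eq_Euler_characteristic_fixed_W_varying_mec_d} follows from \eqref{eq_chi_of_tau_and_one_dim_restriction}: if any one of the four Betti numbers in question is finite, then by the dichotomy the Euler characteristic is well-defined on both sides and differs by exactly $1$. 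There is essentially no obstacle here; the only thing that requires care is verifying explicitly that the restriction of $\cM_{W,\mec d+\mec e_i}(\partial)$ to the subspace $\cM_{W,\mec d}(B) \subset \cM_{W,\mec d+\mec e_i}(B)$ agrees with $\cM_{W,\mec d}(\partial)$, which is immediate from inspecting the components of \eqref{eq_formula_for_cF_partial}.
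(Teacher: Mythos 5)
Your proposal is correct and takes the only natural approach, which is precisely what the paper intends: the corollary is stated without an explicit proof because it is viewed as an immediate application of Lemma~\ref{le_codimension_one_cons}. You have correctly identified the essential verification that the paper leaves unsaid, namely that passing from $\mec d$ to $\mec d+\mec e_i$ enlarges only $\cM_{W,\mec d}(B_i)=k^{\oplus\integers_{\le d_i}}$ to $k^{\oplus\integers_{\le d_i+1}}$ (a codimension-one inclusion) while leaving all other values, all restriction maps, and the target $\cM_{W,\mec d}(A)$ unchanged, so that $\cM_{W,\mec d}(\partial)$ is exactly the restriction of $\cM_{W,\mec d+\mec e_i}(\partial)$ to a codimension-one subspace of the source. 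The inequalities and the Euler-characteristic conclusion then follow from the two-case dichotomy of the lemma exactly as you say.
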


Applying this corollary repeatedly we get the following result.

\begin{theorem}\label{th_cM_Euler_char_repeated_Riemann_functions}
Let $W\from\integers^2\to\integers_{\ge 0}$ be a function such that 
$\chi(\cM_{W,\mec d})$ is well defined
for some $\mec d\in\integers^2$. 
Then for all $\mec d'\in\integers^2$,
$\chi(\cM_{W,\mec d'})$ is well defined, and 
\begin{equation}\label{eq_chi_cM_mec_d_mec_d_prime}
\chi(\cM_{W,\mec d+\mec d'}) = \chi(\cM_{W,\mec d})+\deg(\mec d');
\end{equation} 
equivalently, for all $\mec d\in\integers^2$ we have
\begin{equation}\label{eq_cM_mec_d_alternating_Betti_is_Riemann}
b^0(\cM_{W,\mec d}) - b^1(\cM_{W,\mec d}) 
=\ \chi(\cM_{W,\mec d}) = \deg(\mec d)+C,
\quad\mbox{where}\quad
C=\chi(\cM_{W,\mec 0}).
\end{equation} 
Furthermore, if
for $\deg(\mec d)$ sufficiently small we have
$b^0(\cM_{W,\mec d})=0$,
and
for $\deg(\mec d)$ sufficiently large we have
$b^1(\cM_{W,\mec d})=0$,
then $f(\mec d)=b^0(\cM_{W,\mec d})$ is a 
slowly growing Riemann function, and
for any $\mec K$ and $\mec L=\mec K+\mec 1$ we have
$$
b^1(\cM_{W,\mec d}) = 
f^\wedge_{\mec K}(\mec K-\mec d).
$$
\end{theorem}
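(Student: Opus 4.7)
The plan is to bootstrap everything from Corollary \ref{co_cM_Euler_char}, which supplies the one-step relations: passing from $\mec d$ to $\mec d+\mec e_i$ preserves well-definedness of $\chi$ and shifts it by $+1$, while simultaneously satisfying the pinch inequalities \eqref{eq_Betti_zero_cM_slowly_growing} and \eqref{eq_Betti_one_cM_slowly_decreasing}. Everything in the theorem is obtained by walking through $\integers^2$ via the generators $\pm\mec e_1,\pm\mec e_2$ and iterating these one-step statements.

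First I would prove \eqref{eq_chi_cM_mec_d_mec_d_prime}. Fix $\mec d^*\in\integers^2$ at which $\chi(\cM_{W,\mec d^*})$ is well-defined. For any $\mec d'\in\integers^2$, choose a path from $\mec d^*$ to $\mec d^*+\mec d'$ consisting of steps of the form $\pm\mec e_i$. Applying Corollary \ref{co_cM_Euler_char} to each step (reading Lemma \ref{le_codimension_one_cons} in the reverse direction for a $-\mec e_i$ step) shows that well-definedness of $\chi$ propagates along the path and that $\chi$ shifts by $+1$ or $-1$ according to the sign of the step. Summing the signed steps yields $\deg(\mec d')$, so $\chi(\cM_{W,\mec d^*+\mec d'})=\chi(\cM_{W,\mec d^*})+\deg(\mec d')$. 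Setting $\mec d^*=\mec 0$ and $\mec d'=\mec d$ gives \eqref{eq_cM_mec_d_alternating_Betti_is_Riemann} with $C=\chi(\cM_{W,\mec 0})$.

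Second, under the additional hypotheses, I verify that $f(\mec d)=b^0(\cM_{W,\mec d})$ is a slowly growing Riemann function. The hypothesis that $b^0(\cM_{W,\mec d})=0$ for $\deg(\mec d)$ small supplies a point where $\chi$ is well-defined, so the first part applies on all of $\integers^2$. For an arbitrary $\mec d\in\integers^2$, pick $\mec d_0\le\mec d$ with $\deg(\mec d_0)$ small enough that $b^0(\cM_{W,\mec d_0})=0$, and $\mec d_1\ge\mec d$ with $\deg(\mec d_1)$ large enough that $b^1(\cM_{W,\mec d_1})=0$; iterating \eqref{eq_Betti_zero_cM_slowly_growing} along a monotone chain from $\mec d_0$ up to $\mec d$ keeps $b^0$ finite, and iterating \eqref{eq_Betti_one_cM_slowly_decreasing} along a monotone chain from $\mec d$ up to $\mec d_1$ keeps $b^1$ finite. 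The bound \eqref{eq_Betti_zero_cM_slowly_growing} is itself the definition of slowly growing (Definition \ref{de_slowly_growing}), while the hypotheses give $f(\mec d)=0$ for $\deg(\mec d)$ small and $f(\mec d)=\chi(\cM_{W,\mec d})=\deg(\mec d)+C$ for $\deg(\mec d)$ large (since $b^1=0$ there). Thus $f$ is a Riemann function with offset $C$.

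For the final formula, using $\chi=b^0-b^1$ together with the already-proved identity $\chi(\cM_{W,\mec d})=\deg(\mec d)+C$ gives
\begin{equation*}
b^1(\cM_{W,\mec d}) = b^0(\cM_{W,\mec d}) - \chi(\cM_{W,\mec d}) = f(\mec d)-\deg(\mec d)-C,
\end{equation*}
which, by Definition \ref{de_generalized_Riemann_Roch_formula}, is exactly $f^\wedge_{\mec K}(\mec K-\mec d)$; note this is independent of $\mec K$, as anticipated in the discussion following that definition, so the auxiliary $\mec L=\mec K+\mec 1$ plays no role here (it will be needed later when relating $\cM_{W,\mec d}$ to the weight of $f^\wedge_{\mec K}$). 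There is no serious obstacle in the argument; the only bookkeeping worth flagging is ensuring finiteness of both Betti numbers at an arbitrary $\mec d$, which requires walks in \emph{both} the $\ge$ and $\le$ directions rather than a single path as in the first part.
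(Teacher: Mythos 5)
Your proof is correct and follows essentially the same strategy as the paper's: both arguments bootstrap from Corollary~\ref{co_cM_Euler_char}, using the fact that Lemma~\ref{le_codimension_one_cons} is symmetric (well-definedness of $\chi$ propagates in both directions), and both conclude the final formula by combining $\chi=b^0-b^1$ with \eqref{eq_generalized_riemann_roch}. The only cosmetic difference is in how you traverse $\integers^2$: the paper first goes up from $\mec d$ to a large $\mec a=(a_1,a_2)$ and then comes back down to reach an arbitrary $\mec d'$, while you take a single path with mixed $\pm\mec e_i$ steps directly from $\mec d^*$ to $\mec d^*+\mec d'$; these are equivalent given the bi-directionality of the lemma. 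Your observation about needing separate monotone chains in both directions to pin down finiteness of $b^0$ and $b^1$ at an arbitrary $\mec d$ is a fair point and is implicit (but less explicit) in the paper's appeal to Corollary~\ref{co_cM_Euler_char}. One small thing worth flagging, though it does not affect your argument: the displayed inequality \eqref{eq_Betti_one_cM_slowly_decreasing} in the paper has its direction reversed (it should read $b^1(\cM_{W,\mec d}) \ge b^1(\cM_{W,\mec d+\mec e_i}) \ge b^1(\cM_{W,\mec d})-1$, as follows from the two cases of Lemma~\ref{le_codimension_one_cons}); your use of it matches the intended, correct inequality rather than the one as printed.
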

\begin{proof}
Applying Corollary~\ref{co_cM_Euler_char} repeatedly we see that
if $\chi(\cM_{W,\mec d})$ is well defined, then 
\eqref{eq_chi_cM_mec_d_mec_d_prime} holds for all
$\mec d'\ge\mec d$, and 
in particular for $a_1,a_2$ sufficiently large we have
\begin{equation}\label{eq_chi_cM_W_mec_a}
\chi(\cM_{W,a_1\mec e_1+a_2\mec e_2})=
\chi(\cM_{W,\mec d}) + a_1+a_2-d_1-d_2 .
\end{equation} 
Then applying Corollary~\ref{co_cM_Euler_char} repeatedly to
$\mec a=(a_1,a_2)$ we have that for all $\mec d'\le\mec a$
$$
\chi(\cM_{W,\mec d'}) = \chi(\cM_{W,a_1\mec e_1+a_2\mec e_2}) 
+d'_1+d'_2-a_1-a_2 ,
$$
which, in view of \eqref{eq_chi_cM_W_mec_a}, equals
the left-hand-side of \eqref{eq_chi_cM_mec_d_mec_d_prime}.
Hence \eqref{eq_chi_cM_mec_d_mec_d_prime} holds for all $\mec d'$.
Applying this with $\mec d'$ replaced with an arbitrary 
$\mec d''\in\integers^2$ and subtracting yields
$$
\chi(\cM_{W,\mec d''}) - \chi(\cM_{W,\mec d'}) 
= \deg(\mec d''-\mec d'),
$$
for all $\mec d'',\mec d'$, and setting $\mec d'=\mec 0$ and
$\mec d''=\mec d$ yields
\eqref{eq_cM_mec_d_alternating_Betti_is_Riemann}.

Setting $f(\mec d)=b^0(\cM_{W,\mec d})$, we have that if
$f(\mec d)=0$ for $\deg(\mec d)$ sufficiently small, then
$f$ is intially zero; according to
Corollary~\ref{co_cM_Euler_char}, $f(\mec d)$ is finite for all
$\mec d$ and,
by \eqref{eq_Betti_zero_cM_slowly_growing},
it is slowly growing; if $b^1(\cM_{W,\mec d})=0$ for 
$\deg(\mec d)$ large,
then for such $\mec d$ we have
$$
f(\mec d)=b^0(\cM_{W,\mec d}) = b^0(\cM_{W,\mec d})-
b^1(\cM_{W,\mec d})=\deg(\mec d)+C,
$$
and hence $f(\mec d)=b^0(\cM_{W,\mec d})$ is a Riemann function.
It is slowly growing by \eqref{eq_Betti_zero_cM_slowly_growing}.
In view of, \eqref{eq_generalized_riemann_roch},
for any $\mec K\in\integers^2$ we have
$$
f(\mec d)-f^\wedge_{\mec K}(\mec K-\mec d)=\deg(\mec d)+C
$$
and so
$$
f^\wedge_{\mec K}(\mec K-\mec d) = b^1(\cM_{W,\mec d}).
$$
\end{proof}

[In terms of sheaf theory, the above lemmas and corollaries
express the fact that the two $k$-diagrams~$\cM_{W,\mec d}$
and~$\cM_{W,\mec d+\mec e_i}$
fit into a short exact
sequence with a {\em skyscraper sheaf} supported at $B_i$ whose value is
$k$; see Subsection~\ref{su_skyscraper}.]

\subsection{Simple Examples of $\cM_{W,\mec d}$ Betti Number Bounds}

We remark that without assumptions on $W$, the cohomology groups
and Betti numbers of $\cM_{W,\mec d}$ may not be finite (or 
particularly interesting).

\begin{example}
Let $W=0$.  Then the kernel of 
$\tau_{W,\mec d}=\cM_{W,\mec d}(\partial)$ is zero, and
its cokernel can be identified with
$$
k^{\oplus\integers_{\ge d_1+1}}
\oplus
k^{\oplus\integers_{\ge d_2+1}},
$$
which is infinite dimensional.
Hence $b^0(\cM_{W,\mec d})=0$, $b^1(\cM_{W,\mec d})=+\infty$.
\end{example}

%
%

\begin{example}
We easily see that if $W(\mec d)=2$ for some $\mec d\in\integers^2$,
then ${\rm Multi}(W)$ contains some elements of the form $(d_1,d_2,1),(d_1,d_2,2)$,
and if $b_3=\mec e_{(d_1,d_2,2)}-\mec e_{(d_1,d_2,1)}$, then
$(0,b_3,0)\in\ker(\tau_{W,\mec d})$.  Similarly if $W(d_1,d_2)\ge m$
for some $m\ge 3$, with $b_3=\mec e_{(d_1,d_2,m)} - \mec e_{(d_1,d_2,1)}$ we see that
\begin{equation}\label{eq_betti_zero_cM_W_lower_bound}
b^0(\cM_{W,\mec d}) \ge \sum_{\mec d\in\integers^2} \bigl( W(\mec d)-1\bigr).
\end{equation} 
\end{example}

\begin{example}
We say that $s_1\in\integers$ is {\em isolated in the first component of $W$}
if 
$W(s_1,s_2)=0$ for all $s_2$;
we similarly define when an $s_2\in\integers$ is
isolated from the second component of $W$. 
If $s_1\ge d_1+1$, then all elements of the image of $\tau_{W,\mec d}$
have a zero coefficient in the $\mec e_{s_1}$ component
in the $\cM_{W,\mec d}(A_1)$ summand of the codomain (or range) of
$\tau_{W,\mec d}$; similarly if $s_2\ge d_2+1$ is missing form
the second component of $W$.
It follows that
\begin{equation}\label{eq_betti_one_cM_W_lower_bound}
b^1(\cM_{W,\mec d}) \ge 
\bigl|{\rm Iso}_{1,\ge d_1+1}\bigr|
+ \bigl| {\rm Iso}_{2,\ge d_2+1}  \bigr|
\end{equation} 
where ${\rm Iso}_{1,\ge d_1+1}$ is the set of isolated $s_1$ in the first
component of $W$ with
$s_1\ge d_1+1$, and similarly
for ${\rm Iso}_{2,\ge d_2+1}$.
\end{example}

\begin{example}
Let $W$ be given as $W(d_1,d_2)=2$ if $d_1=0$, and $W(d_1,d_2)=0$ if $d_1\ne 0$.
Then the 
bounds~\eqref{eq_betti_zero_cM_W_lower_bound}
and~\eqref{eq_betti_one_cM_W_lower_bound}
show that
$b^0(\cM_{W,\mec d})=b^1(\cM_{W,\mec d})=+\infty$.
\end{example}

\subsection{The Betti Numbers of $\cM_{W,\mec d}$ for Perfect Matchings}

In the case $W\from\integers^2\to\{0,1\}$ is a perfect matching, it
is easy to determine its Betti numbers.

\begin{theorem}\label{th_perfect_matching}
Let $W\from\integers^2\to\integers$ be a perfect matching.  Then
\begin{enumerate}
\item
$b^0(\cM_{W,\mec d})$ equals the number of $\mec a\in\integers^2$ such
that $W(\mec a)=1$ and $\mec a\le \mec d$, and hence
$$
f(\mec d)\eqdef b^0(\cM_{W,\mec d}) = (\fraks W)(\mec d);
$$
\item
more precisely,
$H^0(\cM_{W,\mec d})$ has a basis consisting of
\begin{equation}\label{eq_basis_for_H_zero_perfect_matching}
\bigl(\mec e_{a_1},\mec e_{(a_1,a_2)},\mec e_{a_2}\bigr)\in 
\ker\bigl(\cM_{W,\mec d}(\partial) \bigr)
\quad\mbox{s.t.}\quad
\mec a\le\mec d;
\end{equation} 
\item
$b^1(\cM_{W,\mec d})$ equals the number of $\mec a\in\integers^2$
such that $W(\mec a)=1$
and $\mec a\ge \mec d+\mec 1$; and
\item
more precisely,
if $\pi$ is the bijection associated to $W$, then
$H^1(\cM_{W,\mec d})$ has a basis consisting of
the images in $H^1(\cM_{W,\mec d})$ of
\begin{equation}\label{eq_basis_for_H_one_perfect_matching}
\mec e_{a_1}\in\cM_{W,\mec d}(A_1) 
\quad\mbox{s.t.}\quad
a_1\ge d_1+1,\ a_2\ge d_2+1.
\end{equation} 
\end{enumerate}
In particular 
$b^i(\cM_{W,\mec d})$ is finite for all $i=0,1$ and all $\mec d\in\integers^2$,
and is zero when $i=0$ and $\deg(\mec d)$ is sufficiently small
or when $i=1$ and $\deg(\mec d)$ is sufficiently large; furthermore
for some $C\in\integers$ we have
$$
\chi(\cM_{W,\mec d})=\deg(\mec d) +C ,
$$
and, moreover, $C=\chi(\cM_{W,\mec 0})$.
Hence for any $\mec K\in\integers^2$
and $\mec L=\mec K+\mec 1$ we have
$$
b^1(\cM_{W,\mec d}) = f^\wedge_{\mec K}(\mec K-\mec d)
=(\fraks W^*_{\mec L})(\mec K-\mec d).
$$
\end{theorem}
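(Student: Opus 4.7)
The plan is to work in coordinates adapted to the bijection $\pi\from\integers\to\integers$ associated to the perfect matching $W$, so that $k^{\oplus W}$ is canonically identified with $k^{\oplus\integers}$ via $\mec e_{a_1}\leftrightarrow \mec e_{(a_1,\pi(a_1))}$. Under this identification, the first projection $\cM_{W,\mec d}(\rho_{3,1})$ is the identity $k^{\oplus\integers}\to k^{\oplus\integers}$, while the second projection $\cM_{W,\mec d}(\rho_{3,2})$ sends $\mec e_{a_1}\mapsto \mec e_{\pi(a_1)}$ and is an isomorphism. The differential $\cM_{W,\mec d}(\partial)$ thus becomes the map $(b_1,b_3,b_2)\mapsto (b_1-b_3,\ b_2-\pi_*(b_3))$, where $b_1,b_2$ lie in $k^{\oplus\integers_{\le d_i}}\subset k^{\oplus\integers}$ and $b_3\in k^{\oplus\integers}$.

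For $H^0$, I would observe that a kernel triple is uniquely determined by $b_3$, since $b_1=b_3$ and $b_2=\pi_*(b_3)$; writing $b_3=\sum c_{a_1}\mec e_{a_1}$, the constraints $b_1\in k^{\oplus\integers_{\le d_1}}$ and $b_2\in k^{\oplus\integers_{\le d_2}}$ amount to $c_{a_1}=0$ whenever $a_1>d_1$ or $\pi(a_1)>d_2$. The remaining free parameters are indexed exactly by those $\mec a=(a_1,\pi(a_1))$ with $\mec a\le\mec d$, and the corresponding basis vectors are precisely those in \eqref{eq_basis_for_H_zero_perfect_matching}, yielding claims~(1) and~(2) with $f(\mec d)=(\fraks W)(\mec d)$ by the definition of $\fraks$.

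For $H^1$, I would describe the image $I\subset k^{\oplus\integers}\oplus k^{\oplus\integers}$ as the span of (i) $(\mec e_{a_1},0)$ for $a_1\le d_1$, (ii) $(0,\mec e_{a_2})$ for $a_2\le d_2$, and (iii) $(-\mec e_{a_1},-\mec e_{\pi(a_1)})$ for all $a_1\in\integers$. From (iii) we get $(\mec e_{a_1},\mec e_{\pi(a_1)})\in I$ for every $a_1$, which lets me rewrite any class $[(x_1,x_2)]$ in the cokernel as $[(y_1,0)]$ by subtracting, component-by-component in $x_2$, the appropriate combinations. Combining (i)--(iii) then shows $[(\mec e_{a_1},0)]=0$ whenever $a_1\le d_1$ or $\pi(a_1)\le d_2$, while the remaining classes, indexed by $T=\{a_1\mid a_1\ge d_1+1,\ \pi(a_1)\ge d_2+1\}$, are linearly independent: any relation $\sum_{a_1\in T}c_{a_1}(\mec e_{a_1},0)\in I$ expands as a combination of generators whose second-coordinate equation forces the coefficients of type (iii) to vanish outside $\{a_1''\mid\pi(a_1'')\le d_2\}$, and then the first-coordinate equation, restricted to indices $a_1\in T$, forces each $c_{a_1}=0$. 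This proves claims~(3) and~(4). The main obstacle is this linear independence step, since the image involves infinitely many generators of three distinct types interacting across both coordinates; the argument hinges on using that $\pi$ is a bijection to pair type (ii) and type (iii) contributions cleanly in the second coordinate.

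Finally, finiteness of both Betti numbers follows because $a_1+\pi(a_1)$ is bounded (as $W$ is initially and eventually zero), so only finitely many $\mec a$ in the support of $W$ satisfy $\mec a\le\mec d$ or $\mec a\ge\mec d+\mec 1$; the same boundedness shows $b^0(\cM_{W,\mec d})=0$ for $\deg(\mec d)$ sufficiently small and $b^1(\cM_{W,\mec d})=0$ for $\deg(\mec d)$ sufficiently large. Applying Theorem~\ref{th_cM_Euler_char_repeated_Riemann_functions} immediately yields $\chi(\cM_{W,\mec d})=\deg(\mec d)+C$ with $C=\chi(\cM_{W,\mec 0})$, shows $f(\mec d)=b^0(\cM_{W,\mec d})$ is a slowly growing Riemann function, and identifies $b^1(\cM_{W,\mec d})=f^\wedge_{\mec K}(\mec K-\mec d)$. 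The identity $f^\wedge_{\mec K}(\mec K-\mec d)=(\fraks W^*_{\mec L})(\mec K-\mec d)$ follows by invoking Theorem~\ref{th_easy_dual_functions_theorem} (with $n=2$, so $(-1)^n=1$), which identifies the weight of $f^\wedge_{\mec K}$ with $W^*_\mec L$, and then applying $\fraks$.
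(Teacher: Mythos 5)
Your proof is correct and the overall structure matches the paper's: both reduce the computation to the bijection $\pi$ (equivalently, to the fact that the maps $\cM_{W,\mec d}(\rho_{3,j})$ are isomorphisms), deduce claims~(1)--(4) from an explicit description of $\ker$ and $\coker$, and then dispatch the concluding statements by citing Theorem~\ref{th_cM_Euler_char_repeated_Riemann_functions} and Theorem~\ref{th_easy_dual_functions_theorem}. The $H^0$ part is essentially identical. Where you genuinely differ from the paper is in the $H^1$ computation: you describe the image of $\cM_{W,\mec d}(\partial)$ as a span of three families of generators inside $k^{\oplus\integers}\oplus k^{\oplus\integers}$ and then carry out a direct linear-independence verification across both coordinates, correctly isolating this step as the delicate one. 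The paper instead takes successive quotients---first by the image of $B_3$ (where the isomorphism $\rho_{3,1}$ yields an explicit basis for the quotient $V$), then by the image of $B_1$, then by the image of $B_2$---and at each stage reads off a basis of the next quotient essentially for free, with no separate independence argument required. Your route is a valid piece of linear algebra and you have filled in the linear-independence step correctly, but the staged-quotient argument in the paper avoids that complication entirely; it is worth internalizing this ``peel off one summand at a time'' technique, since it recurs whenever one computes a cokernel of a map built from injections and isomorphisms.
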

\begin{proof}
Let us begin by proving claims~(1)---(4) above.
Note that~(2) implies~(1), and (4)~implies~(3),
so it suffices to prove~(2) and~(4).
The proofs of~(2) and~(4) straightforward; let us begin with~(2).

To prove~(2), we note that $\cM_{W,\mec d}(B_3)=k^{\oplus W}$,
so the vectors 
$$
\bigl(\mec e_{a_1},\mec e_{(a_1,a_2)},\mec e_{a_2}\bigr)
\in \cM_{W,\mec d}(B)=
\cM_{W,\mec d}(B_1)\oplus\cM_{W,\mec d}(B_3)\oplus\cM_{W,\mec d}(B_2)
$$
ranging over all $a_1,a_2$ such that $W(a_1,a_2)=1$
are linearly independent in $\cM_{W,\mec d}(B)$
by considering merely their $\cM_{W,\mec d}(B_3)$ component.
Consider an element
$$
(b_1,b_3,b_2)\in \ker\bigl( \cM_{W,\mec d}(\partial) \bigr);
$$
then
$$
b_3=\sum_{W(a_1,a_2)=1}  \mec e_{(a_1,a_2)} c_{a_1,a_2}
$$
for some $c_{a_1,a_2}\in k$; the condition that $(b_1,b_3,b_2)$
lies in the kernel is equivalent to
$$
b_1 = \sum_{W(a_1,a_2)=1} \mec e_{a_1} c_{a_1,a_2}\in\cM_{W,\mec d}(B_1)=
k^{\oplus \integers_{\le d_1}},
\quad
b_2= \sum_{W(a_1,a_2)=1} \mec e_{a_2} c_{a_1,a_2}\in
\cM_{W,\mec d}(B_2)=
k^{\oplus \integers_{\le d_2}},
$$
which holds iff $a_1\le d_1$ and $a_2\le d_2$ whenever
$c_{a_1,a_2}\ne 0$.  Hence each such triple $(b_1,b_3,b_2)$ is
a unique linear combination of the vectors in
\eqref{eq_basis_for_H_zero_perfect_matching}.

To prove~(4), since $\cM_{W,\mec d}(\rho_{31}),\cM_{W,\mec d}(\rho_{32})$
are isomorphisms, it follows that 
$$
V = \bigl( \cM_{W,\mec d}(A_1) \oplus \cM_{W,\mec d}(A_2)  \bigr) / 
{\rm Image}\bigl(\cM_{W,\mec d}(B_3) \bigr)
$$
has $(\mec e_{a_1},0)$ as a basis, where $a_1$ ranges over all of $\integers$.
The image of $\cM_{W,\mec d}(B_1)$ in $V$ is precisely the span
of all $(\mec e_{a_1},0)$ with $a_1\le d_1$, and hence
$$
V' = V / {\rm Image}\bigl(\cM_{W,\mec d}(B_1) \bigr)
$$
has a basis consisting of all the $(\mec e_{a_1},0)$ with $a_1\ge d_1+1$;
finally the image of $\cM_{W,\mec d}(B_2)$ in $V'$ is precisely
the span of all $(0,\mec e_{a_2})$ with $a_2\le d_2$,
each of which equals $(-\mec e_{a_1},0)$ for the unique $a_1$
with $W(a_1,a_2)=1$.  Hence
$$
H^1(\cF) = V' / {\rm Image}\bigl(\cM_{W,\mec d}(B_2) \bigr)
$$
has a basis as claimed in 
\eqref{eq_basis_for_H_one_perfect_matching}.

This establishes~(1)--(4) of the theorem.
Next we prove the rest of the theorem.
Since 
$W$ is a perfect matching, by definition it is 
initially and eventually zero; hence the number of $\mec a\le\mec d$
with $W(\mec a)=1$ is zero for
$\deg(\mec d)$ sufficiently small, and for such $f(\mec d)=b^0(\mec d)=0$;
similarly, for $\deg(\mec d)$ sufficiently large the number of
$\mec a\ge \mec d+\mec 1$ with $W(\mec a)=1$ is zero,
and for such $\mec d$ we have $b^1(\mec d)=0$.
The remaining claims follow from
Theorem~\ref{th_cM_Euler_char_repeated_Riemann_functions}
and the fact that 
$\frakm f^\wedge_{\mec K}=(-1)^2 W^*_{\mec L}=W^*_{\mec L}$
(by Theorem~\ref{th_easy_dual_functions_theorem}).
\end{proof}


\subsection{The Betti Numbers of $\cM_{W,\mec d}$ for General $W$ via
an Associated Graph}

One can give a description of the Betti numbers of $\cM_{W,\mec d}$
for any $W$ in terms of a graph associated to $W$ and $\mec d$.
This formula is foundational and seems interesting, but it is independent
of the rest of this article.

Given $W\from\integers^2\to\integers_{\ge 0}\cup\{\infty\}$ and
$\mec d\in\integers^2$ we associate the following graph, which may have
multiple edges and self-loops,
$G={\rm Graph}(W,\mec d)$, which one can describe in two ways:
first,
\begin{enumerate}
\item one forms the bipartite graph $G'$ whose vertex set is 
$\integers\times\{1,2\}$ and whose edge set has $W(s_1,s_2)$ edges joining
$(s_1,1)$ with $(s_2,2)$;
\item one then takes $G$ to be the graphs obtained from $G'$
by collapsing the vertices $\integers_{\le d_1}\times\{1\}$
and $\integers_{\le d_2}\times\{2\}$ into a single vertex, $v_0$.
\end{enumerate}
In particular, $G$ is not generally bipartite,
and has $r$ self-loops at $v_0$ (and no self-loops about any other vertex), 
where 
\begin{equation}\label{eq_self_loops_at_v_null}
r = \sum_{(s_1,s_2)\le\mec d} W(s_1,s_2) ;
\end{equation} 
each such self-loop adds $1$ to the first Betti number of $G$.

The second way to describe $G$ is more explicit: namely $G$ is the graph
with vertex set $V_G=v_0\amalg V_{\rm first}\amalg V_{\rm second}$\footnote{
  We assume some reasonable convention for the meaning of the {\em disjoint
  union} $\amalg$, which is a limit and hence only defined
  up to unique isomorphism; e.g., for sets $A_1,\ldots,A_s$, the set
  $A_1\amalg \ldots  \amalg A_s$ refers to the union 
  $\bigcup_i A_i\times\{i\}$.
  }
with $V_{\rm first}=\integers_{\ge d_1+1}$
and $V_{\rm second}=\integers_{\ge d_2+1}$,
and whose edge set can be identified with
$E_G={\rm Multi}(W)$ (as 
in~\eqref{eq_multiset_convention}), where each element
$(s_1,s_2,i)\in{\rm Multi}(W)$ creates:
\begin{enumerate}
\item
an edge joining $s_1\in V_{\rm first}$ and $s_2\in V_{\rm second}$
if $s_1\ge d_1+1$ and $s_2\ge d_2+1$;
\item
a self-loop about $v_0$ if $s_1\le d_1$ and $s_2\le d_2$;
\item
an edge joining $s_1\in V_{\rm first}$ and $v_0$ if
$s_1\ge d_1+1$ and $s_2\le d_2$; and
\item
an edge joining $v_0$ and $s_2\in V_{\rm second}$ if
$s_1\le d_1$ and $s_2\ge d_2+1$.
\end{enumerate}

It will be convenient to write $G$ as the union of 
its connected components $G_i=(V_i,E_i)$,
where $i$ ranges over $\{0,\ldots,\ell\}$ if $G$ has finitely
many connected components, or $i$ ranging over 
$\integers_{\ge 0}$ otherwise; we will also set $G_0$ to be
the connected component of $v_0$.

We need to recall some convenient definitions of the Betti numbers of
an infinite graph.  If $G=(V,E)$ is a graph 
(with $V,E$ not necessarily finite),
then one can define its incidence
matrix, $\iota_G$, as usual, by orienting each edge arbitrarily, 
so that $\iota_G$ is a map $k^{\oplus E}\to k^{\oplus V}$, and then
$b^0(G),b^1(G)$ are, respectively, the dimensions of the
cokernel and kernel of $\iota_G$.  
This will be useful to us.
However, it will also be conventient
to define the Betti numbers as follows:
$b^0(G)$ is the number of connected components, i.e., equivalence
classes of vertices where two vertices are equivalent if they are
connected by a walk of finite length.
For each connected component we choose a spanning tree (by fixing a vertex,
$r$
as the root of the tree,
then adding one edge joining $r$ to each of its neighbours, then
one edge for each vertex of distance $2$ to $r$, etc.).
This gives a spanning forest of $G$.
Then $b^1(G)$ is the cardinality of set of edges of $G$ that don't lie in
the spanning forest.

\begin{theorem}\label{th_cM_Betti_in_graph_terms}
Let $W\from\integers^2\to\integers_{\ge 0}\cup\{\infty\}$ and
$\mec d\in\integers^2$.  Let
$G={\rm Graph}(W,\mec d)$.  Then
\begin{align*}
b^1(\cM_{W,\mec d}) & = b^0( G) - 1  \\
b^0(\cM_{W,\mec d}) & = b^1( G) .
\end{align*}
\end{theorem}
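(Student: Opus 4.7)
The plan is to reduce the computation of $H^i(\cM_{W,\mec d})$ to an incidence-matrix computation on $G$, and then read off the Betti numbers from standard properties of incidence maps.

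First I would simplify $\tau=\cM_{W,\mec d}(\partial)$. Split each copy of $k^{\oplus\integers}$ in the codomain as $k^{\oplus\integers_{\le d_i}}\oplus k^{\oplus\integers_{\ge d_i+1}}$, so the map becomes
$$
(b_1,b_3,b_2)\mapsto
\bigl(b_1-\mathrm{pr}_1^{\le}(b_3),\,-\mathrm{pr}_1^{\ge}(b_3),\,b_2-\mathrm{pr}_2^{\le}(b_3),\,-\mathrm{pr}_2^{\ge}(b_3)\bigr),
$$
where $\mathrm{pr}_j^{\le},\mathrm{pr}_j^{\ge}$ denote the restrictions of $\mathrm{pr}_j$ to the $\le d_j$ and $\ge d_j+1$ parts. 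Since $b_1$ and $b_2$ vary freely and only influence the first and third components, one can absorb them to show that $\tau$ has the same kernel and cokernel as the ``residual'' map
$$
\tilde\tau\colon k^{\oplus W}\to k^{\oplus V_{\mathrm{first}}}\oplus k^{\oplus V_{\mathrm{second}}},
\qquad b_3\mapsto\bigl(\mathrm{pr}_1^{\ge}(b_3),\,\mathrm{pr}_2^{\ge}(b_3)\bigr),
$$
where the signs are harmless up to isomorphism of the codomain.

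Next I would identify $\tilde\tau$ with a truncated incidence map of $G$. Using the explicit description $E_G=\mathrm{Multi}(W)$ and $V_G=\{v_0\}\amalg V_{\mathrm{first}}\amalg V_{\mathrm{second}}$, a basis vector indexed by $(s_1,s_2,i)\in E_G$ maps under $\tilde\tau$ to $\mec e_{s_1}$ (if $s_1\ge d_1{+}1$) plus $\mec e_{s_2}$ (if $s_2\ge d_2{+}1$); after negating the $V_{\mathrm{second}}$ coordinates this is exactly $\pi\circ\iota_G$, where $\iota_G\colon k^{\oplus E_G}\to k^{\oplus V_G}$ is the signed incidence map of $G$ (with each edge oriented ``$V_{\mathrm{second}}$ to $V_{\mathrm{first}}$,'' self-loops mapping to $0$, and $v_0$-incident edges handled by treating $v_0$ as the appropriate endpoint) and $\pi\colon k^{\oplus V_G}\to k^{\oplus(V_G\setminus\{v_0\})}$ is the projection killing $\mec e_{v_0}$. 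Note that the self-loops at $v_0$, which contribute $0$ to $\iota_G$, lie in $\ker\tilde\tau$, accounting for the $r$ summand in \eqref{eq_self_loops_at_v_null}.

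Finally I would extract the Betti numbers from the short exact sequence
$$
0\to k\cdot\mec e_{v_0}\to k^{\oplus V_G}\xrightarrow{\pi}k^{\oplus(V_G\setminus\{v_0\})}\to 0.
$$
The key claim is that $\mec e_{v_0}\notin\mathrm{image}(\iota_G)$: indeed, every element of $\mathrm{image}(\iota_G)$ is a finite sum of vectors $\pm(\mec e_u-\mec e_v)$ and $0$ (from self-loops), so in particular the sum of its coordinates inside each connected component of $G$ is zero, whereas $\mec e_{v_0}$ has component-sum $1$ in the component $G_0$ containing $v_0$. Consequently $\mathrm{image}(\iota_G)\cap\langle\mec e_{v_0}\rangle=0$, giving
$$
\ker(\pi\circ\iota_G)=\iota_G^{-1}(\langle\mec e_{v_0}\rangle)=\ker(\iota_G),
$$
whose dimension is $b^1(G)$; and
$$
\coker(\pi\circ\iota_G)=\coker(\iota_G)\big/\langle\overline{\mec e_{v_0}}\rangle.
$$
Since $\coker(\iota_G)\cong k^{\oplus\pi_0(G)}$ via the connected-component map and $\overline{\mec e_{v_0}}$ is the (nonzero) basis vector corresponding to $G_0$, this quotient has dimension $b^0(G)-1$. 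Combined with Step~1 this yields $b^0(\cM_{W,\mec d})=b^1(G)$ and $b^1(\cM_{W,\mec d})=b^0(G)-1$.

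The main obstacle I anticipate is making the ``step~1'' reduction and the sign-change identification of $\tilde\tau$ with $\pi\circ\iota_G$ fully precise when $G$ is infinite; the rest is essentially the standard relationship between the incidence matrix of a graph and its component/cycle spaces, and the crucial ingredient that $\mec e_{v_0}$ is not in the image relies only on the finite-support nature of $k^{\oplus E_G}$, which holds verbatim here.
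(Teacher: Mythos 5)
Your proposal is correct, and it takes a genuinely different route from the paper's.

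The paper decomposes $\tau = \cM_{W,\mec d}(\partial)$ as a direct sum $\bigoplus_{i\in I}\tau_i$ indexed by the connected components $G_i$ of $G$, observes that for $i\ne 0$ each $\tau_i$ is (up to sign) an incidence matrix of $G_i$ and so swaps Betti numbers, and then handles the component $G_0$ containing $v_0$ by two hands-on arguments: an inductive chase showing $\tau_0$ is surjective (so $b^1(\tau_0)=0$), and a spanning-tree count showing $b^0(\tau_0)=b^1(G_0)$. Your proof instead avoids the component decomposition entirely. You first absorb the free variables $b_1,b_2$ (a legitimate block-triangular change of coordinates exhibiting $\tau$ as the identity on $k^{\oplus\integers_{\le d_1}}\oplus k^{\oplus\integers_{\le d_2}}$ direct-summed with the residual map $\tilde\tau$), then identify $\tilde\tau$ with $\pi\circ\iota_G$ where $\pi$ kills the $v_0$-coordinate, and finally apply a single uniform piece of linear algebra: because each column of $\iota_G$ has zero component-sum within every connected component, $\mec e_{v_0}\notin\mathrm{image}(\iota_G)$, and the snake lemma applied to the short exact sequence $0\to k\mec e_{v_0}\to k^{\oplus V_G}\xrightarrow{\pi}k^{\oplus(V_G\setminus\{v_0\})}\to 0$ then gives $\ker(\pi\circ\iota_G)=\ker(\iota_G)$ and $\coker(\pi\circ\iota_G)\isom\coker(\iota_G)/\langle\overline{\mec e_{v_0}}\rangle$. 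The identification of signs and of the treatment of all four edge types is correct, and the argument passes without modification to the infinite case because $k^{\oplus E_G}$ has finite support. What your approach buys is that the single statement ``$\mec e_{v_0}\notin\mathrm{image}(\iota_G)$'' plays the role of both of the paper's ad hoc facts about $G_0$ (surjectivity of $\tau_0$ and the kernel-dimension count), and the case distinction over components disappears; what the paper's approach buys is that it stays entirely within elementary manipulations of bipartite incidence matrices and spanning trees, without invoking the snake lemma.
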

We remark that it is interesting how each Betti number
of $\cM_{W,\mec d}$ can be inferred from the opposite Betti
number of $G$.
We also remark that if $W$ is a perfect matching, 
then Theorem~\ref{th_perfect_matching} is a simpler description
of the Betti numbers of $\cM_{W,\mec d}$.
Hence the seeming simplicity of the above theorem is not
necessarily the most practical way to describe the Betti numbers
of $\cM_{W,\mec d}$.
\begin{proof}
Let $\tau=\cM_{W,\mec d}(\partial)$, which one can view as a map
$$
\tau \from 
k^{\oplus\bigl(\integers_{\le d_1}
\amalg\;\integers_{\le d_2}\amalg\;{\rm Multi}(W)\bigr)}
\to
k^{\oplus(\integers\;\amalg\; \integers)}.
$$
In the notion introduced after the definition of $G$, we saw that
$G$ decomposes into its connected components
$G_i=(V_i,E_i)$ where $i$ ranges over $I$, where $I$
equals $\{0,\ldots,\ell\}$ or $\integers_{\ge 0}$, and $G_0$
is the component containing $v_0$.
Then ${\rm Multi}(W)$ is partitioned into multisets $E_i$ with $i\in I$;
setting
$$
E'_0 = \integers_{\le d_1} \amalg \integers_{\le d_2} \amalg E_0,
$$
we have that
$$
\integers_{\le d_1}
\amalg\;\integers_{\le d_2}\amalg\;{\rm Multi}(W)
=
E'_0 \cup \bigcup_{i\in I\setminus\{0\}} E_i
$$
where we identify $E_i \subset{\rm Multi}(W)$ with 
the subset ${\rm Multi}(W)$ as it lies in 
$\integers_{\le d_1} \amalg\;\integers_{\le d_2}\amalg\;{\rm Multi}(W)$.
Similarly setting
$$
V'_0 = 
\bigl( \integers_{\le d_1} \amalg \integers_{\le d_2} \bigr)
\cup (V_0\setminus\{v_0\}) 
\quad
\mbox{(which is a subset of $\integers\amalg\,\integers$)},
$$
we have
$$
\integers\amalg\,\integers =
V'_0 \cup \bigcup_{i\in I\setminus \{0\}} V_i .
$$
It follows that $\tau$ factors as a map
$$
\tau = \bigoplus_{i\in I} \tau_i
$$
where 
$$
\tau_0 \from k^{\oplus E_0'} \to k^{\oplus V_0'}
$$
and for $i\ne 0$,
$$
\tau_i\from k^{\oplus E_i}\to k^{\oplus V_i} .
$$
Note that for $i\ne 0$, $\tau_i$ sends 
$(a_1,a_2,j)\in E_i\subset{\rm Multi}(W)$ to $(\mec e_{a_1},\mec e_{a_2})$;
since $G_i$ is bipartite, 
$b^0(\tau_i),b^1(\tau)$ are the same as $b^0,b^1$ of the map
sending 
$(a_1,a_2,j)$ to $(\mec e_{a_1},-\mec e_{a_2})$, which is an
incidence matrix of $G_i$.
Hence for $j=0,1$,
$$
b^j(\tau) = \sum_{i\in I} b^j(\tau_i) 
= b^j(\tau_0) +  \sum_{i>0} b^{1-j}(G_i).
$$
Since 
$$
b^{1-j}(G) = \sum_{i\in I} b^{1-j}(G_i) ,
$$
it remains to show that $b^1(\tau_0)=0$ and 
$b^0(\tau_0)=b^1(G_0)$.

We claim that $\tau_0$ is surjective; hence for $a\in V_0'$
we need to show that the standard basis vector $\mec e_a$
is in the image of $\tau$; this is clear
for $a\in  \integers_{\le d_1} \amalg \integers_{\le d_2}$.
If the distance from $a$ to $v_0$ is $1$, and $a\in V_{\rm first}$,
then for some $a'\le d_2$ we have $(a,a')\in E_0$, and hence
$\mec e_{a}+\mec e_{a'}\in{\rm Image}(\tau)$, and hence
$\mec e_a\in{\rm Image}(\tau)$; similarly if $a\in V_{\rm second}$.  
We then similarly that
$\mec e_a\in {\rm Image}(\tau)$ if the distance from $a$ to $v_0$
equals $2$, since then
$\mec e_{a}+\mec e_{a'}\in{\rm Image}(\tau)$ for some $a'$ of distance
$1$ to $v_0$.  We then argue the general case by induction on
its distance to $v_0$.
Hence $b^1(\tau_0)=0$.

Finally let us show that $b^0(\tau_0)=b^1(G_0)$.
Let us give an isomorphism $\ker(\tau_0)\to \ker(\iota_{G_0})$
where $\iota_{G_0}$ is an incidence matrix of $G_0$.
To describe such an incidence matrix, we orient each 
$w\in E_0$ arising from $(a_1,a_2,j)\in{\rm Multi}(W)$ as
running from $a_1$ to $a_2$.
Next note that $\tau_0$ is a map
$$
\tau_0 \from 
k^{\oplus (\integers_{\le d_1}
\amalg\;\integers_{\le d_2})\cup E_0} 
\to
k^{\oplus (\integers_{\le d_1}
\amalg\;\integers_{\le d_2})\cup (V_0\setminus\{v_0\})},
$$
so each element of the domain of $\tau_0$ can be viewed as 
a pair $(\alpha,\beta)$ which refers to the element
$$
\sum_{u\in \integers_{\le d_1} \amalg\;\integers_{\le d_2}}
\alpha(u) \mec e_u 
\ +\   %
\sum_{w\in E_0} \beta(w)\mec e_w,
$$
where $\alpha,\beta$ are functions that are zero for all but 
finitely many of their values.
Since $\tau$ takes $(\alpha,0)$ to
$$
\sum_{u \in \integers_{\le d_1} \amalg\;\integers_{\le d_2}} 
\alpha(u)\mec e_u \in k^{\oplus(\integers\;\amalg\;\integers)},
$$
it follows that for each $\beta$ there exists at most one
$\alpha$ with $(\alpha,\beta)\in\ker(\tau_0)$, and such an $\alpha$
exists precisely when
\begin{equation}\label{eq_beta_condition}
\sum_{u=(a_1,a_2,\ell)\in E_0, a_1\ge d_1+1} \beta(u) \mec e_{a_1}  
=0=
\sum_{u=(a_1,a_2,\ell)\in E_0, a_2\ge d_2+1} \beta(u) \mec e_{a_2}  .
\end{equation} 
So we want to prove that the dimension of all $\beta$
satisfying \eqref{eq_beta_condition} equals $b^1(G_0)$.

First consider the case where $G_0=(V_0,E_0)$ is a tree:
we claim that \eqref{eq_beta_condition} forces $\beta=0$,
since if $U\subset E_0$ is the support of $\beta$
and $u'\in U$ is an edge of maximum distance
to $v_0$, one vertex incident upon $u'$ is not incident upon any other
edge of $U'$, which is a contradiction.

Next, consider the case that $G_0$ is a tree $(V_0,T)$ plus 
an edge $u_1$: there exists a $\beta$ as above with $\beta(u_1)$
nonzero, 
using the unique cycle created
by $u_1$; 
since $\beta$ can be increased by at most $1$ with the addition of an
edge, it follows that the dimension of $\beta$ satisfying
\eqref{eq_beta_condition} is exactly $1$.
Similarly, if we add another edge, $u_2$, to $G_0$,
the dimension of $\beta$ satisfying
\eqref{eq_beta_condition} increases by $1$,
using any cycle created by $u_2$; since it can increase by at most $1$,
the dimension of such $\beta$ is exactly $2$.
It similarly
follows by induction on $m$, that if $G_0$ is a tree plus $m$ edges,
then the dimenion of $\beta$ satisfying
\eqref{eq_beta_condition} is exactly $m$.
It follows by taking $m=b^1(G_0)$, or letting $m\to\infty$ if
$b^1(G_0)=\infty$, that 
$$
b^0(\tau) = b^1(G_0) .
$$
\end{proof}

\section{Morphisms, Isomorphisms, and Direct Sum of $k$-Diagrams}
\label{se_morphisms_etc}

For the rest of this article we will need to know when two
$k$-diagrams are {\em isomorphic} and some other basic properties of
(the category of) $k$-diagrams.  

\subsection{Morphisms of $k$-Diagrams}

\begin{definition}
\label{de_morphism_k_diagrams}
Let $\cF,\cG$ be two $k$-diagrams.
By a {\em morphism $\phi\from\cF\to\cG$}
we mean the data, $\phi$, of linear maps from each 
value of $\cF$ to the corresponding
value on $\cG$ in a way that commutes with the restriction maps:
i.e., $\phi$ consists of
$k$-linear maps $\phi(B_i)\from \cF(B_i)\to\cG(B_i)$ for
$i=1,2,3$ and $\phi(A_j)\from\cF(A_j)\to\cG(A_j)$ for $j=1,2$
such that $\cG(\rho_{ij})\phi(B_i)=\phi(A_j)\cF(\rho_{ij})$ whenever
$\cF(\rho_{ij}),\cG(\rho_{ij})$ exist (i.e., $i=j$ or $i=3$ and any $j$).
For $k$-diagrams $\cF,\cG$ we use $\Hom(\cF,\cG)$ to denote the
set of morphism $\cF\to\cG$; if $\phi,\phi'\in\Hom(\cF,\cG)$ and
$\alpha,\alpha'\in\field$, then one can define
$\alpha\phi+\alpha'\phi'\in \Hom(\cF,\cG)$ to be the map that is
the value-by-value linear combination,
i.e., for $P=A_j$ or $P=B_i$,
$$
\bigl( \alpha\phi+\alpha'\phi'\bigr) (P) 
=
\alpha\phi(P)+\alpha' \phi'(P);
$$
this gives $\Hom(\cF,\cG)$ the structure of a $k$-vector space.
\end{definition}
We illustrate a morphism of $k$-diagrams 
in Figure~\ref{fi_morphism_of_diagrams}.
\begin{figure}
\begin{tikzpicture}[scale=0.50,font=\small]
\node (B1) at (0,4) {$\cF(B_1)$};
\node (B2) at (0,-4) {$\cF(B_2)$};
\node (B3) at (0,0) {$\cF(B_3)$};
\node (A1) at (8,2) {$\cF(A_1)$};
\node (A2) at (8,-2) {$\cF(A_2)$};
\draw [->] (B1) -- (A1) node [midway,above] {$\cF(\rho_{1,1})$} ;
\draw [->] (B2) -- (A2) node [midway,below] {$\cF(\rho_{2,2})$} ;
\draw [->] (B3) -- (A1) node [midway,above] {$\cF(\rho_{3,1})$} ;
\draw [->] (B3) -- (A2) node [midway,below] {$\cF(\rho_{3,2})$} ;
\node (BB1) at (15,4) {$\cG(B_1)$};
\node (BB2) at (15,-4) {$\cG(B_2)$};
\node (BB3) at (15,0) {$\cG(B_3)$};
\node (AA1) at (23,2) {$\cG(A_1)$};
\node (AA2) at (23,-2) {$\cG(A_2)$};
\draw [->] (BB1) -- (AA1) node [midway,above] {$\cG(\rho_{1,1})$} ;
\draw [->] (BB2) -- (AA2) node [midway,below] {$\cG(\rho_{2,2})$} ;
\draw [->] (BB3) -- (AA1) node [midway,above] {$\cG(\rho_{3,1})$} ;
\draw [->] (BB3) -- (AA2) node [midway,below] {$\cG(\rho_{3,2})$} ;
\draw [->,ultra thick] (B1) -- (BB1) node [pos=0.75,above] {$\phi(B_1)$};
\draw [->,ultra thick] (B2) -- (BB2) node [pos=0.75,above] {$\phi(B_2)$};
\draw [->,ultra thick] (B3) -- (BB3) node [pos=0.75,above] {$\phi(B_3)$};
\draw [->,ultra thick] (A1) -- (AA1) node [pos=0.2,above] {$\phi(A_1)$};
\draw [->,ultra thick] (A2) -- (AA2) node [pos=0.2,above] {$\phi(A_2)$};
\node (F) at (4,-6) {\Huge$\cF$};
\node (G) at (19,-6) {\Huge$\cG$};
\draw [->,ultra thick] (F) -- (G) node [midway,above] {\Large$\phi$};
\end{tikzpicture}
\caption{A morphism of diagrams $\phi\from\cF\to\cG$, depicted in thick lines}
\label{fi_morphism_of_diagrams}
\end{figure}

\subsection{Constant and the Four Basic $k$-Diagrams}

Let us describe some simple $k$-diagrams that we will use.

\begin{definition}\label{de_constant_diagram_k}
Let $k$ be a field, and $V$ a $k$-vector space.  The
{\em constant $k$-diagram $V$}, denoted $\underline V$,
refers to the diagram whose values are all $V$, and whose
restriction maps are the identity map on $V$.
\end{definition}
In particular, $\underline k$ is the constant diagram whose
values are $k$, viewed as a $k$-vector space.

Let us describe a number of $k$-diagrams closely related to
the diagram $\underline k$ that we will use; we will collectively 
refer to these $k$-diagrams as the {\em four basic $k$-diagrams}.
Before giving the formal definition, we depict these diagrams
in Figure~\ref{fi_four_basic} by their values, all of which are
either $0$ or $k$, and all maps $k\to k$ are the identity maps.

\newcommand{\ficonsttikzpiece}[3]{
\begin{tikzpicture}[scale=0.40]
\node (B1) at (0,2) {$#1$};
\node (B2) at (0,-2) {$#2$};
\node (B3) at (0,0) {$k$};
\node (A1) at (3,1) {$k$};
\node (A2) at (3,-1) {$k$};
\draw [->] (B1) -- (A1) ;
\draw [->] (B2) -- (A2) ;
\draw [->] (B3) -- (A1) ;
\draw [->] (B3) -- (A2) ;
\node at (1.5,-4){$#3$};
\end{tikzpicture}
}

\begin{figure}
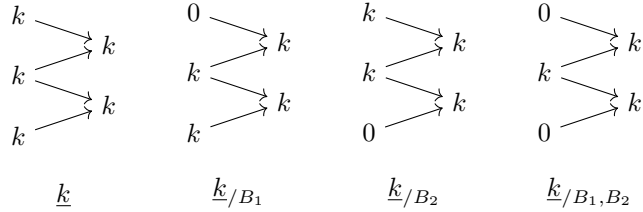

\hspace*{2cm}
\ficonsttikzpiece{k}{k}{\underline{k}}
\quad
\ficonsttikzpiece{0}{k}{\underline{k}_{/B_1}}
\quad
\ficonsttikzpiece{k}{0}{\underline{k}_{/B_2}}
\quad
\ficonsttikzpiece{0}{0}{\underline{k}_{/B_1,B_2}}
\caption{The Diagram $\underline k$ and Related Diagrams}
\label{fi_four_basic}
\end{figure}

\begin{definition}\label{de_four_basic}
Let $k$ be a field.  Consider the four possible diagrams, $\cF$,
such that:
\begin{enumerate}
\item
all values of $\cF$ equal $k$ or $0$;
\item
$\cF(B_3)=\cF(A_1)=\cF(A_2)=k$;
\item
all restriction maps $k\to k$ are the identity map.
\end{enumerate}
We use the notation $\underline{k}$ to refer to $\cF$ as above
with $\cF(B_1)=\cF(B_2)=k$ and call it the {\em constant diagram
$k$}; for $i=1,2$ we use the notation
$\underline{k}_{/B_i}$ to refer the same diagram except
with $\cF(B_i)=0$;
and we use the notation $\underline{k}_{/B_1,B_2}$ to refer to the
remaining such diagram, i.e., with $\cF(B_1)=\cF(B_2)=0$.
We refer to these four diagrams collectively as the
{\em four basic $k$-diagrams}.
\end{definition}

\subsection{Simple Examples of Morphisms with the Four Basic Diagrams}

The reader who has never worked with $k$-diagrams or the related
notions of presheaves and sheaves is encouraged to consider morphisms
between
the four basic diagrams.

\begin{example}\label{ex_morphisms_between_underline_k}
As $k$-vector spaces we have
$$
\Hom( \underline k_{/B_1},\underline k) \isom k,
$$
since for any $\alpha \in k$ there is a unique morphism
$\phi\from \underline k_{/B_1}\to \underline k$ 
such that for $P=A_1,A_2,B_3$, $\phi(P)\from k\to k$ is multiplication
by $\alpha$ (and for $P=B_1,B_2$, since
$\underline k_{/B_1,B_2}(P)=0$, $\phi(P)$ is the trivial
$k$-linear transformation $\{0\}\to k$).
By contrast
$$
\Hom(\underline k,\underline k_{/B_1}) \isom \{0\},
$$
since if
$\phi\from\underline k\to\underline k_{/B_1}$, then 
$$
\phi(B_1)\from \underline k(B_1) \to  \underline k_{/B_1}(B_1)  = 0
$$
must be the zero map, but then
(recall the meaning of $\rho_{i,j}$ from 
Definition~\ref{de_diagram_k_vs})
$$
\underline k(B_1) 
\xrightarrow{\underline k_{/B_1}(\rho_{1,1})\circ \phi(B_1)}
\phi(A_1)
$$
must be the zero map, and since this must equal the map
$$
\underline k(B_1) 
\xrightarrow{\phi(A_1)\circ\underline k(\rho_{1,1})}
\phi(A_1)
$$
this forces $\phi(A_1)$ to be multiplication by $0$.  
Following Figure~\ref{fi_morphism_of_diagrams}
around (with $\cF=\underline k$ and $\cG=\underline k_{/B_1}$)
we see that $\phi$ must be everywhere zero.
\end{example}

[The experts will realize that the above example reflects the fact that
there is a 
canonical inclusion 
$\cF_U\to\cF$ (and the fact there is typically no nonzero morphism
$\cF\to\cF_U$)
where $\cF$ is a sheaf on a topological
space, $U$ an open subset, and $\cF_U$ is the extension by zero
of the restriction of $\cF$ to $U$; we will explain this in
more detail in Subsection~\ref{su_k_diagrams_sheaf_theory}.]

For future use it will be helpful to note the following calculations.

\begin{example}\label{ex_morphisms_of_basic_to_dualizing}
Similar to Example~\ref{ex_morphisms_between_underline_k}, we see that
$$
\Hom(\underline k_{/B_1,B_2},\underline k_{/B_1,B_2}) \isom k,
$$
and $\Hom(\cF,\underline k_{/B_1,B_2})=\{0\}$ for
$\cF=\underline k,\underline k_{/B_1},\underline k_{/B_2}$.
\end{example} 

\begin{example}\label{ex_morphisms_all_basic_four}
More generally, there is a partial order of our four basic
$k$-diagrams: denoting each of these diagrams by $\underline k_{/S}$
where $S$ is some subset of $\{B_1,B_2\}$, where we understand
$\underline k_{/\emptyset}=\underline k$. We have
$$
\Hom(\underline k_{/S},\underline k_{/S'}) 
\isom
\left\{  
\begin{array}{ll} k & \mbox{if $S\subset S'$, and} \\
\{0\} & \mbox{otherwise.}
\end{array}
\right.
$$
\end{example}

\subsection{Example: Global Sections}

If $\phi\from\cF\to\cG$, then we easily see that $\phi$ gives maps
$\cF(B)\to \cG(B)$ and $\cF(A)\to\cG(A)$ that induce maps
$H^i(\cF)\to H^i(\cG)$ for $i=0,1$.

If $\cF$ is any $k$-diagram, then if
$$
\phi\in \Hom(\underline k,\cF),
$$
then $\phi(A_1)$ takes the element $1\in k$ to an element $a_1\in A_1$,
and similarly for $\phi(A_2)$ and the $\phi(B_i)$; this 
gives a tuple $(b_1,b_2,b_3,a_1,a_2)$, and the fact that the
restrictions of $\underline k$ are the identity maps implies that
$(b_1,b_2,b_3,a_1,a_2)$ is a global section 
(Definition~\ref{de_diagram_k_vs});
conversely every global section $(b_1,b_2,b_3,a_1,a_2)$
determines a $\phi$
where $\phi(A_i)1=a_i$ and $\phi(B_j)1=b_j$ which we easily check
is an element of $\Hom(\underline k,\cF)$.
Hence we see (as usual in sheaf theory)
$$
H^0(\cF) \isom \Hom(\underline k,\cF) ,
$$
and we easily check that for any morphism $\phi\from\cF\to\cG$ 
the map $H^0(\cF)\to H^0(\cG)$ is the same as the map
$$
\Hom(\underline k,\cF) \to 
\Hom(\underline k,\cG)
$$
given by composition of the morphism $\underline k\to\cF$ with  $\phi$;
this is a standard fact about global sections of $\cF$
in sheaf theory.

\subsection{Isomorphisms and Direct Sums of $k$-Diagrams}

We now give the notion of {\em isomorphisms} and {\em direct sums}
for $k$-diagrams; later, 
in Subsection~\ref{su_intro_homological_algebra} we will
explain that these notions really result once one specifies what is
meant by a $k$-diagram and a morphism of $k$-diagrams.

\begin{definition}
A morphism $\phi\from\cF\to\cG$ of $k$-diagrams is
an {\em isomorphism} if all the $\phi(A_j)$ and $\phi(B_i)$
are isomorphisms.
\end{definition}
This is equivalent to saying that there exists an inverse morphism
$\nu\from\cG\to\cF$ such that $\phi\nu$ is the identity on $\cG$
(i.e., all the $\phi\nu(A_j)$ and $\phi\nu(B_i)$ are the identity maps)
and $\nu\phi$ is the identity morphism on $\cF$.

\begin{definition}
Let $L$ be a set and for each $\ell\in L$, say that we are given a
$k$-diagram, $\cF_\ell$.  The {\em direct sum of $\{\cF_\ell\}_{\ell\in L}$}
refers to the $k$-diagram denoted
$$
\bigoplus_{\ell\in L} \cF_\ell
$$
whose values at the $A_j$ (for $j=1,2$) are
$$
\bigoplus_{\ell\in L} \cF_\ell(A_j),
$$
and similarly for the values at the $B_i$, and similarly the
restriction maps
are the direct sums of those of the $\cF_\ell$.
Similarly, if for each $\ell\in L$ we are given a morphism
$\phi_\ell$ of $k$-diagrams, the morphism $\oplus_{\ell\in L}\phi_\ell$
is the morphism of $k$-diagrams $\oplus_{\ell\in L}\cF_\ell$
to $\oplus_{\ell\in L}\cG_\ell$ given by direct sum of the $\phi_\ell$.
\end{definition}

We easily check that the all constructions in 
Definition~\ref{de_diagram_k_vs}
commute with taking direct sums.
In particular, for any direct sum $\{\cF_\ell\}_{\ell\in L}$ we have
$$
\left( \bigoplus_{\ell\in L} \cF_\ell \right)(\partial)
=
\bigoplus_{\ell\in L} \cF_\ell (\partial),
$$
and
$j=0,1$ we have
$$
H^j \left( \bigoplus_{\ell\in L} \cF_\ell \right) =
\bigoplus_{\ell\in L} H^j(\cF_\ell)
$$
and taking dimensions we have
$$
b^j \left( \bigoplus_{\ell\in L} \cF_\ell \right) =
\sum_{\ell\in L} b^j(\cF_\ell) .
$$

\section{Sums of $\cM_{W,\mec d}$ and Indicator $k$-Diagrams}
\label{se_indicator}

The main point of this section is to prove the following theorem.

\begin{theorem}\label{th_invariance_under_perfect_matching_sum}
Let $W_1,\ldots,W_s$ and $\tilde W_1,\ldots,\tilde W_s$
be perfect matchings $\integers^2\to\integers$ such that
$$
W_1+\cdots+W_s = 
\tilde W_1+\cdots+\tilde W_s .
$$
Then for any $\mec d$ we have
$$
\cM_{W_1,\mec d} \oplus \cdots \oplus \cM_{W_s,\mec d}
\isom
\cM_{\tilde W_1,\mec d} \oplus \cdots \oplus \cM_{\tilde W_s,\mec d}.
$$
\end{theorem}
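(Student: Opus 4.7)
The plan is to construct an explicit isomorphism $\phi$ between the two direct sums by identifying each $B_3$ component with a single bipartite multigraph and recording the two decompositions as two edge colorings. Let $W = W_1 + \cdots + W_s = \tilde W_1 + \cdots + \tilde W_s$, and let $H = (L, R, E)$ be the bipartite multigraph with $L = R = \integers$ and $E = {\rm Multi}(W)$, so that there are $W(s_1, s_2)$ edges between $s_1 \in L$ and $s_2 \in R$. Since each $W_i$ is a perfect matching, the row and column sums of $W$ all equal $s$, so every vertex of $H$ has degree exactly $s$. The two decompositions yield proper edge colorings $c, \tilde c : E \to [s]$ whose color classes are perfect matchings (concretely, by choosing, for each support pair $(s_1, s_2)$, a bijection between $\{i : W_i(s_1,s_2) = 1\}$ and $\{1, \ldots, W(s_1, s_2)\}$, and similarly for $\tilde c$).

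Next I would define $\phi$ component by component. Both $B_3$ values are canonically identified with $k^{\oplus E}$: a basis of $\bigoplus_i k^{\oplus W_i}$ corresponds to color-$i$ edges under $c$, and similarly on the right via $\tilde c$; take $\phi(B_3)$ to be this identification. For each $s_1 \in L$ and $i \in [s]$, the perfect-matching property of color class $i$ under $c$ yields a unique edge $e_{s_1, i}$ at $s_1$ with $c(e_{s_1, i}) = i$; define
$$
\phi(A_1)\colon (k^{\oplus \integers})^{\oplus s} \to (k^{\oplus \integers})^{\oplus s},
\qquad
\phi(A_1)(\mec e_{s_1}, i) = (\mec e_{s_1}, \tilde c(e_{s_1, i})).
$$
This is invertible because $\tilde c$ is also a proper edge coloring, so $i \mapsto \tilde c(e_{s_1, i})$ is a permutation of $[s]$. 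Define $\phi(A_2)$ analogously using right vertices and $e_{s_2,i}$. Since $\phi(A_1)$ preserves the first coordinate $s_1$, it preserves the subspace $(k^{\oplus \integers_{\le d_1}})^{\oplus s}$; let $\phi(B_1)$ be this restriction, and similarly $\phi(B_2)$.

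Verification of the four commutativities is then direct: the ones for $\rho_{1,1}$ and $\rho_{2,2}$ are automatic from the restriction construction (both sides are inclusions followed by the same map), while for $\rho_{3,1}$ one computes $\phi(A_1)\,\rho_{3,1}^{\mathrm{L}}(\mec e_e) = (\mec e_{s_1(e)}, \tilde c(e_{s_1(e), c(e)})) = (\mec e_{s_1(e)}, \tilde c(e))$, since $e_{s_1(e), c(e)} = e$ by uniqueness, matching $\rho_{3,1}^{\mathrm{R}}\,\phi(B_3)(\mec e_e) = (\mec e_{s_1(e)}, \tilde c(e))$; the check for $\rho_{3,2}$ is symmetric, using right endpoints. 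All components of $\phi$ are bijections, so $\phi$ is an isomorphism of $k$-diagrams. The main (minor) obstacle is that $\phi(A_1)$ and $\phi(A_2)$ are defined using seemingly different data---permutations indexed by left versus right vertices---but the two recipes automatically agree on each basis vector of $B_3 = k^{\oplus E}$ because both reduce to the assignment $c(e) \mapsto \tilde c(e)$; this coherence is precisely what the proper-edge-coloring property guarantees, and is what makes the piecewise construction glue into a global morphism.
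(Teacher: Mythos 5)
Your proof is correct and follows essentially the same approach as the paper's: the paper proves $\cM_{W_i,\mec d}\isom\bec\cI_{\mec d}^{\oplus W_i}$ (Proposition \ref{pr_cM_W_decomp}) and then reassociates the direct sum $\bigoplus_i\bec\cI_{\mec d}^{\oplus W_i}\isom\bec\cI_{\mec d}^{\oplus W}$ (Proposition \ref{pr_s_fold_indicator_function}), which depends only on $W$; your identification of both $B_3$-values with $k^{\oplus E}$ for $E={\rm Multi}(W)$ together with the edge colorings $c,\tilde c$ is precisely an explicit, basis-level writing-out of those two isomorphisms composed. The only presentational difference is that you carry the permutations on the $A_j$-components by hand rather than packaging each atomic piece as an indicator $k$-diagram $\cI_{\mec d\ge\mec a}$.
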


Moreover, we will prove this theorem is true in a very strong sense:
namely, if $W=W_1+\cdots+W_s$, then the direct sum
$$
\cM_{W_1,\mec d} \oplus \cdots \oplus \cM_{W_s,\mec d}
$$
is isomorphic to a sum,
$\bec\cI_{\mec d}^{\oplus W}$
of what we call {\em indicator diagrams}, that can be
inferred from $W$ alone, without reference to the $W_1,\ldots,W_s$.
This will provide additional intuition regarding 
Theorem~\ref{th_invariance_under_perfect_matching_sum}.

\subsection{Example: $\cM_{W,\mec d}$ as a Direct Sum of Indicator Diagrams}

The $k$-diagrams $\cM_{W,\mec d}$ of the 
last section can be naturally viewed as a direct sum of our four basic
diagrams.  In fact, for $W$ fixed, the family $\cM_{W,\mec d}$
with $\mec d$ varying decomposes as a sum of a family of 
our four basic diagrams indexed on $\mec d$.
This point of view will be useful to understand the virtual $k$-diagrams
that we study later.

\begin{definition}\label{de_indicator_diagrams}
Let $k$ be a field and $\mec a\in\integers^2$.  The
{\em $\ge\mec a$-indicator family of $k$-diagrams}
refers to the family of $k$-diagrams indexed on $\mec d\in\integers^2$, 
denoted
$\{\indicateDgeA\}_{\mec d\in\integers^2}$,
where for each $\mec d\in\integers^2$ we set
\begin{enumerate}
\item
$\indicateDgeA=\underline k$ if 
$\mec d\ge \mec a$,
\item
$\indicateDgeA=\underline k_{/B_2}$ if 
$d_1\ge a_1$ and $d_2< a_2$, 
\item
$\indicateDgeA=\underline k_{/B_1}$ if 
$d_2\ge a_2$ and $d_1 < a_1$, and 
\item
$\indicateDgeA=\underline k_{/B_1,B_2}$ if 
$d_1 < a_1$ and $d_2< a_2$.
\end{enumerate}
Equivalently, for each $\mec d$, $\indicateDgeA$
is equal to one of the four basic $k$-diagrams,
where for each $j=1,2$, 
$\indicateDgeA(B_j)=k$ iff $a_j\le d_j$.
\end{definition}

We depict the indicator diagram in Figure~\ref{fi_indicator}.
\begin{figure}
\begin{tikzpicture}[scale=0.75]
\node (B1) at (0,2) {$\indicateDgeA(B_1)=\left\{\begin{array}{cc}
k & \mbox{if $a_1\le d_1$,} \\ 0 & \mbox{otherwise.}
\end{array}\right.$};
\node (B2) at (0,-2) {$\indicateDgeA(B_2)=\left\{\begin{array}{cc}
k & \mbox{if $a_2\le d_2$,} \\ 0 & \mbox{otherwise.}
\end{array}\right.$};
\node (B3) at (0,0) {$k$};
\node (A1) at (8,1) {$k$};
\node (A2) at (8,-1) {$k$};
\draw [->] (B1) -- (A1) node [midway,above] {${\rm Inclusion}$};
\draw [->] (B2) -- (A2) node [midway,below] {${\rm Inclusion}$};
\draw [->] (B3) -- (A1) node [midway,above] {${\rm Identity}$};
\draw [->] (B3) -- (A2) node [midway,below] {${\rm Identity}$};
\end{tikzpicture}
\caption{The Indicator $k$-Diagram $\indicateDgeA$}
\label{fi_indicator}
\end{figure}

\begin{definition}
If $W\from\integers^2\to\integers_{\ge 0}\cup\{\infty\}$, the 
{\em $W$-sum indicator $k$-diagram},
denoted $\bec\cI_{\mec d}^{\oplus W}$,
refers to the direct sum
$$
\bec\cI_{\mec d}^{\oplus W} =
\bigoplus_{\mec a\in\integers^2} 
\bigl( \indicateDgeA  \bigr)^{\oplus W(\mec a)},
$$
where if $W(\mec a)=\infty$, then $\oplus W(\mec a)$ refers to
$\oplus\naturals$, i.e., the summand involved
is the direct sum of a countably infinite number of
copies of $\indicateDgeA$.
\end{definition}

The following proposition is immediate, but worth stating.

\begin{proposition}\label{pr_cM_W_decomp}
Let $W\from\integers^2\to\{0,1\}$ be a perfect matching.
Then there is an isomorphism
$$
\iota_{\mec d}\from\cM_{W,\mec d}\to \bec\cI_{\mec d}^{\oplus W}
$$
given by the canonical isomorphisms
for $i=1,2$ 
\begin{equation}\label{eq_cM_W_to_cI_A_B_i}
\cM_{W,\mec d}(A_i)= k^{\oplus\integers} \to \bec\cI_{\mec d}^{\oplus W}(A_i),
\quad
\cM_{W,\mec d}(B_i)= k^{\oplus\integers_{\le d_i}}\to\bec\cI_{\mec d}^{\oplus W}(B_i),
\end{equation} 
and the isomorphisms
\begin{equation}\label{eq_cM_W_to_cI_B_3}
\cM_{W,\mec d}(B_3)= k^{\oplus W}
\to \bec\cI_{\mec d}^{\oplus W}(B_3);
\end{equation} 
moreover,
and for $j=1,2$, the maps $\cM_{W,\mec d}(\rho_{3j})$ are isomorphisms
of $k^{\oplus W}\to k^{\integers}$.
\end{proposition}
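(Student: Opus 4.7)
The plan is to build the isomorphism $\iota_{\mec d}$ component by component, using the bijection $\pi\from\integers\to\integers$ associated to the perfect matching $W$ (so that the support of $W$ is exactly $\{(a_1,\pi(a_1)):a_1\in\integers\}$).

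First, I would set up consistent indexings of the basis vectors on each side. On the $\cM_{W,\mec d}$ side, all four spaces $\cM_{W,\mec d}(A_j)$ and $\cM_{W,\mec d}(B_j)$ for $j=1,2$ have the obvious standard basis indexed by $\integers$ or $\integers_{\le d_j}$, and $\cM_{W,\mec d}(B_3)=k^{\oplus W}$ has its standard basis indexed by ${\rm Multi}(W)=\{(a_1,\pi(a_1),1):a_1\in\integers\}$, which we identify with $\integers$ via $a_1\leftrightarrow(a_1,\pi(a_1),1)$. On the $\bec\cI_{\mec d}^{\oplus W}$ side, each summand $\indicateDgeA$ for $\mec a=(a_1,\pi(a_1))$ contributes a copy of $k$ at each of $A_1,A_2,B_3$; at $B_1$ it contributes $k$ iff $a_1\le d_1$, and at $B_2$ it contributes $k$ iff $\pi(a_1)\le d_2$. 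Summing over $a_1\in\integers$ gives $\bec\cI_{\mec d}^{\oplus W}(A_1)$, $\bec\cI_{\mec d}^{\oplus W}(A_2)$, $\bec\cI_{\mec d}^{\oplus W}(B_3)$ each naturally $k^{\oplus\integers}$ indexed by $a_1$, while $\bec\cI_{\mec d}^{\oplus W}(B_1)\isom k^{\oplus\integers_{\le d_1}}$ and $\bec\cI_{\mec d}^{\oplus W}(B_2)\isom k^{\oplus\{a_1:\pi(a_1)\le d_2\}}$.

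Next I would define the components of $\iota_{\mec d}$. The map $\iota_{\mec d}(A_1)$ is the identity under the natural identification $a_1\leftrightarrow a_1$, and $\iota_{\mec d}(B_1)$ likewise. The map $\iota_{\mec d}(A_2)$ is the $k$-linear extension of $\mec e_{a_2}\mapsto\mec e_{\pi^{-1}(a_2)}$, which is an isomorphism since $\pi$ is a bijection; $\iota_{\mec d}(B_2)$ is the restriction of this same map to the basis $\{\mec e_{a_2}:a_2\le d_2\}$, landing in $\{\mec e_{a_1}:\pi(a_1)\le d_2\}$, which is again a bijection of bases. Finally $\iota_{\mec d}(B_3)$ is the identity under the $a_1\leftrightarrow(a_1,\pi(a_1),1)$ identification. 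Then I would verify compatibility with restriction maps: the square at $\rho_{1,1}$ (both sides are inclusions $k^{\oplus\integers_{\le d_1}}\hookrightarrow k^{\oplus\integers}$) commutes trivially, the square at $\rho_{2,2}$ commutes because both paths send $\mec e_{a_2}$ (with $a_2\le d_2$) to $\mec e_{\pi^{-1}(a_2)}\in k^{\oplus\integers}$, and the squares at $\rho_{3,1}$ and $\rho_{3,2}$ commute by a direct check on the basis vector $\mec e_{(a_1,\pi(a_1),1)}$.

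The final claim, that each $\cM_{W,\mec d}(\rho_{3,j})\from k^{\oplus W}\to k^{\oplus\integers}$ is an isomorphism, is really the reason the whole construction works: $\rho_{3,1}$ sends the basis element indexed by $(a_1,\pi(a_1),1)$ to $\mec e_{a_1}$, which is a bijection of bases because $a_1\mapsto a_1$ is the identity on $\integers$; $\rho_{3,2}$ sends it to $\mec e_{\pi(a_1)}$, which is a bijection of bases because $\pi$ is a bijection. There is no real obstacle here; the only thing to watch is keeping the two different indexings of $B_2$ and $A_2$ (by $a_2$ on the $\cM$ side, by $a_1$ on the $\bec\cI$ side) straight, which is handled cleanly by pulling back along $\pi^{-1}$ as above.
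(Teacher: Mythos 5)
Your proposal is correct and follows essentially the same approach as the paper's proof: both identify each summand of $\bec\cI_{\mec d}^{\oplus W}$ with its index $a_1$ (via uniqueness of $a_2=\pi(a_1)$), match up the $A_2$, $B_2$ bases through the bijection $\pi$, and observe that the restriction maps $\rho_{3,j}$ of $\cM_{W,\mec d}$ are isomorphisms precisely because $\pi$ is a bijection. The paper is terser and does not spell out the $\pi^{-1}$-twist on $A_2$, $B_2$, but the explicit basis-by-basis verification you give is exactly what the paper means when it says the isomorphisms ``easily'' intertwine with restrictions.
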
 
\begin{proof}
The equalities 
in~\eqref{eq_cM_W_to_cI_A_B_i} 
and~\eqref{eq_cM_W_to_cI_B_3} are by definition
(Definition~\ref{de_cM_W_d}).
The fact that $W$ is a perfect matching implies that for each $a_1$
there is a unique $a_2$ with $W(a_1,a_2)=1$;
this gives the
isomorphism $k^{\oplus\integers} \to \bec\cI_{\mec d}^{\oplus W}(A_1)$
and $k^{\oplus\integers_{\le d_1}}\to\bec\cI_{\mec d}^{\oplus W}(B_1)$;
similarly for the subscript $1$ replaced everywhere by $2$.
By Definition~\ref{de_indicator_diagrams} each indicator diagram
$\indicateDgeA$ has $\indicateDgeA(\rho_{3j})$ being an isomorphism,
and hence the same is true for $\bec\cI_{\mec d}^{\oplus W}$, and
hence, 
by~\eqref{eq_cM_W_to_cI_A_B_i} 
and~\eqref{eq_cM_W_to_cI_B_3},
it also holds for $\cM_{W,\mec d}$.
We easily check that the isomorphisms
in~\eqref{eq_cM_W_to_cI_A_B_i}
and~\eqref{eq_cM_W_to_cI_B_3} intertwine with the restriction maps,
and hence gives the desired isomorphism $\iota_{\mec d}$.
\end{proof}

\begin{proposition}
\label{pr_indicator_diagram_Betti_nums}
For any $W\from\integers\times\integers\to\integers_{\ge 0}\cup\{\infty\}$,
\begin{align*}
b^0(\cI_{\mec d}^{\oplus W})  & = \sum_{\mec a\le \mec d} W(\mec a), \\
b^1(\cI_{\mec d}^{\oplus W})  & = \sum_{\mec a\ge \mec d+\mec 1} W(\mec a),
\end{align*}
and hence, if one of these two Betti numbers is finite, we have
$$
\chi(\cI_{\mec d}^{\oplus W}) = 
\left( \sum_{\mec a\le \mec d} W(\mec a) \right) 
-
\left( \sum_{\mec a\ge \mec d+\mec 1} W(\mec a) \right) .
$$
\end{proposition}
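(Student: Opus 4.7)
The plan is to use the fact that Betti numbers are additive under direct sums (stated at the end of Section~\ref{se_morphisms_etc}), so it suffices to compute $b^0$ and $b^1$ of each of the four basic $k$-diagrams and then tally the contributions according to the definition of $\indicateDgeA$.

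First I would directly compute the differential $\cF(\partial)$, as given by \eqref{eq_formula_for_cF_partial}, for each of the four basic $k$-diagrams from Definition~\ref{de_four_basic}. For $\cF = \underline k$, the map $\cF(\partial)\from k^3\to k^2$ sending $(b_1,b_2,b_3)\mapsto(b_1-b_3,b_2-b_3)$ has a one-dimensional kernel (the diagonal) and is surjective, giving $b^0=1$ and $b^1=0$. For $\cF=\underline k_{/B_i}$ with $i\in\{1,2\}$, killing the $B_i$ summand still leaves a surjective map with zero kernel, giving $b^0=b^1=0$. For $\cF=\underline k_{/B_1,B_2}$, the differential becomes $b_3\mapsto(-b_3,-b_3)\from k\to k^2$, which is injective with one-dimensional cokernel, giving $b^0=0$ and $b^1=1$. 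None of these calculations is deep.

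Next I would assemble these into the statement for $\bec\cI_{\mec d}^{\oplus W}$. Inspecting Definition~\ref{de_indicator_diagrams}, exactly one of the four cases occurs for each $\mec a\in\integers^2$: we get $\indicateDgeA=\underline k$ precisely when $\mec a\le\mec d$, which contributes $W(\mec a)$ to $b^0$ and $0$ to $b^1$; we get $\indicateDgeA=\underline k_{/B_1,B_2}$ precisely when $\mec a\ge \mec d+\mec 1$ (i.e., $d_1<a_1$ and $d_2<a_2$), which contributes $W(\mec a)$ to $b^1$ and $0$ to $b^0$; and in the two remaining ``mixed'' cases both Betti numbers vanish, contributing nothing. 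Summing over $\mec a\in\integers^2$ via additivity of $b^0,b^1$ under direct sums yields
\[
b^0(\bec\cI_{\mec d}^{\oplus W})=\sum_{\mec a\le\mec d}W(\mec a),\qquad
b^1(\bec\cI_{\mec d}^{\oplus W})=\sum_{\mec a\ge\mec d+\mec 1}W(\mec a),
\]
with the convention that $W(\mec a)=\infty$ forces the corresponding sum to be $\infty$.

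Finally, the Euler characteristic formula $\chi=b^0-b^1$ follows immediately from the two identities provided at least one of $b^0,b^1$ is finite, so that the difference is well-defined.

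I do not expect any real obstacle here: the only potential pitfalls are bookkeeping---making sure the four cases of $\indicateDgeA$ exhaust $\integers^2$ disjointly, and correctly handling the $\infty$ convention in the sums when $W$ takes the value $\infty$ (in which case even a single such $\mec a$ in the relevant region makes the corresponding Betti number infinite, consistent with the stated formula).
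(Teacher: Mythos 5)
Your proof is correct and is essentially the argument the paper intends (the paper states the proposition without proof precisely because it reduces to this routine computation): compute the Betti numbers of the four basic diagrams from the explicit formula \eqref{eq_formula_for_cF_partial}, identify which case of Definition~\ref{de_indicator_diagrams} occurs for each $\mec a$, and invoke additivity of $b^i$ under direct sums from Section~\ref{se_morphisms_etc}. The four cases of $\indicateDgeA$ do partition $\integers^2$, and your handling of the $W(\mec a)=\infty$ convention is consistent with the paper's use of $\oplus\naturals$ for infinite multiplicities.
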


Our main interest in Proposition~\ref{pr_indicator_diagram_Betti_nums}
is for $W$ that are $s$-fold perfect matchings in the following sense.

\begin{definition}
We say that a function $W\from\integers^2\to\integers_{\ge 0}$
is an {$s$-fold matching} if it can be written as the
sum of $s$ (bounded) perfect matchings.
\end{definition}

To build models for general Riemann functions $\integers^2\to\integers$
we will require the following strengthening 
of Proposition~\ref{pr_indicator_diagram_Betti_nums}.

\begin{proposition}\label{pr_s_fold_indicator_function}
Let $\{W_i\}_{i\in I}$ be a finite or countably infinite set
of functions $W_i\from\integers^2\to\integers_{\ge 0}\cup\{\infty\}$, and
let $W=\sum_{i\in I}W_i$.  Then
\begin{equation}\label{eq_s_fold_indicator_one}
\bec\cI_{\mec d}^{\oplus W} \isom 
\bigoplus_{i\in I} \bec\cI_{\mec d}^{\oplus W_i}.
\end{equation} 
Let $W\from\integers^2\to\integers_{\ge 0}$ be an $s$-fold matching,
and $W=W_1+\cdots+W_s$ be a decomposition of $W$ into perfect matchings.
Then for any $\mec d\in\integers^2$ there is an isomorphism
\begin{equation}\label{eq_s_fold_indicator_two}
\bec\cI_{\mec d}^{\oplus W} \isom 
\cM_{W_1,\mec d} \oplus \cdots \oplus \cM_{W_s,\mec d}.
\end{equation} 
In particular, if $W=W_1'+\cdots+ W'_s$ is another decomposition of $W$
into perfect matchings, then we have 
\begin{equation}\label{eq_s_fold_indicator_three}
\cM_{W_1,\mec d} \oplus \cdots \oplus \cM_{W_s,\mec d}
\isom
\cM_{W_1',\mec d} \oplus \cdots \oplus \cM_{W_s',\mec d}
\end{equation} 
\end{proposition}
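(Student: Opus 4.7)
The plan is to deduce all three isomorphisms essentially from the definitions, together with the already-established Proposition~\ref{pr_cM_W_decomp}. The only real content is reindexing direct sums, and in fact the three claims will follow in sequence with the third being a trivial consequence of the second.

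First I would prove \eqref{eq_s_fold_indicator_one} directly from the definition of $\bec\cI_{\mec d}^{\oplus W}$. Since $\bec\cI_{\mec d}^{\oplus W}$ is defined as $\bigoplus_{\mec a\in\integers^2}(\indicateDgeA)^{\oplus W(\mec a)}$, and since $W(\mec a)=\sum_{i\in I}W_i(\mec a)$ (with the natural convention if some $W_i(\mec a)=\infty$, noting that $I$ is at most countable so the total remains at most countable), we have for each $\mec a$ a canonical isomorphism
\[
(\indicateDgeA)^{\oplus W(\mec a)} \isom \bigoplus_{i\in I}(\indicateDgeA)^{\oplus W_i(\mec a)}.
\]
Taking the direct sum over $\mec a$ and swapping the order of the two direct sums (over $\mec a$ and over $i$)—which is legitimate since direct sums of $k$-diagrams are constructed value-by-value as direct sums of $k$-vector spaces, and direct sums of vector spaces commute—yields \eqref{eq_s_fold_indicator_one}.

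Next, for \eqref{eq_s_fold_indicator_two}, let $W=W_1+\cdots+W_s$ with each $W_j$ a perfect matching. Proposition~\ref{pr_cM_W_decomp} furnishes, for each $j\in\{1,\ldots,s\}$, an isomorphism $\iota_{\mec d}^{(j)}\from \cM_{W_j,\mec d}\to \bec\cI_{\mec d}^{\oplus W_j}$. Taking the direct sum of these morphisms gives an isomorphism
\[
\bigoplus_{j=1}^s \cM_{W_j,\mec d} \xrightarrow{\ \bigoplus_j \iota_{\mec d}^{(j)}\ } \bigoplus_{j=1}^s \bec\cI_{\mec d}^{\oplus W_j},
\]
and then \eqref{eq_s_fold_indicator_one}, applied with $I=\{1,\ldots,s\}$, provides an isomorphism from this last direct sum to $\bec\cI_{\mec d}^{\oplus W}$. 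Composing yields \eqref{eq_s_fold_indicator_two}.

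Finally, \eqref{eq_s_fold_indicator_three} is immediate: given two decompositions $W=W_1+\cdots+W_s=W_1'+\cdots+W_s'$ into perfect matchings, both sides of \eqref{eq_s_fold_indicator_three} are isomorphic to $\bec\cI_{\mec d}^{\oplus W}$ by \eqref{eq_s_fold_indicator_two}, and the claim follows by composing one isomorphism with the inverse of the other. There is no real obstacle in this proof; the only point that requires a moment of care is checking that the reindexing in \eqref{eq_s_fold_indicator_one} is compatible with the restriction maps of $k$-diagrams, but this is automatic since all constructions (direct sum, reindexing) were defined value-by-value and the restriction maps of $\bec\cI_{\mec d}^{\oplus W}$ are by definition the direct sums of those of the summands $\indicateDgeA$.
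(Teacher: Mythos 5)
Your proof is correct and follows essentially the same route as the paper: reindex the double direct sum for \eqref{eq_s_fold_indicator_one}, then apply Proposition~\ref{pr_cM_W_decomp} summandwise and compose with \eqref{eq_s_fold_indicator_one} for \eqref{eq_s_fold_indicator_two}, with \eqref{eq_s_fold_indicator_three} falling out by transitivity. (In fact your citation of Proposition~\ref{pr_cM_W_decomp} for the middle step is more apt than the reference the paper itself gives there.)
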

\begin{proof}
\eqref{eq_s_fold_indicator_one} follows from the easily verified 
fact that any direct sum
of direct sums is the direct sum of all summands in the double summation
(this is, more generally, valid in any additive category, since this is
an inductive limit of inductive limits, see e.g.,
\cite{sga4.1}, Section~I.2.5.0).
\eqref{eq_s_fold_indicator_two} follows from \eqref{eq_s_fold_indicator_one}
and Proposition~\ref{pr_indicator_diagram_Betti_nums}.
\eqref{eq_s_fold_indicator_three} follows from
\eqref{eq_s_fold_indicator_two}.
\end{proof}

\section{Virtual Fredholm $k$-Diagrams and Riemann Functions for $n=2$}
\label{se_virtual_two_dim_Riemann}

If $f\from\integers^2\to\integers$ is a general Riemann function,
its weight, $W$, can attain negative values.
In this case we don't know of a good way to model $f$ as the zeroth
Betti number of a family of $k$-diagrams; however, one can do so
if we work with {\em virtual $k$-diagrams}.
Our strategy is to use Lemma~\ref{le_row_col_sums_weight_as_perfect}
to write
\begin{equation}\label{eq_W_as_W_i_minus_tilde_W_i}
W = (W_1+\cdots+W_s) - (\tilde{W_1}+\cdots+\tilde{W}_{s-1})
\end{equation} 
for some $s$, where the $W_i$ and $\tilde W_i$ are perfect matchings.
We then model $f$ as $b^0$ of the {\em virtual $k$-diagram}
\begin{equation}\label{eq_virtual_difference_W_i_tilde_W_i}
\left( 
\bigoplus_{i } \cM_{W_i,\mec d} ,
\bigoplus_{i } \cM_{\tilde{W}_i,\mec d} 
\right),
\end{equation} 
which is a ``formal difference'' of the first $k$-diagram ``minus''
the second.

Most of the work in this section is to iron out the notion
of virtual vector spaces, virtual Fredholm maps, and
virtual $k$-diagrams.
[We borrow the term {\em virtual} from
{\em virtual characters} in group theory.]
We then show that the ``virtual'' or ``formal difference of'' 
$k$-diagrams
\eqref{eq_virtual_difference_W_i_tilde_W_i} has the desired
Betti numbers (this is immediate from our discussion
of formal differences), and---more notably---is
independent, up to equivalence, of the decomposition
\eqref{eq_W_as_W_i_minus_tilde_W_i}.

We caution the reader that by our conventions below,
a {\em virtual $k$-vector space} will refer to 
a formal difference of finite dimensional vector spaces
(Convention~\ref{co_virtual_k_vector_space}) and a
{\em virtual $k$-diagram}
will refer to a formal difference of
Fredholm $k$-diagrams, i.e., of $k$-diagrams whose Betti numbers
are finite (Convention~\ref{co_virtual_k_diagram}).
These conventions are needed to get a well defined notion of
Betti numbers, as we explain below.

Our main modeling result is
Proposition~\ref{pr_cM_W_poss_neg_single_virt_class}, which builds
a virtual (Fredholm) $k$-digram that expresses any generalized
Riemann-Roch formula
\eqref{eq_generalized_riemann_roch} 
as an Euler characteristic formula.

We remark that any function $W\from\integers^2\to\integers$ can be
canonically expressed as
$$
W=W^+-W^-,
\quad\mbox{where}\quad
W^+=\max(W,0),\ %
W^-=\max(-W,0).
$$
Although we can see (for example, 
Example~\ref{ex_period_three_non_canonical_example} below) that
$\cM_{W^+,\mec d},\cM_{W^-,\mec d}$ are not generally Fredholm,
even if $\fraks W$ is a Riemann function,
by contrast
$\bec\cI_{\mec d}^{\oplus W^+},
\bec\cI_{\mec d}^{\oplus W^-}$
are always Fredholm, and this gives a sort of ``canonical'' or
``minimal'' way to express
$\cM_{W,\mec d}$ as a formal difference
of (Fredholm) indicator $k$-diagrams
(see Proposition~\ref{pr_cM_W_poss_neg_indicator_canonical}).

\subsection{Formal Definition of Virtual $k$-Diagrams
(and Virtual Vector Spaces, Etc.)}

We now introduce a group of formal differences---either of $k$-vector spaces,
$k$-Fredholm maps, and $k$-diagrams---that
one sees in, say, $K$-theory, or constructing the integers from
the natural numbers.  This general idea is
often technically
called the {\em Grothendieck completion} or the {\em Grothendieck
group} (of a commutative monoid).

\begin{definition}
Let $k$ be a field.
By a {\em virtual $k$-diagram} (respectively, {\em virtual $k$-Fredholm 
map}, {\em virtual $k$-vector space}, etc.)
we mean a
pair $(\cF_1,\cF_2)$ of $k$-diagrams (respectively, $k$-Fredholm map,
$k$-vector space, etc.);
we write 
$(\cF_1,\cF_2)\sim(\cF_1',\cF_2')$
if there is an isomorphism $\phi$
$$
\phi\from 
\cF_1\oplus \cF_2' \oplus\cF_0 
\to
\cF_1'\oplus \cF_2\oplus \cF_0
$$
for some $k$-diagram $\cF_0$ (respectively, $k$-Fredholm map, etc.).
At times we use the notation $\cF_1\ominus\cF_2$ to denote $(\cF_1,\cF_2)$.
We also view a $k$-diagram ($k$-Fredholm map, etc.) $\cF$ as 
the virtual diagram $\cF\ominus \underline 0$, 
where $\underline 0$ is the zero
$k$-diagram (and similarly for Fredholm $k$-map, etc.).
\end{definition}
We easily see that $\sim$ is an equivalence relation, and that
$\oplus$ and $\ominus$ can be extended to 
act on virtual $k$-diagrams (respectively, Fredholm $k$-diagram, etc.)
taken to behave like $+$ and $-$ regarding
parenthesis, e.g., 
$$
(\cF_1\ominus\cF_2)\ominus (\cF_3 \ominus \cF_4) 
\quad\mbox{refers to}\quad
(\cF_1\oplus \cF_4)\ominus (\cF_2 \oplus \cF_3).
$$

\subsection{Virtual Vector Spaces}

It is important to understand the difference between virtual $k$-vector
spaces and virtual $k$-vector spaces of finite dimension.

As virtual $k$-vector spaces,
$k\sim 0$ since there is an isomorphism 
$k\oplus k^{\naturals}\to k^{\naturals}$; however, we will
easily prove that for virtual
finite-dimensional $k$-vector spaces, we have $k^b\ominus k^a \sim
k^{b'}\ominus k^{a'}$ iff $b-a=b'-a'$.  To prove this, one notices that:
\begin{enumerate}
\item 
if $V,V'$ are isomorphic finite dimensional $k$-vector spaces, then
(clearly) $\dim(V)=\dim(V')$; and
\item 
therefore if $V_1\ominus V_2$ is equivalent to $V_3\ominus V_4$
as finite dimensional $k$-vector spaces, then for some finite
dimensional $k$-vector space $V_0$ we have that
$$
V_1\oplus V_4\oplus V_0 \isom
V_2\oplus V_3\oplus V_0 ,
$$
and taking dimensions we have an equality of finite integers
$$
\dim(V_1)+\dim(V_4)+\dim(V_0) 
=
\dim(V_2)+\dim(V_3)+\dim(V_0) ,
$$
and hence
$$
\dim(V_1)-\dim(V_2)=\dim(V_3)-\dim(V_4).
$$
\item Hence we can define
$$
\dim( V_1 \ominus V_2) = \dim(V_1) - \dim(V_2) \in \integers,
$$
which is well defined in the equivalence class of $V_1\ominus V_2$.
\end{enumerate}

\begin{convention}\label{co_virtual_k_vector_space}
By a {\em virtual $k$-vector space} we always mean a
virtual $k$-vector space of finite dimension, unless we specify otherwise.
\end{convention}

\subsection{Virtual $k$-Fredholm Maps}

Let $f\from B\to A$ and $f'\from B'\to A'$ be morphisms 
(i.e., linear transformations) of
$k$-vector spaces.
By a {\em morphism from $f$ to $f'$} we mean a pair
$\phi=(\phi_B,\phi_A)$ of morphisms
$\phi_B\from B\to B'$ and $\phi_A\from A\to A'$ that commute
in the evident fashion, i.e., $\phi_A f = g \phi_B$; see
Figure~\ref{fi_morphism_of_linear_transformations}.
\begin{figure}
\begin{tikzpicture}[scale=0.75]
\node at (0,2)(1){$B$};
\node at (2,2)(2){$A$};
\draw[->] (1) -- (2) node [midway,above] {$f$};
\node at (0,0)(3){$B'$};
\node at (2,0)(4){$A'$};
\draw[->] (3) -- (4) node [midway,above] {$f'$};
\draw[->] (1) -- (3) node [midway,left] {$\phi_B$};
\draw[->] (2) -- (4) node [midway,right] {$\phi_A$};
\end{tikzpicture}
\caption{A Morphism $f\to f'$.}
\label{fi_morphism_of_linear_transformations}
\end{figure}
A morphism $\phi=(\phi_B,\phi_A)\from f\to f'$ is an
{\em isomorphism} if $\phi_B$ and $\phi_A$ are isomorphisms\footnote{
  We remark that elsewhere one considers morphisms up to 
  homotopy or localizes at quasi-isomorphisms; in this article
  we have no need for this.
  }.
We easily verify the following facts.
\begin{enumerate}
\item 
If 
$\phi=(\phi_B,\phi_A)\from f\to f'$ is an isomorphism, then
$\phi_B$ restricts to an isomorphism $\ker(f)\to\ker(f')$,
and similarly $\phi_A$ restricts to an isomorphism
$\coker(f)\to\coker(f')$.
\item 
It follows that if we define for a virtual Fredholm map
$f=f_1\ominus f_2$ its cohomology groups for $i=0,1$ to
be the virtual (finite dimensional) $k$-vector spaces
\begin{equation}\label{eq_cohomology_groups_virtual_Fredholm_map}
H^i\bigl( f_1\ominus f_2 \bigr) = H^i( f_1) \ominus H^i( f_2 ),
\end{equation} 
then if $f_1\ominus f_2$ is equivalent to $f_3\ominus f_4$
as virtual $k$-Fredholm maps, we have
$$
H^i(f_1)\ominus H^i(f_2)
\sim
H^i(f_3)\ominus H^i(f_4)
$$
as virtual finite dimensional $k$-vector spaces.
\item
Hence \eqref{eq_cohomology_groups_virtual_Fredholm_map}
for $i=0,1$ are well-defined virtual finite dimensional $k$-vector
spaces, and hence setting
$$
b^i\bigl( f_1\ominus f_2 \bigr) = 
\dim H^i\bigl( f_1\ominus f_2 \bigr) 
=
\dim H^i(f_1)-\dim H^i(f_2) \in \integers
$$
gives well-defined Betti numbers of a virtual $k$-Fredholm map.
\end{enumerate}
Note the if we work with virtual $k$-linear transformations,
without insisting that they are Fredholm maps, then there seems to
be no good way to define their
Betti numbers (and hence cohomology groups), since
dimensions are not well defined for virtual $k$-vector spaces
when we allow the spaces to be of infinite dimension.

\subsection{Virtual $k$-Diagrams with Finite Betti Numbers}

If $\cF$ is a $k$-diagram, then
$\cF(\partial)$ is a $k$-Fredholm map iff
$b^i(\cF)$ is finite for both $i=0,1$.
Hence in the category of ``$k$-diagrams with finite Betti
numbers,'' the notion of a (virtual) cohomology group and
(virtual) Betti numbers are well defined, by associating to
$\cF_1\ominus \cF_2$ the virtual $k$-Fredholm map
$$
(\cF_1\ominus \cF_2)(\partial)
\eqdef \cF_1(\partial) \ominus \cF_2(\partial).
$$

\begin{definition}
Let $k$ be a field.  By a {\em Fredholm $k$-diagram}
we mean a $k$-diagram with both Betti numbers finite.
\end{definition}
It follows that virtual Fredholm $k$-diagrams have 
well-defined virtual cohomology groups and virtual Betti numbers.

\begin{convention}\label{co_virtual_k_diagram}
By a {\em virtual $k$-diagram} we mean a virtual Fredholm $k$-diagram
unless we specify otherwise,
i.e., we are working with $k$-diagrams with both Betti numbers finite.
Hence a virtual $k$-diagram has well-defined cohomology groups 
(which are virtual $k$-vector spaces of finite dimension) and
therefore well-defined Betti numbers.
\end{convention}

\subsection{Riemann Functions as Virtual Direct Sums}

Our convention is that a virtual $k$-diagram refers to $k$-diagrams
that are Fredholm; this is necessary to get well-defined Betti numbers.
Hence we need the following easy lemma.

\begin{lemma}\label{le_cM_W_d_are_Fredholm}
Let $W\from\integers^2\to\integers$ be any perfect matching.
Then for any $\mec d\in\integers^2$, 
$\cM_{W,\mec d}$ has finite Betti numbers.
Similarly, for any $\mec d\in\integers^2$,
the number of $\mec a$ with $W(\mec a)=1$
and $\indicateDgeA$ having at least one non-zero Betti number is
finite.
\end{lemma}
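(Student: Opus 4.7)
The plan is to reduce both assertions to a single finiteness count coming from the boundedness built into the definition of a perfect matching: namely, if $\pi\from\integers\to\integers$ is the bijection associated to $W$, then the quantity $\pi(i)+i$ is bounded both above and below, since by Definition~\ref{de_perfect_matching} a perfect matching is initially and eventually zero.

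For the first claim, I would invoke Theorem~\ref{th_perfect_matching}, which tells us that
\[
b^0(\cM_{W,\mec d}) = \#\{\mec a\in\integers^2 : W(\mec a)=1,\ \mec a\le\mec d\},
\quad
b^1(\cM_{W,\mec d}) = \#\{\mec a\in\integers^2 : W(\mec a)=1,\ \mec a\ge\mec d+\mec 1\}.
\]
Let $m,M\in\integers$ be constants with $m\le i+\pi(i)\le M$ for all $i$. An element $\mec a=(a_1,\pi(a_1))$ contributing to $b^0$ satisfies $a_1\le d_1$ and $\pi(a_1)\le d_2$; the second inequality combined with $a_1+\pi(a_1)\ge m$ gives $a_1\ge m-d_2$, so $a_1$ lies in the finite range $[m-d_2,d_1]$. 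Similarly, an element contributing to $b^1$ satisfies $a_1\ge d_1+1$ and, using $a_1+\pi(a_1)\le M$, also $a_1\le M-d_2-1$. Both sets are therefore finite.

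For the second claim, I would first record the Betti numbers of the four basic $k$-diagrams by a direct computation of the kernel and cokernel of the map $\cF(\partial)$ in \eqref{eq_formula_for_cF_partial}: one finds $b^0(\underline k)=1$, $b^1(\underline k_{/B_1,B_2})=1$, and both Betti numbers vanish for $\underline k_{/B_1}$ and $\underline k_{/B_2}$. Translating via Definition~\ref{de_indicator_diagrams}, the indicator $\indicateDgeA$ has at least one non-zero Betti number exactly when either $\mec a\le\mec d$ or $\mec a\ge\mec d+\mec 1$. Hence the $\mec a$ counted in the second claim are precisely the union of the two finite sets appearing in the first claim (alternatively, one may cite Proposition~\ref{pr_cM_W_decomp} together with Proposition~\ref{pr_indicator_diagram_Betti_nums} to identify this count with $b^0(\cM_{W,\mec d})+b^1(\cM_{W,\mec d})$, which is finite by the first part).

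There is no serious obstacle here; the whole lemma follows once one unwinds the definitions. The only mild subtlety is noticing that the two ``bad'' quadrants (where $a_i>d_i$ for one coordinate and $a_j\le d_j$ for the other) produce basic diagrams $\underline k_{/B_1}$ or $\underline k_{/B_2}$ that are \emph{acyclic}, so that despite their appearing in the decomposition of $\bec\cI_{\mec d}^{\oplus W}$ they contribute nothing to either Betti number — this is what lets the second assertion be controlled by the same one-sided bounds that drive the first.
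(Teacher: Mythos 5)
Your proof is correct and matches the paper's treatment: for the first claim both invoke Theorem~\ref{th_perfect_matching} (you additionally spell out the explicit bound $m\le a_1+\pi(a_1)\le M$, which is sound but not strictly necessary since the theorem already asserts finiteness). For the second claim your main argument — compute the Betti numbers of the four basic diagrams, observe that $\underline k_{/B_1}$ and $\underline k_{/B_2}$ are acyclic, and classify which $\mec a$ give $\indicateDgeA\in\{\underline k,\underline k_{/B_1,B_2}\}$ — is exactly the alternative route the paper records immediately after its proof, while your parenthetical (citing Proposition~\ref{pr_cM_W_decomp} and Proposition~\ref{pr_indicator_diagram_Betti_nums} to equate the count with $b^0+b^1$ of $\cM_{W,\mec d}$) is the paper's primary proof. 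No gaps.
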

\begin{proof}
Let $W$ be a perfect matching.
Theorem~\ref{th_perfect_matching} shows that $\cM_{W,\mec d}$ has finite
Betti numbers.
The claim about $\indicateDgeA$ follows since
$$
\cM_{W,\mec d} = \bec\cI_{\mec d}^{\oplus W}
= \bigoplus_{W(\mec a)=1} \indicateDgeA,
$$
and
hence for $i=0,1$ we have
$$
b^i(\cM_{W,\mec d})
=
\sum_{W(\mec a)=1} b^i(\indicateDgeA).
$$
\end{proof}
One can alternatively prove the claim about $\indicateDgeA$ above by
noting that
the only non-zero Betti numbers of our four basic
diagrams are $b^0$ of $\underline k$ and $b^1$
of $\underline k_{/B_1,B_2}$; furthermore 
we have $\indicateDgeA=\underline k$ iff $\mec d\le \mec a$,
and $\indicateDgeA=\underline k_{/B_1,B_2}$ iff $\mec d+\mec 1\ge \mec a$,
and each of these conditions on $\mec a$
occurs for only finitely many $\mec a$ for which $W(\mec a)=1$,
since $W$ is supported 
in degree bounded from above and below.

\begin{definition}
Let $f\from\integers^2\to\integers$ be a Riemann function,
and $W=\frakm f$.  
For any way of writing 
\begin{equation}\label{eq_W_as_difference_definition}
W = (W_1+ \cdots + W_s) - (\tilde{W}_1 + \cdots + \tilde{W}_{s-1}) .
\end{equation}
we use $\cM_{W,\mec d}$ (with \eqref{eq_W_as_difference_definition}
understood) to denote
to the 
virtual (Fredholm) $k$-diagram
$$
\bigl(\cM_{W_1,\mec d} \oplus\cdots \oplus\cM_{W_{s,\mec d}},
\cM_{\tilde{W}_1,\mec d}\oplus\cdots\oplus\cM_{\tilde{W}_{s-1},\mec d} \bigr).
$$
\end{definition}

The following proposition immediately implies the main result of 
this section.
\begin{proposition}\label{pr_cM_W_poss_neg_single_virt_class}
Let $f\from\integers^2\to\integers$ be a Riemann function, and $W$ its
weight.
For any way of writing $W$ as
\eqref{eq_W_as_difference_definition},
the equivalence class $[\cM_{W,\mec d}]$, of the virtual $k$-diagram
$\cM_{W,\mec d}$
is independent of the way we write $W$ in
\eqref{eq_W_as_difference_definition}.
Furthermore, 
for any $\mec d\in\integers^2$,
\begin{enumerate}
\item
$
f(\mec d) = b^0([\cM_{W,\mec d}]),
$
\item
for any $\mec K\in\integers^2$, 
$
f^\wedge_{\mec K}(\mec K-\mec d) = b^1([\cM_{W,\mec d}]),
$
and 
\item
$
\chi([\cM_{W,\mec d}]) = \deg(\mec d) + C
$
where $C$ is the offset of $f$.
\end{enumerate}
\end{proposition}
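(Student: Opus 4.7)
The plan is to reduce each assertion in the proposition to results already established, principally Theorem~\ref{th_perfect_matching} (Betti numbers of $\cM_{W,\mec d}$ for a perfect matching), Theorem~\ref{th_invariance_under_perfect_matching_sum} (invariance of the direct sum under reparametrization), and Theorem~\ref{th_easy_dual_functions_theorem} (relating $\frakm f^\wedge_\mec K$ to $W^*_\mec L$). Lemma~\ref{le_row_col_sums_weight_as_perfect} guarantees that decompositions of the form~\eqref{eq_W_as_difference_definition} exist.

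First I would prove the independence of the equivalence class $[\cM_{W,\mec d}]$ under the decomposition. Given two expressions
\[
W \;=\; (W_1+\cdots+W_s)-(\tilde W_1+\cdots+\tilde W_{s-1})
\;=\; (W'_1+\cdots+W'_{s'})-(\tilde W'_1+\cdots+\tilde W'_{s'-1}),
\]
rearranging gives an equality of two sums of perfect matchings,
\[
W_1+\cdots+W_s+\tilde W'_1+\cdots+\tilde W'_{s'-1}
\;=\;
W'_1+\cdots+W'_{s'}+\tilde W_1+\cdots+\tilde W_{s-1},
\]
with the same number of summands on both sides (namely $s+s'-1$). Applying Theorem~\ref{th_invariance_under_perfect_matching_sum} to these two expressions yields an isomorphism of direct sums of the corresponding $\cM_{\cdot,\mec d}$, which is precisely the equivalence
\[
\bigl(\cM_{W_1,\mec d}\oplus\cdots\oplus\cM_{W_s,\mec d}\bigr)\oplus
\bigl(\cM_{\tilde W'_1,\mec d}\oplus\cdots\oplus\cM_{\tilde W'_{s'-1},\mec d}\bigr)
\]
being isomorphic to the analogous direct sum with primed and unprimed swapped; by definition of the equivalence relation $\sim$ on virtual $k$-diagrams this shows the two virtual diagrams are equivalent. (All summands are Fredholm by Lemma~\ref{le_cM_W_d_are_Fredholm}, so we remain inside the category in which virtual Betti numbers are well defined.)

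Next I would compute $b^0$ and $b^1$ by linearity in the decomposition. By the definition of Betti numbers of a virtual Fredholm $k$-diagram,
\[
b^0([\cM_{W,\mec d}])
=\sum_{i=1}^{s}b^0(\cM_{W_i,\mec d})-\sum_{i=1}^{s-1}b^0(\cM_{\tilde W_i,\mec d}).
\]
By Theorem~\ref{th_perfect_matching}, each $b^0(\cM_{W',\mec d})=(\fraks W')(\mec d)$ for any perfect matching $W'$. Since $\fraks$ is $\integers$-linear and $W=\sum_i W_i-\sum_i\tilde W_i$, the right-hand side collapses to $(\fraks W)(\mec d)=f(\mec d)$, proving~(1). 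For~(2), the same theorem gives $b^1(\cM_{W',\mec d})=(\fraks(W')^*_{\mec L})(\mec K-\mec d)$ for $\mec L=\mec K+\mec 1$, and linearity together with the linearity of the involution $W'\mapsto(W')^*_{\mec L}$ yields $b^1([\cM_{W,\mec d}])=(\fraks W^*_{\mec L})(\mec K-\mec d)$. Theorem~\ref{th_easy_dual_functions_theorem} (with $n=2$, so $(-1)^n=1$) identifies $\frakm(f^\wedge_\mec K)=W^*_{\mec L}$, hence $\fraks W^*_{\mec L}=f^\wedge_\mec K$, giving~(2). Claim~(3) follows immediately by subtracting, invoking the generalized Riemann–Roch formula~\eqref{eq_generalized_riemann_roch}:
\[
\chi([\cM_{W,\mec d}])=b^0([\cM_{W,\mec d}])-b^1([\cM_{W,\mec d}])
=f(\mec d)-f^\wedge_\mec K(\mec K-\mec d)=\deg(\mec d)+C.
\]

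The hardest step conceptually is the first one, the well-definedness. The mild trick is that the two decompositions a priori have different lengths $s,s'$, so one cannot directly apply Theorem~\ref{th_invariance_under_perfect_matching_sum}; one must first move the negative parts of one decomposition to the positive side of the other to produce sums of perfect matchings of the \emph{same} length, before invoking invariance. Once that bookkeeping is in place, everything else is a routine application of linearity of $\fraks$, the duality of weights, and the Betti-number formulas already established for perfect matchings.
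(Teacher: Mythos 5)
Your proof is correct and takes essentially the same approach as the paper. The well-definedness argument is identical (the paper cites Proposition~\ref{pr_s_fold_indicator_function}, specifically \eqref{eq_s_fold_indicator_three}, which has the same content as Theorem~\ref{th_invariance_under_perfect_matching_sum}); the only cosmetic difference in the rest is order — you compute $b^1$ directly by linearity using the $(\fraks\,W^*_{\mec L})(\mec K-\mec d)$ formula from Theorem~\ref{th_perfect_matching} and then obtain $\chi$ by subtraction, whereas the paper computes $\chi$ by linearity (via \eqref{eq_offset_of_difference_of_sums}) and then derives $b^1=b^0-\chi$.
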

\begin{proof}
Say that we write $W$ as 
$$
W = W_1 + \cdots + W_s - \bigl( \tilde W_1 + \cdots + \tilde W_{s-1} \bigr)
$$
with all $W_i$ and $\tilde W_i$ as perfect matchings, and also as
a difference of sums of perfect matchings
$$
W = W'_1 + \cdots + W'_{s'} - 
\bigl( \tilde W'_1 + \cdots + \tilde W'_{s'-1} \bigr).
$$
Then we have
$$
W_1 + \cdots + W_s + \tilde W'_1 + \cdots + \tilde W'_{s'-1}
=
\tilde W_1+\cdots+\tilde W_{s-1} + W'_1 + \cdots + W'_{s'} .
$$
It follows from Proposition~\ref{pr_s_fold_indicator_function}, 
specifically \eqref{eq_s_fold_indicator_three}, that for 
any $\mec d\in\integers^2$
$$
\left(\bigoplus_{i=1}^s \cM_{W_i,\mec d} \right)
\oplus
\left(\bigoplus_{i=1}^{s'-1} \cM_{\tilde W'_i,\mec d} \right)
\isom
\left(\bigoplus_{i=1}^{s-1} \cM_{\tilde W_i,\mec d} \right)
\oplus
\left(\bigoplus_{i=1}^{s'} \cM_{W'_i,\mec d} \right)
$$
and hence
$$
\left(\bigoplus_{i=1}^s \cM_{W_i,\mec d} \right)
\ominus
\left(\bigoplus_{i=1}^{s-1} \cM_{\tilde W_i,\mec d} \right)
\isom
\left(\bigoplus_{i=1}^{s'} \cM_{W'_i,\mec d} \right)
\ominus
\left(\bigoplus_{i=1}^{s'-1} \cM_{\tilde W'_i,\mec d} \right)
$$
as virtual $k$-diagrams.  Hence the class $[\cM_{W,\mec d}]$ is independent
of how we write $W$ as a difference of (finite) sums of perfect matchings.

For the second part of the proposition, we write $W$ as in
\eqref{eq_W_as_difference_definition}, and note that for any $i$,
$$
b^0(\cM_{W_i,\mec d}) = (\fraks W_i)(\mec d),
$$
and similarly for $\tilde W_i$, and
hence
$$
b^0(\cM_{W,\mec d}) = 
\bigl( \fraks(W_1+\cdots+W_s)\bigr) (\mec d)
-
\bigl( \fraks(\tilde W_1+\cdots+\tilde W_{s-1})\bigr) (\mec d)
=
(\fraks W) (\mec d) ,
$$
where the last equality holds
by applying $\fraks$ to both sides of 
\eqref{eq_W_as_difference_definition}
and using the linearity of $\fraks$.
We reason similarly with $b^0$ replaced 
with $\chi$, in view of \eqref{eq_offset_of_difference_of_sums};
finially we use $b^1=b^0-\chi$ to reason about $b^1$.
\end{proof}

\begin{example}\label{ex_period_three_non_canonical_example}
Let $W\from\integers^2\to\integers$ be $3$-periodic and satisfy
$W(1,0)=W(1,2)=W(0,1)=W(2,1)=1$ and $W(1,1)=-1$.
Therefore for all $m\in\integers$,
\begin{equation}\label{eq_W_non_canonical_as_pm_difference}
W(3m+1,-3m)=W(3m+1,-3m+2)=W(3m,-3m+1)=W(3m+2,-3m+1)=1,
\end{equation} 
and
\begin{equation}\label{eq_W_non_canonical_as_pm_difference_minus}
W(3m+1,-3m+1)=-1.
\end{equation} 
We easily see 
that the values of $\mec a=(a_1,a_2)$ at which
$W(\mec a)\ne 0$ and $0\le a_1\le 2$ or $0\le a_2\le 2$
correspond to the values in
\eqref{eq_W_non_canonical_as_pm_difference} 
and~\eqref{eq_W_non_canonical_as_pm_difference_minus}
with $m=0$.
It follows that the $0,1,2$-th row sums and the $0,1,2$-th column sums
equal $1$, and hence all row sums and all column sums of $W$ equal $1$,
and hence
$\sigma W$ is a Riemann function (one can also check that $W$
{\em slowly growing}, as in Definition~\ref{de_slowly_growing}, by
examining the values of $W$ in rows $0,1,2$ and columns $0,1,2$)\footnote{
  For any $W\from\integers^2\to\integers$ we easily see that $\fraks W$ is
  slowly growing iff the sign pattern in each row and each column
  is an alternating sequence of $+$'s and $-$'s, beginning and
  ending in $+$'s.  For the $W$ in this example, we easily check this
  to be the case.
  }.  One can write $W=W_1-W_2+W_3$ in a number of ways, even where
each $W_i$ is 3-periodic, but there seems to be no canonical way
of doing so.  One can also write ``canonically'' write
$W=W^+-W^-$ with $W^+=\max(W,0)$
and $W^-=\max(-W,0)$, but then we easily see that 
$b^0(\cM_{W^+,\mec d})=+\infty$\footnote{To see that
  $b^0(\cM_{W^+,\mec d})=+\infty$, note that 
  for each $m$ large, note that $G={\rm Graph}(W^+,\mec d)$ as in
  Theorem~\ref{th_cM_Betti_in_graph_terms} has a cycle
  created by the $W$-values $W(3m+1,-3m)=W(3m+1,-3m+2)=1$ that give a
  multiple edge from $v_0$ to the vertex $3m\in V_{\rm first}$.
  } for any $\mec d$ and
$b^1(\cM_{W^-,\mec d})=+\infty$ for any $\mec d$\footnote{
  since for any $m\in\integers$
  the vertices $3m,3m+2\in V_{\rm first}$ are isolated
  in the graph ${\rm Graph}(W^-,\mec d)$ as in
  Theorem~\ref{th_cM_Betti_in_graph_terms}.
  }.
Hence the virtual diagram $(\cM_{W^+,\mec d},\cM_{W^-,\mec d})$
doesn't have 
a well-defined Euler characteristic
(which should equal $\deg(\mec d)+C$ for some $C$).
\end{example}

\begin{example}
\label{ex_uncountably_equivalent}
We remark that the there are virtual
$k$-diagram $[\cM_{W,\mec d}]$ that can be realized as a formal difference
of Fredholm $k$-diagrams in
uncountably many ways.
Indeed, let $W(i,j)=1$ iff
for some $t\in\integers$
we have $i\in\{2t,2t+1\}$ and $j\in\{-2t,-2t+1\}$.
Hence $W$ is a $2$-fold matching.
We claim there are uncountably many perfect matchings $W_1$
such that $W_2=W-W_1$ is also a perfect matching:
indeed, consider any perfect matching, $W_1$,
such that for each $t\in\integers$
either $W_1(2t,-2t)=W_1(2t+1,-2t+1)=1$
or $W_1(2t+1,-2t)=W(2t,-2t+1)=1$;
we easily see that $W_2=W-W_1$ is also a perfect matching,
and there are uncountably many such $W_1$
(and $W_1,W_2$ are supported in degrees 0,1,2, so they are, indeed,
perfect matchings).
By contrast, if we insist that $W_1$ is $r$-periodic for some $r\ge 1$,
then there are only finitely many possible $W_1$ as above.
Hence for fixed perfect matchings
$\tilde W_1,\tilde W_2$ with $\tilde W_1+\tilde W_2=W$, we have
$$
\cM_{\tilde W_1,\mec d}
\oplus
\cM_{\tilde W_2,\mec d}
\isom
\cM_{W_1,\mec d}
\oplus
\cM_{W_2,\mec d}
$$
whenever $W_1+W_2=W$.
\end{example}

Example~\ref{ex_uncountably_equivalent}
shows that unless we assume periodicity,
the virtual
$k$-diagram $[\cM_{W,\mec d}]$ can be realized as a virtual $k$-diagram in
uncountably many ways.
It similarly follows (unless we make some periodicity assumptions) that whenever \eqref{eq_W_as_W_i_minus_tilde_W_i}
holds,
there are uncountably many ways of writing $W$ as
\eqref{eq_W_as_W_i_minus_tilde_W_i},
if we replace $s$ with $s+2$ there, and let
$W_{s+1},W_{s+2},\tilde W_{s},\tilde W_{s+1}$
be, respectively,
$W_1,W_2,\tilde W_1,\tilde W_2$ in Example~\ref{ex_uncountably_equivalent}
above.

One can generalize the above proposition to indicator $k$-diagram sums.

\begin{proposition} 
Let $W_1,\ldots,W_4$ be functions $\integers^2\to\integers_{\ge 0}$ such
that $W_1-W_2=W_3-W_4$.
Then for all $\mec d$ we have
$$
\bec\cI_{\mec d}^{W_1}\ominus \bec\cI_{\mec d}^{W_2}
\sim
\bec\cI_{\mec d}^{W_3}\ominus \bec\cI_{\mec d}^{W_4}.
$$
Moreover, if $\bec\cI_{\mec d}^{W_i}$ is a Fredholm $k$-diagram for
all $i$, then this equivalence holds as virtual Fredholm $k$-diagrams.
\end{proposition}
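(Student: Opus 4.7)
The plan is to reduce everything to the additivity statement \eqref{eq_s_fold_indicator_one} in Proposition~\ref{pr_s_fold_indicator_function}, applied after rewriting the hypothesis $W_1 - W_2 = W_3 - W_4$ in additive form as $W_1 + W_4 = W_2 + W_3$.

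First I would observe that since all $W_i$ take values in $\integers_{\ge 0}\cup\{\infty\}$, the sums $W_1+W_4$ and $W_2+W_3$ are again functions into $\integers_{\ge 0}\cup\{\infty\}$, so the $W$-sum indicator $k$-diagrams $\bec\cI_{\mec d}^{\oplus (W_1+W_4)}$ and $\bec\cI_{\mec d}^{\oplus (W_2+W_3)}$ are well defined. The equality $W_1+W_4 = W_2+W_3$ as functions $\integers^2\to\integers_{\ge 0}\cup\{\infty\}$ implies the two $W$-sum indicator diagrams are literally the same $k$-diagram (by the defining formula of $\bec\cI_{\mec d}^{\oplus W}$ as a direct sum of $\indicateDgeA$ taken $W(\mec a)$ times). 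Hence applying \eqref{eq_s_fold_indicator_one} to both sides yields
$$
\bec\cI_{\mec d}^{\oplus W_1} \oplus \bec\cI_{\mec d}^{\oplus W_4}
\isom
\bec\cI_{\mec d}^{\oplus (W_1+W_4)}
=
\bec\cI_{\mec d}^{\oplus (W_2+W_3)}
\isom
\bec\cI_{\mec d}^{\oplus W_2} \oplus \bec\cI_{\mec d}^{\oplus W_3}.
$$

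By the definition of equivalence of virtual $k$-diagrams (with $\cF_0 = 0$, or more generally taking $\cF_0$ as needed), this isomorphism is precisely the statement that
$$
\bec\cI_{\mec d}^{W_1}\ominus \bec\cI_{\mec d}^{W_2}
\sim
\bec\cI_{\mec d}^{W_3}\ominus \bec\cI_{\mec d}^{W_4}
$$
as virtual $k$-diagrams, which gives the first claim.

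For the Fredholm refinement, I would note that the Betti numbers of a direct sum add, so if each $\bec\cI_{\mec d}^{W_i}$ is Fredholm, then so are $\bec\cI_{\mec d}^{W_1}\oplus\bec\cI_{\mec d}^{W_4}$ and $\bec\cI_{\mec d}^{W_2}\oplus\bec\cI_{\mec d}^{W_3}$, and the isomorphism above is an isomorphism of Fredholm $k$-diagrams. Hence the same equivalence holds in the category of virtual Fredholm $k$-diagrams, per Convention~\ref{co_virtual_k_diagram}. There is no real obstacle here; the only subtle point worth checking is that the appeal to \eqref{eq_s_fold_indicator_one} is valid for sums of just two functions (which it is, as a special case of the finite version), and that ``$W_1-W_2 = W_3-W_4$'' as an equation of $\integers$-valued functions is equivalent to ``$W_1+W_4 = W_2+W_3$'' as an equation of $\integers_{\ge 0}\cup\{\infty\}$-valued functions, which is immediate since all $W_i$ are non-negative.
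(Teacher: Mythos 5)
Your proof is correct and follows essentially the same route as the paper: rewrite the hypothesis additively as $W_1+W_4=W_2+W_3$, observe that both sides then yield the same $W$-sum indicator diagram $\bec\cI_{\mec d}^{\oplus W}$, and invoke the direct-sum decomposition to conclude $\bec\cI_{\mec d}^{\oplus W_1}\oplus\bec\cI_{\mec d}^{\oplus W_4}\isom\bec\cI_{\mec d}^{\oplus W_2}\oplus\bec\cI_{\mec d}^{\oplus W_3}$. The paper's proof is a one-line version of exactly this; your additional remarks on the Fredholm case and the well-definedness of the sum are harmless elaborations (note only that the proposition's hypothesis is $\integers_{\ge 0}$-valued, not $\integers_{\ge 0}\cup\{\infty\}$, so the caveat about infinities is unnecessary).
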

\begin{proof}
Indeed, if $W=W_1+W_4=W_2+W_3$, then clearly
$$
\bec\cI_{\mec d}^{W_1}\oplus  \bec\cI_{\mec d}^{W_4}
\isom \bec\cI_{\mec d}^{W}
\isom \bec\cI_{\mec d}^{W_2}\oplus  \bec\cI_{\mec d}^{W_3} .
$$
\end{proof}

%
%

\subsection{A Canonical Virtual $k$-Diagram of a Riemann Function}

In Example~\ref{ex_period_three_non_canonical_example} we remarked
that one can canonically write
\begin{equation}\label{eq_W_as_plus_part_minus_part}
W=W^+-W^-,
\quad\mbox{where}\quad
W^+=\max(W,0),\ %
W^-=\max(-W,0),
\end{equation} 
but this doesn't generally express $W$ as a difference of perfect 
matchings;
furthermore, as
Example~\ref{ex_period_three_non_canonical_example} shows,
even if $\fraks W$ is a Riemann function,
$(\cM_{W^+,\mec d},\cM_{W^-,\mec d})$ may not have a well defined
Euler characteristic.
In this subsection we remark that the
\eqref{eq_W_as_plus_part_minus_part} does lead to a canonical
way to write $[\cM_{W,\mec d}]$ as a virtual $k$-diagram composed
of indicator $k$-diagrams.

\begin{proposition}\label{pr_cM_W_poss_neg_indicator_canonical}
Let $W\from\integers^2\to\integers$ be initially and eventually zero.
With notation as in \eqref{eq_W_as_plus_part_minus_part}, for each
$\mec d\in\integers^2$, the formal difference
$$
\bec\cI_{\mec d}^{\oplus W} 
\eqdef
\ \ %
\bec\cI_{\mec d}^{\oplus W^+}\hskip-0.15cm
\ominus
\bec\cI_{\mec d}^{\oplus W^-} 
$$
is a virtual $k$-diagram, i.e., both
$\bec\cI_{\mec d}^{\oplus W^+}\hskip-0.15cm$ and
$\bec\cI_{\mec d}^{\oplus W^-}\hskip-0.15cm$ are Fredholm $k$-diagrams.
Furthermore, if $\fraks W$ is a Riemann function,
then as virtual (Fredholm) $k$-diagrams 
\begin{equation}\label{eq_indicator_W_non_pos_and_cM}
[ \bec\cI_{\mec d}^{\oplus W} ] = [ \cM_{W,\mec d} ].
\end{equation} 
\end{proposition}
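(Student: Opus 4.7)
The plan has two parts, corresponding to the two claims in the proposition. For the Fredholm claim, I would apply Proposition~\ref{pr_indicator_diagram_Betti_nums} directly: since $W$ is initially and eventually zero, so are $W^+$ and $W^-$, and each is supported in a bounded strip $\{a\le \deg(\mec a)\le b\}$. Hence the formulas $b^0(\bec\cI_{\mec d}^{\oplus W^+})=\sum_{\mec a\le\mec d}W^+(\mec a)$ and $b^1(\bec\cI_{\mec d}^{\oplus W^+})=\sum_{\mec a\ge\mec d+\mec 1}W^+(\mec a)$ (and the analogues for $W^-$) are finite sums of integers, since the intersection of the strip with $\{\mec a\le\mec d\}$ or $\{\mec a\ge\mec d+\mec 1\}$ contains only finitely many lattice points. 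Both $\bec\cI_{\mec d}^{\oplus W^\pm}$ are therefore Fredholm, so $\bec\cI_{\mec d}^{\oplus W}$ is a virtual $k$-diagram.

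For the equivalence~\eqref{eq_indicator_W_non_pos_and_cM}, the strategy is to pick any decomposition of $W$ as an alternating sum of perfect matchings,
\[ W = (W_1 + \cdots + W_\ell) - (\tilde W_1 + \cdots + \tilde W_{\ell-1}), \]
furnished by Lemma~\ref{le_row_col_sums_weight_as_perfect} applied with $r=1$ (valid because $\fraks W$ being a Riemann function forces all row and column sums of $W$ to equal $1$). Rearranging as an equality of nonnegative integer-valued functions
\[ W^+ + \tilde W_1 + \cdots + \tilde W_{\ell-1} \ =\ W^- + W_1 + \cdots + W_\ell, \]
and applying the identity~\eqref{eq_s_fold_indicator_one} of Proposition~\ref{pr_s_fold_indicator_function} to each side gives an isomorphism
\[ \bec\cI_{\mec d}^{\oplus W^+}\oplus \bigoplus_{i=1}^{\ell-1}\bec\cI_{\mec d}^{\oplus \tilde W_i} \ \isom\ \bec\cI_{\mec d}^{\oplus W^-}\oplus \bigoplus_{i=1}^\ell \bec\cI_{\mec d}^{\oplus W_i}. \]
Since each $W_i$ and $\tilde W_i$ is a perfect matching, Proposition~\ref{pr_cM_W_decomp} supplies canonical isomorphisms $\bec\cI_{\mec d}^{\oplus W_i}\isom \cM_{W_i,\mec d}$ and $\bec\cI_{\mec d}^{\oplus \tilde W_i}\isom \cM_{\tilde W_i,\mec d}$, so the displayed isomorphism can be rewritten as
\[ \bec\cI_{\mec d}^{\oplus W^+}\oplus \bigoplus_{i=1}^{\ell-1}\cM_{\tilde W_i,\mec d} \ \isom\ \bec\cI_{\mec d}^{\oplus W^-}\oplus \bigoplus_{i=1}^\ell \cM_{W_i,\mec d}. \]
By the defining equivalence of virtual $k$-diagrams (with $\cF_0=0$), this says that $\bec\cI_{\mec d}^{\oplus W^+}\ominus \bec\cI_{\mec d}^{\oplus W^-}$ and $\bigoplus_i \cM_{W_i,\mec d}\ominus \bigoplus_i \cM_{\tilde W_i,\mec d}$ represent the same class. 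Since the latter is the representative used to define $[\cM_{W,\mec d}]$, we obtain $[\bec\cI_{\mec d}^{\oplus W}]=[\cM_{W,\mec d}]$.

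The main obstacle is verifying the hypotheses of Lemma~\ref{le_row_col_sums_weight_as_perfect}: that lemma requires $W$ to be a \emph{$1$-regular weight}, which includes the boundedness condition $|W(\mec d)|\le C$. For slowly growing $\fraks W$ (as in Proposition~\ref{pr_W_either_zero_one_minus_one}), this is automatic since $W$ takes values in $\{-1,0,1\}$, but for general Riemann functions the bound must be imposed separately (or derived from additional structure). Once boundedness is in hand, the rest of the proof is purely bookkeeping via~\eqref{eq_s_fold_indicator_one} and Proposition~\ref{pr_cM_W_decomp}; no homological computation beyond Proposition~\ref{pr_indicator_diagram_Betti_nums} is needed.
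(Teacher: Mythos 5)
Your proof is essentially the same as the paper's: both rearrange $W = W^+ - W^- = (\sum W_i) - (\sum \tilde W_i)$ into an equality of non-negative functions, pass to indicator $k$-diagrams via the additivity $\bec\cI_{\mec d}^{\oplus(U+V)} \isom \bec\cI_{\mec d}^{\oplus U}\oplus\bec\cI_{\mec d}^{\oplus V}$, and then convert the perfect-matching summands to $\cM_{W_i,\mec d}$'s via Proposition~\ref{pr_cM_W_decomp}. The one stylistic difference is that the paper's proof begins with \emph{any} equality~\eqref{eq_W_as_W_i_minus_tilde_W_i} rather than invoking Lemma~\ref{le_row_col_sums_weight_as_perfect} to manufacture one; since the statement already refers to $[\cM_{W,\mec d}]$, whose very definition presupposes such a decomposition, this sidesteps the existence question you flag. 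Your boundedness caveat is real, but it is a caveat about when $[\cM_{W,\mec d}]$ is defined at all (any alternating sum of $s$ perfect matchings is bounded by $s$, so no decomposition exists if $W$ is unbounded on its supporting strip), not a gap in the argument itself once a decomposition is in hand.
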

\begin{proof}
Since $W$ is initially and eventually zero, for any $\mec d$, the
number of $\mec a$ with $\mec d\le \mec a$ and $W(\mec a)\ne 0$
is finite, and similarly for the number with $\mec d+\mec 1\ge\mec a$.
Hence both
$\bec\cI_{\mec d}^{\oplus W^+}$ and
$\bec\cI_{\mec d}^{\oplus W^-}$ are Fredholm $k$-diagrams.
To see \eqref{eq_indicator_W_non_pos_and_cM}, we see that
for any equality
\eqref{eq_W_as_W_i_minus_tilde_W_i} we have
$$
(W_1+\cdots+W_s) - (\tilde{W_1}+\cdots+\tilde{W}_{s-1})
= W = W^+ - W^-
$$
and hence
$$
W^- + (W_1+\cdots+W_s)
=
W^+ + (\tilde{W_1}+\cdots+\tilde{W}_{s-1})
$$
and hence for all $\mec d\in\integers$ we have
$$
\bec\cI_{\mec d}^{\oplus W^-} \oplus 
\bec\cI_{\mec d}^{\oplus W_1} \oplus 
\cdots\oplus
\bec\cI_{\mec d}^{\oplus W_s}
\isom
\bec\cI_{\mec d}^{\oplus W^+} \oplus 
\bec\cI_{\mec d}^{\oplus \tilde W_1} \oplus 
\cdots\oplus
\bec\cI_{\mec d}^{\oplus \tilde W_{s-1}} ,
$$
and hence
$$
\bec\cI_{\mec d}^{\oplus W^-} \oplus 
\cM_{W_1,\mec d} \oplus \cdots \oplus \cM_{W_s,\mec d}
\isom
\bec\cI_{\mec d}^{\oplus W^+} \oplus 
\cM_{\tilde W_1,\mec d} \oplus \cdots \oplus \cM_{\tilde W_{s-1},\mec d} ,
$$
and since all $k$-diagrams are Fredholm, this
implies \eqref{eq_indicator_W_non_pos_and_cM} as virtual
(Fredholm) $k$-diagrams.
\end{proof}

\section{Modeling General Riemann Functions}
\label{se_higher_Riemann}

In this section we model any Riemann function
$f\from\integers^n\to\integers$ by gluing together the models
we have developed for $n=2$.
We begin by stating the main results, leaving the proofs of 
the more difficult theorems for later subsections.

\subsection{Main Modeling Results}

\begin{definition}\label{de_virtual_k_diagram_of_two_var_restriction}
Let $f\from \integers^n\to\integers$ be a Riemann function.
For any $i,j\in[n]$ with $i\ne j$ and any $\mec d\in\integers^n$, let
$f_{i,j,\mec d}=f_{i,j,\mec d}(a_i,a_j)\from \integers^2\to\integers$ be the
two-variable restriction
\eqref{eq_two_variable_restriction}.
(We write $a_i,a_j$ as the arguments for $f_{i,j,\mec d}$
instead of, say, $a_1,a_2$, to stress that
$a_i$ corresponds to adding $a_i\mec e_i$ in
\eqref{eq_two_variable_restriction}, and similarly for $a_j$).
We set $W=W_{f;i,j,\mec d}$ to be the weight of $f_{i,j,\mec d}$, 
and define the {\em virtual $k$-diagram associated to $f$ and $\mec d$
at coordinates $i,j$} to be the class
$$
[\cM_{f;i,j,\mec d}]\eqdef [\cM_{W,\mec 0}]=[\cM_{W_{f;i,j,\mec d},\mec 0}]
$$
(which we know is a single equivalence class of virtual (Fredholm)
$k$-diagrams).
\end{definition}

The merit of the above definition is described in the following 
theorem, that is really a
straightforward consequence of
Proposition~\ref{pr_cM_W_poss_neg_single_virt_class}.


\begin{theorem}
\label{th_pre_main_modeling_theorem_general_Riemann_function}
Let $f\from \integers^n\to\integers$ be a Riemann function.
Then for any distinct $i,j\in[n]$ and $\mec d\in\integers^n$ we have
\begin{equation}\label{eq_betti_zero_two_var_restriction}
b^0([\cM_{f;i,j,\mec d}])=f_{i,j,\mec d}(\mec 0)=f(\mec d),
\end{equation}
\begin{equation}\label{eq_chi_two_var_restriction}
\chi([\cM_{f;i,j,\mec d}])=\chi(\cM_{W,\mec 0}) = \deg(\mec d)+C
\end{equation} 
where $C$ is the offset of $f$, and for every $\mec K\in\integers^n$ we have
\begin{equation}\label{eq_betti_one_two_var_restriction}
b^1([\cM_{f;i,j,\mec d}])=f^\wedge_\mec K(\mec K-\mec d).
\end{equation}
In particular, it follows that for any distinct $i',j'\in[n]$ we have
\begin{equation}\label{eq_betti_duality_two_var_restriction}
b^1([\cM_{f;i,j,\mec d}])=f^\wedge_\mec K(\mec K-\mec d)
= b^0([\cM_{f^\wedge_\mec K;i',j',\mec K-\mec d}]).
\end{equation} 
\end{theorem}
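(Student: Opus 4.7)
The plan is to reduce every claim to Proposition~\ref{pr_cM_W_poss_neg_single_virt_class} applied to the two-variable restriction $f_{i,j,\mec d}\from\integers^2\to\integers$, which is itself a Riemann function by Example~\ref{ex_Riemann_function_restriction}, with offset $\deg(\mec d)+C$ by \eqref{eq_two_variable_restriction_constant}. Writing $W=W_{f;i,j,\mec d}$ for its weight, the definition gives $[\cM_{f;i,j,\mec d}]=[\cM_{W,\mec 0}]$, so it suffices to apply the $n=2$ machinery at the single point $\mec 0\in\integers^2$.

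First I would prove \eqref{eq_betti_zero_two_var_restriction}: by Proposition~\ref{pr_cM_W_poss_neg_single_virt_class}(1) applied to $f_{i,j,\mec d}$ at $\mec 0$,
\[
b^0([\cM_{W,\mec 0}]) \;=\; f_{i,j,\mec d}(\mec 0) \;=\; f(\mec d),
\]
the last equality being the definition \eqref{eq_two_variable_restriction} at $(a_i,a_j)=(0,0)$. Next I would prove \eqref{eq_chi_two_var_restriction}: since $f_{i,j,\mec d}$ has offset $\deg(\mec d)+C$ by \eqref{eq_two_variable_restriction_constant}, Proposition~\ref{pr_cM_W_poss_neg_single_virt_class}(3) gives
\[
\chi([\cM_{W,\mec 0}]) \;=\; \deg(\mec 0)+\bigl(\deg(\mec d)+C\bigr) \;=\; \deg(\mec d)+C.
\]

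For \eqref{eq_betti_one_two_var_restriction}, the cleanest path is $b^1=b^0-\chi$, which combined with the two displays above yields
\[
b^1([\cM_{f;i,j,\mec d}]) \;=\; f(\mec d)-\deg(\mec d)-C.
\]
The right-hand side equals $f^\wedge_\mec K(\mec K-\mec d)$ for every $\mec K\in\integers^n$ by the definition \eqref{eq_generalized_riemann_roch}, confirming both the stated identity and the remark that the left-hand side is independent of $\mec K$. (One could alternatively invoke Proposition~\ref{pr_cM_W_poss_neg_single_virt_class}(2) directly, after choosing any $\mec K'\in\integers^2$ and checking that $(f_{i,j,\mec d})^\wedge_{\mec K'}(\mec K')=f^\wedge_\mec K(\mec K-\mec d)$; the $b^0-\chi$ route avoids this bookkeeping.)

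Finally, \eqref{eq_betti_duality_two_var_restriction} is obtained by applying \eqref{eq_betti_zero_two_var_restriction}, already established, to the Riemann function $f^\wedge_\mec K$ (which is Riemann by Proposition~\ref{pr_properties_of_wedge_sub_mec_K}(1)) at the point $\mec K-\mec d\in\integers^n$ with coordinates $i',j'$:
\[
b^0\bigl([\cM_{f^\wedge_\mec K;i',j',\mec K-\mec d}]\bigr)
\;=\; f^\wedge_\mec K(\mec K-\mec d)
\;=\; b^1([\cM_{f;i,j,\mec d}]).
\]
There is no real obstacle here; the only point requiring a moment of care is that the offset of $f_{i,j,\mec d}$ depends on $\mec d$ but is independent of the choice of coordinates $i,j$, which is exactly what makes $\chi([\cM_{f;i,j,\mec d}])$ a function of $\mec d$ alone and sets up the coordinate-independence used in the restriction-invariance discussion that follows the theorem.
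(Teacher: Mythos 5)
Your proposal is correct and follows essentially the same route as the paper's proof: reduce to Proposition~\ref{pr_cM_W_poss_neg_single_virt_class} applied to $f_{i,j,\mec d}$ at $\mec 0$, compute the offset $\deg(\mec d)+C$ from \eqref{eq_two_variable_restriction_constant}, obtain $b^1$ via $b^0-\chi$, and then deduce \eqref{eq_betti_duality_two_var_restriction} by applying \eqref{eq_betti_zero_two_var_restriction} to $f^\wedge_\mec K$ at $\mec K-\mec d$. No gaps; the alternative route via Proposition~\ref{pr_cM_W_poss_neg_single_virt_class}(2) that you mention in passing is not what the paper does either, so your main argument is the match.
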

\begin{proof}
By definition, $\cM_{f;i,j,\mec d}=\cM_{W,\mec 0}$ with
$W=W_{f;i,j,\mec d}$ equal to the weight of $f_{i,j,\mec d}$.
Hence Proposition~\ref{pr_cM_W_poss_neg_single_virt_class} implies that
$$
b^0[\cM_{W,\mec 0}] = (\fraks W)(\mec 0)=f_{i,j,\mec d}(\mec 0)=f(\mec d)
$$
and hence \eqref{eq_betti_zero_two_var_restriction} holds.
If $C$ is the offset of $f$, then for sufficiently large $a_i+a_j$ we have
$$
f_{i,j,\mec d}(a_i,a_j)=f(\mec d+a_i\mec e_i+a_j\mec e_j)
= \deg(\mec d+a_i\mec e_i+a_j\mec e_j) + C=
a_i+a_j+\deg(\mec d)+C
$$
and it follows that the offset of $f_{i,j,\mec d}$ is $C'=\deg(\mec d)+C$.
So Proposition~\ref{pr_cM_W_poss_neg_single_virt_class} implies that
$$
\chi([\cM_{W,\mec 0}]) = \deg(\mec 0) + C' = C' = \deg(\mec d)+C
$$
and \eqref{eq_chi_two_var_restriction} follows.
It follows that
$$
b^1[\cM_{W,\mec 0}] =
b^0[\cM_{W,\mec 0}] - \chi[\cM_{W,\mec 0}] 
=
f(\mec d)-\deg(\mec d)-C,
$$
and Proposition~\ref{pr_cM_W_poss_neg_single_virt_class}
and \eqref{eq_generalized_riemann_roch} implies
\eqref{eq_betti_one_two_var_restriction}.

Finally, \eqref{eq_betti_duality_two_var_restriction} follows from
\eqref{eq_betti_one_two_var_restriction} and from
\eqref{eq_betti_zero_two_var_restriction} with $f$ replaced with
$f^\wedge_\mec K$ and with $\mec d$ replaced with $\mec K-\mec d$.
\end{proof}


The main goal of this section is to prove the following two theorems
that state that the equivalence class of the
virtual Fredholm $k$-diagram
$[\cM_{f;i,j,\mec d}]$ is independent
of the choice of $i$ and $j$.

\begin{theorem}\label{th_glue_i_j_j_prime}
Let $n\in\naturals$ with $n\ge 3$, $\mec d\in\integers^n$,
$f\from \integers^n\to\integers$ be a Riemann function, and 
$i,j,j'\in [n]$ be three distinct integers.  
Then
$[\cM_{f;i,j,\mec d}]=[\cM_{f;i,j',\mec d}]$.
\end{theorem}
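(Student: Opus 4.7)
The strategy is to express $[\cM_{f;i,j,\mec d}]$ as an element of the Grothendieck group of virtual Fredholm $k$-diagrams in terms of the four basic $k$-diagrams of Definition~\ref{de_four_basic}, and then to observe that two of these classes vanish, leaving two numerical coefficients that depend only on $f$ and $\mec d$.

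Setting $W = W_{f;i,j,\mec d}$, I would first invoke Proposition~\ref{pr_cM_W_poss_neg_indicator_canonical} to obtain $[\cM_{f;i,j,\mec d}] = [\bec\cI_{\mec 0}^{\oplus W^+}] \ominus [\bec\cI_{\mec 0}^{\oplus W^-}]$, where $W^{\pm}$ are the positive and negative parts of $W$. By Definition~\ref{de_indicator_diagrams} applied at $\mec d = \mec 0$, each indicator summand $\cI_{\mec 0\ge\mec a}$ equals one of the four basic $k$-diagrams according to the quadrant in which $\mec a$ lies. Grouping by type, $\bec\cI_{\mec 0}^{\oplus W^\pm}$ decomposes as
\[
(\underline k)^{\oplus n_1(W^\pm)} \oplus (\underline k_{/B_2})^{\oplus n_2(W^\pm)} \oplus (\underline k_{/B_1})^{\oplus n_3(W^\pm)} \oplus (\underline k_{/B_1,B_2})^{\oplus n_4(W^\pm)},
\]
where $n_j(W^\pm)$ is the total $W^\pm$-weight in the corresponding quadrant. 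Because $W$ is supported in a bounded-degree strip, the quadrants $\{\mec a \le \mec 0\}$ and $\{\mec a \ge \mec 1\}$ intersect this strip in finite sets, so $n_1(W^\pm)$ and $n_4(W^\pm)$ are non-negative integers, while $n_2(W^\pm)$ and $n_3(W^\pm)$ may be countably infinite cardinals.

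The key point will be that $[\underline k_{/B_1}] = [\underline k_{/B_2}] = 0$ in the Grothendieck group of virtual Fredholm $k$-diagrams, proved by an Eilenberg-swindle argument. A direct computation from Definition~\ref{de_diagram_k_vs} shows that the differential $\underline k_{/B_i}(\partial)\from k^2 \to k^2$ is a bijection, so both Betti numbers of $\underline k_{/B_i}$ are zero; hence $V_i := (\underline k_{/B_i})^{\oplus\naturals}$ also has both Betti numbers zero, and so is Fredholm. The canonical isomorphism $\underline k_{/B_i} \oplus V_i \isom V_i$ then witnesses the equivalence $(\underline k_{/B_i}, 0) \sim (0,0)$ with $\cF_0 = V_i$, so $[\underline k_{/B_i}] = 0$. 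Consequently the middle two summands in the decomposition above vanish, yielding
\[
[\cM_{f;i,j,\mec d}] = \bigl(n_1(W^+) - n_1(W^-)\bigr)[\underline k] + \bigl(n_4(W^+) - n_4(W^-)\bigr)[\underline k_{/B_1,B_2}].
\]

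Finally, $n_1(W^+) - n_1(W^-) = \sum_{\mec a\le\mec 0} W(\mec a) = (\fraks W)(\mec 0) = f_{i,j,\mec d}(\mec 0) = f(\mec d)$, and by Theorem~\ref{th_pre_main_modeling_theorem_general_Riemann_function}, $n_4(W^+) - n_4(W^-) = b^1([\cM_{f;i,j,\mec d}]) = f^\wedge_{\mec K}(\mec K - \mec d)$ for any $\mec K \in \integers^n$. Both coefficients depend only on $f$ and $\mec d$ (and, for the second, any fixed $\mec K$), not on the choice of $i,j$; the same computation with $j'$ in place of $j$ gives the identical expression, proving $[\cM_{f;i,j,\mec d}] = [\cM_{f;i,j',\mec d}]$. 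The main obstacle will be confirming that the Eilenberg swindle is legitimate under the paper's convention that the witness diagram $\cF_0$ in the equivalence relation for virtual $k$-diagrams must itself be Fredholm; the Betti-number computation for $V_i$ above exactly covers this.
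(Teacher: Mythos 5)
Your argument is correct, and it takes a genuinely different route from the paper's. The paper proves this theorem by choosing decompositions of $W=\frakm f_{i,j,\mec d}$ and $W'=\frakm f_{i,j',\mec d}$ into differences of sums of perfect matchings and then invoking the ``pasting'' relation $f_{i,j,\mec d}(a,0)=f(\mec d+a\mec e_i)=f_{i,j',\mec d}(a,0)$ together with Lemma~\ref{le_the_big_result_for_virtual_diagrams} to build an explicit isomorphism of the two big direct sums --- i.e., an equivalence with witness $\cF_0=0$. By contrast, you work entirely in the Grothendieck group: you observe that $\underline k_{/B_1}$ and $\underline k_{/B_2}$ have both Betti numbers zero (the differentials $0\oplus k\oplus k\to k\oplus k$ and $k\oplus 0\oplus k\to k\oplus k$ are visibly bijective), so the countably infinite direct sums $V_i=(\underline k_{/B_i})^{\oplus\naturals}$ are themselves Fredholm and furnish a legitimate Eilenberg swindle, killing $[\underline k_{/B_i}]$ in the group. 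Combined with Proposition~\ref{pr_cM_W_poss_neg_indicator_canonical} and the quadrant decomposition of $\bec\cI^{\oplus W^\pm}_{\mec 0}$, this collapses $[\cM_{f;i,j,\mec d}]$ to the class $\underline k^{\oplus n_1(W^+)}\ominus\underline k^{\oplus n_1(W^-)}\oplus\underline k_{/B_1,B_2}^{\oplus n_4(W^+)}\ominus\underline k_{/B_1,B_2}^{\oplus n_4(W^-)}$, which depends only on the two integers $n_1(W^+)-n_1(W^-)=f(\mec d)$ and $n_4(W^+)-n_4(W^-)=b^1([\cM_{f;i,j,\mec d}])=f^\wedge_{\mec K}(\mec K-\mec d)$, both manifestly independent of $j$. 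You were right to flag the Fredholm-witness requirement as the crux; your Betti-number check of $V_i$ handles it. A fair assessment of the trade-off: your argument is shorter and shows the stronger (coarser) fact that among indicator-type virtual diagrams the class in the Grothendieck group is determined by $(b^0,b^1)$ alone, but it cannot be upgraded to prove Theorem~\ref{th_glue_i_j_j_prime_not_virtual}, which asserts an honest isomorphism of $k$-diagrams in the perfect-matching case and really does require the zipper construction that the paper develops.
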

This theorem is more technical, and will be proven in
Subsection~\ref{su_proof_th_glue_i_j_j_prime}.
The above theorem easily yields one of the main results
in the section.

\begin{corollary}\label{co_glue_i_j_j_prime}
Let $n\in\naturals$ with $n\ge 2$, $\mec d\in\integers^n$,
and $f\from \integers^n\to\integers$ be a Riemann function.
Then the equivalence class
$[\cM_{f;i,j,\mec d}]$ of virtual (Fredholm) $k$-diagrams
is independent of the choice of distinct $i,j\in[n]$.
\end{corollary}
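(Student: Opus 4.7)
The plan is to combine Theorem~\ref{th_glue_i_j_j_prime}, which changes the second coordinate while fixing the first, with an independent \emph{transposition invariance} statement $[\cM_{f;i,j,\mec d}] = [\cM_{f;j,i,\mec d}]$. Granted both, any two ordered pairs of distinct indices in $[n]$ can be connected by finitely many such moves, so the equivalence class is constant on all admissible pairs.

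For transposition invariance, I first observe that $f_{j,i,\mec d}(a,b) = f_{i,j,\mec d}(b,a)$, so the weights satisfy $W_{f;j,i,\mec d} = W\circ\sigma$, where $W := W_{f;i,j,\mec d}$ and $\sigma(a,b) = (b,a)$. By Lemma~\ref{le_row_col_sums_weight_as_perfect} I write $W = W_1 + \cdots + W_s - \tilde W_1 - \cdots - \tilde W_{s-1}$ as a difference of sums of perfect matchings; replacing each $W_k$ (resp.\ $\tilde W_k$) with $W_k\circ\sigma$ (resp.\ $\tilde W_k\circ\sigma$), which is again a perfect matching, gives the analogous decomposition of $W\circ\sigma$. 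By Proposition~\ref{pr_cM_W_poss_neg_single_virt_class} it therefore suffices to show $\cM_{W_k,\mec 0} \cong \cM_{W_k\circ\sigma,\mec 0}$ as $k$-diagrams for each perfect matching $W_k$.

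For such $W_k$ with associated bijection $\pi_k$, Proposition~\ref{pr_cM_W_decomp} gives $\cM_{W_k,\mec 0} \cong \bec\cI_{\mec 0}^{\oplus W_k}$. Partitioning the index set $\{(\ell,\pi_k(\ell)) : \ell\in\integers\}$ according to the signs of $\ell$ and $\pi_k(\ell)$ yields
\[
\cM_{W_k,\mec 0} \cong \underline{k}^{\oplus |A_k|} \oplus \underline{k}_{/B_2}^{\oplus |B_k|} \oplus \underline{k}_{/B_1}^{\oplus |C_k|} \oplus \underline{k}_{/B_1,B_2}^{\oplus |D_k|},
\]
with $A_k = \{\ell\le 0 : \pi_k(\ell)\le 0\}$, $B_k = \{\ell\le 0 : \pi_k(\ell)>0\}$, $C_k = \{\ell>0 : \pi_k(\ell)\le 0\}$, $D_k = \{\ell>0 : \pi_k(\ell)>0\}$. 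The same analysis for $W_k\circ\sigma$ (with associated bijection $\pi_k^{-1}$) yields the identical decomposition with the cardinalities of $B_k$ and $C_k$ interchanged. Since $\pi_k(\ell)+\ell$ is bounded, $|A_k|$ and $|D_k|$ are finite while $|B_k|$ and $|C_k|$ are both countably infinite; therefore the two direct sums have the same multiset of summands and are isomorphic as $k$-diagrams. Taking alternating direct sums over $k$ then yields $[\cM_{W,\mec 0}] = [\cM_{W\circ\sigma,\mec 0}]$ as virtual Fredholm $k$-diagrams.

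Finally, I combine transposition invariance with Theorem~\ref{th_glue_i_j_j_prime}. For $n=2$ the only two ordered pairs are exchanged by transposition, giving the corollary directly. For $n\ge 3$, given $(i,j)$ and $(i',j')$ with $i\neq j$, $i'\neq j'$, a short case analysis based on which of $i,j,i',j'$ coincide connects the two pairs through a chain of transpositions and applications of Theorem~\ref{th_glue_i_j_j_prime}, using a third index from $[n]$ to change coordinates as needed. The principal obstacle is the perfect-matching isomorphism, which reduces to checking multiset agreement of basic $k$-diagrams; once this is in place, the rest is routine.
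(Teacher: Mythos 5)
Your overall strategy matches the paper's: establish transposition invariance $[\cM_{f;i,j,\mec d}]=[\cM_{f;j,i,\mec d}]$, combine it with Theorem~\ref{th_glue_i_j_j_prime} (which holds the first index fixed), and chain. The paper's chain for $n\ge 3$ is made explicit as
$[\cM_{f;1,2,\mec d}]=[\cM_{f;1,j,\mec d}]=[\cM_{f;j,1,\mec d}]=[\cM_{f;j,i,\mec d}]=[\cM_{f;i,j,\mec d}]$,
which your ``short case analysis'' amounts to.

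The place where your argument departs from the paper's, and in fact improves on it, is the transposition step. The paper disposes of it in one sentence, claiming $\cM_{f;i,j,\mec d}\isom\cM_{f;j,i,\mec d}$ via ``the evident morphism that exchanges $B_1,A_1$ values respectively with $B_2,A_2$ values.'' But such a map is \emph{not} a morphism of $k$-diagrams in the sense of Definition~\ref{de_morphism_k_diagrams}, which requires $\phi(B_i)\from\cF(B_i)\to\cG(B_i)$ for each fixed $i$; a component-preserving isomorphism built from the natural identification at $B_3$ forces $\phi(A_1)$ to be the permutation $\mec e_{a}\mapsto\mec e_{\pi(a)}$, which typically fails to carry $k^{\oplus\integers_{\le 0}}$ into itself. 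Your decomposition into indicator summands makes the subtlety visible: for a perfect matching $W$, the swap replaces each $\cI_{\mec 0\ge(a_1,a_2)}$ by $\cI_{\mec 0\ge(a_2,a_1)}$, interchanging the types $\underline k_{/B_1}$ and $\underline k_{/B_2}$, and these are \emph{not} isomorphic as $k$-diagrams (Example~\ref{ex_morphisms_all_basic_four}). Thus one really must compare the multisets of summand types globally, and the isomorphism only exists because the two infinite families $B_k$ and $C_k$ both have cardinality $\aleph_0$ while $A_k$ and $D_k$ are finite and invariant. Your observation that this is where the boundedness of $\pi_k(\ell)+\ell$ enters is exactly the right point. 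So your route is essentially the paper's, but your justification for the transposition step is careful where the paper's is loose, and this is worth retaining.
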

\begin{proof}
For distinct $i,j\in[n]$ we have
$\cM_{f;i,j,\mec d}\isom \cM_{f;j,i,\mec d}$ 
by the evident morphism that exchanges $B_1,A_1$ values respectively with
$B_2,A_2$ values.  Hence for any $i\ne j$ we have
\begin{equation}\label{eq_M_two_var_exchange_indices}
[\cM_{f;i,j,\mec d}] 
= [\cM_{f;j,i,\mec d}].
\end{equation} 

If $n=2$, then the only choices of distinct $i,j\in[2]$ are
$(i,j)$ equal to either $(1,2)$ or $(2,1)$.  Since
\eqref{eq_M_two_var_exchange_indices} shows that
$$
[\cM_{f;1,2,\mec d}] 
= [\cM_{f;2,1,\mec d}],
$$
this proves the corollary in the case $n=2$.

Hence it suffices to prove the corollary when $n\ge 3$.

According to Theorem~\ref{th_glue_i_j_j_prime} we have
$[\cM_{f;1,2,\mec d}]=[\cM_{f;1,j,\mec d}]$ for any $j\ge 3$,
and similarly
for any $i\ne j$ we have
$[\cM_{f;j,1,\mec d}]=[\cM_{f;j,i,\mec d}]$.
Combining these two equalities with 
\eqref{eq_M_two_var_exchange_indices} we have
$$
[\cM_{f;1,2,\mec d}]=[\cM_{f;1,j,\mec d}] =
[\cM_{f;j,1,\mec d}]=[\cM_{f;j,i,\mec d}]=
[\cM_{f;i,j,\mec d}].
$$
\end{proof}

The above corollary makes the following definition well defined.

\begin{definition}\label{de_cM_f_at_d}
For any Riemann function $f\from\integers^n\to\integers$, and any
$\mec d\in\integers^n$, we define
the {\em virtual $k$-diagram of $f$ at $\mec d$}, denoted
$[\cM_{f {\rm \;at\;}\mec d}]$
to be the class of virtual $k$-diagram $[\cM_{f;i,j,\mec d}]$
for any distinct $i,j\in[n]$
(which is a single equivalence class of virtual $k$-diagrams
in view of Corollary~\ref{co_glue_i_j_j_prime}.
\end{definition}

Stating 
Theorem~\ref{th_pre_main_modeling_theorem_general_Riemann_function} in
terms of Definition~\ref{de_cM_f_at_d} immediately
implies the following theorem.

\begin{theorem}\label{th_main_modeling_theorem_general_Riemann_function}
Let $f\from\integers^n\to\integers$ be any Riemann function
with offset $C$,
and let $\mec d,\mec K\in\integers$.  Then we have
$$
b^0 \bigl( [\cM_{f {\rm \;at\;}\mec d}] \bigr) = f(\mec d),
$$
$$
\chi  \bigl( [\cM_{f {\rm \;at\;}\mec d}] \bigr) = \deg(\mec d)+C,
$$
and
$$
b^1 \bigl( [\cM_{f {\rm \;at\;}\mec d}] \bigr)
=
f^\wedge_\mec K(\mec K-\mec d)
=
b^0 \bigl( [\cM_{f^\wedge_\mec K {\rm \;at\;}\mec K-\mec d}] \bigr).
$$
\end{theorem}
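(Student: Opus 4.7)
The plan is straightforward: the statement is essentially a notational rewriting of Theorem~\ref{th_pre_main_modeling_theorem_general_Riemann_function}, obtained by recognizing that the class $[\cM_{f{\rm\;at\;}\mec d}]$ is literally defined in Definition~\ref{de_cM_f_at_d} to be $[\cM_{f;i,j,\mec d}]$ for any choice of distinct $i,j\in[n]$, with Corollary~\ref{co_glue_i_j_j_prime} ensuring this is well defined. So the whole proof is an invocation of prior results.

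First I would fix any two distinct indices $i,j\in[n]$ (assuming $n\ge 2$, which is implicit since otherwise there is no two-variable restriction to speak of). By Definition~\ref{de_cM_f_at_d} together with Corollary~\ref{co_glue_i_j_j_prime}, we have $[\cM_{f{\rm\;at\;}\mec d}] = [\cM_{f;i,j,\mec d}]$, and this equivalence class of virtual Fredholm $k$-diagrams has well-defined Betti numbers and Euler characteristic. Applying \eqref{eq_betti_zero_two_var_restriction} of Theorem~\ref{th_pre_main_modeling_theorem_general_Riemann_function} gives $b^0([\cM_{f{\rm\;at\;}\mec d}]) = f(\mec d)$, and applying \eqref{eq_chi_two_var_restriction} gives $\chi([\cM_{f{\rm\;at\;}\mec d}]) = \deg(\mec d)+C$. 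The identity $b^1([\cM_{f{\rm\;at\;}\mec d}]) = f^\wedge_\mec K(\mec K-\mec d)$ is then \eqref{eq_betti_one_two_var_restriction}, valid for every $\mec K\in\integers^n$.

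For the final equality $f^\wedge_\mec K(\mec K-\mec d) = b^0([\cM_{f^\wedge_\mec K {\rm\;at\;}\mec K-\mec d}])$, I would first note that $f^\wedge_\mec K$ is itself a Riemann function by Proposition~\ref{pr_properties_of_wedge_sub_mec_K}, so the construction $[\cM_{f^\wedge_\mec K {\rm\;at\;}\mec K-\mec d}]$ makes sense. Then I apply the $b^0$ identity already established (in the first displayed equation of the theorem) to the Riemann function $f^\wedge_\mec K$ at the point $\mec K-\mec d$, obtaining
\[
b^0\bigl([\cM_{f^\wedge_\mec K {\rm\;at\;}\mec K-\mec d}]\bigr) = f^\wedge_\mec K(\mec K-\mec d),
\]
which chains with the preceding identity for $b^1$.

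There is no genuine obstacle: all of the substance lies in Theorem~\ref{th_pre_main_modeling_theorem_general_Riemann_function} (which follows from Proposition~\ref{pr_cM_W_poss_neg_single_virt_class} applied to the weight of the two-variable restriction $f_{i,j,\mec d}$) and in Theorem~\ref{th_glue_i_j_j_prime} (which powers Corollary~\ref{co_glue_i_j_j_prime} and was flagged as the technical step, to be handled in Subsection~\ref{su_proof_th_glue_i_j_j_prime}). The present theorem is pure bookkeeping: it simply replaces the notation $[\cM_{f;i,j,\mec d}]$, which exhibits a dependence on $i,j$ that Corollary~\ref{co_glue_i_j_j_prime} eliminates, with the index-free notation $[\cM_{f{\rm\;at\;}\mec d}]$.
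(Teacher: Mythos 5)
Your proposal is correct and follows precisely the route the paper intends: the paper itself states that the theorem follows immediately from Theorem~\ref{th_pre_main_modeling_theorem_general_Riemann_function} once one substitutes Definition~\ref{de_cM_f_at_d} (justified by Corollary~\ref{co_glue_i_j_j_prime}), and your use of \eqref{eq_betti_zero_two_var_restriction}, \eqref{eq_chi_two_var_restriction}, \eqref{eq_betti_one_two_var_restriction}, and \eqref{eq_betti_duality_two_var_restriction} matches that exactly. Your observation that $f^\wedge_\mec K$ is itself a Riemann function (Proposition~\ref{pr_properties_of_wedge_sub_mec_K}), needed to make sense of $[\cM_{f^\wedge_\mec K {\rm\;at\;}\mec K-\mec d}]$, is a point the paper leaves implicit but is indeed used.
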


In the following special case of 
Theorem~\ref{th_glue_i_j_j_prime} one can prove
a much stronger result.

\begin{theorem}
\label{th_glue_i_j_j_prime_not_virtual}
Let $n\in\naturals$ with $n\ge 3$, $\mec d\in\integers^n$,
$f\from \integers^n\to\integers$ be a Riemann function, and 
$i,j,j'\in [n]$ be three distinct integers.  
Say that the weights of $f_{i,j,\mec d}$ and $f_{i,j',\mec d}$,
respectively
$W_{f;i,j,\mec d}$ and $W_{f;i,j',\mec d}$, are non-negative,
and hence both perfect matchings.
Then we have
$$
\cM_{f;i,j,\mec d} \isom \cM_{f;i,j',\mec d} .
$$
\end{theorem}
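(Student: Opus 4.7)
The plan is to invoke Proposition~\ref{pr_cM_W_decomp} to express each side as a direct sum of the four basic $k$-diagrams, and then show that the multiplicities with which each basic diagram appears depend only on $f$ and $\mec d$, not on the choice of second coordinate. Write $W=W_{f;i,j,\mec d}$ and $W'=W_{f;i,j',\mec d}$, both perfect matchings by hypothesis, with associated bijections $\pi,\pi'\from\integers\to\integers$. By Proposition~\ref{pr_cM_W_decomp},
$$
\cM_{W,\mec 0}\isom \bec\cI_{\mec 0}^{\oplus W}=\bigoplus_{\mec a\in\integers^2,\,W(\mec a)=1}\cI_{\mec 0\ge\mec a},
$$
and likewise for $W'$. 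By Definition~\ref{de_indicator_diagrams}, each summand $\cI_{\mec 0\ge\mec a}$ is exactly one of $\underline k$, $\underline k_{/B_1}$, $\underline k_{/B_2}$, or $\underline k_{/B_1,B_2}$, determined by whether $a_1\le 0$ or $a_1\ge 1$ and whether $a_2\le 0$ or $a_2\ge 1$. Grouping by this quadrant yields
$$
\cM_{W,\mec 0}\isom \underline k^{\oplus n_{00}}\oplus \underline k_{/B_1}^{\oplus n_{10}}\oplus \underline k_{/B_2}^{\oplus n_{01}}\oplus \underline k_{/B_1,B_2}^{\oplus n_{11}},
$$
where $n_{\epsilon_1\epsilon_2}=n_{\epsilon_1\epsilon_2}^{(W)}$ counts matched pairs $(a_1,\pi(a_1))$ lying in the corresponding quadrant, and similarly for $n^{(W')}_{\epsilon_1\epsilon_2}$ with $\pi'$. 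The theorem then reduces to checking that $n_{\epsilon_1\epsilon_2}^{(W)}=n_{\epsilon_1\epsilon_2}^{(W')}$ as cardinals for each of the four choices of $(\epsilon_1,\epsilon_2)$.

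For the two ``finite'' counts: by Theorem~\ref{th_perfect_matching}, $n_{00}^{(W)}=b^0(\cM_{W,\mec 0})=(\fraks W)(\mec 0)=f_{i,j,\mec d}(0,0)=f(\mec d)$, and likewise $n_{00}^{(W')}=f(\mec d)$. Meanwhile $n_{11}^{(W)}=b^1(\cM_{W,\mec 0})$, and by Theorem~\ref{th_cM_Euler_char_repeated_Riemann_functions} the Euler characteristic of $\cM_{W,\mec 0}$ equals $\deg(\mec d)+C$, where $C$ is the offset of $f$; hence $n_{11}^{(W)}=f(\mec d)-\deg(\mec d)-C$, which again depends only on $f$ and $\mec d$. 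Thus $n_{00}^{(W)}=n_{00}^{(W')}$ and $n_{11}^{(W)}=n_{11}^{(W')}$.

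For the remaining two ``infinite'' counts, the key observation is that both are countably infinite for any perfect matching arising as the weight of a Riemann function $\integers^2\to\integers$, so they automatically agree as cardinals. Since $f_{i,j,\mec d}$ is Riemann, its weight $W=\frakm f_{i,j,\mec d}$ is both initially and eventually zero, so the support of $W$ lies in some bounded-degree strip $\{\mec a: a\le \deg(\mec a)\le b\}$. In particular $a_1+\pi(a_1)\le b$ for every $a_1$ with $W(a_1,\pi(a_1))=1$, so $\pi(a_1)\le 0$ for every such $a_1>\max(0,b)$, giving $n_{10}^{(W)}=\aleph_0$; symmetrically $n_{01}^{(W)}=\aleph_0$, and the same argument applied to $\pi'$ yields $n_{10}^{(W')}=n_{01}^{(W')}=\aleph_0$. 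All four multiplicities therefore agree and the two direct-sum decompositions are isomorphic, establishing $\cM_{f;i,j,\mec d}\isom\cM_{f;i,j',\mec d}$.

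The main obstacle is conceptual rather than computational: the naive ``identity'' map between $\cM_{W,\mec 0}$ and $\cM_{W',\mec 0}$ fails to intertwine the restrictions $\rho_{32}$, which encode $\pi$ versus $\pi'$. Proposition~\ref{pr_cM_W_decomp} bypasses this by rewriting each side as a direct sum of basic diagrams, whereupon the finite multiplicities are pinned down by the Betti-number and Euler-characteristic invariants and the infinite multiplicities are forced to match on cardinality grounds alone.
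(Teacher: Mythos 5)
Your proof is correct, but it takes a genuinely different route from the paper's. The paper constructs an explicit isomorphism with $\phi(A_1)$ equal to the identity: Lemma~\ref{le_zipper_matchings} shows such a $\phi$ exists iff for every $a\le 0$ one has $\pi(a)\le 0\iff\pi'(a)\le 0$, and Lemma~\ref{le_zipper_condition_on_fs_versus_Ws} translates that into the condition $f_{i,j,\mec d}(a,0)=f_{i,j',\mec d}(a,0)$ for all $a\le 0$, which holds automatically since both sides equal $f(\mec d+a\mec e_i)$. Your argument instead decomposes both $k$-diagrams into direct sums of the four basic diagrams and matches multiplicities: the two finite multiplicities ($\underline k$ and $\underline k_{/B_1,B_2}$, counted by $b^0$ and $b^1$) are pinned down by $f(\mec d)$ and the offset via Theorems~\ref{th_perfect_matching} and~\ref{th_cM_Euler_char_repeated_Riemann_functions}, and the two infinite multiplicities are both $\aleph_0$ because any perfect matching is supported in a bounded-degree strip. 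What your approach buys is conceptual economy: the isomorphism question collapses to matching invariants, with no map to construct and no intertwining to verify. What the paper's approach buys is a canonical isomorphism of a uniform shape (identity on $A_1$; the only nontrivial check is whether $\phi(B_2)$ is forced to be a well-defined isomorphism), and that shape is reused nearly verbatim in Lemma~\ref{le_zipper_multiple_matchings} and Lemma~\ref{le_the_big_result_for_virtual_diagrams} to obtain the virtual-case Theorem~\ref{th_glue_i_j_j_prime}. Your cardinality-matching yields an isomorphism only up to arbitrary pairings of countably infinite index sets; it could in principle be adapted to the virtual case by tracking basic-diagram multiplicities of the positive and negative halves separately, but the paper's single explicit recipe is tidier to recycle there.
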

The proof of this theorem will be given in
Subsection~\ref{su_proof_th_glue_i_j_j_prime_not_virtual}.

\begin{corollary}\label{co_glue_i_j_j_prime_not_virtual}
Let $n\in\naturals$ with $n\ge 3$, $\mec d\in\integers^n$,
and $f\from \integers^n\to\integers$ be a Riemann function.
Say that for some $I\subset[n]$ we have that 
$W_{f;i,j,\mec d}=\frakm f_{i,j,\mec d}$ is everywhere non-negative,
and therefore a perfect matching.
Then all the $k$-diagrams $\cM_{f;i,j,\mec d}$ varying over
distinct $i,j\in I$ are isomorphic (as $k$-diagrams).
\end{corollary}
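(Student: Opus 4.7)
The plan is to mirror the argument for Corollary~\ref{co_glue_i_j_j_prime}, but at the level of honest $k$-diagrams rather than equivalence classes of virtual $k$-diagrams, invoking Theorem~\ref{th_glue_i_j_j_prime_not_virtual} in place of Theorem~\ref{th_glue_i_j_j_prime}. The two ingredients needed are (a) a ``swap'' isomorphism between $\cM_{f;i,j,\mec d}$ and $\cM_{f;j,i,\mec d}$, and (b) the ``pivot'' isomorphism from the theorem that, for a fixed first index, replaces the second index by any other.

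For (a), I would observe that for any distinct $i,j\in[n]$, the $k$-diagrams $\cM_{f;i,j,\mec d}$ and $\cM_{f;j,i,\mec d}$ are related by the tautological relabelling that exchanges the value at $B_1$ with that at $B_2$ and the value at $A_1$ with that at $A_2$, together with the corresponding relabelling of restriction maps; this is the same evident morphism invoked in \eqref{eq_M_two_var_exchange_indices}, and it is patently an isomorphism of $k$-diagrams. For (b), by hypothesis every weight $W_{f;i,j,\mec d}$ with $i,j\in I$ distinct is non-negative, hence a perfect matching, so for any three pairwise distinct $i,j,j'\in I$, Theorem~\ref{th_glue_i_j_j_prime_not_virtual} delivers an isomorphism $\cM_{f;i,j,\mec d}\isom \cM_{f;i,j',\mec d}$ of $k$-diagrams.

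Given these, I would conclude by the same short case analysis used in Corollary~\ref{co_glue_i_j_j_prime}, applied to two arbitrary ordered pairs $(i,j)$ and $(i',j')$ of distinct elements of $I$: if $i=i'$ (and $j\ne j'$), apply (b) directly; if $j=j'$ (and $i\ne i'$), sandwich (b) between two applications of (a); if $\{i,j\}\cap\{i',j'\}=\emptyset$ (which forces $|I|\ge 4$), chain
$$
\cM_{f;i,j,\mec d}\isom \cM_{f;i,i',\mec d}\isom \cM_{f;i',i,\mec d}\isom \cM_{f;i',j',\mec d},
$$
using (b), (a), (b). The remaining cases ($(i,j)=(i',j')$ or $(i,j)=(j',i')$) are trivial or reduce to (a).

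I do not anticipate any technical obstacle here: the corollary is pure bookkeeping once one notes that both (a) and (b) produce bona fide isomorphisms of $k$-diagrams rather than mere equivalences of virtual $k$-diagrams. All substantive content is packaged inside Theorem~\ref{th_glue_i_j_j_prime_not_virtual}, whose proof (to be given in Subsection~\ref{su_proof_th_glue_i_j_j_prime_not_virtual}) is the main obstacle to the entire chain of results.
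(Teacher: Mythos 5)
Your approach is the one the paper intends: the paper's proof of Corollary~\ref{co_glue_i_j_j_prime_not_virtual} consists of the single sentence ``The proof is the same as that of Corollary~\ref{co_glue_i_j_j_prime},'' and that earlier proof is precisely the swap isomorphism plus the pivot isomorphism of Theorem~\ref{th_glue_i_j_j_prime_not_virtual}, chained together. Your ingredients (a) and (b) and the way you combine them are correct, and you are right that everything honest (not virtual) goes through because all the relevant weights are assumed to be perfect matchings for indices in $I$. One small slip: your case enumeration for two ordered pairs $(i,j)$, $(i',j')$ is not exhaustive. You cover $i=i'$, $j=j'$, disjoint $\{i,j\}\cap\{i',j'\}=\emptyset$, and the two ``both coincide'' cases, but omit the cases where the intersection has exactly one element via a crossed match, namely $i=j'$ with $j\ne i'$ (e.g.\ $(1,2)$ vs.\ $(3,1)$) and symmetrically $j=i'$ with $i\ne j'$. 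These are handled by the same tools, e.g.\ when $j=i'$ use $\cM_{f;i,j,\mec d}\isom_a\cM_{f;j,i,\mec d}\isom_b\cM_{f;j,j',\mec d}=\cM_{f;i',j',\mec d}$, and dually when $i=j'$; or, more cleanly, avoid the case split entirely by fixing a reference pair $(i_0,j_0)\in I^2$ and showing each $\cM_{f;i,j,\mec d}$ is isomorphic to $\cM_{f;i_0,j_0,\mec d}$, as in the paper's proof of Corollary~\ref{co_glue_i_j_j_prime} (taking care that the pivot indices lie in $I$ rather than being $1,2$).
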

The proof is the same as that
of Corollary~\ref{co_glue_i_j_j_prime}.

In particular, if $I=[n]$ in the above corollary, then for fixed
$f$ and all $\mec d\in\integers^n$, the
$k$-diagrams $\cM_{f;i,j,\mec d}$ are all isomorphic,
and one can define 
$[\cM_{f {\rm \;at\;}\mec d}]$ as an equivalence class of 
$k$-diagrams.
Then Corollary~\ref{co_glue_i_j_j_prime_not_virtual}
yields the
following result, which
gives a stronger conclusion
than Theorem~\ref{th_main_modeling_theorem_general_Riemann_function}
in a special case thereof.

\begin{theorem}\label{th_main_modeling_theorem_when_non_virtual}
Let $f\from\integers^n\to\integers$ be any Riemann function
with offset $C$, and let $\mec K\in\integers$.
Assume that for all distinct $i,j\in[n]$ and all $\mec d\in\integers^n$,
$\frakm f_{i,j,\mec d}$ and $\frakm (f^\wedge_\mec K)_{i,j,\mec d}$
are perfect matchings.
Then the conclusions of
Theorem~\ref{th_main_modeling_theorem_general_Riemann_function} hold
where we understand that $[\cM_{f {\rm \;at\;}\mec d}]$
and $[\cM_{f^\wedge_\mec K {\rm \;at\;}\mec K-\mec d}]$
refer to equivalence classes of $k$-diagrams.
\end{theorem}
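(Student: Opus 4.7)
The plan is to reduce this to Theorem~\ref{th_main_modeling_theorem_general_Riemann_function} by upgrading, under the stated hypothesis, the virtual-diagram classes appearing there to genuine isomorphism classes of (non-virtual) $k$-diagrams, and then invoking the previous theorem verbatim.

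First I would apply Corollary~\ref{co_glue_i_j_j_prime_not_virtual} with $I=[n]$ to the Riemann function $f$. By hypothesis, for every $\mec d\in\integers^n$ and every pair of distinct $i,j\in[n]$, the weight $W_{f;i,j,\mec d}=\frakm f_{i,j,\mec d}$ is a perfect matching, so the ordinary $k$-diagram $\cM_{f;i,j,\mec d}=\cM_{W_{f;i,j,\mec d},\mec 0}$ from Definition~\ref{de_cM_W_d} is defined without passing to a formal difference. The corollary then asserts that all of these $\cM_{f;i,j,\mec d}$ (as $i,j$ vary over distinct elements of $[n]$) are isomorphic as $k$-diagrams. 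Hence there is a well-defined isomorphism class, which I take to be $[\cM_{f{\rm\;at\;}\mec d}]$; it maps onto the equivalence class of Definition~\ref{de_cM_f_at_d} under the embedding $\cF\mapsto\cF\ominus\underline 0$ of $k$-diagrams into virtual $k$-diagrams. The same argument, applied to the Riemann function $f^\wedge_\mec K$ (which is a Riemann function by Proposition~\ref{pr_properties_of_wedge_sub_mec_K}), yields that $[\cM_{f^\wedge_\mec K{\rm\;at\;}\mec K-\mec d}]$ is likewise a single isomorphism class of genuine $k$-diagrams.

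Second, I would observe that by Lemma~\ref{le_cM_W_d_are_Fredholm}, each representative $\cM_{f;i,j,\mec d}$ is a Fredholm $k$-diagram (its Betti numbers are finite, given by the formulas in Theorem~\ref{th_perfect_matching}). Consequently, the ordinary Betti numbers $b^0,b^1$ and Euler characteristic $\chi$ of $\cM_{f;i,j,\mec d}$ agree, under the embedding $\cF\mapsto\cF\ominus\underline 0$, with the virtual Betti numbers and virtual Euler characteristic of its image as a virtual $k$-diagram. The same holds for $\cM_{f^\wedge_\mec K;i,j,\mec K-\mec d}$.

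Finally, the three formulas
\begin{equation*}
b^0\bigl([\cM_{f{\rm\;at\;}\mec d}]\bigr)=f(\mec d),\qquad
\chi\bigl([\cM_{f{\rm\;at\;}\mec d}]\bigr)=\deg(\mec d)+C,
\end{equation*}
\begin{equation*}
b^1\bigl([\cM_{f{\rm\;at\;}\mec d}]\bigr)=f^\wedge_\mec K(\mec K-\mec d)=b^0\bigl([\cM_{f^\wedge_\mec K{\rm\;at\;}\mec K-\mec d}]\bigr)
\end{equation*}
follow by applying Theorem~\ref{th_main_modeling_theorem_general_Riemann_function} to $f$ (and, for the last equality, also to $f^\wedge_\mec K$), then transporting the identities along the equality of virtual and ordinary Betti numbers established in the previous step. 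There is no real obstacle here: the only thing to check is that passing to the ordinary isomorphism class is compatible with the virtual-class constructions, and this is immediate because all involved $k$-diagrams are already Fredholm, so the Grothendieck-group equivalence restricts to ordinary isomorphism on the representatives at hand.
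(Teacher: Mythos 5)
Your proposal is correct and follows essentially the same route the paper takes: invoke Corollary~\ref{co_glue_i_j_j_prime_not_virtual} with $I=[n]$ (for both $f$ and $f^\wedge_{\mec K}$) to get a single isomorphism class of genuine $k$-diagrams, note via Lemma~\ref{le_cM_W_d_are_Fredholm} (and Theorem~\ref{th_perfect_matching}) that these are Fredholm so the virtual and ordinary Betti numbers coincide under $\cF\mapsto\cF\ominus\underline 0$, and then transport the three identities from Theorem~\ref{th_main_modeling_theorem_general_Riemann_function}. The only cosmetic omission is the degenerate case $n=2$, where Corollary~\ref{co_glue_i_j_j_prime_not_virtual} requires $n\ge 3$; but there the choices $(i,j)\in\{(1,2),(2,1)\}$ already give isomorphic diagrams by the index-swap morphism, so nothing needs to be added.
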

We remark that 
\eqref{eq_n_variate_equality_two_var_duals}
of Section~\ref{se_duality_first} shows that if
$\frakm f_{i,j,\mec d}$ are non-negative for
all $i,j,\mec d$, then so all the $\frakm (f^\wedge_\mec K)_{i,j,\mec d}$,
and conversely.

It is helpful to first prove
Theorem~\ref{th_glue_i_j_j_prime_not_virtual}
first, as it is simpler to prove but illustrates the main idea in
the proof of Theorem~\ref{th_glue_i_j_j_prime}

The rest of this section is dedicated to proving these two theorems.

One crucial ingredient of the proofs of both theorems is the equality
\begin{equation}\label{eq_pasting_relation}
\forall a\in\integers, \quad
f_{i,j,\mec d}(a,0) = f(\mec d+ a\mec e_i) =f_{i,j',\mec d}(a,0).
\end{equation} 
The other idea in both proofs is to look for an isomorphism 
that is very simple
along the $A_1$ values of $\cM_{W,\mec d}$, 
and to see what conditions this requires
elsewhere; it turns out that it is only along the $B_2$ value
that one needs some conditions, and those conditions turn out to
be exactly \eqref{eq_pasting_relation}.
Let us give the details.

\subsection{Isomorphisms That Are Simple Along the $A_1$ Values}

To prove Theorem~\ref{th_glue_i_j_j_prime_not_virtual}, we will use
the following lemma.

\begin{lemma}\label{le_zipper_matchings}
Let $W,W'$ be perfect matchings, and $\pi,\pi'$ their associated
bijections.  Then for any $\mec d\in\integers^2$ the following are equivalent:
\begin{enumerate}
\item
there exists an isomorphism of $k$-diagrams
$\phi\from\cM_{W,\mec d}\to\cM_{W',\mec d}$
such that $\phi(A_1)$ is the identity; and
\item
\begin{equation}
\label{eq_zipper_condition_from_W_to_W_prime} 
\forall a\in\integers_{\le d_1}, \quad
\pi(a)\le d_2
\iff
\pi'(a)\le d_2.
\end{equation}
\end{enumerate}
\end{lemma}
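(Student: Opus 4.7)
The plan is to exploit the rigidity forced by the condition $\phi(A_1)=\mathrm{Id}$, then read off the remaining data of $\phi$ by chasing the four commutativity squares that define a morphism of $k$-diagrams. It streamlines bookkeeping to identify $\cM_{W,\mec d}(B_3)=k^{\oplus W}$ with $k^{\oplus\integers}$ via $\mec e_{(a,\pi(a),1)}\mapsto\mec e_a$; under this identification $\cM_{W,\mec d}(\rho_{3,1})$ becomes the identity on $k^{\oplus\integers}$ and $\cM_{W,\mec d}(\rho_{3,2})$ becomes the basis permutation $k^{\oplus\pi}$ sending $\mec e_a\mapsto\mec e_{\pi(a)}$, with analogous conventions for $W'$ and $\pi'$.

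For the implication $(1)\Rightarrow(2)$, I would chase the four squares in turn. The square for $\rho_{1,1}$, combined with the fact that $\rho_{1,1}$ and $\rho_{1,1}'$ are both inclusions, forces $\phi(B_1)=\mathrm{Id}$. The square for $\rho_{3,1}$ then forces $\phi(B_3)=\mathrm{Id}$ under the chosen identifications. Substituting $\phi(B_3)=\mathrm{Id}$ into the square for $\rho_{3,2}$ gives $\phi(A_2)(\mec e_{\pi(a)})=\mec e_{\pi'(a)}$ for every $a\in\integers$, so $\phi(A_2)=k^{\oplus\sigma}$ with $\sigma\eqdef\pi'\circ\pi^{-1}$. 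Finally, the square for $\rho_{2,2}$ combined with the requirement that $\phi(B_2)$ be an isomorphism of $k^{\oplus\integers_{\le d_2}}$ forces $\sigma$ to restrict to a bijection of $\integers_{\le d_2}$; after substituting $b=\pi(a)$ this produces \eqref{eq_zipper_condition_from_W_to_W_prime}.

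For $(2)\Rightarrow(1)$, I would run the construction in reverse: define $\phi(A_1)$, $\phi(B_1)$, and $\phi(B_3)$ to be the identity maps (via the identifications), set $\phi(A_2)\eqdef k^{\oplus\sigma}$ with $\sigma=\pi'\circ\pi^{-1}$, and define $\phi(B_2)(\mec e_b)\eqdef\mec e_{\sigma(b)}$ for $b\le d_2$. I would then verify that $\phi(B_2)$ lands in $k^{\oplus\integers_{\le d_2}}$ and is an isomorphism there, after which the commutativity of all four squares is immediate from the definitions.

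The main obstacle is this last verification. The forward chase naturally delivers the iff for every $a\in\integers$, whereas the hypothesis of the lemma only asserts it for $a\in\integers_{\le d_1}$. I would close this gap by exploiting the boundedness of $a+\pi(a)$ and $a+\pi'(a)$---both are uniformly bounded because $W$ and $W'$ are perfect matchings---so that $\pi(a)\le d_2$ and $\pi'(a)\le d_2$ are automatic for $a$ sufficiently large and both fail for $a$ sufficiently small. One must verify that the only values of $a\in\integers_{>d_1}$ where the iff could \emph{a priori} fail are controlled (via $\sigma=\pi'\circ\pi^{-1}$ and its inverse) by the hypothesis at $a\in\integers_{\le d_1}$; checking this reduction carefully, and hence that $\sigma$ does restrict to a bijection of $\integers_{\le d_2}$, is where the real content of the lemma lies.
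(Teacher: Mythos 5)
Your proposal follows the paper's route essentially verbatim: chase the four commuting squares forced by $\phi(A_1)=\mathrm{Id}$ to pin down $\phi(B_1)$, $\phi(B_3)$, and $\phi(A_2)$, and reduce the whole question to whether $\phi(B_2)$ can be defined as an isomorphism of $k^{\oplus\integers_{\le d_2}}$ making the last square commute.

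You then correctly observe that the precise condition is that $\sigma=\pi'\circ\pi^{-1}$ restrict to a bijection of $\integers_{\le d_2}$, i.e., that $\pi(a)\le d_2\iff\pi'(a)\le d_2$ for \emph{every} $a\in\integers$, and you rightly note this is strictly more than hypothesis~(2) supplies. But the gap you flag cannot be closed by any boundedness reduction from $a>d_1$ back to $a\le d_1$, because the implication $(2)\Rightarrow(1)$ is false as stated. Take $d_1=d_2=0$, $\pi(a)=-a$, and $\pi'(a)=1-a$ for $a\notin\{0,1\}$ with $\pi'(0)=0$, $\pi'(1)=1$; both are perfect matchings ($a+\pi(a)=0$ and $a+\pi'(a)\in\{0,1,2\}$). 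For $a\le 0$ one checks $\pi(a)\le 0\iff a=0\iff\pi'(a)\le 0$, so (2) holds; yet $b^1(\cM_{W,\mec 0})=|\{a>0:\pi(a)>0\}|=0$ while $b^1(\cM_{W',\mec 0})=|\{a>0:\pi'(a)>0\}|=1$, so the two diagrams have different Euler characteristics and admit no isomorphism at all. (The paper's own proof commits the identical slip: the ``Hence'' just before the displayed implication silently shrinks the quantifier from $\forall a_1\in\integers$ to $\forall a_1\in\integers_{\le d_1}$.) What your square-chase actually proves is the lemma with the unrestricted quantifier in \eqref{eq_zipper_condition_from_W_to_W_prime}, and that is also all that is used downstream: in the proof of Theorem~\ref{th_glue_i_j_j_prime_not_virtual} the pasting relation \eqref{eq_pasting_relation} holds for every $a\in\integers$, not merely $a\le 0$. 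So the fix is to strengthen the hypothesis to match what $\phi(B_2)$ forces, at which point your argument closes with nothing left to reduce; do not spend effort looking for the reduction you allude to, since the counterexample shows it does not exist.
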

We remark that to prove Theorem~\ref{th_glue_i_j_j_prime}, 
we need know only that
(2) $\implies$ (1).
\begin{proof}
Consider a morphism $\phi$ with $\phi(A_1)$ the identity.  
\begin{figure}
\begin{tikzpicture}[scale=0.50,font=\small]
\node (B1) at (0,4) {$k^{\oplus\integers_{\le d_1}}$};
\node (B2) at (0,-4) {$k^{\oplus\integers_{\le d_2}}$};
\node (B3) at (0,0) {$k^{\oplus W}$};
\node (A1) at (8,2) {$k^{\oplus\integers}$};
\node (A2) at (8,-2) {$k^{\oplus\integers}$};
\draw [->] (B1) -- (A1) node [midway,above] 
{$\quad \mec e_{a_1}\mapsto \mec e_{a_1}$} ;
\draw [->] (B2) -- (A2) node [midway,below] 
{$\quad \mec e_{a_2}\mapsto \mec e_{a_2}$} ;
\draw [->] (B3) -- (A1) node [midway,above] 
{$\mec e_{(a_1,\pi(a_1))}\mapsto \mec e_{a_1}\quad\quad\quad\quad\quad$} ;
\draw [->] (B3) -- (A2) node [midway,below] 
{$\mec e_{(a_1,\pi(a_1))}\mapsto \mec e_{\pi(a_1)}\quad\quad\quad\quad\quad\quad\quad$} ;
\node (BB1) at (15,4) {$k^{\oplus\integers_{\le d_1}}$};
\node (BB2) at (15,-4) {$k^{\oplus\integers_{\le d_2}}$};
\node (BB3) at (15,0) {$k^{\oplus W}$};
\node (AA1) at (23,2) {$k^{\oplus\integers}$};
\node (AA2) at (23,-2) {$k^{\oplus\integers}$};
\draw [->] (BB1) -- (AA1) node [midway,above] 
{$\quad\quad \mec e_{a_1}\mapsto \mec e_{a_1}$} ;
\draw [->] (BB2) -- (AA2) node [midway,below] 
{$\quad\quad \mec e_{a_2}\mapsto \mec e_{a_2}$} ;
\draw [->] (BB3) -- (AA1) node [pos=0.7,below] 
{$\hskip 10 em \mec e_{(a_1,\pi'(a_1))}\mapsto \mec e_{a_1}$} ;
%
%
\draw [->] (BB3) -- (AA2) node [pos=0.7,above] 
{\hskip 12 em $\mec e_{(a_1,\pi'(a_1)}\mapsto \mec e_{\pi'(a_1)}$} ;
\draw [->,dashed,ultra thick] (B1) -- (BB1) node [pos=0.5,above] {$\phi(B_1)={\rm identity}$};
\draw [->,dashed,ultra thick] (B2) -- (BB2) node [pos=0.7,above] {Is $\phi(B_2)$ defined?};
\draw [->,dashed,ultra thick] (B3) -- (BB3) node [pos=0.6,above] 
{$\phi(B_3)\mec e_{(a_1,\pi(a))}=\mec e_{(a_1,\pi'(a_1))}$};
\draw [->,ultra thick] (A1) -- (AA1) node [pos=0.5,above] {$\phi(A_1)={\rm identity}$};
\draw [->,dashed,ultra thick] (A2) -- (AA2) node [pos=0.5,above] 
{$\phi(A_2)(\mec e_{\pi(a_1)})=\mec e_{\pi'(a_1)}$};
\node (F) at (4,-6) {\Large$\cM_{W,\mec d}$};
\node (G) at (19,-6) {\Large$\cM_{W',\mec d}$};
\draw [->,thick] (F) -- (G) node [midway,above] {\Large$\phi$};
\end{tikzpicture}
\caption{Implications of $\phi(A_1)={\rm identity}$, for a  
$\phi\from\cM_{W,\mec d}\to \cM_{W',\mec d}$ (where we omit
the quantifiers $\forall a_1\in\integers$ and $\forall a_2\in\integers$)}
\label{fi_W_one_W_two_zipper}
\end{figure}
In Figure~\ref{fi_W_one_W_two_zipper} we depict
$\phi(A_1)$, which is the
identity map, is depicted in a thick line, and the inferences about the
other values of $\phi$ are depicted in dashed lines.
We easily see that (considering Figure~\ref{fi_W_one_W_two_zipper}):
\begin{enumerate}
\item 
$\phi(B_1)$ is forced to be the identity, 
\item 
$\phi(B_3)$ must take $\mec e_{(a_1,\pi(a_1))}$ to
$\mec e_{(a_1,\pi'(a_1))}$ for all $a_1\in\integers$,
\item
$\phi(A_2)$ is forced to take $\mec e_{\pi(a_1)}$ to 
$\mec e_{\pi'(a_1)}$ for all $a_1\in\integers$, and
\item
$\phi(B_2)$ is uniquely determined if it exists, and it exists
iff for all $a_1\in\integers$
with $\pi(a_1)\le d_2$,
the vector the $\mec e_{\pi'(a_1)}$ lies in
$k^{\oplus\integers_{\le d_2}}$.
\end{enumerate}
Hence $\phi(B_2)$ exists iff
$$
\forall a_1\in\integers_{\le d_1},\quad
\pi(a_1)\le d_2 \implies
\pi'(a_1)\le d_2;
$$
and hence $\phi(B_2)$ exists and is an isomorphism iff
\eqref{eq_zipper_condition_from_W_to_W_prime} holds.
\end{proof}

We similarly prove the following generalization that we will
use to prove Theorem~\ref{th_glue_i_j_j_prime}.

\begin{lemma}\label{le_zipper_multiple_matchings}
Let $W_1,\cdots,W_s$ and $W_1',\cdots,W_s'$ be two sequences of
perfect matchings $\integers^2\to\integers$.
Let $\mec d\in\integers^2$, and for each
$a_1\in\integers$ and $j\in[s]$, 
let $\mec e_{a_i,j}$ and $\mec e_{a_i,j}'$ denote, respectively, the
standard basis vector $\mec e_{a_1}\in k^{\integers}$ in, respectively
$\cM_{W_j,\mec d}(A_1)$ and $\cM_{W_j',\mec d}(A_1)$.
Let
$$
W = W_1 + \cdots + W_s, \quad
W' = W_1'+\cdots+ W_s' ,
$$
and
$$
\cM_{\mec d}=\cM_{W_1,\mec d}\oplus \cdots\oplus \cM_{W_s,\mec d}, 
\quad
\cM_{\mec d}'=\cM_{W_1',\mec d}\oplus \cdots\oplus \cM_{W_s',\mec d}.
$$
Then for any $\mec d\in\integers^2$, the following are equivalent:
\begin{enumerate}
\item 
there exists an isomorphism
$\phi\from\cM\to\cM'$ that for each $a_1\in\integers_{\le d_1}$,
such that $\phi(A_1)$
restricted to $\mec e_{a_1,1},\ldots,\mec e_{a_1,s}$ yields a bijection
from this set 
to $\mec e'_{a_1,1},\ldots,\mec e'_{a_1,s}$;
\item
letting $\pi_r,\pi'_r$ for $r\in[s]$ denote
the bijections associated to $W_r,W'_r$,
for all $a_1\le d_1$,
\begin{equation}\label{eq_pi_pi_prime_iff}
\bigl| \{ r \ | \ \pi_r(a_1) \le d_2 \} \bigr|
=
\bigl| \{ r' \ | \ \pi'_{r'}(a_1) \le d_2 \} \bigr|.
\end{equation} 
\end{enumerate}
\end{lemma}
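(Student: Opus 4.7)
\smallskip

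The plan is to generalize the proof of Lemma~\ref{le_zipper_matchings} from a single perfect matching to a direct sum of $s$ perfect matchings, following the same line of argument: specify the structure of $\phi(A_1)$, see what this forces on the remaining values, and then identify the exact obstruction to $\phi(B_2)$ being well defined and an isomorphism.

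First I will observe that an isomorphism $\phi\from\cM_{\mec d}\to\cM'_{\mec d}$ as described in (1) amounts to choosing, for each $a_1\in\integers_{\le d_1}$, a permutation $\sigma_{a_1}\in S_s$ with $\phi(A_1)\mec e_{a_1,r}=\mec e'_{a_1,\sigma_{a_1}(r)}$, and to specify $\phi(A_1)$ somehow on the remaining fibers over $a_1>d_1$. Mimicking the diagram chase from Lemma~\ref{le_zipper_matchings} on each direct summand, commutation with the inclusion $\rho_{1,1}$ forces $\phi(B_1)$ to permute summands via the same $\sigma_{a_1}$; commutation with $\rho_{3,1}$ forces $\phi(B_3)\mec e_{(a_1,\pi_r(a_1)),r}=\mec e'_{(a_1,\pi'_{\sigma_{a_1}(r)}(a_1)),\sigma_{a_1}(r)}$; and then commutation with $\rho_{3,2}$ forces $\phi(A_2)\mec e_{\pi_r(a_1),r}=\mec e'_{\pi'_{\sigma_{a_1}(r)}(a_1),\sigma_{a_1}(r)}$. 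Finally, $\phi(B_2)$ exists and is an isomorphism iff $\phi(A_2)$ restricts to a bijection of the subspace $B_2\subset A_2$, which unwinds to the requirement that, for each $a_1$, $\sigma_{a_1}$ biject $\{r:\pi_r(a_1)\le d_2\}$ with $\{r':\pi'_{r'}(a_1)\le d_2\}$.

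For the direction (1)$\Rightarrow$(2), this last reformulation applied for $a_1\le d_1$ is immediate: a permutation of $[s]$ bijecting two subsets forces them to have equal cardinality, which is exactly~\eqref{eq_pi_pi_prime_iff}. For the converse (2)$\Rightarrow$(1), condition~\eqref{eq_pi_pi_prime_iff} guarantees the existence of a permutation $\sigma_{a_1}$ bijecting the two finite subsets of $[s]$ for each $a_1\le d_1$; I then assemble $\phi(B_1),\phi(B_3),\phi(A_2),\phi(B_2)$ by the formulas derived above and check that they are isomorphisms value by value.

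The main obstacle I anticipate is handling the fibers over $a_1>d_1$ in the construction of Step~(2)$\Rightarrow$(1): the hypothesis (1) places no fiber-preservation constraint on $\phi(A_1)$ there, but $\phi(B_2)$ still needs to be a bijection on the portion of $B_2$ fed by such $a_1$ via $\pi_r$. The way I would handle this is to first reduce, using Theorem~\ref{th_invariance_under_perfect_matching_sum}, to a situation where $W_1+\cdots+W_s$ and $W_1'+\cdots+W_s'$ agree globally (any iso $\cM_{\mec d}\isom\cM'_{\mec d}$ forces this by comparing zeroth Betti numbers at every shifted $\mec d'$), and then invoke the fact that under this global agreement the cardinality condition holds automatically for every $a_1\in\integers$; this lets me extend the choice of $\sigma_{a_1}$ to all $a_1\in\integers$, upgrading the construction to an honest isomorphism of $k$-diagrams while still satisfying the fiber-preservation property required in~(1) on $\integers_{\le d_1}$.
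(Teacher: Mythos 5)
Your derivation is consonant with the paper's: condition~(1) on $\phi(A_1)$, chased through $\rho_{3,1},\rho_{3,2},\rho_{2,2}$, forces $\phi(B_1),\phi(B_3),\phi(A_2)$, and the only obstruction sits at $\phi(B_2)$, namely that each $\sigma_{a_1}$ must map $\{r:\pi_r(a_1)\le d_2\}$ bijectively onto $\{r':\pi'_{r'}(a_1)\le d_2\}$. The direction (1)$\Rightarrow$(2) is then immediate, as you note, and you have correctly put your finger on the genuine difficulty: the $\phi(B_2)$-obstruction ranges over all $a_1\in\integers$, while condition~(2) as written restricts to $a_1\le d_1$.

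Your proposed patch for the fibers over $a_1>d_1$, however, does not work. Condition~(2) does not force $W_1+\cdots+W_s=W'_1+\cdots+W'_s$; and even if you had an isomorphism $\cM_{\mec d}\isom\cM'_{\mec d}$ in hand (which, for (2)$\Rightarrow$(1), you do not), it gives no information about $\cM_{\mec d'}$ for $\mec d'\ne\mec d$, so one cannot ``compare Betti numbers at every shifted $\mec d'$.'' In fact the stated equivalence fails: take $s=1$, $\mec d=(0,0)$, $\pi(a)=-a$, and $\pi'(0)=0$, $\pi'(1)=1$, $\pi'(a)=1-a$ for $a\ne 0,1$ (a valid perfect matching, since $\pi'(a)+a$ is bounded). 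For $a_1\le 0$, both sides of \eqref{eq_pi_pi_prime_iff} equal $1$ exactly when $a_1=0$ and $0$ otherwise, so (2) holds; but $b^1(\cM_{W,\mec 0})=0$ while $b^1(\cM_{W',\mec 0})=1$, so there is no isomorphism $\cM_{W,\mec 0}\to\cM_{W',\mec 0}$ at all and (1) fails. The paper's own proof in fact reads condition~(1) as imposing the fiber-block structure for every $a_1\in\integers$ and deduces the constraint for every $a_1\in\integers$, and the downstream uses of the lemma (via \eqref{eq_pasting_relation} in the proofs of Theorems~\ref{th_glue_i_j_j_prime} and~\ref{th_glue_i_j_j_prime_not_virtual}) always supply $f(a,0)=f'(a,0)$ for \emph{all} $a\in\integers$. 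So the restriction to $a_1\le d_1$ in the statement appears to be a slip: replacing $\integers_{\le d_1}$ by $\integers$ in both (1) and (2) yields a correct lemma, and with that change your argument goes through with no patching needed.
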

\begin{proof}
We have
$$
\cM_{\mec d}(A_1)=
\bigoplus_{a_1\in\integers}{\rm Span}(\mec e_{a_1,1},\ldots,\mec e_{a_1,s})
$$
and similarly with $\cM'$ and the $\mec e'_{a_1,i}$.
Condition~(1) says that 
for each $a_1\in\integers$ there is a permutation
$\sigma=\sigma_{a_1}$ on $[s]$ such that
$$
\phi(A_1)(\mec e_{a_1,i})=\mec e'_{a_1,\sigma_{a_1}(i)}.
$$
But the isomorphism $\cM_{W_i,\mec d}\isom\bec\cI_{\mec d}^{\oplus W_i}$ 
allows us to
write
$$
\cM_{\mec d}\isom \bigoplus_{a_1\in\integers,\ i\in [s]} 
\cI_{\mec d\ge (a_1,\pi_i(a_1))}
$$
in a way that $\mec e_{a_1,i}\in \cM_{\mec d}(A_1)$ 
corresponds to the standard basis vector in vector (with the same
indices) $\mec e_{a_1,i}$.
Since the same is true of $\cM'_{\mec d}$ and $\sigma_{a_1}$, that gives that
the desired isomorphism must have that for all $a_1\in\integers$
and $i\in [s]$
$$
\cI_{\mec d\ge (a_1,i)}
\isom
\cI_{\mec d\ge (a_1,\sigma_{a_1}(i))};
$$
similar to the argument in the proof of 
Lemma~\ref{le_zipper_matchings}, this holds automatically
for the $B_1,B_3,A_2$ values, and holds at the $B_2$ value iff
\begin{equation}
\label{eq_size_pi_versus_pi_prime}
\pi_i(a_1)\le d_2 \iff \pi'_{\sigma_{a_1}(i)}(a_1)\le d_2.
\end{equation}
Hence, for each $a_1\in\integers$, such a $\sigma_{a_1}$ exists
iff \eqref{eq_size_pi_versus_pi_prime} holds,
and if so for each $a_1\in\integers$ we can choose
any permutation $\sigma_{a_1}$ on $[s]$ that maps
$$
\{ r \ | \ \pi_r(a_1) \le d_2 \} 
\quad\mbox{to}\quad
 \{ r' \ | \ \pi'_{r'}(a_1) \le d_2 \} 
$$
(and therefore $\sigma_{a_1}$ also maps the same with $\le d_2$ replaced
everywhere with $>d_2$).
\end{proof}

\subsection{Proof of Theorem~\ref{th_glue_i_j_j_prime_not_virtual} and
Examples}
\label{su_proof_th_glue_i_j_j_prime_not_virtual}

In this section we will prove
Theorem~\ref{th_glue_i_j_j_prime_not_virtual}, which follows almost
immediately 
from the lemma below (which adds a third equivalent condition
to Lemma~\ref{le_zipper_matchings}).

\begin{lemma}\label{le_zipper_condition_on_fs_versus_Ws}
Let $W,W'$ be perfect matchings, and $\pi,\pi'$ their associated
bijections.  Then for any $\mec d\in\integers^2$ the following are equivalent:
\begin{enumerate}
\item
there exists an isomorphism of $k$-diagrams
$\phi\from\cM_{W,\mec d}\to\cM_{W',\mec d}$
such that $\phi(A_1)$ is the identity; and
\item
\begin{equation}
\label{eq_zipper_condition_from_W_one_to_W_two} 
\forall a\in\integers_{\le d_1}, \quad
\pi(a)\le d_2
\iff
\pi'(a)\le d_2;
\end{equation}
and
\item
setting $f=\fraks W$ and $f'=\fraks W'$ we have
\begin{equation}
\label{eq_zipper_condition_f_one_to_f_two} 
\forall a\in\integers_{\le d_1}, \quad
f(a,d_2) 
=
f'(a,d_2).
\end{equation}
\end{enumerate}
\end{lemma}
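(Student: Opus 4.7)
The plan is to leverage Lemma~\ref{le_zipper_matchings}, which already gives (1)$\iff$(2), and reduce the lemma to showing (2)$\iff$(3). This is purely a statement about the two perfect matchings $W, W'$ and their associated bijections $\pi, \pi'$, with no further reference to $k$-diagrams.

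First I would use Theorem~\ref{th_perfect_matching}(1), which says that for a perfect matching $W$ with associated bijection $\pi$,
$$
f(a,d_2) = (\fraks W)(a,d_2) = \bigl|\{(a_1,a_2) \le (a,d_2) : W(a_1,a_2)=1\}\bigr| = \bigl|\{a_1 \le a : \pi(a_1) \le d_2\}\bigr|,
$$
and similarly for $f'$ and $\pi'$. Hence, for any $a \in \integers$,
$$
f(a,d_2) - f(a-1,d_2) =
\begin{cases} 1 & \text{if } \pi(a) \le d_2, \\ 0 & \text{otherwise,}\end{cases}
$$
and the analogous statement with $f',\pi'$ in place of $f,\pi$.

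Next, to see (3)$\implies$(2), I would take successive differences in $a$ in \eqref{eq_zipper_condition_f_one_to_f_two}: for every $a \le d_1$ the increment $f(a,d_2)-f(a-1,d_2)$ equals $f'(a,d_2)-f'(a-1,d_2)$ (the two values at $a$ agree by hypothesis, and at $a-1$ either by hypothesis if $a-1 \le d_1$, or, if $a-1$ is smaller than all $a'$ with $\pi(a') \le d_2$ or $\pi'(a') \le d_2$, because both values are zero). By the displayed formula this increment condition is exactly \eqref{eq_zipper_condition_from_W_one_to_W_two}. For (2)$\implies$(3), both $f$ and $f'$ are initially zero (since $W,W'$ are initially zero), so for $a$ sufficiently negative $f(a,d_2)=f'(a,d_2)=0$; then I would induct upward on $a$ up to $d_1$, using that \eqref{eq_zipper_condition_from_W_one_to_W_two} forces the increments to match at every step.

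There is no real obstacle here: the whole content is the bookkeeping identity relating $f(a,d_2)$ to the cardinality of $\{a_1\le a : \pi(a_1)\le d_2\}$, together with the observation that both partial sums start from $0$ at $a=-\infty$. I would just need to be careful to state that the equivalence (2)$\iff$(3) only quantifies over $a \le d_1$, and that for $a=-\infty$ the induction base case is guaranteed by $W,W'$ being initially zero.
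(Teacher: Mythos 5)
Your proposal is correct and follows essentially the same route as the paper: invoke Lemma~\ref{le_zipper_matchings} for the equivalence of (1) and (2), then reduce (2)$\iff$(3) to the observation that the increments $f(a,d_2)-f(a-1,d_2)$ record exactly whether $\pi(a)\le d_2$, and telescope from the initially-zero base case. One small redundancy: in your argument for (3)$\implies$(2), whenever $a\le d_1$ one automatically has $a-1\le d_1$, so the secondary case you introduce for $a-1$ never actually arises and can be dropped.
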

\begin{proof}
The equivalence of~(1) and~(2) is just 
Lemma~\ref{le_zipper_matchings}.

(2) $\implies$ (3): for any $a\in\integers$ we have
\begin{equation}\label{eq_diff_fraks_Ws_perfect_matching}
f(a,d_2)-f(a-1,d_2)
=
\sum_{a_2\le d_2} W(a,a_2) = 
\left\{ \begin{array}{ll}
1 & \mbox{if $\pi(a)\le d_2$, and} \\
0 & \mbox{otherwise,}
\end{array}\right.
\end{equation} 
and similarly 
\begin{equation}\label{eq_diff_fraks_Ws_perfect_matching_prime}
f'(a,d_2)-f'(a-1,d_2)
=
\left\{ \begin{array}{ll}
1 & \mbox{if $\pi'(a)\le d_2$, and} \\
0 & \mbox{otherwise,}
\end{array}\right.
\end{equation} 
Now $f(a,d_2)=f'(a,d_2)=0$ for $a$ sufficiently
small, and hence
\eqref{eq_zipper_condition_f_one_to_f_two} holds for $a\le a'$
for some $a'$.
Assuming (2), 
\eqref{eq_diff_fraks_Ws_perfect_matching} and
\eqref{eq_diff_fraks_Ws_perfect_matching_prime} imply that
for all $a\le d_1$ we have
$$
f(a,d_2)-f(a-1,d_2)
=
f'(a,d_2)-f'(a-1,d_2)
$$
and therefore
$$
f(a,d_2) - f'(a,d_2) = 
f(a-1,d_2)-f'(a-1,d_2).
$$
Hence we can use
induction on $a$ from $a'+1$ to $d_1$ to infer that
\eqref{eq_zipper_condition_from_W_one_to_W_two} holds for all $a\le d_1$.

(3) $\implies$ (2):
\eqref{eq_zipper_condition_from_W_one_to_W_two} implies that
for any $a$ we have
$$
f(a,d_2)-f(a-1,d_2)
=
f'(a,d_2)-f'(a-1,d_2)
$$
and hence
\eqref{eq_diff_fraks_Ws_perfect_matching} and 
\eqref{eq_diff_fraks_Ws_perfect_matching_prime}
imply that for 
each $a\le d_1$, $\pi(a)\le d_2$ iff $\pi'(a)\le d_2$.
\end{proof}

\begin{proof}[Proof of Theorem~\ref{th_glue_i_j_j_prime_not_virtual}]
The equation \eqref{eq_pasting_relation} implies
condition~(3) of 
Lemma~\ref{le_zipper_condition_on_fs_versus_Ws} with
$d_1=d_2=0$.
Hence we conclude condition~(1) of
Lemma~\ref{le_zipper_condition_on_fs_versus_Ws} in this case,
which is just
the assertion of Theorem~\ref{th_glue_i_j_j_prime_not_virtual}.
\end{proof}

We finish this subsection by showing how to generate non-trivial examples,
of Lemma~\ref{le_zipper_condition_on_fs_versus_Ws} and
Theorem~\ref{th_glue_i_j_j_prime_not_virtual}, 
and specify one such example explicitly;
this may serve to illustrate how this lemma and this theorem
work in practice.

\begin{example}
Let $f$ be as in Example~\ref{ex_weights_genus_one_examples}.
Then
$$
f_{1,2,\mec 0}(a_1,0) = f_{1,3,\mec 0}(a_1,0)
$$
for all $a_1\in\integers$.  Hence if $W,W'$ are the respective
weights of $f_{1,2,\mec 0},f_{1,3,\mec 0}$,
then $W\ne W'$, and, in more detail, $W,W'$ are both $4$-periodic, and
their associated bijections $\pi,\pi'$ satisfy
$$
\pi(0)=\pi'(0)=0,
\ 
\pi(1)=\pi'(1)=1,
$$
and
$$
\pi(-1)=\pi'(-2)=2,
\ 
\pi(-2)=\pi'(-1)=3.
$$
Hence $\pi'\ne\pi$,
but it is nonetheless true that
$$
\forall a_1\in\integers_{\le 0},
\quad\quad
\pi(a_1) \le 0
\iff
\pi'(a_1) \le 0
$$
(which moreover holds for all $a_1\in\integers$).
One can similarly generate examples of 
$f_{1,2,\mec d}$ and $f_{1,3,\mec d}$ for any $\mec d\in\integers^4$.
One can also generate examples as in
Example~\ref{ex_cycle_Baker_Norine} with $n\ge 5$.
\end{example}

\begin{example}
Let $G=K_n$ be the complete graph on $n$ vertices, and 
$f=1+r_{\rm BN}$ the Riemann function associated to the Baker-Norine
rank function on $G$.
Then Folinsbee and Friedman \cite{folinsbee_friedman_weights} show
that any two-variable restriction of $f$ has non-negative weight.
Hence one can generate further examples of
$f_{1,2,\mec d},f_{1,3,\mec d}$ for various $\mec d\in\integers^n$.
\end{example}

\subsection{Generalization of 
Lemma~\ref{le_zipper_condition_on_fs_versus_Ws}}

When $\cM_{f;i,j,\mec d}$ and
$\cM_{f;i,j',\mec d}$ are virtual $k$-diagrams,
we will need the following generalization
of Lemma~\ref{le_zipper_condition_on_fs_versus_Ws}.
(To prove Theorem~\ref{th_glue_i_j_j_prime},
we need only that~(4) implies~(1) below.)

\begin{lemma}\label{le_the_big_result_for_virtual_diagrams}
Let $W_1,\cdots,W_s$ and $W_1',\cdots,W_s'$ be two sequences of
perfect matchings $\integers^2\to\integers$.
Let $\mec d\in\integers^2$, and for each
$a_1\in\integers$ and $j\in[s]$, 
let $\mec e_{a_i,j}$ and $\mec e_{a_i,j}'$ denote, respectively, the
standard basis vector $\mec e_{a_1}\in k^{\integers}$ in, respectively
$\cM_{W_j,\mec d}(A_1)$ and $\cM_{W_j',\mec d}(A_1)$.
Let
$$
W = W_1 + \cdots + W_s, \quad
W' = W_1'+\cdots+ W_s' ,
$$
and
$$
\cM_{\mec d}=\cM_{W_1,\mec d}\oplus \cdots\oplus \cM_{W_s,\mec d}, 
\quad
\cM_{\mec d}'=\cM_{W_1',\mec d}\oplus \cdots\oplus \cM_{W_s',\mec d}.
$$
Then for any $\mec d\in\integers^2$, the following are equivalent:
\begin{enumerate}
\item 
there exists an isomorphism
$\phi\from\cM\to\cM'$ that for each $a_1\in\integers_{\le d_1}$,
$\phi(A_1)$
restricts to a bijection from $\mec e_{a_1,1},\ldots,\mec e_{a_1,s}$
to $\mec e'_{a_1,1},\ldots,\mec e'_{a_1,s}$
(and, moreover, in this case one can also take from the subset
of $\mec e_{a_1,r}$ with $\pi_{r}(a_1)\le d_2$
to those $\mec e'_{a_1,r}$ with $\pi'_{r}(a_1)\le d_2$ where $\pi_r,\pi'_r$
are the bijections associated to $W_r,W'_r$); 
\item
letting $\pi_r,\pi'_r$ for $r\in[s]$ denote
the bijections associated to $W_r,W'_r$,
$$
\forall a_1\in\integers_{\le d_1},\quad
\bigl| \{ r \ | \ \pi_r(a_1) \le d_2 \} \bigr|
=
\bigl| \{ r \ | \ \pi'_r(a_1) \le d_2 \} \bigr|;
$$
\item
$$
\forall a_1\in\integers_{\le d_1},\quad
\sum_{a_2\le d_2} W(a_1,a_2) = \sum_{a_2\le d_2} W'(a_1,a_2);
$$
\item
if $f=\fraks W$ and $f'=\fraks W'$, then
$$
\forall a_1\in\integers_{\le d_1},\quad
f(a_1,d_2) = f'(a_1,d_2).
$$
\end{enumerate}
Moreover, if the above conditions hold, 
then all maps $\phi(A_1)$
are determined as those that
for each $a_1\in\integers_{\le d_1}$
restricts to a bijection 
$$
\{ \mec e_{a_1,r} \ | \ \pi_r(a_1)\le d_2 \} 
\to
\{ \mec e'_{a_1,r} \ | \ \pi'_r(a_1)\le d_2 \} 
$$
and to a bijection
$$
\{ \mec e_{a_1,r} \ | \ \pi_r(a_1)\ge d_2+1 \} 
\to
\{ \mec e'_{a_1,r} \ | \ \pi'_r(a_1)\ge d_2+1 \}  .
$$
\end{lemma}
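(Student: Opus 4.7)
The plan is to establish the cycle (1)$\iff$(2)$\iff$(3)$\iff$(4), leveraging Lemma~\ref{le_zipper_multiple_matchings} for the hardest equivalence and then using elementary counting and induction for the rest. The heavy lifting is already in place; the main work is merely to translate between the ``matching'' language of (1)--(2) and the ``weight/function'' language of (3)--(4).

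First I would prove (1)$\iff$(2). Condition~(1) here is essentially the same as condition~(1) of Lemma~\ref{le_zipper_multiple_matchings}, and condition~(2) here is verbatim condition~(2) there, so that lemma gives the equivalence directly. The refinement in the parenthetical of~(1)---that $\phi(A_1)$ may be chosen to send the subset $\{\mec e_{a_1,r}:\pi_r(a_1)\le d_2\}$ bijectively to $\{\mec e'_{a_1,r}:\pi'_r(a_1)\le d_2\}$---is already visible from the proof of Lemma~\ref{le_zipper_multiple_matchings}, because the permutation $\sigma_{a_1}$ constructed there must satisfy $\pi_i(a_1)\le d_2\iff \pi'_{\sigma_{a_1}(i)}(a_1)\le d_2$; under~(2) one can pick $\sigma_{a_1}$ to be any bijection compatible with this partitioning of $[s]$ into ``small'' and ``large'' indices.

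Next, (2)$\iff$(3) is an immediate counting identity. Since each $W_r$ is a perfect matching with associated bijection $\pi_r$, we have $W_r(a_1,a_2)=1$ precisely when $a_2=\pi_r(a_1)$; summing over $r\in[s]$ gives
\[
\sum_{a_2\le d_2} W(a_1,a_2) = \bigl|\{r\in[s] : \pi_r(a_1)\le d_2\}\bigr|,
\]
and the same identity for $W'$ and $\pi'_r$. Thus for each fixed $a_1\le d_1$ the conditions in~(2) and~(3) are literally the same equality of integers.

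For (3)$\iff$(4), I would repeat the differencing argument used in the proof of Lemma~\ref{le_zipper_condition_on_fs_versus_Ws}. Namely, for every $a_1\in\integers$,
\[
f(a_1,d_2)-f(a_1-1,d_2) = \sum_{a_2\le d_2}W(a_1,a_2),
\]
and similarly for $f',W'$. Since $W,W'$ are initially zero, $f(a_1,d_2)=f'(a_1,d_2)=0$ for all $a_1$ sufficiently small, so the two statements become equivalent by induction on $a_1$ running up to $d_1$. The ``moreover'' clause then follows by inspecting the proof of Lemma~\ref{le_zipper_multiple_matchings}: after fixing such a partition-preserving $\sigma_{a_1}$ on $[s]$, every other component $\phi(B_1),\phi(B_3),\phi(A_2),\phi(B_2)$ is uniquely forced (exactly as in Figure~\ref{fi_W_one_W_two_zipper}), and the only constraint for $\phi(B_2)$ to exist is precisely the partition-preserving property, which is guaranteed by the equivalent conditions. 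The main ``obstacle'' is therefore not an obstacle at all: all the serious combinatorics sits in Lemma~\ref{le_zipper_multiple_matchings}, and the remaining arguments are bookkeeping, provided one is careful to articulate the refined form of the $A_1$-bijection appearing in~(1) and in the concluding sentence.
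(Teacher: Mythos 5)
Your proposal is correct and follows essentially the same route as the paper: (1)$\iff$(2) via Lemma~\ref{le_zipper_multiple_matchings}, (2)$\iff$(3) by the counting identity converting row-sums of $W$ into cardinalities of $\{r:\pi_r(a_1)\le d_2\}$, (3)$\iff$(4) by the same differencing/induction used for Lemma~\ref{le_zipper_condition_on_fs_versus_Ws}, and the ``moreover'' clause from the structure of $\sigma_{a_1}$ in the proof of Lemma~\ref{le_zipper_multiple_matchings}. No gaps.
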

Another way to think of this lemma is to
recall that if $W_1,\ldots,W_s$ are perfect matchings, and
$W=W_1+\cdots+W_s$, then
$\cM_{W_1,\mec d}\oplus\cdots\oplus\cM_{W_s,\mec d}\isom
\bec\cI^W_{\mec d}$.  
Hence this lemma shows that 
$\bec\cI^W_{\mec d}\isom \bec\cI^{W'}_{\mec d}$.
\begin{proof}
(1) $\iff$ (2): this is Lemma~\ref{le_zipper_multiple_matchings}.

(2) $\iff$ (3): this is by definition: if $W$ is any perfect matching,
then $\pi$ is the unique bijection such that 
$W(a,\pi(a))=1$ for all $a\in\integers$.
Hence, for any $d_2$ and $a_1$ we have
$$
\sum_{a_2\le d_2} W(a_1,a_2) = 
\bigl| \{ r \ | \ \pi_r(a_1) \le d_2 \} \bigr| .
$$
Similarly
$$
\sum_{a_2\le d_2} W'(a_1,a_2) =
\bigl| \{ r \ | \ \pi'_r(a_1) \le d_2 \} \bigr|.
$$
So if for some $a_1$, the number of values of $r$ such that
$\pi_r(a_1)\le d_2$ is the number of $r'$ with 
$\pi_{r'}(a_1)\le d_2$, then for this particular value of $a_1$,
$$
\sum_{a_2\le d_2} W(a_1,a_2) 
=\sum_{a_2\le d_2} W'(a_1,a_2)
$$
Now we apply this fact to all $a_1\le d_1$.

(3) $\iff$ (4): similar to 
\eqref{eq_diff_fraks_Ws_perfect_matching}, we have
for all $a\in\integers$,
\begin{equation}\label{eq_diff_fraks_Ws_s_fold_matching}
f(a,d_2)-f(a-1,d_2)
=
\sum_{a_2\le d_2} W(a,a_2) = 
\bigl| \{ r \ | \ \pi_r(a) \le d_2 \} \bigr|,
\end{equation} 
and similarly
\begin{equation}\label{eq_diff_fraks_Ws_s_fold_matching_prime}
f'(a,d_2)-f'(a-1,d_2)
=
\sum_{a_2\le d_2} W'(a,a_2) = 
\bigl| \{ r \ | \ \pi'_r(a) \le d_2 \} \bigr|.
\end{equation} 

The claim after (1)--(4) about $\phi(A_1)$ follows from 
\eqref{eq_size_pi_versus_pi_prime} (and the discussion below it).
(This claim is not needed in what follows, but serves to 
illustrate the way that $\phi(A_1)$---and
therefore all of $\phi$---is constructed.)
\end{proof}

\subsection{Proof of Theorem~\ref{th_glue_i_j_j_prime}}
\label{su_proof_th_glue_i_j_j_prime}

To prove Theorem~\ref{th_glue_i_j_j_prime},
we need only the part of
Lemma~\ref{le_the_big_result_for_virtual_diagrams} 
that asserts condition~(4) there implies condition~(1).



\begin{proof}[Proof of Theorem~\ref{th_glue_i_j_j_prime}]
Since $f_{i,j,\mec d}$ is a Riemann function $\integers^2\to\integers$,
we can write
$$
W=\frakm f_{i,j,\mec d} = W_1+\cdots +W_s - \tilde W_1 -\cdots-\tilde W_{s-1}
$$
where the $W_i$ and $\tilde W_i$ are perfect matchings, and we may similarly
write 
$$
W' = \frakm f_{i,j',\mec d} = W'_1+\cdots +W'_{s'} 
- \tilde W'_1 -\cdots-\tilde W'_{s'-1}
$$
To show that $\cM_{W,\mec 0}\isom \cM_{W',\mec 0}$ it suffices to show
that
\begin{equation}\label{eq_what_we_need_for_isom_of_virt_k_diagrams} 
\left( \bigoplus_{i=1}^s \cM_{W_i,\mec 0}  \right) \oplus
\left( \bigoplus_{i=1}^{s'-1} \cM_{\tilde W'_i,\mec 0}  \right) 
\isom
\left( \bigoplus_{i=1}^{s-1} \cM_{\tilde W_i,\mec 0}  \right) \oplus
\left( \bigoplus_{i=1}^{s'} \cM_{W'_i,\mec 0}  \right) 
\end{equation} 
Let 
$$
f_1 = \fraks( W_1+\cdots +W_s ), \quad
f_2 = \fraks( \tilde W_1+\cdots +\tilde W_{s-1} )
$$
so that $f_{i,j,\mec d}=f_1-f_2$ and similarly
$$
f'_1 = \fraks( W'_1+\cdots +W'_{s'} ), \quad
f'_2 = \fraks( \tilde W'_1+\cdots +\tilde W'_{s'-1} )
$$
and so $f_{i,j',\mec d}=f'_1-f'_2$.
By definition
\begin{equation}\label{eq_zipper_condition_for_f_i_j_j_prime}
f_{i,j,\mec d}(a_1,0) = f(\mec d + \mec e_i a_1)
=
f_{i,j',\mec d}(a_1,0)
\end{equation} 
for all $a_1\in\integers$.  It follows for all $a_1\in\integers$ we have
$$
(f_1-f_2)(a_1,0) = (f_1'-f_2')(a_1,0),
$$
and therefore 
$$
\forall a_1\in\integers,\quad
(f_1+f_2')(a_1,0) = (f_2+f_1')(a_1,0).
$$
Hence applying Lemma~\ref{le_the_big_result_for_virtual_diagrams} with 
$f,f'$ there replaced with $f_1+f_2'$ and $f_2+f_1'$ respectively
(and $W_1,\ldots,W_s$ there replaced with
$W_1,\ldots,W_s,\tilde W'_1,\ldots,\tilde W'_{s'-1}$ here, 
and $W'_1,\ldots,W'_s$ there with
$\tilde W_1,\ldots,\tilde W_{s-1},W'_1,\ldots,W'_{s'}$ here),
we have that condition~(4) of this lemma holds, and therefore
condition~(1) holds.
Therefore \eqref{eq_what_we_need_for_isom_of_virt_k_diagrams} holds,
and therefore
$$
\cM_{W,\mec 0}\isom \cM_{W',\mec 0}
$$
as virtual $k$-diagrams.
\end{proof}

\section{The First Duality Theorems}
\label{se_duality_first}

In this section we show that 
the $k$-diagram $\underline k_{/B_1,B_2}$
is a ``dualizing'' $k$-diagram, in that for any 
$k$-diagram $\cF$ we have that there is an isomorphism
$$
H^1(\cF)^* \to
\Hom(\cF,\underline k_{/B_1,B_2})
$$
which is ``natural'' or ``functorial'' in $\cF$.

We then prove that any perfect matching, $W$, and any
$\mec K,\mec L\in\integers^2$ with $\mec L=\mec K+\mec 1$,
for any $\mec d$ there is a isomorphism
\begin{equation}\label{eq_duality_first_summary}
H^1(\cM_{W,\mec d})^* \to H^0(\cM_{W^*_{\mec L},\mec K-\mec d}).
\end{equation} 
Since these are finite dimensional vector spaces, by
replacing $W$ and $\mec d$ with, respectively,
$W^*_\mec L$ and $\mec K-\mec d$, we moreover get isomorphisms
\begin{equation}\label{eq_duality_first_summary_better}
H^i(\cM_{W,\mec d})^* \to H^{1-i}(\cM_{W^*_{\mec L},\mec K-\mec d})
\quad\mbox{for $i=0,1$.}
\end{equation} 
We use this to infer that if $W$ is the weight of any
Riemann function $\integers^2\to\integers$, there are isomorphism
of virtual $k$-vector spaces
\begin{equation}\label{eq_duality_first_virtual}
H^i\bigl([\cM_{W,\mec d}]\bigr)^* \to 
H^{1-i}\bigl([\cM_{W^*_{\mec L},\mec K-\mec d}]\bigr)
\quad\mbox{for $i=0,1$,}
\end{equation} 
where $[\cM_{W,\mec d}]$ is the equivalence class of
virtual $k$-diagrams, and the dual of a virtual $k$-vector space
is appropriately defined.
We use this to infer that
for any
Riemann function $f\from\integers^n\to\integers$
and any $\mec d,\mec K\in\integers^n$ there is an isomorphism
\begin{equation}\label{eq_duality_first_general}
H^i\bigl([\cM_{f {\rm \; at \;}\mec d}]\bigr)^* \to
H^{1-i}\bigl([\cM_{f^\wedge_\mec K{\rm \; at \;}\mec K-\mec d}] \bigr)
\quad\mbox{for $i=0,1$.}
\end{equation} 

\subsection{Representing $H^1(\cF)^*$}

\begin{theorem}\label{th_representability_of_H_one_dual}
For any $k$-diagram $\cF$ there is a {\em natural} isomorphism
\begin{equation}\label{eq_H_one_star_representable}
H^1(\cF)^* \to 
\Hom(\cF,\underline k_{/B_1,B_2}),
\end{equation} 
where ``natural'' means ``functorial'' in the sense that
if $\mu\from\cF\to\cG$ is any morphism, then the natural
map $H^1(\cG)^*\to H^1(\cF)^*$ obtained by dualizing the
map $\mu$ induces from $H^1(\cF)\to H^1(\cG)$ is the same
as the map
$$
\Hom(\cG,\underline k_{/B_1,B_2}) \to
\Hom(\cF,\underline k_{/B_1,B_2}) .
$$
\end{theorem}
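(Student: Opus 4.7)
The plan is to prove the theorem by direct computation, unpacking both sides of \eqref{eq_H_one_star_representable} in terms of concrete data attached to $\cF$ and matching them up.

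First I would unpack the left-hand side. By Definition~\ref{de_diagram_k_vs}, $H^1(\cF)$ is the cokernel of $\cF(\partial)\from \cF(B)\to \cF(A_1)\oplus\cF(A_2)$, so an element of $H^1(\cF)^*$ is the same data as a pair $(\alpha_1,\alpha_2)$ with $\alpha_j\from\cF(A_j)\to k$ linear, annihilating the image of $\cF(\partial)$. Reading off the formula~\eqref{eq_formula_for_cF_partial} for $\cF(\partial)$, this annihilation is equivalent to the three conditions
\begin{equation*}
\alpha_1\circ\cF(\rho_{1,1})=0,\quad
\alpha_2\circ\cF(\rho_{2,2})=0,\quad
\alpha_1\circ\cF(\rho_{3,1})=\alpha_2\circ\cF(\rho_{3,2}).
\end{equation*}

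Next I would unpack the right-hand side in the same concrete way. A morphism $\phi\from\cF\to\underline{k}_{/B_1,B_2}$ is forced to have $\phi(B_1)=\phi(B_2)=0$ (since their targets are $0$), so it is given by the three linear maps $\phi(B_3),\phi(A_1),\phi(A_2)\from \cdot\to k$. Chasing the commutation conditions of Definition~\ref{de_morphism_k_diagrams} for the pairs $(i,j)\in\{(1,1),(2,2),(3,1),(3,2)\}$, using that the restriction maps of $\underline{k}_{/B_1,B_2}$ between its nonzero values $k$ are the identity, yields exactly
\begin{equation*}
\phi(A_1)\circ\cF(\rho_{1,1})=0,\quad
\phi(A_2)\circ\cF(\rho_{2,2})=0,\quad
\phi(B_3)=\phi(A_1)\circ\cF(\rho_{3,1})=\phi(A_2)\circ\cF(\rho_{3,2}).
\end{equation*}
At this point the isomorphism \eqref{eq_H_one_star_representable} presents itself: send $(\alpha_1,\alpha_2)\in H^1(\cF)^*$ to the morphism $\phi$ with $\phi(A_1)=\alpha_1$, $\phi(A_2)=\alpha_2$, and $\phi(B_3)=\alpha_1\circ\cF(\rho_{3,1})$ (the sign discrepancy between the two bullet lists disappears once one picks the correct sign convention, or equivalently, one may replace $\alpha_2$ by $-\alpha_2$; the particular sign is the same ``choice of basis'' flagged by the authors right after Definition~\ref{de_diagram_k_vs}). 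The map is plainly $k$-linear; injectivity and surjectivity are immediate from the matching sets of conditions above.

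Finally I would check naturality: given $\mu\from\cF\to\cG$, the induced map $H^1(\mu)\from H^1(\cF)\to H^1(\cG)$ is, at the level of representatives in $\cF(A)$ and $\cG(A)$, just $(\mu(A_1),\mu(A_2))$; so its dual sends $(\alpha_1,\alpha_2)$ on $\cG(A)$ to $(\alpha_1\circ\mu(A_1),\alpha_2\circ\mu(A_2))$ on $\cF(A)$. On the other side, precomposition by $\mu$ sends $\phi\from\cG\to\underline{k}_{/B_1,B_2}$ to the morphism with values $\phi(A_j)\circ\mu(A_j)$ at the $A_j$. Under the identification built above these two operations agree, giving the naturality square.

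The only genuinely non-routine part is the sign/convention bookkeeping, which is exactly what Section~\ref{se_duality_second} (referenced after Definition~\ref{de_diagram_k_vs}) warns about; I would fix one global sign convention for $\cF(\partial)$ at the outset and carry it consistently through the identification, so that the isomorphism is canonical up to this choice. Everything else is a finite diagram chase in vector spaces, so I expect no real obstacles beyond presenting the matchup cleanly.
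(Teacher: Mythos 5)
Your proposal is correct and follows essentially the same route as the paper's proof: both unpack $H^1(\cF)^*$ as functionals on $\cF(A_1)\oplus\cF(A_2)$ annihilating the image of $\cF(\partial)$ (the paper packages this via the isomorphism $(\coker\cL)^*\isom\ker(\cL^*)$, you work with the annihilator directly, which is the same thing), and both unpack $\Hom(\cF,\underline k_{/B_1,B_2})$ componentwise and match the two sets of compatibility conditions, with the same sign twist $\phi(A_2)\leftrightarrow -\alpha_2$ that you correctly flag. The naturality check you sketch is also the one the paper carries out.
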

In modern parlance, the functor
$\cF \to H^1(\cF)^*$ 
is represented
by the $k$-diagram $\underline k_{/B_1,B_2}$.
In Section~\ref{se_duality_second}
we will give a conceptually simple proof of this theorem
using standard techniques from homological algebra.
Here we content ourselves to prove this theorem 
directly, which is straightforward,
although a bit tedious.

For the proof below, 
note that if
$\cL\from U\to V$ is any linear map of (possibly infinite dimensional)
$k$-vector spaces, then if $V^*$ denotes the $k$-dual space of $V$,
i.e., the vector space of maps $V\to k$,
then the usual dual map $\cL^*\from V^*\to U^*$ is given by
\begin{equation}\label{eq_dual_map_explicitly}
\forall w\from V\to k, \quad
\cL^*(w) = w\circ \cL;
\end{equation} 
we claim that thre is an isomorphism
\begin{equation}\label{eq_cokernel_to_kernel_map}
\bigl( \coker(\cL) \bigr)^* \to \ker\bigl(\cL^*\bigr)
\end{equation} 
constructed as follows:
any $w\in \bigl( \coker(\cL) \bigr)^*$ is a map
$w$ from $\coker(\cL)=(V/{\rm Image}(\cL))$ to $k$,
and so the quotient map $V\to V/{\rm Image}(\cL)$ followed by $w$
gives a map $\tilde w\from V\to k$ 
(which takes ${\rm Image}(\cL)$ to $0$);
it easily follows that $\tilde w\circ\cL$,
which is an element of $U^*$, lies in the kernel of $\cL^*$.
This gives the map \eqref{eq_cokernel_to_kernel_map}.
Conversely, any $v\in \ker(\cL^*)$ 
is a map $V\to k$ taking ${\rm Image}(\cL)$ to $0$, and hence determines
a $w\from\coker(\cL)\to k$ such that
$v=\tilde w\circ\cL$;
hence \eqref{eq_cokernel_to_kernel_map} is an isomorphism.

We warn the reader that in 
Subsection~\ref{su_linear_algebra_subtlety} we will
see that if $\cL\from U\to V$ is any linear map, there is a natural
\begin{equation}\label{eq_cokernel_of_cL_star_to_star_of_kernel}
\coker(\cL^*) \to \bigl( \ker(\cL) \bigr)^*,
\end{equation} 
however, in contrast to \eqref{eq_cokernel_to_kernel_map},
(to the best of our knowledge)
one needs to assume the axiom
of choice (or Zorn's lemma) to ensure that this is an isomorphism.
Hence it is remarkable that \eqref{eq_cokernel_to_kernel_map} is an
isomorphism without assuming the axiom of choice.

\begin{proof}[Proof of Theorem~\ref{th_representability_of_H_one_dual}]
Consider a morphism $\phi\from\cF\to\underline k_{/B_1,B_2}$;
see Figure~\ref{fi_morphism_to_canonical_diagram}.
\begin{figure}
\begin{tikzpicture}[scale=0.50,font=\small]
\node (B1) at (0,4) {$\cF(B_1)$};
\node (B2) at (0,-4) {$\cF(B_2)$};
\node (B3) at (0,0) {$\cF(B_3)$};
\node (A1) at (8,2) {$\cF(A_1)$};
\node (A2) at (8,-2) {$\cF(A_2)$};
\draw [->] (B1) -- (A1) node [midway,above] {$\cF(\rho_{1,1})$} ;
\draw [->] (B2) -- (A2) node [midway,below] {$\cF(\rho_{2,2})$} ;
\draw [->] (B3) -- (A1) node [midway,above] {$\cF(\rho_{3,1})$} ;
\draw [->] (B3) -- (A2) node [midway,below] {$\cF(\rho_{3,2})$} ;
\node (BB1) at (15,4) {$0$};
\node (BB2) at (15,-4) {$0$};
\node (BB3) at (15,0) {$k$};
\node (AA1) at (23,2) {$k$};
\node (AA2) at (23,-2) {$k$};
\draw [->] (BB1) -- (AA1) node [midway,above] {$0$} ;
\draw [->] (BB2) -- (AA2) node [midway,below] {$0$} ;
\draw [->] (BB3) -- (AA1) node [midway,above] {${\rm Identity}$} ;
\draw [->] (BB3) -- (AA2) node [midway,below] {${\rm Identity}$} ;
\draw [->,ultra thick] (B1) -- (BB1) node [pos=0.75,above] {$\phi(B_1)$};
\draw [->,ultra thick] (B2) -- (BB2) node [pos=0.75,above] {$\phi(B_2)$};
\draw [->,ultra thick] (B3) -- (BB3) node [pos=0.75,above] {$\phi(B_3)$};
\draw [->,ultra thick] (A1) -- (AA1) node [pos=0.2,above] {$\phi(A_1)$};
\draw [->,ultra thick] (A2) -- (AA2) node [pos=0.2,above] {$\phi(A_2)$};
%
\end{tikzpicture}
\caption{A morphism $\phi\from\cF\to\underline k_{/B_1,B_2}$}
\label{fi_morphism_to_canonical_diagram}
\end{figure}
Hence we have
$$
\bigl( \phi(A_1),-\phi(A_2) \bigr) \in
\bigl( \cF(A_1) \bigr)^*
\oplus
\bigl( \cF(A_2) \bigr)^* 
$$
(when we view $\phi(A_i)\from\cF(A_i)\to k$ as an element of
the dual space of $\cF(A_i)$);
let us prove that, moreover,
\begin{equation}\label{eq_phi_As_in_the_kernel}
\bigl( \phi(A_1),-\phi(A_2) \bigr) \in 
\ker\Bigl( \bigl(\cF(\partial)\bigr)^*\Bigr).
\end{equation} 
To do so, in view of \eqref{eq_formula_for_cF_partial}, we have
$(\cF(\partial))^*$ is the map
$$
\bigl( \cF(\partial) \bigr)^* 
\from 
\bigl( \cF(A_1) \bigr)^* 
\oplus
\bigl( \cF(A_2) \bigr)^* 
\to
\bigl( \cF(B_1) \bigr)^* 
\oplus
\bigl( \cF(B_2) \bigr)^* 
\oplus
\bigl( \cF(B_3) \bigr)^*
$$
given as the map taking $(w_1,w_2)$ with $w_i\in(\cF(A_i))^*$ as
follows:
$$
(w_1,w_2)\mapsto 
\Bigl(
\ \bigl( \cF(\rho_{1,1})\bigr)^*(w_1) \ ,
\ \bigl( \cF(\rho_{2,2})\bigr)^*(w_2) \ ,
\ -\bigl( \cF(\rho_{3,1}) \bigr)^*(w_1)
-\bigl( \cF(\rho_{3,2}) \bigr)^*(w_2)
\ \Bigr),
$$
which, in view of \eqref{eq_dual_map_explicitly} we may write more simply
as the map
\begin{equation}\label{eq_dual_map_cF_partial}
(w_1,w_2)\mapsto 
\bigl(
\ w_1\cF(\rho_{1,1}) \ ,
\ w_2\cF(\rho_{2,2}) \ ,
\ -w_1 \cF(\rho_{3,1}) 
- w_2\cF(\rho_{3,2})
\ \Bigr)
\end{equation} 
where for brevity we have omitted the composition symbol $\circ$.

So set $w_1=\phi(A_1)$ and $w_2=-\phi(A_2)$.
Since $\underline k_{/B_1,B_2}$ has value $0$ at $B_1$, we have
$\phi(A_1)\cF(\rho_{1,1})$ must be the zero map.  Arguing similarly 
for $B_2$, we have
\begin{align}
\label{eq_first_comp_dual_partial}
 w_1\cF(\rho_{1,1}) & = 0,  \\
\label{eq_second_comp_dual_partial}
-w_2\cF(\rho_{2,2}) & = 0 .
\end{align} 
Similarly, since for $i=1,2$ the map from the $B_3$ value of 
$\underline k_{/B_1,B_2}$ to the $A_i$ is the identity
map (on $k$), we have
$$
\phi(B_3) = \phi(A_i)\cF(\phi_{3,i}),
$$
and hence
$$
w_1 \cF(\phi_{3,1}) 
=
-w_2 \cF(\phi_{3,2}) 
=
\phi(B_3) ,
$$
and hence
\begin{equation}\label{eq_third_comp_dual_partial}
w_1 \cF(\phi_{3,1}) 
+w_2 \cF(\phi_{3,2}) 
=
0.
\end{equation} 
In view of
\eqref{eq_first_comp_dual_partial}--\eqref{eq_third_comp_dual_partial}
we have
$(w_1,w_2)=(\phi(A_1),-\phi(A_2))$
is taken to $(0,0,0)$ under the map
\eqref{eq_dual_map_cF_partial}.  

Next we claim that, conversely, if $(w_1,w_2)\in\ker(((\cF(\partial))^*)$,
then there exists some $\phi\in\Hom(\cF,\underline k_{/B_1,B_2})$ such that
$\phi(A_1)=w_1$ and $\phi(A_2)=-w_2$.  Namely, this
determines $\phi$ at $A_1,A_2$; this forces the values of $\phi$ at
the $B_i$, namely we set $\phi(B_i)=0$ for $i=1,2$ and we set
$$
\phi(B_3) = w_1 \cF(\rho_{3,1}) = \phi(A_1)\cF(\rho_{3,1})
$$
(so that $\phi$ intertwines with the restrictions from $B_3$ to $A_1$
of $\cF$ and $\underline k_{/B_1,B_2}$).
Now we verify that $\phi$ is actually a morphism: for example,
since $(w_1,w_2)\in\ker(((\cF(\partial))^*)$, 
in view of \eqref{eq_dual_map_cF_partial} we have
$$
w_1  \cF(\rho_{3,1}) + w_2 \cF(\rho_{3,2}) = 0
$$
and it follows that
$$
\phi(B_3) = -w_2\cF(\rho_{3,2}) = \phi(A_2)\cF(\rho_{3,2}),
$$
and therefore $\phi$ intertwines with the restrictions from $B_3$ to $A_2$.
Similarly $\phi$ intertwines with the restrictions from $B_i$ to $A_i$
for $i=1,2$, i.e.,
$\phi(A_i)\cF(\rho_{i,i})=0$, in view of \eqref{eq_dual_map_cF_partial}.

It follows that the map
$$
\phi \mapsto 
\bigl( \phi(A_1), -\phi(A_2) \bigr)
$$
is gives an isomorphism 
\eqref{eq_H_one_star_and_zeroth_cohomology_dual}.

To check the desired functoriality, say that 
$\mu\from\cF\to\cG$ is any morphism.
The map
\eqref{eq_H_one_star_and_zeroth_cohomology_dual} with $\cG$ replacing
$\cF$ is given by
associating to $\phi\in\Hom(\cG,k_{/B_1,B_2})$ the element
\begin{equation}\label{eq_element_of_G_partial_star}
\bigl( \phi(A_1),-\phi(A_2) \bigr) \in 
\ker\Bigl( \bigl(\cG(\partial)\bigr)^*\Bigr).
\end{equation} 
Then to $\phi$ we associate the element
$$
\phi\circ\mu \in \Hom(\cF,k_{/B_1,B_2}),
$$
and therefore to $\phi\circ\mu$ we associate the element
\begin{equation}\label{eq_mu_acts_on_element_of_G_partial_star}
\bigl( \phi\circ\mu(A_1),-\phi\circ\mu(A_2) \bigr) \in 
\ker\Bigl( \bigl(\cF(\partial)\bigr)^*\Bigr).
\end{equation} 
But to an element of $H^1(\cG)$, namely an element
\eqref{eq_element_of_G_partial_star}, the action of $\mu$
taking $H^1(\cG)$ to $H^1(\cF)$ is precisely $\mu$ applied to
each element, which again gives
\eqref{eq_mu_acts_on_element_of_G_partial_star}.
Hence the isomorphism 
\eqref{eq_H_one_star_and_zeroth_cohomology_dual} is functorial
(or natural).
\end{proof}

\subsection{A Duality Theorem for Perfect Matchings}

\begin{theorem}\label{th_first_duality_cM}
Let $W\from\integers^2\to\integers$ be a perfect matching.  Then
for any $\mec K\in\integers^2$ and $\mec L=\mec K+\mec 1$,
for any $\mec d$
there is an isomorphism
\begin{equation}\label{eq_isomorphism_for_perfect_matchings}
\Hom\bigl(\cM_{W,\mec d},\underline k_{/B_1,B_2} \bigr) \to
\Hom\bigl(\underline k,\cM_{W^*_{\mec L},\mec K-\mec d}\bigr)
\end{equation} 
induced by decomposing the above diagrams into indicator diagrams
and taking the isomorphism
\begin{equation}\label{eq_identify_two_hom_sets}
\Hom(\underline k_{/B_1,B_2},\underline k_{/B_1,B_2})
\to
\Hom(\underline k,\underline k)
\end{equation} 
which takes the identity morphism of $k_{/B_1,B_2}$ to the identity
morphism of $\underline k$
(both $\Hom$ sets are isomorphic to $k$, in view 
of Example~\ref{ex_morphisms_all_basic_four}).
Moreover, \eqref{eq_identify_two_hom_sets} gives us an isomorphism
\begin{equation}\label{eq_H_one_star_and_zeroth_cohomology_dual}
H^1(\cM_{W,\mec d})^* \to
\Hom(\underline k,\cM_{W^*_{\mec L},\mec K-\mec d}) ,
\end{equation} 
which gives us isomorphisms
\begin{equation}\label{eq_duality_result_perfect_matchings}
H^i(\cM_{W,\mec d})^* \to
H^{1-i}(\cM_{W^*_{\mec L},\mec K-\mec d})
\quad\mbox{for $i=0,1.$}
\end{equation} 
\end{theorem}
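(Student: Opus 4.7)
The plan is to combine three ingredients already established in the paper: (i) the direct sum decomposition $\cM_{W,\mec d}\isom\bec\cI_{\mec d}^{\oplus W}=\bigoplus_{\mec a:\,W(\mec a)=1}\indicateDgeA$ from Proposition~\ref{pr_cM_W_decomp}; (ii) the complete classification of morphism spaces between basic $k$-diagrams in Example~\ref{ex_morphisms_all_basic_four}; and (iii) the representability result Theorem~\ref{th_representability_of_H_one_dual}. Since $W$ is a perfect matching, Theorem~\ref{th_perfect_matching} tells us that all Betti numbers involved are finite and all $\Hom$-spaces we form decompose as \emph{finite} direct sums of copies of $k$, so there is no product-versus-coproduct subtlety to worry about.

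For the first assertion I would first compute the left-hand side of \eqref{eq_isomorphism_for_perfect_matchings} by passing the direct sum out of $\Hom$, obtaining
\[
\Hom\bigl(\cM_{W,\mec d},\underline k_{/B_1,B_2}\bigr)\ \isom\ \bigoplus_{W(\mec a)=1}\Hom\bigl(\indicateDgeA,\underline k_{/B_1,B_2}\bigr);
\]
by Example~\ref{ex_morphisms_all_basic_four}, a summand is nonzero (and then canonically $k$ via the identity morphism) exactly when $\indicateDgeA=\underline k_{/B_1,B_2}$, i.e.\ when $\mec d+\mec 1\le\mec a$. The same maneuver on the right gives
\[
\Hom\bigl(\underline k,\cM_{W^*_{\mec L},\mec K-\mec d}\bigr)\ \isom\ \bigoplus_{W^*_\mec L(\mec b)=1}\Hom\bigl(\underline k,\cI_{(\mec K-\mec d)\ge\mec b}\bigr),
\]
and a summand is nonzero precisely when $\cI_{(\mec K-\mec d)\ge\mec b}=\underline k$, i.e.\ $\mec b\le \mec K-\mec d$. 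The key bookkeeping step is to change variables via $\mec b=\mec L-\mec a$: because $\mec L=\mec K+\mec 1$, this matches the support conditions ($W^*_\mec L(\mec b)=W(\mec a)$) and transforms $\mec b\le\mec K-\mec d$ into $\mec a\ge \mec d+\mec 1$, so both sides are canonically direct sums indexed by the \emph{same} finite set $\{\mec a:W(\mec a)=1,\ \mec a\ge\mec d+\mec 1\}$. The isomorphism \eqref{eq_isomorphism_for_perfect_matchings} is then defined summand-by-summand through the identification \eqref{eq_identify_two_hom_sets}.

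For \eqref{eq_H_one_star_and_zeroth_cohomology_dual}, I would simply precompose with the representability isomorphism $H^1(\cM_{W,\mec d})^*\to\Hom(\cM_{W,\mec d},\underline k_{/B_1,B_2})$ given by Theorem~\ref{th_representability_of_H_one_dual}; combined with the standard identification $\Hom(\underline k,\cG)\isom H^0(\cG)$ recalled in the text, this delivers the $i=1$ case of \eqref{eq_duality_result_perfect_matchings}. For $i=0$, I would apply the $i=1$ case to the perfect matching $W^*_\mec L$ at degree $\mec K-\mec d$ and use $(W^*_\mec L)^*_\mec L=W$ together with $\mec K-(\mec K-\mec d)=\mec d$ to obtain an isomorphism $H^1(\cM_{W^*_\mec L,\mec K-\mec d})^*\to H^0(\cM_{W,\mec d})$; finite-dimensionality (Theorem~\ref{th_perfect_matching}) then lets me take $k$-duals to get the desired map $H^0(\cM_{W,\mec d})^*\to H^1(\cM_{W^*_\mec L,\mec K-\mec d})$.

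The only genuinely delicate point is the index-matching step, where one must verify that the correspondence $\mec a\leftrightarrow\mec L-\mec a$ really does biject the nonzero summands on the two sides \emph{and} that the chosen identification of $k$'s (via the identity-to-identity rule) is compatible with all of the restriction maps hidden inside the indicator diagrams. This is essentially a direct calculation using Definition~\ref{de_indicator_diagrams} and costs no more than carefully unpacking the four basic diagrams; I do not expect any conceptual difficulty beyond keeping the shift by $\mec 1$ straight.
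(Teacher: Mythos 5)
Your proposal is correct and follows essentially the same route as the paper's own proof: decompose $\cM_{W,\mec d}$ into indicator summands via Proposition~\ref{pr_cM_W_decomp}, identify the nonvanishing $\Hom$-summands using Example~\ref{ex_morphisms_all_basic_four}, match indices via $\mec a\mapsto\mec L-\mec a$, compose with Theorem~\ref{th_representability_of_H_one_dual} for \eqref{eq_H_one_star_and_zeroth_cohomology_dual}, and obtain the $i=0$ case from the $i=1$ case by the substitution $(W,\mec d)\leftrightarrow(W^*_\mec L,\mec K-\mec d)$ together with finite-dimensionality. Your explicit remark that finiteness (Theorem~\ref{th_perfect_matching}) removes any product-versus-coproduct concern is a small but worthwhile clarification of a step the paper handles implicitly when it passes from $\prod$ to $\bigoplus$.
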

Before giving the proof, let us make two remarks regarding this theorem.

We remark that if $f=\fraks W$ in the above theorem, then
Theorem~\ref{th_perfect_matching} and
\eqref{eq_generalized_riemann_roch} imply that
$$
b^1(\cM_{W,\mec d}) = f^\wedge_\mec K(\mec K-\mec d)
= b^0(\cM_{W^*_{\mec L},\mec K-\mec d}),
$$
applying this equation with $W,\mec d$ respectively replaced with
$W^*_\mec L$ and $\mec K-\mec d$, we therefore get
\begin{equation}\label{eq_betti_number_duality_perfect_matchings}
b^i(\cM_{W,\mec d}) = b^{1-i}(\cM_{W^*_{\mec L},\mec K-\mec d})
\quad\mbox{for $i=0,1$.}
\end{equation} 
Hence \eqref{eq_duality_result_perfect_matchings} strengthens
this formula, by giving the isomorphism of vector spaces
\eqref{eq_duality_result_perfect_matchings} that upon taking dimensions
implies \eqref{eq_betti_number_duality_perfect_matchings}.


\begin{proof}
We remark that for any $\mec a,\mec d\in\integers^2$,
$\indicateDgeA$ is one of our four basic diagrams, and
equals $\underline k_{/B_1,B_2}$ iff $\mec a\ge \mec d+\mec 1$
(recall Definition~\ref{de_indicator_diagrams}); hence
$$
\Hom(\indicateDgeA ,\underline k_{/B_1,B_2})
$$
is $0$ unless $\mec a\ge \mec d+\mec 1$, in which case it equals
$$
\Hom(\underline k_{/B_1,B_2}, \underline k_{/B_1,B_2}).
$$
(recall Example~\ref{ex_morphisms_of_basic_to_dualizing}).
Since
$$
\cM_{W,\mec d} = \bigoplus_{W(\mec a)=1} \indicateDgeA ,
$$
we have
$$
\Hom(\cM_{W,\mec d},\underline k_{/B_1,B_2}) \isom
\prod_{W(\mec a)=1} \Hom( \indicateDgeA ,\underline k_{/B_1,B_2})
=
\bigoplus_{W(\mec a)=1,\ \mec a\ge\mec d+\mec 1}
\Hom(\underline k_{/B_1,B_2},\underline k_{/B_1,B_2}).
$$

On the other hand, we similarly have
$$
\Hom(\underline k, \indicateDgeA )
$$
is $0$ unless $\mec a\le\mec d$, in which case it equals
$\Hom(\underline k,\underline k)$.  Hence
\begin{align*}
\Hom(\underline k,\cM_{W^*_\mec L,\mec K-\mec d})
=&
\bigoplus_{W^*_{\mec L}(\mec a)=1,\ \mec a\le\mec K-\mec d} 
\Hom(\underline k,\underline k) \\
=&
\bigoplus_{W(\mec L-\mec a)=1,\ \mec a\le\mec K-\mec d}
\Hom(\underline k,\underline k)
\end{align*}
which, upon substituting $\mec a'=\mec L-\mec a$ in the sum,
$$
=\bigoplus_{W(\mec a')=1,\ \mec L-\mec a'\le\mec K-\mec d}
\Hom(\underline k,\underline k)
=\bigoplus_{W(\mec a')=1,\ \mec a'\ge\mec 1+\mec d}
\Hom(\underline k,\underline k) .
$$

Hence in \eqref{eq_isomorphism_for_perfect_matchings}, the 
left-hand-side has one copy of
$\Hom(\underline k_{/B_1,B_2},\underline k_{/B_1,B_2})$ for
each $\mec a$ with $W(\mec a)=1$ and $\mec a\ge \mec d+\mec 1$,
and the right-hand-side one copy of
$\Hom(\underline k,\underline k)$ for each such $\mec a$.
Hence we have an isomorphism
\eqref{eq_isomorphism_for_perfect_matchings}.

Composing the isomorphism in
Theorem~\ref{th_representability_of_H_one_dual} with 
\eqref{eq_isomorphism_for_perfect_matchings} gives us an
isomorphism
\eqref{eq_H_one_star_and_zeroth_cohomology_dual},
which also
proves the $i=1$ case of
\eqref{eq_duality_result_perfect_matchings}.
Since these are finite dimensional vector spaces, this also gives
an isomorphism
$$
H^0(\cM_{W^*_{\mec L},\mec K-\mec d})^* \to
H^1(\cM_{W,\mec d}) .
$$
If we apply this isomorphism when we replace all occurrences of
$W,\mec d$ with, respectively,
$W^*_{\mec L},\mec K-\mec d$ (so that $W^*_{\mec L}$ is therefore 
replaced with $W$, and $\mec K-\mec d$ with $\mec d$), we have
an isomorphism
$$
H^0(\cM_{W,\mec d})^* \to
H^1(\cM_{W^*_{\mec L},\mec K-\mec d}).
$$
This proves the $i=0$ case of
\eqref{eq_duality_result_perfect_matchings}.
Hence both the $i=0$ and $i=1$ cases of
\eqref{eq_duality_result_perfect_matchings} hold.
\end{proof}

\subsection{The Dual of a Virtual $k$-Vector Space}

In this subsection we define the dual of a virtual $k$-vector space
and make some brief remarks about this definition.
In Appendix~\ref{ap_dual_virtual_vec_sp} we discuss some more 
foundational aspects about virtual vector spaces that partially
justify our definition.

In Theorem~\ref{th_first_duality_cM}, we have an isomorphism
from the dual of $H^1(\cM_{W,\mec d})$ to
$H^0(\cM_{W_\mec L^*,\mec K-\mec d})$.
In our more general duality theorems 
(namely Theorem~\ref{th_duality_virtual_two_dimensional}
and Theorem~\ref{th_n_variate_duality} below),
$H^1(\cM_{W,\mec d})$ and 
will be 
a virtual $k$-vector space
$(V_1,V_2)$, $H^0(\cM_{W,\mec d})$ a virtual vector space $(V_3,V_4)$;
Theorem~\ref{th_first_duality_cM} then provides an isomorphism
$V_1^*\to V_3$ and $V_2^*\to V_4$.  For this reason, we will be pretty much
forced to make the following definition.

\begin{definition}\label{de_dual_virtual_vector_space}
Let $(V_1,V_2)$ be a virtual $k$-vector space (which, by convention, means
$V_1,V_2$ are finite dimensional).
We define the {\em dual} of $(V_1,V_2)$ to be the virtual $k$-vector space
$(V_1^*,V_2^*)$.
\end{definition}

\begin{proposition}
Let $(V_1,V_2)\sim (V_3,V_4)$ be equivalent (finite dimensional)
virtual $k$-vector spaces.
Then any equivalence, i.e., any isomorphism 
$$
\mu\from V_0\oplus V_1\oplus V_4 \to V_0\oplus V_2 \oplus V_3
$$
for some $V_0$, gives rise to an equivalence
$(V_1,V_2)^*\sim (V_3,V_4)^*$.
\end{proposition}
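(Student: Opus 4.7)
The plan is to directly dualize the given isomorphism $\mu$. Recall that $k$-linear dualization is a contravariant functor on $k$-vector spaces that converts finite direct sums canonically into direct sums of duals, i.e., for any $k$-vector spaces $U_1,\ldots,U_r$ there is a natural isomorphism $(U_1\oplus\cdots\oplus U_r)^*\isom U_1^*\oplus\cdots\oplus U_r^*$, and this identification requires no dimensionality hypothesis. Applying this to the isomorphism
$$
\mu\from V_0\oplus V_1\oplus V_4 \to V_0\oplus V_2\oplus V_3
$$
yields an isomorphism
$$
\mu^*\from V_0^*\oplus V_2^*\oplus V_3^* \to V_0^*\oplus V_1^*\oplus V_4^*.
$$

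Next, since $\mu$ is an isomorphism of $k$-vector spaces, so is $\mu^*$, and hence it admits an inverse
$$
(\mu^*)^{-1}\from V_0^*\oplus V_1^*\oplus V_4^* \to V_0^*\oplus V_2^*\oplus V_3^*.
$$
Setting $V_0'=V_0^*$, this is precisely the data required for an equivalence of virtual $k$-vector spaces $(V_1^*,V_2^*)\sim(V_3^*,V_4^*)$, which by Definition~\ref{de_dual_virtual_vector_space} is the same as $(V_1,V_2)^*\sim(V_3,V_4)^*$. The only point worth emphasizing is that $V_0$ (hence $V_0^*$) need not be finite dimensional, but this causes no trouble since the definition of the equivalence relation $\sim$ permits an arbitrary $k$-vector space as the "padding" factor. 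There is no substantive obstacle here; the proposition is essentially the observation that dualization is functorial and sends isomorphisms to isomorphisms, once one verifies the purely formal bookkeeping that the dualized isomorphism has the correct shape to witness equivalence of the dual pairs.
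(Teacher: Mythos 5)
Your approach is identical to the paper's: dualize $\mu$, use the natural isomorphism $(U\oplus U')^*\isom U^*\oplus U'^*$, and take $V_0^*$ as the new padding; the paper's proof is this argument verbatim. However, your closing remark---that ``$V_0$ need not be finite dimensional'' and that ``the definition of the equivalence relation $\sim$ permits an arbitrary $k$-vector space as the padding factor''---is backwards. In the Grothendieck group of \emph{finite-dimensional} $k$-vector spaces, the padding $V_0$ must itself be finite dimensional; otherwise the relation trivializes (the paper points out precisely this just before Convention~\ref{co_virtual_k_vector_space}, noting that $k\oplus k^{\naturals}\isom k^{\naturals}$ would force $k\sim 0$). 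The paper's one-line justification --- ``since all the $V_i$ are finite dimensional, so are all the $V_i^*$'' --- is there exactly to guarantee that $V_0^*$ is a legitimate padding. Your conclusion is unaffected because the hypothesis in fact forces $V_0$ to be finite dimensional, but the reason you give for not needing to worry about it is wrong.
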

\begin{proof}
Since $\mu$ is an isomorphism, the dual map of $\mu$,
$$
\mu^*\from (V_0\oplus V_2 \oplus V_3)^* \to
(V_0\oplus V_1\oplus V_4)^*
$$
yields an isomorphism
$$
V_0^*\oplus V_2^*\oplus V_3^* \to V_0^*\oplus V_1^* \oplus V_4^* ;
$$
since all the $V_i$ are finite dimensional, so are all the $V_i^*$.
\end{proof}

We also remark that in
Theorem~\ref{th_duality_virtual_two_dimensional} 
one could avoid references to the dual space of a virtual vector space
provided that one has a good notion of ``pairing'' and ``perfect pairing''
of virtual vector spaces.
See Appendix~\ref{ap_dual_virtual_vec_sp} for further remarks
about pairings, as well as other remarks that motivate
Definition~\ref{de_dual_virtual_vector_space} as possibly
(i.e., in future work) fitting into a broader
notion of ``morphisms'' of virtual vector spaces,
rather than just being a definition of necessity.

\subsection{Duality for Weights of Riemann Functions $\integers^2\to\integers$}
\label{su_duality_general_riemann_functions}

\begin{theorem}\label{th_duality_virtual_two_dimensional}
Let $W$ be the weight of an arbitrary Riemann function 
$\integers^2\to\integers$.  
Then for any $\mec d,\mec K,\mec L\in\integers^2$ there is an
isomorphism of virtual $k$-vector spaces
\begin{equation}\label{eq_duality_two_dim_weight_functions}
H^i([\cM_{W,\mec d}])^* \to
H^{1-i}([\cM_{W^*_{\mec L},\mec K-\mec d}])
\quad\mbox{for $i=0,1$.}
\end{equation} 
\end{theorem}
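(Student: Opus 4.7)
The strategy is to reduce everything to the perfect matching case already handled by Theorem~\ref{th_first_duality_cM}. First, I would invoke Lemma~\ref{le_row_col_sums_weight_as_perfect} to write
$$
W = (W_1+\cdots+W_s) - (\tilde W_1+\cdots+\tilde W_{s-1})
$$
as a difference of sums of perfect matchings. The key preliminary observation is that the $\mec L$-dual of a perfect matching is again a perfect matching: if $\pi'\from\integers\to\integers$ is the bijection associated to a perfect matching $W'$, then $(W')^*_\mec L$ is the perfect matching associated to $a\mapsto L_2-\pi'(L_1-a)$, which is a bijection whose sum with $a$ remains bounded. Applying $(\cdot)^*_\mec L$ to the displayed decomposition of $W$ therefore gives a valid decomposition
$$
W^*_\mec L = \bigl((W_1)^*_\mec L+\cdots+(W_s)^*_\mec L\bigr) - \bigl((\tilde W_1)^*_\mec L+\cdots+(\tilde W_{s-1})^*_\mec L\bigr)
$$
as a difference of sums of perfect matchings.

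Next, by the definition of $[\cM_{W,\mec d}]$ (and Proposition~\ref{pr_cM_W_poss_neg_single_virt_class}, which guarantees independence of the decomposition up to equivalence), these two decompositions yield
$$
[\cM_{W,\mec d}] = \Bigl[\bigoplus_{i=1}^s\cM_{W_i,\mec d}\,\ominus\,\bigoplus_{i=1}^{s-1}\cM_{\tilde W_i,\mec d}\Bigr],
$$
$$
[\cM_{W^*_\mec L,\mec K-\mec d}] = \Bigl[\bigoplus_{i=1}^s\cM_{(W_i)^*_\mec L,\mec K-\mec d}\,\ominus\,\bigoplus_{i=1}^{s-1}\cM_{(\tilde W_i)^*_\mec L,\mec K-\mec d}\Bigr].
$$
Since cohomology commutes with direct sums and with the formal-difference operation $\ominus$ (recall \eqref{eq_cohomology_groups_virtual_Fredholm_map}), applying $H^i$ and then dualizing via Definition~\ref{de_dual_virtual_vector_space} gives
$$
H^i([\cM_{W,\mec d}])^* \,=\, \Bigl(\bigoplus_i H^i(\cM_{W_i,\mec d})\Bigr)^{\!*}\ominus\Bigl(\bigoplus_i H^i(\cM_{\tilde W_i,\mec d})\Bigr)^{\!*},
$$
and analogously for $H^{1-i}([\cM_{W^*_\mec L,\mec K-\mec d}])$, summed over the corresponding dual matchings.

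Finally, I would apply Theorem~\ref{th_first_duality_cM} summand-by-summand: for each individual perfect matching $W_i$ (respectively $\tilde W_i$), the theorem supplies an isomorphism
$$
H^i(\cM_{W_i,\mec d})^* \to H^{1-i}(\cM_{(W_i)^*_\mec L,\mec K-\mec d}),
$$
and similarly for $\tilde W_i$. Taking the direct sum of the isomorphisms indexed by $i=1,\ldots,s$, and the direct sum of those indexed by $i=1,\ldots,s-1$, and assembling these as a morphism of virtual $k$-vector spaces $(V_1,V_2)\to(V_3,V_4)$, produces the desired isomorphism
$$
H^i([\cM_{W,\mec d}])^* \to H^{1-i}([\cM_{W^*_\mec L,\mec K-\mec d}]).
$$
I expect no serious obstacle here; the only mild subtlety is checking that everything is consistent across equivalent choices of the decomposition \eqref{eq_W_as_W_i_minus_tilde_W_i}, but because we only need to exhibit \emph{some} isomorphism between two already-well-defined equivalence classes of virtual $k$-vector spaces, it suffices to fix one decomposition and apply Theorem~\ref{th_first_duality_cM} summand-by-summand. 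The hardest conceptual point was already handled earlier, namely the construction of $[\cM_{W,\mec d}]$ and the proof of Theorem~\ref{th_first_duality_cM}.
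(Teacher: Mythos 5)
Your proposal is correct and takes essentially the same route as the paper: decompose $W$ (via Lemma~\ref{le_row_col_sums_weight_as_perfect}) into a difference of sums of perfect matchings, observe that applying $(\cdot)^*_{\mec L}$ yields the corresponding decomposition of $W^*_{\mec L}$, then apply Theorem~\ref{th_first_duality_cM} summand-by-summand and assemble. The one small thing you make explicit that the paper merely assumes is that $(\cdot)^*_{\mec L}$ carries perfect matchings to perfect matchings, with associated bijection $a\mapsto L_2-\pi'(L_1-a)$; otherwise the arguments coincide (the paper handles $i=1$ first and derives $i=0$ by swapping $W,\mec d$ for $W^*_{\mec L},\mec K-\mec d$, whereas you invoke both cases of \eqref{eq_duality_result_perfect_matchings} directly, a cosmetic difference).
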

The proof of this theorem is mostly a matter of unwinding
the various definitions
and applying Theorem~\ref{th_first_duality_cM}
\begin{proof}
Write $W$ as a difference of sums of
perfect matchings
\begin{equation}\label{eq_W_as_difference_of_perf_matchings_co_virt}
W = (W_1 + \cdots + W_s) - (\tilde W_1+\cdots+\tilde W_{s-1}),
\end{equation} 
whereupon $H^i([\cM_{W,\mec d}])$ refers to the kernel and cokernel
of the virtual Fredholm map
$$
\bigl( \cM_{W_1,\mec d}(\partial)\oplus\cdots\oplus 
\cM_{W_s,\mec d}(\partial) \bigr)
\ominus
\bigl( \cM_{\tilde W_1,\mec d}(\partial)\oplus\cdots\oplus 
\cM_{\tilde W_{s-1},\mec d}(\partial) \bigr) .
$$
In view of
\eqref{eq_W_as_difference_of_perf_matchings_co_virt} we have
$$
W^*_{\mec L} = 
\bigl( (W_1)^*_{\mec L} + \cdots + (W_s)^*_{\mec L}) - 
\bigl( (\tilde W_1) ^*_\mec L+\cdots+\tilde (W_{s-1})^*_\mec L \bigr),
$$
and for any $\mec d\in\integers^2$ we have that
$H^i(\cM_{W^*_{\mec L},\mec K-\mec d})$ refers to the equivalence class
of the kernel and cokernel of
$$
\bigl( \cM_{(W_1)^*_{\mec L},\mec K-\mec d}(\partial)\oplus\cdots\oplus 
\cM_{(W_s)^*_{\mec L},\mec K-\mec d}(\partial) \bigr)
\ominus
\bigl( \cM_{(\tilde W_1),\mec K-\mec d}(\partial)\oplus\cdots\oplus 
\cM_{(\tilde W_{s-1})^*_{\mec L},\mec K-\mec d}(\partial) \bigr) .
$$
In view of \eqref{eq_duality_result_perfect_matchings} we have
for each $\mec d\in\integers^2$,
$$
H^1(\cM_{W_i,\mec d})^* \to
H^0(\cM_{(W_i)^*_{\mec L},\mec K-\mec d})
$$
for all $i$, i.e., an isomorphism
$$
\Bigl( \coker\bigl( \cM_{W_i,\mec d}(\partial) \bigr) \Bigr) ^* 
\to
\ker\bigl( \cM_{(W_i)^*_{\mec L},\mec K-\mec d}(\partial) \bigr) ,
$$
and similarly with $\widetilde W_i$ replacing $W_i$.
Hence, in view of 
Definition~\ref{de_dual_virtual_vector_space}, we get an
isomorphism of virtual vector spaces
$$
H^1(\cM_{W,\mec d})^* \to 
H^0(\cM_{W^*_{\mec L},\mec K-\mec d}) .
$$
This proves the $i=1$ case of 
\eqref{eq_duality_two_dim_weight_functions}.
Replacing $W,\mec d$ everywhere in the above equation with
$W^*_\mec L,\mec K-\mec d$ and taking duals gives the
$i=0$ case of
\eqref{eq_duality_two_dim_weight_functions}.
Hence both the $i=0$ and $i=1$ cases of
\eqref{eq_duality_two_dim_weight_functions}  hold.
\end{proof}

\subsection{Duality for All Riemann Functions}

In this subsection we establish
\eqref{eq_duality_first_general}
for an arbitrary Riemann 
function $f\from\integers^n\to\integers$. 
The first lemma below is the essential insight; roughly speaking,
it says that the two-variable restriction of a generalized
Riemann-Roch formula yields a generalized Riemann-Roch
formula for the two-variable restriction.
In other words, if
$f\from\integers^n\to\integers$ is a Riemann function, then
for all $\mec d,\mec K\in\integers^n$ we have
\begin{equation}\label{eq_generalized_rr_duality_all_intro}
f(\mec d)-f^\wedge_\mec K(\mec K-\mec d)=\deg(\mec d)+C_f
\end{equation}
where $C_f$ is the offset of $f$.  Now consider
$\mec K$ and $d_3,\ldots,d_n$ to be fixed, and $(d_1,d_2)$ varying over
all of $\integers^2$; then
$$
g(d_1,d_2)=f(\mec d)
$$
is a Riemann function of two variables $d_1,d_2$, and $g$ therefore satisfies
\begin{equation}\label{eq_two_var_restrict_duality_all_intro}
g(d_1,d_2)-g^\wedge_{(K_1,K_2)}(K_1-d_1,K_2-d_2) = d_1+d_2+C_g 
\end{equation} 
where $C_g$ is the offset of $g$.
The first lemma shows (easily) that the right-hand-sides of
\eqref{eq_two_var_restrict_duality_all_intro} and
\eqref{eq_generalized_rr_duality_all_intro} are equal, and therefore
the negative term of the left-hand-sides of
\eqref{eq_two_var_restrict_duality_all_intro} and
\eqref{eq_generalized_rr_duality_all_intro} are equal, i.e.,
$$
f^\wedge_\mec K(\mec K-\mec d) = g^\wedge_{(K_1,K_2)}(K_1-d_1,K_2-d_2)
$$
where $d_1,d_2$ are varying.
The first lemma expresses this equality of functions in a way that is
useful to establish \eqref{eq_duality_first_general},
namely as \eqref{eq_n_variate_equality_two_var_duals} below.

\begin{lemma}
Let $n\ge 2$,
$f\from\integers^n\to\integers$ be a Riemann function with
offset $C_f$, let 
$\mec d, \mec K\in\integers^n$, and
let $(\tilde K_1,\tilde K_2)\in\integers^2$.
Set
$$
\mec d'=\mec d-d_1\mec e_1-d_2\mec e_2
=(0,0,d_3,\ldots,d_n),
$$
and let $g=f_{1,2,\mec d'}$.
Then
\begin{enumerate}
\item
for all $a_1,a_2\in\integers$,
\begin{equation}\label{eq_g_the_two_var_restrict_of_f}
g(a_1,a_2)=
f_{1,2,\mec d'}(a_1,a_2)=f(a_1,a_2,d_3,\ldots,d_n) ;
\end{equation} 
\item 
the offset of $g=f_{1,2,\mec d'}$ is
\begin{equation}\label{eq_offset_two_var_restriction}
C_g = d_3+\cdots+d_n+C_f ;
\end{equation} 
\item 
\begin{equation}\label{eq_f_dual_and_g_dual_values}
f^\wedge_\mec K(\mec K-\mec d)= 
g^\wedge_{\tilde K_1,\tilde K_2}
(\tilde K_1-d_1,\tilde K_2-d_2);
\end{equation} 
and
\item
setting $\mec K'=(0,0,K_3,\ldots,K_n)$, we have
\begin{equation}\label{eq_n_variate_two_var_pre_equation}
\forall a_1,a_2\in\integers,\quad
(f^\wedge_\mec K)_{1,2,\mec K'-\mec d'}(a_1,a_2)
= g^\wedge_{(K_1,K_2)}(a_1,a_2)
= (f_{1,2,\mec d'})^\wedge_{(K_1,K_2)}(a_1,a_2) .
\end{equation} 
\end{enumerate}
Moreover, for any $\mec d,\mec K$, and $\mec d',\mec K'$ as above, we have
an equality of functions
\item
\begin{equation}\label{eq_n_variate_equality_two_var_duals}
(f^\wedge_\mec K)_{1,2,\mec K'-\mec d'}=(f_{1,2,\mec d'})^\wedge_{(K_1,K_2)} .
\end{equation} 
\end{lemma}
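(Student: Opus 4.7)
The plan is to prove each of the five assertions by unwinding the defining equations \eqref{eq_two_variable_restriction}, \eqref{eq_two_variable_restriction_constant}, and \eqref{eq_first_dual_formulation}; no deep structure is required, only careful bookkeeping. The guiding observation is that, by \eqref{eq_first_dual_formulation}, every dual expression $h^\wedge_{\mec M}(\mec M - \mec c)$ is just $h(\mec c) - \deg(\mec c) - C_h$, so once claims~(1) and~(2) are in hand, claims~(3)--(5) reduce to matching two such expressions and verifying they have identical values and identical degree-plus-offset shifts.

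First I would dispose of claim~(1) by directly expanding \eqref{eq_two_variable_restriction}: since $\mec d' = (0,0,d_3,\ldots,d_n)$, we have $f_{1,2,\mec d'}(a_1,a_2) = f(\mec d' + a_1\mec e_1 + a_2 \mec e_2) = f(a_1,a_2,d_3,\ldots,d_n)$. For claim~(2), \eqref{eq_two_variable_restriction_constant} applied inside $f$ gives, for $a_1+a_2$ large, $g(a_1,a_2) = a_1 + a_2 + (d_3 + \cdots + d_n) + C_f$, so by definition $C_g = d_3 + \cdots + d_n + C_f$.

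For claim~(3), \eqref{eq_first_dual_formulation} gives $f^\wedge_\mec K(\mec K - \mec d) = f(\mec d) - \deg(\mec d) - C_f$ and $g^\wedge_{(\tilde K_1,\tilde K_2)}(\tilde K_1 - d_1, \tilde K_2 - d_2) = g(d_1,d_2) - (d_1 + d_2) - C_g$; substituting $g(d_1,d_2) = f(\mec d)$ from~(1) and $C_g = d_3 + \cdots + d_n + C_f$ from~(2) yields equality, and en passant shows that the right-hand side does not depend on $\tilde K_1,\tilde K_2$, as claimed implicitly by the statement. For claim~(4), I would compute the two sides in parallel: $(f^\wedge_\mec K)_{1,2,\mec K' - \mec d'}(a_1,a_2)$ equals $f^\wedge_\mec K(a_1, a_2, K_3 - d_3, \ldots, K_n - d_n)$ by~(1) applied to $f^\wedge_\mec K$, which by \eqref{eq_first_dual_formulation} becomes
\[
f(K_1 - a_1, K_2 - a_2, d_3, \ldots, d_n) - \bigl(K_1 + K_2 - a_1 - a_2 + d_3 + \cdots + d_n\bigr) - C_f;
\]
meanwhile $(f_{1,2,\mec d'})^\wedge_{(K_1,K_2)}(a_1,a_2) = g(K_1 - a_1, K_2 - a_2) - (K_1 + K_2 - a_1 - a_2) - C_g$, which by~(1) and~(2) is the same expression.

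Claim~(5) is then just claim~(4) restated as equality of functions, since~(4) holds for every $(a_1,a_2) \in \integers^2$. The hard part is not mathematical but notational: one must keep straight which slots are evaluated where, and in particular recognize that $\mec K' - \mec d'$ has first and second components equal to zero precisely so that $(f^\wedge_\mec K)_{1,2,\mec K' - \mec d'}$ behaves as a genuine two-variable restriction in the $(a_1,a_2)$ variables. Beyond this, every step is mechanical substitution.
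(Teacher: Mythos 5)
Your proof is correct and follows essentially the same route as the paper: claims (1)--(3) are handled identically, by expanding the definitions in \eqref{eq_two_variable_restriction}, \eqref{eq_two_variable_restriction_constant}, and \eqref{eq_first_dual_formulation}. The one small routing difference is in claim (4): you compute both sides of \eqref{eq_n_variate_two_var_pre_equation} directly for an arbitrary $(a_1,a_2)$ and observe that they are literally the same expression, whereas the paper derives (4) from (3) by first fixing $\tilde K_i=K_i$, obtaining the equality at the single point $(K_1-d_1,K_2-d_2)$, and then invoking the fact that the functions in question are independent of $d_1,d_2$ (since $\mec d'$ discards those slots) so that the equality propagates to all points. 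Your direct computation is arguably a little cleaner, as it avoids the extra "independence of $d_1,d_2$" observation; but both arguments are mechanical substitutions and are correct.
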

\begin{proof}
(1)~is immediate.  (2)~follows from the fact that with $d_3,\ldots,d_n$
fixed and $a_1+a_2$ sufficiently large we have
$$
f(a_1,a_2,d_3,\ldots,d_n) = a_1+a_2+d_3+\cdots+d_n+C_f,
$$
which by~(1) equals
$$
g(a_1,a_2)= a_1+a_2+C_g
$$
for $a_1+a_2$ sufficiently large.
Hence $C_g=d_3+\cdots+d_n+C_f$.

To prove~(3), in view of
\eqref{eq_first_dual_formulation}
we have
\begin{equation}\label{eq_f_wedge_K_at_K_minus_d}
f^\wedge_\mec K(\mec K-\mec d) =
f(\mec d)-\deg(\mec d)-C_f,
\end{equation}
and similarly
$$
g^\wedge_{(\tilde K_1,\tilde K_2)}
(\tilde K_1-d_1,\tilde K_2-d_2)
=
g(d_1,d_2)-(d_1+d_2)-C_g
$$
which, in view of~(1) and~(2),
$$
=f(\mec d)-(d_1+d_2)-(d_3+\cdots+d_n+C_f)
=
f(\mec d)-\deg(\mec d)-C_f
$$
which is just the right-hand-side of 
\eqref{eq_f_wedge_K_at_K_minus_d};
this yields
\eqref{eq_f_dual_and_g_dual_values}.

To prove~(4), taking $\tilde K_i=K_i$ for $i=1,2$,
\eqref{eq_f_dual_and_g_dual_values} gives
$$
f^\wedge_\mec K(\mec K-\mec d)= 
g^\wedge_{(K_1,K_2)}
(K_1-d_1,K_2-d_2).
$$
Hence we have
$$
(f^\wedge_\mec K)_{1,2,\mec K'-\mec d'}(K_1-d_1,K_2-d_2)
=f^\wedge_\mec K(\mec K-\mec d)=
g^\wedge_{(K_1,K_2)}
(K_1-d_1,K_2-d_2),
$$
and therefore
$$
(f^\wedge_\mec K)_{1,2,\mec K'-\mec d'}(K_1-d_1,K_2-d_2)
=
g^\wedge_{(K_1,K_2)}
(K_1-d_1,K_2-d_2)
=
(f_{1,2,\mec d'})^\wedge_{(K_1,K_2)}
(K_1-d_1,K_2-d_2).
$$
But the functions
$$
(f^\wedge_\mec K)_{1,2,\mec K'-\mec d'},
\quad
g=f_{1,2,\mec d'},
\quad\mbox{and}\quad
g^\wedge_{(K_1,K_2)}=(f_{1,2,\mec d'})^\wedge_{(K_1,K_2)}
$$
are, as functions $\integers^2\to\integers$,
independent of $d_1,d_2$, since $\mec d'$ discards the components
$d_1,d_2$.
Hence, for fixed $f,\mec K$ and $d_3,\ldots,d_n$, we have
$$
\forall d_1,d_2\in\integers,\quad
(f^\wedge_\mec K)_{1,2,\mec K'-\mec d'}(K_1-d_1,K_2-d_2)
= g^\wedge_{(K_1,K_2)}(K_1-d_1,K_2-d_2),
$$
which upon setting $a_i=K_i-d_i$ for $i=1,2$
proves \eqref{eq_n_variate_two_var_pre_equation}.

Finally \eqref{eq_n_variate_equality_two_var_duals}
follows from~(4) and the value of $g$.
\end{proof}

The next lemma is straightforward but needs to be stated formally.

\begin{lemma}\label{le_translation_of_function_cM}
Let $W\from\integers^2\to\integers$ be the weight of a Riemann function,
$f\from\integers^2\to\integers$, and for some $\mec t\in\integers^2$
let $g$ be the translation of $f$ by $\mec t$, i.e., 
given by
$$
g(\mec d)=f(\mec d+\mec t).
$$
Then $W'=\frakm g$ satisfies
\begin{equation}\label{eq_cM_class_under_translation}
\forall \mec d\in\integers^2,
\quad
[\cM_{W',\mec d}] = [\cM_{W,\mec d+\mec t}].
\end{equation} 
\end{lemma}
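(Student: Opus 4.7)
\medskip
\noindent\textbf{Proof plan for Lemma~\ref{le_translation_of_function_cM}.}
My plan is to first reduce to the case of perfect matchings via the decomposition machinery, then prove the perfect matching case by an explicit indicator-diagram isomorphism, and finally patch these together to get the virtual equivalence.

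First, I would observe that the operator $\frakm$ commutes with translation by any $\mec t\in\integers^2$: this is immediate from the formula~\eqref{eq_define_mu}, since translation is linear and commutes with each $\frakt_i$. Hence if $g(\mec d)=f(\mec d+\mec t)$ then $W'=\frakm g$ satisfies
\[
W'(\mec a) = W(\mec a+\mec t) \qquad\text{for all }\mec a\in\integers^2.
\]
In particular, $W'$ is the weight of a Riemann function (the row and column sum conditions transport across translation).

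Next I would handle the case when $W$ is a perfect matching, which is the core of the argument. In this case $W'$ is also a perfect matching (its associated bijection $\pi'$ is the translate of $\pi$ in the obvious sense). By Proposition~\ref{pr_cM_W_decomp} we may write
\[
\cM_{W,\mec d+\mec t} \isom \bec\cI_{\mec d+\mec t}^{\oplus W}
= \bigoplus_{W(\mec a')=1} \cI_{\mec d+\mec t\ge \mec a'},
\qquad
\cM_{W',\mec d} \isom \bec\cI_{\mec d}^{\oplus W'}
= \bigoplus_{W'(\mec a)=1} \cI_{\mec d\ge \mec a}.
\]
The reindexing $\mec a' = \mec a+\mec t$ puts these sums in bijection, since $W(\mec a+\mec t)=W'(\mec a)$. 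The key observation is that for any $\mec a\in\integers^2$ the four-valued invariant ``which of the four basic $k$-diagrams is $\cI_{\mec d\ge \mec a}$?'' depends only on the two-bit sign pattern $(\operatorname{sgn}(d_1-a_1),\operatorname{sgn}(d_2-a_2))$ (see Definition~\ref{de_indicator_diagrams}), which is invariant under jointly translating $\mec d$ and $\mec a$ by $\mec t$; therefore
\[
\cI_{\mec d\ge \mec a} \;=\; \cI_{\mec d+\mec t\ge \mec a+\mec t}
\]
as $k$-diagrams. Matching summands term by term yields an honest isomorphism $\cM_{W',\mec d}\isom \cM_{W,\mec d+\mec t}$, which in particular gives \eqref{eq_cM_class_under_translation} in the perfect-matching case.

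For the general case, by Lemma~\ref{le_row_col_sums_weight_as_perfect} we may write
\[
W = (W_1+\cdots+W_s) - (\tilde W_1+\cdots+\tilde W_{s-1})
\]
with all $W_i,\tilde W_i$ perfect matchings. Translating by $\mec t$ we get
\[
W' = (W'_1+\cdots+W'_s) - (\tilde W'_1+\cdots+\tilde W'_{s-1}),
\]
where $W'_i,\tilde W'_i$ are the translates (hence still perfect matchings). Using the perfect-matching case summand by summand, we obtain isomorphisms $\cM_{W'_i,\mec d}\isom \cM_{W_i,\mec d+\mec t}$ and $\cM_{\tilde W'_i,\mec d}\isom \cM_{\tilde W_i,\mec d+\mec t}$. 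Taking direct sums and then the formal difference produces an isomorphism of virtual Fredholm $k$-diagrams
\[
\Bigl(\bigoplus_{i=1}^s \cM_{W'_i,\mec d}\Bigr)\ominus \Bigl(\bigoplus_{i=1}^{s-1}\cM_{\tilde W'_i,\mec d}\Bigr)
\;\isom\;
\Bigl(\bigoplus_{i=1}^s \cM_{W_i,\mec d+\mec t}\Bigr)\ominus \Bigl(\bigoplus_{i=1}^{s-1}\cM_{\tilde W_i,\mec d+\mec t}\Bigr),
\]
and by Proposition~\ref{pr_cM_W_poss_neg_single_virt_class} both sides represent the equivalence classes $[\cM_{W',\mec d}]$ and $[\cM_{W,\mec d+\mec t}]$ respectively (independent of the chosen decomposition). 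This gives \eqref{eq_cM_class_under_translation}.

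There is no real obstacle here: the translation invariance of the indicator diagrams is structural, and Proposition~\ref{pr_cM_W_poss_neg_single_virt_class} already guarantees that the virtual class does not depend on how one decomposes $W$, so the only thing to verify carefully is that the translated decomposition is a valid decomposition of $W'$ (which is immediate from step one).
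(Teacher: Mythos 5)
Your proof is correct and follows essentially the same route as the paper's: decompose $W$ into a difference of sums of perfect matchings via Lemma~\ref{le_row_col_sums_weight_as_perfect}, observe that the translated matchings decompose $W'$, prove the translation isomorphism $\cM_{W'_i,\mec d}\isom \cM_{W_i,\mec d+\mec t}$ summand by summand, and take direct sums. The only difference is that you verify the per-matching isomorphism carefully via the indicator-diagram decomposition (rather than simply asserting it as the paper does, where it is written as an ``equality'' but is really a shift isomorphism of the underlying index sets), which is a legitimate detail worth making explicit but not a different argument.
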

\begin{proof}
Write $W$ as a difference of sums of perfect matchings
$$
W = W_1+\cdots+W_s - \tilde W_1 - \cdots - \tilde W_{s-1} .
$$
Then setting $W'_i$ to be the translation by $\mec t$ of $W_i$,
and similarly for $\tilde W'_i$, we have
$$
W' = W'_1+\cdots+W'_s - \tilde W'_1 - \cdots - \tilde W'_{s-1} .
$$
Since $W'_i$ is translation by $\mec t$ of $W_i$, we have
$\cM_{W'_i,\mec d}=\cM_{W_i,\mec d+\mec t}$,
and similarly for $\tilde W'_i$ and $\tilde W_i$.
Hence
$$
\cM_{W'_1,\mec d} \oplus \cdots \oplus \cM_{W'_s,\mec d}
\isom
\cM_{W_1,\mec d+\mec t} \oplus \cdots \oplus \cM_{W_s,\mec d+\mec t}
$$
and
$$
\cM_{\tilde W_1,\mec d+\mec t}\oplus\cdots\oplus
\cM_{\tilde W_{s-1},\mec d+\mec t}
\isom
\cM_{\tilde W'_1,\mec d}\oplus\cdots\oplus
\cM_{\tilde W'_{s-1},\mec d} .
$$
Taking direct sums of these last two equations yields
the equivalence of classes \eqref{eq_cM_class_under_translation}.
\end{proof}

\begin{theorem}\label{th_n_variate_duality}
Let $f\from\integers^n\to\integers$ be an arbitrary Riemann functions.
Then for any $\mec d,\mec K\in\integers^n$ there are isomorphisms
\begin{equation}\label{eq_main_duality_n_variate_theorem}
H^i\bigl([\cM_{f {\rm \; at \;}\mec d}]\bigr)^* \to
H^{1-i}\bigl([\cM_{f^\wedge_\mec K{\rm \; at \;}\mec K-\mec d}] \bigr)
\quad\mbox{for $i=0,1$,}
\end{equation} 
which specifically arises 
by setting
$\mec d'=(0,0,d_3,d_4,\ldots,d_n)$
taking $W=\frakm f_{1,2,\mec d'}$,
and applying \eqref{eq_duality_two_dim_weight_functions}.
\end{theorem}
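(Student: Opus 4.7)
The plan is to reduce the $n$-variable duality to the two-variable duality Theorem~\ref{th_duality_virtual_two_dimensional} via the two-variable restriction of $f$. By Corollary~\ref{co_glue_i_j_j_prime} we may fix the indices $i=1$, $j=2$ once and for all. Set $\mec d'=(0,0,d_3,\ldots,d_n)$ and $\mec K'=(0,0,K_3,\ldots,K_n)$, let $g=f_{1,2,\mec d'}$ be the two-variable restriction specified in the statement of the theorem, and let $W=\frakm g$ be its weight.

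First I would identify $[\cM_{f{\rm\;at\;}\mec d}]$ with a virtual $k$-diagram built from $W$. Because $f_{1,2,\mec d}(a_1,a_2)=f(d_1+a_1,d_2+a_2,d_3,\ldots,d_n)=g(a_1+d_1,a_2+d_2)$, the function $f_{1,2,\mec d}$ is precisely the translation of $g$ by $(d_1,d_2)\in\integers^2$. Applying Lemma~\ref{le_translation_of_function_cM} with $\mec t=(d_1,d_2)$ and the point $\mec 0$ yields
$$
[\cM_{f{\rm\;at\;}\mec d}]=[\cM_{f;1,2,\mec d}]=[\cM_{W,(d_1,d_2)}].
$$

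Second, I would perform the corresponding identification on the dual side. The key input is identity \eqref{eq_n_variate_equality_two_var_duals}, which gives $(f^\wedge_\mec K)_{1,2,\mec K'-\mec d'}=g^\wedge_{(K_1,K_2)}$; applying Theorem~\ref{th_easy_dual_functions_theorem} to this two-variable Riemann function, so that the sign $(-1)^n=(-1)^2=+1$, its weight is $W^*_\mec L$ with $\mec L=(K_1+1,K_2+1)$. Since the function $(f^\wedge_\mec K)_{1,2,\mec K-\mec d}$ is the translation by $(K_1-d_1,K_2-d_2)$ of $(f^\wedge_\mec K)_{1,2,\mec K'-\mec d'}$, Lemma~\ref{le_translation_of_function_cM} gives
$$
[\cM_{f^\wedge_\mec K{\rm\;at\;}\mec K-\mec d}]=[\cM_{W^*_\mec L,(K_1-d_1,K_2-d_2)}].
$$

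Third, I would apply Theorem~\ref{th_duality_virtual_two_dimensional} to the weight $W$, making the free choices $\mec d_0=(d_1,d_2)$, $\mec K_0=(K_1,K_2)$, and $\mec L=(K_1+1,K_2+1)$. Since $\mec K_0-\mec d_0=(K_1-d_1,K_2-d_2)$, the theorem provides an isomorphism
$$
H^i([\cM_{W,(d_1,d_2)}])^*\to H^{1-i}([\cM_{W^*_\mec L,(K_1-d_1,K_2-d_2)}])
$$
for $i=0,1$, which after substituting the two identifications above is exactly \eqref{eq_main_duality_n_variate_theorem}. The substantive inputs are \eqref{eq_n_variate_equality_two_var_duals} and Theorem~\ref{th_duality_virtual_two_dimensional}; the remainder is translation bookkeeping. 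The only point that requires care is verifying that the translation by $(d_1,d_2)$ on the primal side and by $(K_1-d_1,K_2-d_2)$ on the dual side interact correctly so that the difference of base points matches the $\mec K_0-\mec d_0$ appearing in the two-variable duality formula---this matching is exactly what forces, and is forced by, the choice $\mec L=\mec K_0+\mec 1$.
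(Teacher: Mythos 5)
Your proof is correct and follows essentially the same route as the paper's: fix indices $(1,2)$ via Corollary~\ref{co_glue_i_j_j_prime}, reduce to a translation of $g=f_{1,2,\mec d'}$ via Lemma~\ref{le_translation_of_function_cM}, invoke \eqref{eq_n_variate_equality_two_var_duals} together with Theorem~\ref{th_easy_dual_functions_theorem} to identify $W^*_{\mec L}$ as the weight of $g^\wedge_{(K_1,K_2)}$, translate the dual side, and apply Theorem~\ref{th_duality_virtual_two_dimensional}. The only cosmetic difference is that you perform both translation identifications before invoking the two-variable duality, whereas the paper interleaves them; the mathematical content is identical.
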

\begin{proof}
By Definition~\ref{de_cM_f_at_d} we have
$$
[\cM_{f {\rm \; at \;}\mec d}] = [\cM_{\frakm f_{1,2,\mec d},(0,0)}].
$$
But for all $\mec a\in\integers^2$ we have
$$
f_{1,2,\mec d}(\mec a)=f_{1,2,\mec d'}\bigl( (d_1,d_2)+\mec a \bigr)
$$
since both sides equal $f(\mec d+a_1\mec e_1+a_2\mec e_2)$.
Hence $f_{1,2,\mec d}$ is a translation of $f_{1,2,\mec d'}$ by
$(d_1,d_2)$,
and Lemma~\ref{le_translation_of_function_cM} implies that
$$
[\cM_{\frakm f_{1,2,\mec d},(0,0)}]
=
[\cM_{W,(d_1,d_2)}].
$$
In view of \eqref{eq_duality_two_dim_weight_functions} we have
an isomorphism
\begin{equation}\label{eq_isom_H_i_n_variate_to_two_variate_stuff}
H^i\bigl( [\cM_{f {\rm \; at \;}\mec d}] \bigr)^*
\to
H^{1-i}\bigl( [ \cM_{W^*_{K_1+1,K_2+1},(K_1-d_1,K_2-d_2)} ] \bigr).
\end{equation} 
Since $W=\frakm g$ where $g=f_{1,2,\mec d'}$, 
\eqref{eq_n_variate_equality_two_var_duals}
implies that
$$
W^*_{K_1+1,K_2+1} = \frakm \bigl( g^\wedge_{(K_1,K_2)} \bigr)=
\frakm \bigl( (f^\wedge_K)_{1,2,\mec K'-\mec d'} \bigr) 
$$
with $\mec K'=(0,0,K_3,\ldots,K_n)$.  
Lemma~\ref{le_translation_of_function_cM} similarly implies that
$$
\Bigl[\cM_{\frakm 
\bigl( (f^\wedge_K)_{1,2,\mec K'-\mec d'} \bigr),(K_1-d_1,K_2-d_2)}\Bigr]
=
\Bigl[\cM_{\frakm 
\bigl( (f^\wedge_K)_{1,2,\mec K-\mec d} \bigr),(0,0)}\Bigr]
$$
which, by Definition~\ref{de_cM_f_at_d},
$$
=\bigl[\cM_{f^\wedge_K {\rm\; at\; \mec K-\mec d}} \bigr].
$$
Hence this equality and
\eqref{eq_isom_H_i_n_variate_to_two_variate_stuff}
yields
\eqref{eq_main_duality_n_variate_theorem}.
\end{proof}

\section{Stronger Duality Properties and Further Remarks}
\label{se_duality_second}

In this section we will prove some stronger duality properties of
$\underline k_{/B_1,B_2}$.
To do so, we will develop some foundations of the homological
algebra of $k$-diagrams, including a discussion of
{\em skyscraper $k$-diagrams} and {\em coskyscraper $k$-diagrams},
based on the unpublished results in
\cite{friedman_cohomology}.
We also discuss the connection of $k$-diagrams to Grothendieck's
sheaf theory and classical sheaf theory.
This discussion will tie up a number of 
loose ends: for example, we will show that our definition of the cohomology
groups of $k$-diagrams agree with the usual definition of 
of these groups, both in the context of sheaf theory for
Grothendieck topologies and for classical topological spaces.
We will also comment on periodic Riemann functions and possible future work.

Let us summarize the main results of this section.

The first stronger duality property of $\underline k_{/B_1,B_2}$
is that for any $k$-diagram, $\cF$,
we have
\begin{equation}\label{eq_zeroth_cohomology_and_Ext_one}
H^0(\cF)^*\isom \Ext^1(\cF,\underline k_{/B_1,B_2}),
\end{equation} 
where $\Ext^1$ is the ``first Ext group;'' in order to define this group,
and to make some basic computations,
we will need some results from homological algebra.

When we define Ext groups we will
see that $\Ext^0(\cF,\cG)$ is isomorphic to $\Hom(\cF,\cG)$, and 
$\Ext^i(\underline k,\cF)$ is isomorphic to $H^i(\cF)$ for all $i$;
hence
combining 
\eqref{eq_zeroth_cohomology_and_Ext_one}
with
Theorem~\ref{th_representability_of_H_one_dual}
yields the following theorem.

\begin{theorem}\label{th_strong_duality}
For any $k$-diagram, $\cF$, and for $i=0,1$, there are isomorphisms
\begin{equation}\label{eq_Serre_type_duality}
H^i(\cF)^* \to \Ext^{1-i}(\cF,\underline k_{/B_1,B_2})
\end{equation} 
that are functorial (i.e., natural) in $\cF$.
In other words,
$\Hom(\cF,\underline k_{/B_1,B_2})$ and $\Ext^1(\cF,\underline k_{/B_1,B_2})$
are isomorphic to the kernel and cokernel, respectively, of
the dual map
$$
\cF(\partial)^*\from \cF(A)^* \to \cF(B)^*
$$
defined in
Definition~\ref{de_diagram_k_vs}.
\end{theorem}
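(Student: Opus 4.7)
The plan is to set up the homological algebra of $k$-diagrams, construct an explicit length-one projective resolution of any $k$-diagram $\cF$, and apply $\Hom(-, \underline k_{/B_1, B_2})$ to identify $\Ext^i(\cF, \underline k_{/B_1, B_2})$ with the $i$-th cohomology of the dual complex $\cF(A)^* \to \cF(B)^*$. Combined with Theorem~\ref{th_representability_of_H_one_dual} and the standard (but subtle, in the infinite-dimensional case) kernel/cokernel dualities for linear maps, this will yield both the $i=0$ and $i=1$ cases of \eqref{eq_Serre_type_duality}; functoriality in $\cF$ will be automatic since every step is natural.

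First I would introduce the projective generators of the abelian category of $k$-diagrams. For each point $P \in \{B_1, B_2, B_3, A_1, A_2\}$ there is a projective diagram $\cP_P$ corepresenting the functor $\cG \mapsto \cG(P)$ (so that $\Hom(\cP_P, \cG) \isom \cG(P)$ naturally): explicitly $\cP_{A_j}$ has value $k$ only at $A_j$, $\cP_{B_i}$ for $i = 1, 2$ has value $k$ at $B_i$ and $A_i$ with identity restriction, and $\cP_{B_3}$ has value $k$ at $B_3, A_1, A_2$ with identity restrictions. A direct value-by-value computation shows that the canonical surjection
\begin{equation*}
\cP_0 := \bigoplus_{P} \cP_P^{\oplus \cF(P)} \twoheadrightarrow \cF
\end{equation*}
has kernel $\cP_1 \isom \cP_{A_1}^{\oplus(\cF(B_1) \oplus \cF(B_3))} \oplus \cP_{A_2}^{\oplus(\cF(B_2) \oplus \cF(B_3))}$, which is again projective (being supported only at $A_1, A_2$). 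Hence $0 \to \cP_1 \to \cP_0 \to \cF \to 0$ is a length-one projective resolution, so $\Ext^i(\cF, \cG)$ may be defined as $H^i(\Hom(\cP_\bullet, \cG))$ and vanishes for $i \geq 2$.

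Next I would evaluate $\Hom(\cP_\bullet, \underline k_{/B_1, B_2})$. Since $\underline k_{/B_1, B_2}$ vanishes at $B_1, B_2$ and equals $k$ elsewhere, only the summands with $P \in \{B_3, A_1, A_2\}$ contribute, and the resulting two-term complex takes the form
\begin{equation*}
\cF(B_3)^* \oplus \cF(A_1)^* \oplus \cF(A_2)^* \;\longrightarrow\; \cF(B_1)^* \oplus \cF(B_3)^* \oplus \cF(B_2)^* \oplus \cF(B_3)^*,
\end{equation*}
where the two copies of $\cF(B_3)^*$ on the right correspond to the two ways $B_3$ reaches $A_1$ and $A_2$. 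A change of basis that replaces these two copies by their sum and their difference decouples an acyclic piece, leaving a complex naturally identified (up to the harmless sign conventions noted after Definition~\ref{de_diagram_k_vs}) with the dual differential $\cF(A)^* \xrightarrow{\cF(\partial)^*} \cF(B)^*$. Passing to cohomology yields natural isomorphisms $\Ext^0(\cF, \underline k_{/B_1, B_2}) \isom \ker(\cF(\partial)^*)$ and $\Ext^1(\cF, \underline k_{/B_1, B_2}) \isom \coker(\cF(\partial)^*)$, matching the parenthetical assertion in the theorem statement.

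Finally I would close by invoking the kernel/cokernel dualities \eqref{eq_cokernel_to_kernel_map} and \eqref{eq_cokernel_of_cL_star_to_star_of_kernel} applied to $\cL = \cF(\partial)$. The first gives, without any set-theoretic hypothesis, a natural isomorphism $H^1(\cF)^* = \coker(\cF(\partial))^* \isom \ker(\cF(\partial)^*) \isom \Ext^0(\cF, \underline k_{/B_1, B_2})$, reproving and identifying the map of Theorem~\ref{th_representability_of_H_one_dual}. The second produces the natural map $\Ext^1(\cF, \underline k_{/B_1, B_2}) \isom \coker(\cF(\partial)^*) \to \ker(\cF(\partial))^* = H^0(\cF)^*$, and the main technical point will be to verify that this map is an isomorphism. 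As the excerpt already flags before \eqref{eq_cokernel_of_cL_star_to_star_of_kernel}, this step requires the axiom of choice, used precisely to extend an arbitrary linear functional from the subspace $\ker(\cF(\partial)) \subset \cF(B)$ to all of $\cF(B)$; once AC is admitted, both injectivity and surjectivity of $\coker(\cL^*) \to \ker(\cL)^*$ follow from elementary linear algebra, completing the proof of \eqref{eq_Serre_type_duality}.
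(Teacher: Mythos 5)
Your argument is correct and reaches the same formula, but by a genuinely different computational route than the paper. You build a two-term projective resolution $0\to\cP_1\to\cP_0\to\cF\to 0$ (using coskyscrapers at all five points, so $\cP_0$ has a summand for each $P$ and $\cP_1$ is a sum of coskyscrapers at $A_1,A_2$) and apply $\Hom(-,\underline k_{/B_1,B_2})$; the resulting two-term complex carries an extra copy of $\cF(B_3)^*$ at both ends which you must split off before recognizing $\cF(\partial)^*$. The paper instead writes down the minimal two-term \emph{injective} resolution of the fixed dualizing diagram,
$$0\to \underline k_{/B_1,B_2}\to \bigoplus_i {\rm Sky}_{A_i}(k)\to \bigoplus_j {\rm Sky}_{B_j}(k)\to 0,$$
applies $\Hom(\cF,-)$, and lands directly on $\bigoplus_i\cF(A_i)^*\to\bigoplus_j\cF(B_j)^*$ with no extraneous summands to cancel. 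Both approaches are legitimate; resolving the dualizing object once (rather than resolving the variable $\cF$) gives a cleaner identification with the dual differential and avoids the cancellation step. Your final appeal to the kernel/cokernel dualities \eqref{eq_cokernel_to_kernel_map} and \eqref{eq_cokernel_of_cL_star_to_star_of_kernel} (with the axiom-of-choice caveat) matches the paper exactly.

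One small technical point in your decoupling step: replacing the two copies of $\cF(B_3)^*$ on the target by their sum and difference is \emph{not} an invertible change of basis in characteristic $2$, so that particular description of the cancellation fails there. Replacing $(u_3,u_3')$ by $(u_3-u_3',\,u_3')$ (equivalently, a standard row/column reduction, not the symmetric sum/difference) splits off the acyclic $\cF(B_3)^*\xrightarrow{\sim}\cF(B_3)^*$ summand uniformly over any field. The surrounding signs then agree with $\cF(\partial)^*$ only up to the sign-convention indeterminacy the paper already flags after Definition~\ref{de_diagram_k_vs}, so no further adjustment is needed.
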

The proof is given in Subsection~\ref{su_strong_duality}.
This theorem shows that $\underline k_{/B_1,B_2}$ plays the role
of the canonical sheaf in the statement of Serre duality
(e.g., Section~III.7 of \cite{hartshorne}).

The second duality result, proven at the end of this section,
gives a method for computing the {\em Serre functor}, $S$, on a $k$-diagram,
and we show that
$S(\underline k)\isom\underline k_{/B_1,B_2}[1]$;
the proof is an immediate consequence of 
the proofs of
Theorems~\ref{th_strong_duality}
and~\ref{th_cohomology_agrees_with_Ext_groups} below.

To prove these results, we will develop the notion of
skyscraper and coskyscraper $k$-diagrams.
The reader familiar with sheaf theory in topology or algebraic geometry
(e.g., \cite{hartshorne}) will be able to check that
skyscraper $k$-diagrams are the analog of skyscraper sheaves
in topological sheaf theory
(coskyscraper $k$-diagrams do not generally exist
in topological sheaf theory).
We stress that all $k$-diagrams have a two-term
injective resolution with skyscraper $k$-diagrams,
and a two-term projective resolution with coskyscraper $k$-diagrams.
This
makes working with $k$-diagrams much simpler than with sheaves
over general topological spaces.

In this section will also show that the definition of the cohomology
groups of a $k$-diagram, Definition~\ref{de_diagram_k_vs}, agrees
with the usual definition of cohomology groups.  Namely, we will prove 
the following theorem.

\begin{theorem}\label{th_cohomology_agrees_with_Ext_groups}
Let $\cF$ be a $k$-diagram.  Then the cohomology groups, $H^i(\cF)$, of $\cF$
defined in Definition~\ref{de_diagram_k_vs} are isomorphic to
the groups $\Ext^i(\underline k,\cF)$ defined by viewing the
category of $k$-diagrams as an abelian category.
\end{theorem}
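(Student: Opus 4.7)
The plan is to exhibit an explicit projective resolution of $\underline k$ of length two in the abelian category of $k$-diagrams, and then to observe that the standard Ext-via-resolution recipe recovers exactly $H^0(\cF)=\ker\cF(\partial)$ and $H^1(\cF)=\coker\cF(\partial)$ from Definition~\ref{de_diagram_k_vs}. First I would note that a $k$-diagram is exactly a contravariant functor from the finite five-object category underlying Figure~\ref{fi_our_diagrams_new_again} to $k$-vector spaces, so by standard presheaf yoga the category of $k$-diagrams is abelian, with kernels, cokernels, and direct sums computed pointwise, and the right derived functors $\Ext^i(\underline k,-)$ of $\Hom(\underline k,-)$ are well defined.

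Next I would introduce, for each $P\in\{B_1,B_2,B_3,A_1,A_2\}$, the coskyscraper $k$-diagram $\cC_P$ characterized by the Yoneda-type identity $\Hom(\cC_P,\cF)\isom\cF(P)$ natural in $\cF$; existence is a direct construction, and projectivity is immediate since the functor $\cF\mapsto\cF(P)$ is exact. Setting $P_0\eqdef\cC_{B_1}\oplus\cC_{B_2}\oplus\cC_{B_3}$ and $P_1\eqdef\cC_{A_1}\oplus\cC_{A_2}$ gives two projective $k$-diagrams with $\Hom(P_0,\cF)\isom\cF(B)$ and $\Hom(P_1,\cF)\isom\cF(A)$ naturally in $\cF$. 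The element $1\in k=\underline k(B_i)$ picks out canonical morphisms $\cC_{B_i}\to\underline k$ whose direct sum gives a surjection $\pi\from P_0\to\underline k$.

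Then I would produce a morphism $\delta\from P_1\to P_0$ so that the map $\Hom(P_0,\cF)\to\Hom(P_1,\cF)$ induced by $\delta$ is exactly $\cF(\partial)$ of \eqref{eq_formula_for_cF_partial}, including the minus signs on the terms coming from $B_3$; these signs are absorbed into a choice of basis of two one-dimensional vector spaces, as already flagged in the paragraph following Definition~\ref{de_diagram_k_vs} and carried out explicitly in Lemma~\ref{le_projective_resolution_of_underline_k}. A direct pointwise check at each of the five objects shows that $0\to P_1\xrightarrow{\delta} P_0\xrightarrow{\pi}\underline k\to 0$ is exact, so this is a projective resolution of $\underline k$ of length two. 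Applying $\Hom(-,\cF)$ and taking cohomology then yields $\Ext^0(\underline k,\cF)=\ker\cF(\partial)=H^0(\cF)$ and $\Ext^1(\underline k,\cF)=\coker\cF(\partial)=H^1(\cF)$, with all higher Exts vanishing.

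The main obstacle is essentially bookkeeping: writing down $\delta$ with the correct signs so that $\Hom(\delta,\cF)$ matches $\cF(\partial)$ on the nose, and verifying pointwise exactness of $P_1\to P_0\to\underline k\to 0$ at each of the five objects. Both are finite, mechanical checks and constitute the content of Lemma~\ref{le_projective_resolution_of_underline_k}; once they are in hand, the theorem follows from the standard fact that $\Ext^i(\underline k,\cF)$ may be computed from any projective resolution of $\underline k$. As a sanity check, the $i=0$ case is already consistent with the earlier identification $H^0(\cF)\isom\Hom(\underline k,\cF)=\Ext^0(\underline k,\cF)$ from the discussion of global sections.
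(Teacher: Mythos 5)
Your proposal is correct and follows essentially the same route as the paper: the key step is exactly the two-term projective resolution $0\to\bigoplus_i{\rm CoSky}_{A_i}(k)\to\bigoplus_j{\rm CoSky}_{B_j}(k)\to\underline k\to 0$ of Lemma~\ref{le_projective_resolution_of_underline_k}, followed by the identification of the induced map on $\Hom(-,\cF)$ with $\cF(\partial)$ via Proposition~\ref{pr_morphisms_of_special_case_of_k_valued_cosky_and_sky}. The only cosmetic difference is that the paper constructs the surjection $\mu_0$ first and then \emph{computes} its kernel to be $\bigoplus_i{\rm CoSky}_{A_i}(k)$, whereas you phrase it as writing down $\delta$ and then verifying exactness pointwise; these amount to the same bookkeeping.
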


We will briefly review some facts about homological algebra, and
refer the reader to the textbook \cite{weibel} or
\cite{hartshorne}, Section~III.1 for details.

We remark that the modern foundations of homological algebra 
implicitly assume that Zorn's lemma holds, and it follows
that for any two $k$-vector spaces $A\subset B$, there is a 
$B'\subset B$ such that $B$ is the direct sum of $A$ and $B'$.
Hence we assume this here (see Subsection~\ref{su_linear_algebra_subtlety}
for further discussion).

\subsection{Skyscraper and Co-Skyscraper $k$-Diagrams}

To compute Ext groups below, we will use
{\em skyscraper} and {\em coskyscraper}
diagrams that we now describe.

For any $k$-vector space, $V$, consider the $k$-diagram,
denoted ${\rm Sky}_{A_1}(V)$, depicted below:
$$
\begin{tikzpicture}[scale=0.40]
\node (B1) at (0,2) {$V$};
\node (B2) at (0,-2) {$0$};
\node (B3) at (0,0) {$V$};
\node (A1) at (3,1) {$V$};
\node (A2) at (3,-1) {$0$};
\draw [->] (B1) -- (A1) ;
\draw [->] (B2) -- (A2) ;
\draw [->] (B3) -- (A1) ;
\draw [->] (B3) -- (A2) ;
\node at (1.5,-4){${\rm Sky}_{A_1}(V)$};
\end{tikzpicture}
$$
where all maps $V\to V$ are identity maps; we call this the
{\em skyscraper diagram of $V$ at $A_1$}.  We easily check that for 
any $\cF$, and any
$$
\phi\in \Hom(\cF,{\rm Sky}_{A_1}(V)),
$$
the value of $\phi(A_1)\from \cF(A_1)\to V$ determines all of
$\phi$: indeed, the map $\phi(B_1)\from\cF(B_1)\to V$ must
equal $\phi(A_1)\circ \cF(\rho_{1,1})$; conversely, any
map $\cF(A_1)\to V$ extends to such a morphism $\phi$.
Hence the map $\phi\mapsto\phi(A_1)$ sets up an isomorphism
\begin{equation}\label{eq_skyscraper_hom_formula}
\Hom(\cF,{\rm Sky}_{A_1}(V)) \isom \Hom_k(\cF(A_1),V),
\end{equation} 
where $\Hom_k$ denotes the morphisms as $k$-vector spaces.

One similarly defines ${\rm Sky}_{A_2}(V)$, 
and for $j=1,2,3$ one defines
${\rm Sky}_{B_j}(V)$ to be the $k$-diagram whose only nonzero value
is $V$, at $B_j$.  We depict these $k$-diagrams below.
$$
\begin{tikzpicture}[scale=0.40]
\node (B1) at (0,2) {$0$};
\node (B2) at (0,-2) {$V$};
\node (B3) at (0,0) {$V$};
\node (A1) at (3,1) {$0$};
\node (A2) at (3,-1) {$V$};
\draw [->] (B1) -- (A1) ;
\draw [->] (B2) -- (A2) ;
\draw [->] (B3) -- (A1) ;
\draw [->] (B3) -- (A2) ;
\node at (1.5,-4){${\rm Sky}_{A_2}(V)$};
\end{tikzpicture}
\quad\quad
\begin{tikzpicture}[scale=0.40]
\node (B1) at (0,2) {$V$};
\node (B2) at (0,-2) {$0$};
\node (B3) at (0,0) {$0$};
\node (A1) at (3,1) {$0$};
\node (A2) at (3,-1) {$0$};
\draw [->] (B1) -- (A1) ;
\draw [->] (B2) -- (A2) ;
\draw [->] (B3) -- (A1) ;
\draw [->] (B3) -- (A2) ;
\node at (1.5,-4){${\rm Sky}_{B_1}(V)$};
\end{tikzpicture}
\quad\quad
\begin{tikzpicture}[scale=0.40]
\node (B1) at (0,2) {$0$};
\node (B2) at (0,-2) {$V$};
\node (B3) at (0,0) {$0$};
\node (A1) at (3,1) {$0$};
\node (A2) at (3,-1) {$0$};
\draw [->] (B1) -- (A1) ;
\draw [->] (B2) -- (A2) ;
\draw [->] (B3) -- (A1) ;
\draw [->] (B3) -- (A2) ;
\node at (1.5,-4){${\rm Sky}_{B_2}(V)$};
\end{tikzpicture}
\quad\quad
\begin{tikzpicture}[scale=0.40]
\node (B1) at (0,2) {$0$};
\node (B2) at (0,-2) {$0$};
\node (B3) at (0,0) {$V$};
\node (A1) at (3,1) {$0$};
\node (A2) at (3,-1) {$0$};
\draw [->] (B1) -- (A1) ;
\draw [->] (B2) -- (A2) ;
\draw [->] (B3) -- (A1) ;
\draw [->] (B3) -- (A2) ;
\node at (1.5,-4){${\rm Sky}_{B_3}(V)$};
\end{tikzpicture}
$$
This gives for any
$P=A_1,A_2,B_1,B_2,B_3$ and any $k$-vector space, $V$,
a diagram, ${\rm Sky}_P(V)$ with an isomorphism
\begin{equation}\label{eq_skyscraper_hom}
\Hom(\cF,{\rm Sky}_{P}(V)) \isom \Hom_k(\cF(P),V),
\end{equation} 
given by 
$$
\mbox{for $\phi\in \Hom(\cF,{\rm Sky}_{P}(V))$, \quad\quad
$\phi\mapsto \phi(P)$}.
$$
Skyscrapers are particularly useful because one can prove that they
are {\em injective} $k$-diagrams (see Proposition~\ref{pr_two_term}
below).

Similarly one defines for a $k$-vector sapce $V$ the {\em coskyscraper
$k$-diagram at $B_j$} to be the diagrams depicted below:
$$
\begin{tikzpicture}[scale=0.40]
\node (B1) at (0,2) {$0$};
\node (B2) at (0,-2) {$0$};
\node (B3) at (0,0) {$0$};
\node (A1) at (3,1) {$V$};
\node (A2) at (3,-1) {$0$};
\draw [->] (B1) -- (A1) ;
\draw [->] (B2) -- (A2) ;
\draw [->] (B3) -- (A1) ;
\draw [->] (B3) -- (A2) ;
\node at (1.5,-4){${\rm CoSky}(A_1)$};
\end{tikzpicture}
\quad\quad
\begin{tikzpicture}[scale=0.40]
\node (B1) at (0,2) {$0$};
\node (B2) at (0,-2) {$0$};
\node (B3) at (0,0) {$0$};
\node (A1) at (3,1) {$0$};
\node (A2) at (3,-1) {$V$};
\draw [->] (B1) -- (A1) ;
\draw [->] (B2) -- (A2) ;
\draw [->] (B3) -- (A1) ;
\draw [->] (B3) -- (A2) ;
\node at (1.5,-4){${\rm CoSky}(A_2)$};
\end{tikzpicture}
\quad\quad
\begin{tikzpicture}[scale=0.40]
\node (B1) at (0,2) {$V$};
\node (B2) at (0,-2) {$0$};
\node (B3) at (0,0) {$0$};
\node (A1) at (3,1) {$V$};
\node (A2) at (3,-1) {$0$};
\draw [->] (B1) -- (A1) ;
\draw [->] (B2) -- (A2) ;
\draw [->] (B3) -- (A1) ;
\draw [->] (B3) -- (A2) ;
\node at (1.5,-4){${\rm CoSky}(B_1)$};
\end{tikzpicture}
\quad\quad
\begin{tikzpicture}[scale=0.40]
\node (B1) at (0,2) {$0$};
\node (B2) at (0,-2) {$V$};
\node (B3) at (0,0) {$0$};
\node (A1) at (3,1) {$0$};
\node (A2) at (3,-1) {$V$};
\draw [->] (B1) -- (A1) ;
\draw [->] (B2) -- (A2) ;
\draw [->] (B3) -- (A1) ;
\draw [->] (B3) -- (A2) ;
\node at (1.5,-4){${\rm CoSky}(B_2)$};
\end{tikzpicture}
\quad\quad
\begin{tikzpicture}[scale=0.40]
\node (B1) at (0,2) {$0$};
\node (B2) at (0,-2) {$0$};
\node (B3) at (0,0) {$V$};
\node (A1) at (3,1) {$V$};
\node (A2) at (3,-1) {$V$};
\draw [->] (B1) -- (A1) ;
\draw [->] (B2) -- (A2) ;
\draw [->] (B3) -- (A1) ;
\draw [->] (B3) -- (A2) ;
\node at (1.5,-4){${\rm CoSky}(B_3)$};
\end{tikzpicture}
$$
One verifies that for
$P=A_1,A_2,B_1,B_2,B_3$ and any $k$-vector space, $V$,
there is an isomorphism
$$
\Hom({\rm CoSky}_P(V),\cG) \isom \Hom_k(V,\cG(P))
$$
taking $\phi$ to $\phi(P)$.
Coskyscrapers are particularly useful because one can prove that they
are {\em projective} $k$-diagrams (see Proposition~\ref{pr_two_term}
below).

\subsection{A Minimal Introduction to Homological Algebra and Value-by-Value
Evaluation}

\label{su_intro_homological_algebra}

Here we will briefly review some facts about homological algebra, and
refer the reader to the textbook \cite{weibel} or
\cite{hartshorne}, Section~III.1 for details.

In order to use apply homological algebra, we need to know that
$k$-diagrams and their morphisms form an {\em abelian category}
(see \cite{weibel}, Definition~1.2.2 or
\cite{hartshorne}, Section~III.1); in computations, we will need to
know how to compute the kernel, image, and cokernel in the sense
defined for abelian categories.

\begin{definition}\label{de_value_by_value}
If $\phi\from\cF\to\cG$ is a morphism of $k$-diagrams, then the {\em kernel}
(respectively {\em image} and {\em cokernel}) of $\phi$ is
the $k$-diagram whose value at 
$P=A_1,A_2,B_1,B_2,B_3$ equals $\ker(\phi(P))$
(respectively, ${\rm Image}(\phi(P))$ and $\coker(\phi(P))$),
and whose restriction maps are induced by those of $\cF$
(respectively, those of $\cG$ in both cases).
\end{definition}
In other words, we define {\em kernel}, {\em image}, and {\em cokernel}
by evaluating them ``value-by-value.''

The next proposition is well known.
\begin{proposition}\label{pr_value_by_value}
The category of $k$-diagrams is an abelian category.
The definitions of kernel, image, and cokernel in
Definition~\ref{de_value_by_value}
agree with those notions when viewing the category of $k$-diagrams 
as an abelian category.
\end{proposition}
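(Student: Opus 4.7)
The plan is to observe that a $k$-diagram is exactly a functor $\cF\from\cC\to\Vect_k$, where $\cC$ is the small category with objects $\{B_1,B_2,B_3,A_1,A_2\}$ and non-identity morphisms $\rho_{1,1},\rho_{2,2},\rho_{3,1},\rho_{3,2}$ (since no arrow has its source in $\{A_1,A_2\}$, there are no nontrivial composites and no commutativity relations to impose). A morphism of $k$-diagrams in the sense of Definition~\ref{de_morphism_k_diagrams} is then precisely a natural transformation between such functors. Thus the category of $k$-diagrams is the functor category $\Vect_k^\cC$.

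Granting this identification, I would invoke the standard fact (see, e.g., Weibel \cite{weibel}, or the general yoga of abelian categories) that for any abelian category $\cA$ and any small category $\cC$, the functor category $\cA^\cC$ is abelian, and that all finite limits and colimits in $\cA^\cC$---in particular kernels, cokernels, images, and finite direct sums---are computed pointwise, i.e., value-by-value on the objects of $\cC$. Since $\Vect_k$ is abelian, this immediately yields that $k$-diagrams form an abelian category.

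The remaining task is to check that the pointwise constructions really do produce well-defined $k$-diagrams, matching Definition~\ref{de_value_by_value}. Given a morphism $\phi\from\cF\to\cG$, one sets $(\ker\phi)(P)=\ker(\phi(P))$ for each $P\in\{B_1,B_2,B_3,A_1,A_2\}$; for each restriction arrow $\rho_{i,j}$ one must verify that $\cF(\rho_{i,j})$ carries $\ker(\phi(B_i))$ into $\ker(\phi(A_j))$, which is immediate from the naturality square $\phi(A_j)\circ \cF(\rho_{i,j})=\cG(\rho_{i,j})\circ \phi(B_i)$. An analogous verification handles image (using that $\cG(\rho_{i,j})$ sends $\image(\phi(B_i))$ into $\image(\phi(A_j))$) and cokernel (using the induced map on quotients). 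These checks are routine but should be stated for completeness.

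The main (and only slightly subtle) step is verifying the axiom that in $\cA^\cC$ every monomorphism equals the kernel of its cokernel and every epimorphism equals the cokernel of its kernel; this reduces, because all the relevant constructions are pointwise, to the same statement in $\Vect_k$, which is classical. The existence of a zero object (the diagram with all values $0$) and of finite biproducts (value-by-value direct sums) is immediate. I do not expect any genuine obstacle; the content of the proposition is essentially a recognition principle, and the proof is a one-line appeal to the functor-category theorem plus the naturality-square checks needed to confirm that the value-by-value recipes of Definition~\ref{de_value_by_value} coincide with the abstract kernel/image/cokernel in $\Vect_k^\cC$.
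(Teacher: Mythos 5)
Your proof is correct and coincides with the first of the two arguments the paper sketches in Subsection~\ref{su_k_diagrams_sheaf_theory}: identify $k$-diagrams with the functor category of presheaves of $k$-vector spaces on the five-object category $\cC$ of \eqref{eq_our_category_for_our_diagrams}, and invoke the standard fact (cited there as \cite{sga4.1}, Expos\'e~I, Proposition~3.1 and Corollaire~3.2) that such a functor category is abelian with kernels, images, and cokernels computed value-by-value. The paper also offers a second route via sheaves on a five-point topological space, which you do not need.
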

The reader can prove this proposition directly; however,
in Subsection~\ref{su_k_diagrams_sheaf_theory}
we will give two other proofs: one by a direct appeal to
\cite{sga4.1}, and another by appealing to the special nature of
sheaf theory over
finite topological spaces.
Of course, the reader may prefer to just assume this proposition and
carry out the computations below.

A doubly-infinite sequence of morphisms of $k$-diagrams
$$
\cdots \to \cF^{-1} \xrightarrow{d^{-1}}
\cF^0 \xrightarrow{d^0} 
\cF^1 \xrightarrow{d^1} \cdots
$$
is {\em exact in position $i$} if ${\rm Image}(d^{i-1})=\ker(d^i)$,
i.e., for all
$P\in \{A_1,A_2,B_1,B_2,B_3\}$ we have
${\rm Image}(d^{i-1}(P))=\ker(d^i(P))$.
A sequence of morphisms is {\em exact} if it is exact at each position.
The analogous definition holds for a finite sequence of morphisms,
or a one-sided infinite sequence of morphisms.

A {\em short exact sequence} is an exact sequence
\begin{equation}\label{eq_short_exact_cF_sequence}
0\to \cF^1\to \cF^2\to \cF^3 \to 0.
\end{equation} 
For such a sequence, and any $k$-diagram, $\cG$, the morphisms in the
above sequence determine, via composition, a sequence 
\begin{equation}\label{eq_short_exact_cF_sequence_applied_to_G_first}
0 \to \Hom(\cG,\cF^1)\to \Hom(\cG,\cF^2)\to\Hom(\cG,\cF^3)\to 0
\end{equation} 
and a sequence
\begin{equation}\label{eq_short_exact_cF_sequence_applied_to_G_second}
0 \to \Hom(\cF^3,\cG)\to \Hom(\cF^2,\cG)\to\Hom(\cF^1,\cG)\to 0.
\end{equation} 
We say that a $k$-diagram, $\cG$ is {\em injective}
(respectively, {\em projective}), if for any short exact sequence
\eqref{eq_short_exact_cF_sequence}, the resulting sequence
\eqref{eq_short_exact_cF_sequence_applied_to_G_first}
(respectively, 
\eqref{eq_short_exact_cF_sequence_applied_to_G_second})
is exact.
Any finite direct sum of injectives is injective, and any
of projectives is projective.

In contrast with a lot of commonly used abelian categories---such as
the category of sheaves of abelian groups or of $k$-vector spaces on
a topological space---each $k$-diagram has simple projective and
injective resolutions.

\begin{proposition}\label{pr_two_term}
Any skyscraper (respectively, coskyscraper)
$k$-diagram is injective (respectively, projective).
Any $k$-diagram $\cF$ fits into an exact sequence
\begin{equation}\label{eq_cP_resolving_cF}
0\to \cP_1\to\cP_0\to \cF\to 0
\end{equation} 
where $\cP_1,\cP_0$ are projective---actually direct sums of
coskyscraper diagrams; we call \eqref{eq_cP_resolving_cF} a
{\em two-term projective resolution of $\cF$}.
Moreover, if the values of $\cF$ are finite dimensional $k$-vector spaces,
then the coskyscraper sheaves in the projective resolution can be
taken to be are of the form
${\rm CoSky}_P(V)$ where the $V$ are finite dimensional $k$-vector spaces.
Similarly, any $\cG$ fits into an exact sequence,
\begin{equation}\label{eq_cI_resolving_cG}
0\to \cG\to \cI^0\to\cI^1\to  0 ,
\end{equation}
where $\cI^0,\cI^1$ are injective; we call
\eqref{eq_cI_resolving_cG} a
{\em two-term injective resolution of $\cG$}.
Moreover, if the values of $\cG$ are finite dimensional $k$-vector spaces,
then the injective resolution is a sum of $k$-diagrams
${\rm Sky}_P(V)$ where the $V$ are finite dimensional $k$-vector spaces.
\end{proposition}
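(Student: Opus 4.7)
The plan is to establish the two parts of the proposition by exploiting the adjunction formulas for skyscrapers and coskyscrapers, together with the value-by-value calculation of kernels, images, and cokernels of morphisms of $k$-diagrams (Proposition~\ref{pr_value_by_value}). For injectivity of $\Sky_P(V)$, I would combine the isomorphism $\Hom(\cF,\Sky_P(V))\isom \Hom_k(\cF(P),V)$ from~\eqref{eq_skyscraper_hom} with two exactness facts: the evaluation functor $\cF\mapsto\cF(P)$ is exact by Proposition~\ref{pr_value_by_value}, and $\Hom_k(-,V)$ is exact on $k$-vector spaces because every short exact sequence of vector spaces splits. Thus the composite functor $\Hom(-,\Sky_P(V))$ is exact in $\cF$, so $\Sky_P(V)$ is injective. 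The projectivity of $\CoSky_P(V)$ follows by the dual argument applied to $\Hom(\CoSky_P(V),\cG)\isom \Hom_k(V,\cG(P))$. Arbitrary direct sums of injectives (respectively projectives) remain injective (respectively projective) in any abelian category, giving the stronger form needed below.

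For the projective resolution of an arbitrary $\cF$, I would set
$$\cP_0 \;=\; \bigoplus_{P\in\{A_1,A_2,B_1,B_2,B_3\}}\CoSky_P\bigl(\cF(P)\bigr),$$
where the summand indexed by $P$ is the morphism $\CoSky_P(\cF(P))\to\cF$ adjoint to the identity on $\cF(P)$. Evaluating at any value $Q$, the $P=Q$ summand contributes $\cF(Q)$ mapping by the identity, so $\cP_0\to\cF$ is surjective value-by-value. A direct computation should then show that the kernel $\cP_1$ vanishes at $B_1,B_2,B_3$, while at $A_j$ it is canonically the graph subspace of $\cF(A_j)\oplus\bigoplus_{i:\ \rho_{ij}\text{ exists}}\cF(B_i)$ parametrized by the $\cF(B_i)$-components. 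Because $\cP_1$ is concentrated on $A$-values, every restriction map (which emanates from a $B$-value) is automatically zero, and this identifies
$$\cP_1 \;\isom\; \CoSky_{A_1}\bigl(\cF(B_1)\oplus\cF(B_3)\bigr)\oplus\CoSky_{A_2}\bigl(\cF(B_2)\oplus\cF(B_3)\bigr),$$
a direct sum of coskyscrapers as required.

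The injective resolution is the formal dual: I would take $\cI^0=\bigoplus_P\Sky_P(\cG(P))$ with each component the map $\cG\to\Sky_P(\cG(P))$ adjoint to the identity on $\cG(P)$. At any value $Q$, the map $\cG(Q)\to\cI^0(Q)$ contains the $P=Q$ summand as an identity factor, hence is injective. A symmetric computation should then identify the cokernel $\cI^1$ as vanishing at $A_1$ and $A_2$, with values $\cG(A_1)$, $\cG(A_2)$, and $\cG(A_1)\oplus\cG(A_2)$ at $B_1$, $B_2$, $B_3$ respectively; its restriction maps into the $A_j$-values are forced to be zero, so $\cI^1$ decomposes as the corresponding direct sum of $\Sky_{B_i}$ diagrams. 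The finite-dimensional refinement is then immediate in both constructions, because every parameter vector space appearing is a finite direct sum of values of $\cF$ or $\cG$.

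The only point I expect to require real vigilance is the bookkeeping at the end---namely, checking that the induced restriction maps in $\cP_1$ and $\cI^1$ are all zero, so that those $k$-diagrams really split as sums of (co)skyscrapers and not merely as subquotients thereof. This should, however, be automatic: $\cP_1$ lives only on the $A$-labels and $\cI^1$ only on the $B$-labels, so the relevant domain or codomain of every candidate restriction map already vanishes. No technology beyond Proposition~\ref{pr_value_by_value} and the two adjunction formulas for (co)skyscrapers is required.
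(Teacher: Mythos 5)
Your proposal is correct and follows essentially the same route as the paper: injectivity/projectivity of (co)skyscrapers via the adjunction isomorphisms combined with exactness of the evaluation functors, followed by the natural surjection onto $\bigoplus_P \CoSky_P(\cF(P))$ (resp.\ injection into $\bigoplus_P \Sky_P(\cG(P))$), whose kernel (resp.\ cokernel) is supported only at $A_1,A_2$ (resp.\ $B_1,B_2,B_3$) and therefore splits as a sum of coskyscrapers (resp.\ skyscrapers). Your explicit identifications of $\cP_1$ and $\cI^1$ are a slightly more detailed version of what the paper records in one line; the only implicit hypothesis, shared with the paper, is Zorn's lemma for the splitting of short exact sequences of possibly infinite-dimensional vector spaces.
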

We will prove this proposition in
Subsection~\ref{su_two_term_resolutions}.

The fact that any $k$-diagram has a two-term projective resolution
and a two-term injective resolution makes the definition of Ext
groups especially simple.  
(For the general definition of Ext groups, see
\cite{weibel,hartshorne}.)

For any $k$-diagrams, $\cF,\cG$, we take a projective resolution
\eqref{eq_cP_resolving_cF} and 
define the group $\Ext^i(\cF,\cG)$ for $i=0,1$, respectively,
as the kernel and cokernel of the resulting maps
$$
\Hom(\cP_0,\cG)\to \Hom(\cP_1,\cG),
$$
which are independent of the projective $k$-diagrams $\cP_1,\cP_0$
and morphism $\cP_1\to\cP_0$ yielding an exact sequence;
$\Ext^0(\cF,\cG)\isom \Hom(\cF,\cG)$.
Furthermore, taking any injective resolution
\eqref{eq_cI_resolving_cG}, 
the resulting kernel and cokernel of the map
$$
\Hom(\cF,\cI^0)\to \Hom(\cF,\cI^1)
$$
are also isomorphic to $\Ext^i(\cF,\cG)$ for, respectively, $i=0,1$
(see, e.g., Theorem~2.7.6\footnote{
  One is also using the Freyd-Mitchell Embedding Theorem, namely
  Theorem~1.6.1 of \cite{weibel} (page 25), 
  so that working with $R$-modules
  implies the same results over any small abelian category.
  } 
  (page 63) of \cite{weibel}).

Consider any short exact sequence
of $k$-diagrams 
$$
0\to \cF_1\to \cF_2\to \cF_3\to 0
$$
(i.e., for each $P=A_1,A_2,B_1,B_2,B_3$, the sequence
$$
0\to \cF_1(P)\to \cF_2(P)\to \cF_3(P) \to 0
$$
is exact).  In this case for any $k$-diagram, $\cG$,
there is a long exact sequence
$$
0\to \Ext^0(\cF_3,\cG)\to \Ext^0(\cF_2,\cG)\to  \Ext^0(\cF_1,\cG) \to
\Ext^1(\cF_3,\cG) \to \cdots 
$$
and this sequence is ``natural'' or ``functorial'' in $\cG$, i.e.,
if $\cG_1\to \cG_2$ is a morphism, then there are morphisms
$\Ext^i(\cF_j,\cG_1)\to\Ext^i(\cF_j,\cG_2)$
that commute with the maps in the two resulting long exact sequences.
Similarly to a short exact sequence $0\to \cG_1\to\cG_2\to\cG_3$,
there is a long
exact sequence
\begin{equation}\label{eq_long_exact_ext_second_var_varying}
0\to \Ext^0(\cF,\cG_1)\to \Ext^0(\cF,\cG_2)\to  \Ext^0(\cF,\cG_3) \to
\Ext^1(\cF,\cG_1) \to \cdots 
\end{equation} 
that is functorial in $\cF$
(see, for example, the discussion of
{\em universality}, since $k$-diagrams have enough
injectives  (see \cite{hartshorne},
Corollary~1.4, Section~III.1).


\subsection{Working with Ext Groups}

Our discussion of skyscraper and coskyscraper $k$-diagrams have
some easy consequences that we will need when we make
computations; we state these results in the two propositions below, and
we leave both proofs to the reader.
 
\begin{proposition}\label{pr_special_case_of_k_valued_cosky_and_sky}
Let
$P$ be one of $A_1,A_2,B_1,B_2,B_3$.
For each $k$-diagram, $\cG$, and each
$u\in\cG(P)$, the isomorphisms 
$$
\Hom({\rm CoSky}_P(k),\cG) \isom \Hom_k(k,\cG(P)) \isom \cG(P)
$$
determine a unique 
$$
\iota_{P,u} \in \Hom\bigl({\rm CoSky}_P(k),\cG\bigr)
$$
such that $\iota_{P,u}(P)$ takes $1$ to $u$.
For each $k$-diagram, $\cF$, and each $w\in\cF(P)^*$, the isomorphisms
$$
\Hom(\cF,{\rm Sky}_P(k)) \isom \Hom_k(\cF(P),k) \isom \cF(P)^*
$$
determine a unique 
$$
\iota^{P,w} \in \Hom\bigl(\cF,{\rm Sky}_P(k)\bigr)
$$
such that $\iota^{P,w}(P)$ takes each $u\in\cF(P)$ to $w(u)\in k$.
\end{proposition}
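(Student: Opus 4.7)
The plan is to observe that both claims are essentially restatements of the universal properties of coskyscraper and skyscraper $k$-diagrams that have already been established in the excerpt, combined with the canonical identification $\Hom_k(k,V)\isom V$ sending a linear map to its value at $1\in k$.

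First, for the coskyscraper statement, I would invoke the isomorphism
\[
\Hom\bigl({\rm CoSky}_P(k),\cG\bigr) \isom \Hom_k\bigl(k,\cG(P)\bigr)
\]
already recorded in the excerpt (just after the definition of the coskyscraper diagrams), together with the canonical identification $\Hom_k(k,\cG(P))\isom \cG(P)$ sending $f$ to $f(1)$. The composed isomorphism sends $\phi$ to $\phi(P)(1)$, so given $u\in\cG(P)$ there is a unique preimage $\iota_{P,u}$, and by construction $\iota_{P,u}(P)(1)=u$. This is the content of the first half of the proposition.

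Second, for the skyscraper statement, I would apply the already-established isomorphism~\eqref{eq_skyscraper_hom},
\[
\Hom\bigl(\cF,{\rm Sky}_P(V)\bigr) \isom \Hom_k\bigl(\cF(P),V\bigr),
\]
specialized to $V=k$. Since $\Hom_k(\cF(P),k)$ is by definition $\cF(P)^*$, this gives an isomorphism $\Hom(\cF,{\rm Sky}_P(k))\isom\cF(P)^*$ sending $\phi$ to $\phi(P)$. For any $w\in\cF(P)^*$ we then take $\iota^{P,w}$ to be the unique preimage of $w$ under this isomorphism; then by construction the map $\iota^{P,w}(P)\from\cF(P)\to k$ equals $w$ as linear functionals, i.e.\ $\iota^{P,w}(P)(u)=w(u)$ for all $u\in\cF(P)$.

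There is essentially no obstacle here: the proposition is a bookkeeping corollary of the universal properties of skyscrapers and coskyscrapers that were established earlier. The only minor subtlety is to spell out what the isomorphism $\Hom_k(k,\cG(P))\isom\cG(P)$ does (evaluation at $1$), so that the characterization ``$\iota_{P,u}(P)$ takes $1$ to $u$'' is literally the composition of the two canonical isomorphisms; similarly the characterization ``$\iota^{P,w}(P)$ takes $u$ to $w(u)$'' is just the unwinding of $\Hom_k(\cF(P),k)=\cF(P)^*$.
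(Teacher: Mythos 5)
Your proof is correct and takes exactly the expected route. The paper in fact leaves this proposition to the reader (it says so explicitly just before stating Propositions~\ref{pr_special_case_of_k_valued_cosky_and_sky} and~\ref{pr_morphisms_of_special_case_of_k_valued_cosky_and_sky}), so there is no authorial proof to compare against; but your argument is precisely the intended one, namely specializing the universal-property isomorphisms $\Hom({\rm CoSky}_P(V),\cG)\isom\Hom_k(V,\cG(P))$ and \eqref{eq_skyscraper_hom} to $V=k$ and then unwinding the standard identifications $\Hom_k(k,\cG(P))\isom\cG(P)$ via evaluation at $1$ and $\Hom_k(\cF(P),k)=\cF(P)^*$.
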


In computing Ext groups and the Serre functor, the following observations
will be helpful.

\begin{definition}\label{de_specialization}
For $P,P'\in \{A_1,A_2,B_1,B_2,B_3\}$, we say that $P'$ is 
{\em a specialization of $P$} if $P'=P$ or $P=A_i$ for some $i$
and $P'=B_i,B_3$; we also write $P'\le P$, which gives a partial
order on the set $ \{A_1,A_2,B_1,B_2,B_3\}$.
\end{definition}

\begin{proposition}
\label{pr_morphisms_of_special_case_of_k_valued_cosky_and_sky}
For $P,P'$ each equal one of $A_1,A_2,B_1,B_2,B_3$.
We have
$$
\Hom({\rm CoSky}_{P,k},{\rm CoSky}_{P',k})\isom
{\rm CoSky}_{P',k}(P)
$$
which equals $k$ or $0$ according to whether or not $P'$ is a specialization
of $P$.
Furthermore, let $P'$ be a specialization of $P$, and consider the map
$$
\mu \in \Hom({\rm CoSky}_{P,k},{\rm CoSky}_{P',k})
$$
given by $\mu(P)$ takes $1\in \Hom({\rm CoSky}_{P,k}(P)$ to
$\alpha\in k={\rm CoSky}_{P',k}(P)$.  Then
for any $k$-diagram, $\cF$, the map that $\mu$ induces by composition
$$
\Hom({\rm CoSky}_{P',k},\cF) 
\to
\Hom({\rm CoSky}_{P,k},\cF) ,
$$
when equivalently viewed as a map
$$
\cF(P') \to \cF(P),
$$
is the restriction map in $\cF$ from $\cF(P')\to\cF(P)$ multiplied
by $\alpha$.
Similarly, for $P,P'$ each equal one of $A_1,A_2,B_1,B_2,B_3$, we have
$$
\Hom({\rm Sky}_{P,k},{\rm Sky}_{P',k})\isom
{\rm Sky}_{P,k}(P')
$$
which equals $k$ or $0$ according to whether or not $P'$ is a specialization
of $P$;
if 
$$
\mu \in \Hom({\rm Sky}_{P,k},{\rm Sky}_{P',k})
$$
satisfies $(\mu(P'))1=\alpha$, then the map that $\mu$ induces by composition
$$
\Hom(\cF,{\rm Sky}_{P',k})
\to
\Hom(\cF,{\rm Sky}_{P,k}) ,
$$
when equivalently viewed as a map
$$
\cF(P')^* \to \cF(P)^*,
$$
is the dual of restriction map in $\cF$ from $\cF(P')^*\to\cF(P)^*$ multiplied
by $\alpha$.
\end{proposition}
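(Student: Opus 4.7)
The plan is to derive both isomorphism statements as immediate consequences of Proposition~\ref{pr_special_case_of_k_valued_cosky_and_sky}. For the coskyscraper case I would set $\cG = {\rm CoSky}_{P',k}$ in the identification $\Hom({\rm CoSky}_{P,k},\cG) \isom \cG(P)$, yielding $\Hom({\rm CoSky}_{P,k},{\rm CoSky}_{P',k}) \isom {\rm CoSky}_{P',k}(P)$. A brief inspection of the displayed pictures of the coskyscraper diagrams shows that the support of ${\rm CoSky}_{P',k}$ is exactly $\{Q : P' \le Q\}$ in the specialization order of Definition~\ref{de_specialization}, so the value ${\rm CoSky}_{P',k}(P)$ is $k$ when $P' \le P$ and $0$ otherwise. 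The skyscraper assertion is dual: I would set $\cF = {\rm Sky}_{P,k}$ in $\Hom(\cF,{\rm Sky}_{P',k}) \isom \cF(P')^*$, note that ${\rm Sky}_{P,k}(P')$ is at most one-dimensional with support $\{Q : Q \le P\}$, and use that a one-dimensional space is canonically isomorphic to its dual.

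The substantive part is identifying the induced composition maps with scalar multiples of (duals of) restriction maps. My plan is to exploit one combinatorial fact visible in the definitions: whenever a coskyscraper ${\rm CoSky}_{P',k}$ takes the value $k$ at two points $P' \le P$, the restriction map between them is the identity map on $k$. Thus for $\phi \in \Hom({\rm CoSky}_{P',k},\cF)$ corresponding under Proposition~\ref{pr_special_case_of_k_valued_cosky_and_sky} to $u = \phi(P')(1) \in \cF(P')$, the compatibility of $\phi$ with restrictions forces $\phi(P)(1) = \cF(\rho_{P'\to P})(u)$, where $\rho_{P'\to P}$ denotes the restriction map of $\cF$ from $P'$ to $P$. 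Given $\mu$ with $\mu(P)(1) = \alpha$, the composite $\phi \circ \mu$ then satisfies $(\phi\circ \mu)(P)(1) = \phi(P)(\alpha) = \alpha\, \cF(\rho_{P'\to P})(u)$, which is exactly $\alpha$ times the restriction applied to $u$. Iterating this check with $P$ equal to any other value where ${\rm CoSky}_{P,k}$ is nonzero confirms that the single scalar $\alpha$ controls the whole induced morphism.

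The skyscraper half runs in parallel by dualization: any $\psi \in \Hom(\cF,{\rm Sky}_{P,k})$ corresponds to a functional on the $P$-value of $\cF$, and the identity restrictions inside ${\rm Sky}_{P,k}$ together with $\psi$-equivariance force $\psi(P')$, viewed as an element of $\cF(P')^*$, to equal $w \circ \cF(\rho_{P'\to P})$ where $w = \psi(P)$. Composing with $\mu$ and unwinding gives the dual restriction map scaled by $\alpha$. I do not expect a serious conceptual obstacle here; the whole proposition amounts to bookkeeping on top of the Yoneda-style adjunctions already established in Proposition~\ref{pr_special_case_of_k_valued_cosky_and_sky}. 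The only place I would take care is in tracking directions of arrows and pairings in the skyscraper/dual part, since the two halves are related by reversing every arrow and inserting duals, and there is a potential for sign or direction conventions in the statement to need minor renormalization before they match the natural output of this computation.
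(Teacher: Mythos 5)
Your proof is correct, and since the paper's own ``proof'' consists only of the remark that the verification is a ``straightforward checking of the various cases of $P$ and $P'$,'' your account supplies exactly the details the authors left to the reader: reduce to the adjunction of Proposition~\ref{pr_special_case_of_k_valued_cosky_and_sky}, note that a (co)skyscraper is supported on the set of generizations (resp.~specializations) of its base point, and observe that where two values of a (co)skyscraper are both $k$ the connecting restriction map is the identity, so the morphism condition propagates the single scalar $\alpha$ through the whole comparison.

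The ``direction renormalization'' you flagged is a genuine typo in the paper's statement rather than an issue in your computation: for $\mu\in\Hom({\rm Sky}_{P,k},{\rm Sky}_{P',k})$, covariance of $\Hom(\cF,-)$ gives a map $\Hom(\cF,{\rm Sky}_{P,k})\to\Hom(\cF,{\rm Sky}_{P',k})$, i.e.~$\cF(P)^*\to\cF(P')^*$ (the dual of the restriction $\cF(P')\to\cF(P)$, times $\alpha$), which is the direction your argument produces; the displayed arrows in the proposition's skyscraper clause are reversed. This is consistent with how the map is actually used in the proof of Theorem~\ref{th_strong_duality}, where $\mu^1_*\from\bigoplus_i\cF(A_i)^*\to\bigoplus_j\cF(B_j)^*$ is identified with the dual of $\nu=\cF(\partial)$.
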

The proof is a straightforward checking of the various cases of $P$ and $P'$
in $\{A_1,A_2,B_1,B_2,B_3\}$, which we leave to the reader.
[The analogous result holds (and is similarly easy to check)
for presheaves (in the sense of \cite{sga4.1})
over any semitopological category; see \cite{friedman_cohomology}.]

\subsection{Two-Term Injective and Projective Resolutions}
\label{su_two_term_resolutions}

In this section we prove Proposition~\ref{pr_two_term}.
The proof will introduce some useful tools to construct
some slightly different resolutions of $\underline k$ and
$\underline k_{/B_1,B_2}$.

\begin{proof}[Proof of Proposition~\ref{pr_two_term}]
For any short exact sequence 
\eqref{eq_short_exact_cF_sequence}, and any
$P=A_1,A_2,B_1,B_2,B_3$,
\begin{equation}\label{eq_short_exact_at_P}
0\to \cF^1(P)\to \cF^2(P)\to \cF^3(P)\to 0
\end{equation} 
is a short exact sequence of $k$-vector spaces.
For any
$k$-vector space, $V$,
setting $\cG={\rm CoSky}_{P,V}$,
\eqref{eq_short_exact_cF_sequence_applied_to_G_first} is equivalent
to the sequence of $k$-vector spaces
$$
0\to \Hom_k\bigl( V,\cF^1(P) \bigr)
\to \Hom_k\bigl( V,\cF^2(P) \bigr)
\to \Hom_k\bigl( V,\cF^3(P) \bigr)
\to 0,
$$
which we easily verify is exact 
by choosing a basis for $V$, which reduces this to the case
$V=k$, which is equivalent to \eqref{eq_short_exact_at_P}.  
Hence any coskyscraper $k$-diagrams is projective.

Similarly for
$\cG={\rm Sky}_{P,V}$,
\eqref{eq_short_exact_cF_sequence_applied_to_G_second} is equivalent to the
sequence
$$
0
\to \Hom_k\bigl(  \cF^3(P),V \bigr)
\to \Hom_k\bigl(  \cF^2(P),V \bigr)
\to \Hom_k\bigl(  \cF^1(P),V \bigr) 
\to 0
$$
which we see is exact by choosing a basis, $X$, for ${\rm Image}(\cF^1(P))$
in $\cF^2(P)$, and then extending this to a basis $X\cup X'$
for all of $\cF^2(P)$.
We then see that each element of $\Hom_k\bigl(  \cF^2(P),V \bigr)$
is determined by one of each of $\Hom_k\bigl(  \cF^i(P),V \bigr)$
for $i=1,3$.
Hence any skyscraper $k$-diagrams is injective.

For the projective resolution of a $k$-diagram, $\cF$, we note that for
each 
$P=A_1,A_2,B_1,B_2,B_3$, the isomorphism
$$
\Hom( {\rm CoSky}_P(\cF(P)) , \cF)  \isom \Hom_k\bigl( \cF(P),\cF(P) \bigr)
$$
(taking $\phi$ to $\phi(P)$) determines a unique map
\begin{equation}\label{eq_phi_P_to_cF}
\phi_{P,\cF}\from {\rm CoSky}_P\bigl( \cF(P) \bigr) \to \cF
\end{equation} 
that corresponds to the identity map of $\Hom_k\bigl( \cF(P),\cF(P) \bigr)$,
i.e., the identity map $\phi_P(P)$ is the identity map
$\cF(P)\to\cF(P)$.
This sets up a morphism $\cP_0\to\cF$, where
$$
\phi\from\cP_0\to\cF,
$$
where
$$
\cP_0 = \bigoplus_P {\rm CoSky}_P\bigl( \cF(P) \bigr), 
\quad
\phi = \bigoplus_P \phi_P.
$$
We see that $\ker(\phi)$ is $0$ on $B_1,B_2,B_3$, and hence
$\ker(\phi)$ is a sum of coskyscrapers at $A_1,A_2$.  Hence we get
a resolution
$$
\ker(\phi)=\cP_1\to\cP_0\to\cF,
$$
where $\cP_1,\cP_0$ are direct sums of coskyscraper $k$-diagrams.

Similarly the maps
$$
\phi^{P,\cF}\from \cF\to {\rm Sky}_P \bigl( \cF(P) \bigr) 
$$
such that $\phi^P(P)$ is the identity, yield an injective map
$\cF\to\cI^0$ whose cokernel at $A_1,A_2$ is $0$, and hence
we get a two-term injective resolution $\cF\to\cI^0\to\cI^1$
where $\cI^0=\oplus_P {\rm Sky}_P \bigl( \cF(P) \bigr)$.
\end{proof}

[Similar remarks hold for the category of presheaves on
any finite categories that are {\em semitopological} in
the sense of \cite{friedman_cohomology}; see
\cite{friedman_memoirs_hnc} for examples of this general
principle in another special case, namely the semitopological
categories used to define a {\em sheaf on a graph}.]

\subsection{Proof of Theorem~\ref{th_cohomology_agrees_with_Ext_groups}}

To prove Theorem~\ref{th_cohomology_agrees_with_Ext_groups}
we rely on the following straightforward calculation.

\begin{lemma}\label{le_projective_resolution_of_underline_k}
The $k$-diagram $\underline k$ has a projective resolution
\begin{equation}\label{eq_projective_resolution_of_underline_k}
0
\to \bigoplus_{i=1}^2{\rm CoSky}_{A_i}(k)
\xrightarrow{\mu_1} \bigoplus_{j=1}^3{\rm CoSky}_{B_j}(k)
\xrightarrow{\mu_0} \underline k \to 0,
\end{equation}
where 
\begin{equation}\label{eq_mu_zero_sum_of_phi_Bs_k}
\mu_0=\bigoplus_{j=1}^3 \phi_{B_j,\underline k}
\end{equation} 
with $\phi_{B_j}$ as in \eqref{eq_phi_P_to_cF}, 
and the map $\mu_1$ is the
as follows: the restriction to $\mu_1$ of ${\rm CoSky}_{A_1}(k)$ 
is determined by the element $(1,0,-1)$ in
$$
\Biggl( \bigoplus_{j=1}^3{\rm CoSky}_{B_j}(k) \Biggr)(A_1)
=
 \bigoplus_{j=1}^3 \bigl( {\rm CoSky}_{B_j}(k) \bigr)(A_1)
=
k\oplus \{0\} \oplus k,
$$
i.e., equals the element $\iota_{A_1,(1,0,-1)}$ as in
Proposition~\ref{pr_special_case_of_k_valued_cosky_and_sky},
and similarly
$\mu_1$ restricted to ${\rm CoSky}(A_2)$ is $\iota_{A_2,(0,1,-1)}$.
\end{lemma}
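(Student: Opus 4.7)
The plan is to verify the lemma by a direct value-by-value computation, which is possible because, as explained in Proposition~\ref{pr_value_by_value}, kernels, images, and cokernels in the category of $k$-diagrams are computed at each of the five points $P\in\{A_1,A_2,B_1,B_2,B_3\}$. First I would tabulate the values of each of the three $k$-diagrams in \eqref{eq_projective_resolution_of_underline_k}: using the diagrams after the definitions of skyscraper and coskyscraper, one finds that $\bigoplus_{j=1}^{3}{\rm CoSky}_{B_j}(k)$ has value $k$ at each $B_j$, value $k\oplus k$ at $A_1$ (contributions from $B_1$ and $B_3$), and value $k\oplus k$ at $A_2$ (contributions from $B_2$ and $B_3$), while $\bigoplus_{i=1}^{2}{\rm CoSky}_{A_i}(k)$ has value $0$ at each $B_j$ and value $k$ at each $A_i$.

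Next I would describe $\mu_0$ and $\mu_1$ value-by-value using Propositions~\ref{pr_special_case_of_k_valued_cosky_and_sky} and~\ref{pr_morphisms_of_special_case_of_k_valued_cosky_and_sky}. The map $\mu_0$ in \eqref{eq_mu_zero_sum_of_phi_Bs_k}, at each $B_j$, is the identity $k\to k$; at $A_1$ it is the sum map $(a,c)\mapsto a+c$ on $k\oplus k$ (the identity on the $B_1$ and $B_3$ summands composed with the identity restriction in $\underline k$), and similarly at $A_2$ it is the sum map $(b,c)\mapsto b+c$. The map $\mu_1$, by the description of $\iota_{A_1,(1,0,-1)}$ and $\iota_{A_2,(0,1,-1)}$, is zero at every $B_j$ (since the source is zero there), sends $1\mapsto(1,-1)$ at $A_1$ in the $(B_1,B_3)$ coordinates, and sends $1\mapsto(1,-1)$ at $A_2$ in the $(B_2,B_3)$ coordinates.

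I would then check exactness at each of the five points. At each $B_j$, the sequence reduces to $0\to 0\to k\to k\to 0$ with the right map the identity, which is exact. At $A_1$, the sequence becomes
\[
0 \to k \xrightarrow{\ 1\mapsto(1,-1)\ } k\oplus k \xrightarrow{\ (a,c)\mapsto a+c\ } k \to 0,
\]
whose exactness is immediate, and the $A_2$ case is identical. Combined with Proposition~\ref{pr_value_by_value}, this shows \eqref{eq_projective_resolution_of_underline_k} is exact as a sequence of $k$-diagrams. Finally, projectivity of the outer terms follows from Proposition~\ref{pr_two_term}, since any coskyscraper is projective and finite direct sums of projectives are projective.

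There is no real obstacle: once the maps $\mu_0$ and $\mu_1$ are unwound via Propositions~\ref{pr_special_case_of_k_valued_cosky_and_sky} and~\ref{pr_morphisms_of_special_case_of_k_valued_cosky_and_sky}, the verification is a short linear-algebra check at $A_1$ and $A_2$. The only place where one must be careful is keeping track of signs in the definition of $\mu_1$ (which is what makes the composition $\mu_0\circ\mu_1$ vanish at $A_1,A_2$), and noting that the apparently arbitrary choice of signs $(1,0,-1)$ and $(0,1,-1)$ corresponds exactly to the sign convention in the differential $\cF(\partial)$ of \eqref{eq_formula_for_cF_partial}; this is the remark promised after Definition~\ref{de_diagram_k_vs} that the formula for $H^1(\cF)$ reflects a choice of basis for certain one-dimensional spaces.
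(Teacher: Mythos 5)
Your proposal is correct and takes essentially the same approach as the paper: both unwind $\mu_0$ and $\mu_1$ value-by-value at $B_1,B_2,B_3,A_1,A_2$, and the exactness check at each point is the same short linear-algebra computation. The only cosmetic difference is that the paper phrases the argument as "compute $\ker(\mu_0)$ and identify it with $\bigoplus_i{\rm CoSky}_{A_i}(k)$" whereas you verify exactness of the already-assembled three-term sequence directly; these are the same calculation.
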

\begin{proof}
Setting
$$
\cP_0 = \bigoplus_{j=1}^3{\rm CoSky}_{B_j}(k)
$$
we have $\cP_0(B_j)=k$ for $j=1,2,3$, and
$\mu_0$ in \eqref{eq_mu_zero_sum_of_phi_Bs_k}
has $\mu_0(B_j)\from k\to k$ being the identity map.
Furthermore,
$$
\cP_0(A_1)=\bigoplus_{j=1}^3 \bigl( {\rm CoSky}_{B_j}(k) \bigr)(A_1)
= k\oplus \{0\} \oplus k
$$
and $\mu_0(A_1)$ takes $k\oplus \{0\} \oplus k \to \underline k(A_1)=k$
by the map taking $(u_1,0,u_3)$ to $u_1+u_3$.
Similarly $\mu(A_2)$ takes $\cP(A_2)=\{0\}\oplus k \oplus k$ to
$\underline k(A_2)$ by the map taking $(0,u_2,u_3)$ to $u_2+u_3$.
It follows $\mu_0$ is surjective, and that $\cP_1=\ker(\mu_0)$
has $\cP_1(B_j)=0$ for all $j$, and
$$
\cP_1(A_1)=\ker\bigl( (u_1,0,u_3) \mapsto u_1+u_3 \bigr), 
\quad
\cP_1(A_2)=\ker\bigl( (0,u_2,0,u_3) \mapsto u_2+u_3 \bigr).
$$
Hence $\cP_1(A_1)$ is a one dimensional $k$-vector space, spanned by
$(1,0,-1)\in k\oplus\{0\}\oplus k$, and $\cP(A_2)$ similarly,
spanned by $(1,0,-1)$.
Since for ${\rm CoSky}_{A_i,k}$ has value $k$ at $A_i$ and everywhere else
$\{0\}$, $\cP_1$ is the sum of one-dimensional coskyscrapers with
the value $k$ and the maps $\cP_1\to\cP_0$ is as in the statement
of the lemma.
\end{proof}
We remark that in the above lemma, $\cP_1(A_1)$ is a the one-dimensional
kernel of the map $(u_1,0,u_3)\mapsto u_1+u_3$; hence this
kernel is spanned both 
by $\pm(1,0,-1)$; the choice of one over the other
is arbitrary, and similarly the choice of $\mu_1(A_1)$ could
take $1$ to $(1,0,-1)$ or $(-1,0,1)$.  Similarly for $\cP_1(A_2)$.
[The opposite category of $\cC$ above is category in
\cite{friedman_memoirs_hnc} associated to a graph with three vertices,
corresponding to $B_1,B_2,B_3$
and two edges corresponding to $A_1,A_2$, 
and the choice between $\pm(1,0,-1)$ is analogous
to a choice of orientation of the edge corresponding to $A_1$;
similarly for $A_2$.]

\begin{proof}[Proof of Theorem~\ref{th_cohomology_agrees_with_Ext_groups}]
We can compute
${\rm Ext}^i(\underline k,\cF)$ using the projective
resolution
\eqref{eq_projective_resolution_of_underline_k}, which is therefore
the kernel and cokernel of the map that $\mu_1$ induces on
$$
\Hom\left( \bigoplus_{j=1}^3{\rm CoSky}_{B_j}(k), \cF\right) \to
\Hom\left( \bigoplus_{i=1}^2{\rm CoSky}_{A_i}(k), \cF\right),
$$
which is equivalent to a map 
\begin{equation}\label{eq_nu_like_partial_cF}
\nu\from \bigoplus_{j=1}^3 \cF(B_j) \to
\bigoplus_{i=1}^2 \cF(A_i)  .
\end{equation} 
For $i=1,2$ and $j=1,2,3$, let $\alpha_{i,j}\in k$ be given as
\begin{equation}\label{eq_alpha_ij}
\alpha_{1,1}=\alpha_{2,2}=1,
\quad
\alpha_{1,3}=\alpha_{2,3}=-1,
\quad
\alpha_{1,2}=\alpha_{2,1}=0.
\end{equation} 
According to 
Proposition~\ref{pr_morphisms_of_special_case_of_k_valued_cosky_and_sky},
since $\mu_1$ takes
${\rm CoSky}_{A_i,k}$ to ${\rm CoSky}_{B_j,k}$ by
multiplication by $\alpha_{i,j}$,
it follows that $\nu$ maps $\cF(B_j)$ to $\cF(A_i)$
by multiplication by $\alpha_{i,j}$.
But this is precisely the map $\cF(\partial)$.
Hence $\Ext^i(\underline k,\cF)$ for $i=0,1$ are, respectively, the
kernel and cokernel of $\cF(\partial)$.
\end{proof}

\subsection{A Subtlety of Linear Algebra and Zorn's Lemma}
\label{su_linear_algebra_subtlety}

Let $\cL\from U\to V$ be any linear map of (possibly infinite dimensional)
$k$-vector spaces.  Then we get a map:
\begin{equation}\label{eq_cokernel_of_cL_star_to_star_of_kernel_subtlety}
\coker(\cL^*) \to \bigl( \ker(\cL) \bigr)^*,
\end{equation} 
as follows:
any $\ell\in U^*$ gives, by restriction, a map $\ker(U)\to k$, i.e.,
an element of $(\ker(U))^*$, and if $\ell-\ell'\in {\rm Image}(\cL^*)$,
then $\ell,\ell'$ agree on $\ker(U)$.
To know that this map is an isomorphism, one needs to know that
any linear map $\ell\from\ker(\cL)\to k$ extends to a map $U\to k$.
This is true if we assume the axiom of choice or Zorn's Lemma, for then using
Zorn's lemma we can find a subspace $U'\subset U$ such that
$U$ splits as a direct
sum of $\ker(\cL)$ and $U'\in U$, and this
allows us to extend $\ell$ to all of
$U$ (defining $\ell$ to be zero on $U'$, which defines $\ell$ on all of $U$).

[Even if one does not assume the axiom of choice or Zorn's lemma,
\eqref{eq_cokernel_of_cL_star_to_star_of_kernel_subtlety}
will be an isomorphism in certain situations: for example,
if $U$ has a basis and
$\ker(\cL)$ is finite dimensional, then we can
perform a finite basis exchange to have a basis
for $\ker(\cL)$ that extends to a basis for $U$.
Note that this holds for $L=\cM_{W,\mec d}(\partial)$ for any
perfect matching $W$, the values of $\cM_{W,\mec d}$ have a basis
(indexed by $\integers$ or $\integers_{\le d_i}$ for $i=1,2$) and
the kernel of $L$ is finite dimensional.]

On the other hand, the foundations of homological algebra assume 
Zorn's lemma, since one uses Baer's Criterion to show that
$k$ is an injective $k$-module, which assumes Zorn's lemma
(see, e.g., \cite{weibel}, proof of Bear's Criterion~2.3.1, page~39).
In fact, to say that $k$ is injective is precisely to say that
if $A\to B$ is any injectiion, then any map $A\to k$ is the composition
of the map $A\to B$ with some map $B\to k$, which
implies that any linear functional on a subspace of $B$
has an extension to all of $B$.
Hence we will assume Zorn's lemma for the rest of this section.

\subsection{Proof of Theorem~\ref{th_strong_duality}}
\label{su_strong_duality}

\begin{proof}[Proof of Theorem~\ref{th_strong_duality}]
Similar to
\eqref{eq_projective_resolution_of_underline_k},
let us prove that $\underline k_{/B_1,B_2}$ has the injective
resolution
\begin{equation}\label{eq_injective_resolution_of_duality_sheaf}
0 \to \underline k_{/B_1,B_2}
\xrightarrow{\mu^{0}} \bigoplus_{i=1}^2{\rm Sky}_{A_i}(k)
\xrightarrow{\mu^{1}} 
\bigoplus_{j=1}^3{\rm Sky}_{B_j}(k)
\to 0 .
\end{equation} 
First, we take $\mu^{0}$ to be 
$\iota^{A_1,1}\oplus\iota^{A_2,1}$ with notation as in
Proposition~\ref{pr_special_case_of_k_valued_cosky_and_sky}, which yields
an injection
$$
\mu^{0}\from \underline k_{/B_1,B_2} \to \cI^0,
\quad\mbox{where}\quad
\cI^0= \bigoplus_{i=1}^2{\rm Sky}_{A_i,k} ,
$$
where setting $\cI^1=\cI^0/\mu^{0}(\underline k)$, we have
$\cI^1(A_i)=0$ for $i,1,2$, and
$$
\cI^1(B_1)=\cI^1(B_2)=k/\{0\}=k,
\quad
\cI^1(B_3)=k\oplus k/{\rm diag},
$$
where ${\rm diag}$ is the diagonal, i.e., the span of $(1,1)$ in $k\oplus k$.
Hence identifying $k\oplus k/{\rm diag}$ with
the class of $(-1,0)$, we get
$$
\cI^1 \isom \bigoplus_{j=1}^3{\rm Sky}_{B_j}(k),
$$
with the map $\mu^1\from\cI^0\to\cI^1$ being the
map taking ${\rm Sky}_{A_i,k}$ to ${\rm Sky}_{B_j,k}$ being
multiplication by $\alpha_{ij}$, where $\alpha_{i,j}$ are given in
\eqref{eq_alpha_ij}.
Hence for any $\cF$ we have
an exact sequence
$$
0 
\leftarrow \Ext^0(\cF,\underline k) 
\leftarrow
\bigoplus_{i=1}^2 \Hom_k(\cF(A_i),k)
\leftarrow
\bigoplus_{j=1}^3 \Hom_k(\cF(B_i),k)
\leftarrow
\Ext^1(\cF,\underline k) 
\leftarrow  0,
$$
i.e., 
$$
0 
\leftarrow \Ext^0(\cF,\underline k) 
\leftarrow
\bigoplus_{i=1}^2 \cF(A_i)^*
\leftarrow
\bigoplus_{j=1}^3 \cF(B_i)^*
\leftarrow
\Ext^1(\cF,\underline k) 
\leftarrow  0.
$$
But in view of the values of the $\alpha_{i,j}$, the map
\begin{equation}\label{eq_dual_of_nu_partia}
\bigoplus_{i=1}^2 \cF(A_i)^*
\leftarrow
\bigoplus_{j=1}^3 \cF(B_i)^*
\end{equation} 
above is precisely the dual map of $\nu$ in
\eqref{eq_nu_like_partial_cF}.
Hence, in view of the isomorphisms
\eqref{eq_cokernel_to_kernel_map} and
\eqref{eq_cokernel_of_cL_star_to_star_of_kernel_subtlety},
the duals of kernel and cokernel of $\nu$ are 
the cokernel and kernel (respectively) of the map
in \eqref{eq_dual_of_nu_partia}.
This establishes
\eqref{eq_zeroth_cohomology_and_Ext_one}
and hence
\eqref{eq_Serre_type_duality}.

The functoriality in $\cF$ can be verified directly or
by appealing to the functoriality of
\eqref{eq_long_exact_ext_second_var_varying}.
\end{proof}

\subsection{Homological Algebra of $k$-Diagrams, Value-By-Value Evaluation,
and Sheaf Theory}
\label{su_k_diagrams_sheaf_theory}

To apply the machinery of homological algebra, one has to verify that
$k$-diagrams form an {\em abelian category}, and to
prove 
Proposition~\ref{pr_value_by_value}.  As mentioned there, the
reader who is so inclined can verify Proposition~\ref{pr_value_by_value}
from scratch.  However, it is simpler to point out that
(1) this proposition is well known, and (2) the reader familiar with
sheaf theory on topological spaces can view $k$-diagrams as
equivalent to sheaves of $k$-vector spaces on a certain topological
space.  Let us explain both these points.

First, we remark that,
more generally, many convenient notions regarding $k$-diagrams and
their morphisms can be evaluated ``value-by-value,''
just as the notions of kernel, image, and cokernel of
a morphism in
Proposition~\ref{pr_value_by_value}.
Let us give some further examples of these ``value-by-value''
evaluations.

\subsubsection{Examples of Value-By-Value Evaluation}

If $\cF$ is a $k$-diagram, then a 
{\em subdiagram} of $\cF$ refers to any $k$-diagram,
$\cF'$ such that
at each $P=A_1,A_2,B_1,B_2,B_3$,
$\cF'(P)\subset\cF(P)$ is a subspace, and such that each
restriction map, $\cF'(\rho_{ij})$, of $\cF'$ is the restriction
of $\cF(\rho_{ij})$ to the subspace $\cF(B_i)$.
In this case one easily verify that the value-by-value inclusion
of $\cF'$ into $\cF$ gives has a morphism $\phi\from\cF'\to\cF$.
In this case there is a {\em quotient} $\cF/\cF'$, whose
values are $\cF(P)/\cF'(P)$.

The reader familiar with topological sheaf theory will notice
that if we take a value-by-value quotient $\cF/\cF'$ as above,
the result is not generally a sheaf (see, e.g., \cite{hartshorne},
Section~II.1, top of page 65); to get the usual notion of a quotient
sheaf one has to take the extra step of 
{\em sheafifying} the result, i.e., taking the sheaf associated to
the presheaf given by $U\to \cF(U)/cF'(U)$ for open subsets $U$.  
For $k$-diagrams we never need this extra step.

Similarly, for
a morphism of $k$-diagrams $\phi\from\cF\to\cG$,
the {\em kernel}, {\em image}, and {\em cokernel} of $\phi$ to be
the diagrams whose values at each
$P=A_1,A_2,B_1,B_2,B_3$ are the kernel, image, and cokernels
of $\phi(P)$.
Again, in general topological sheaf theory,
the additional step of {\em sheafifying} is needed for images and cokernels
(see, e.g., \cite{hartshorne}, Section~II.1, top of page 64,
just above Proposition/Definition~1.2).

\subsubsection{Why Value-By-Value Evaluation Works}

To see why ``value-by-value'' evaluation works, we appeal to
\cite{sga4.1}, Expos\'e I, Section~3, Proposition~3.1 and
Corollaire~3.2:
consider the category $\cC$, whose objects and non-identity morphisms
are depicted below.
\begin{equation}\label{eq_our_category_for_our_diagrams}
\begin{tikzpicture}[scale=0.40]
\node (B1) at (0,2) {$B_1$};
\node (B2) at (0,-2) {$B_2$};
\node (B3) at (0,0) {$B_3$};
\node (A1) at (3,1) {$A_1$};
\node (A2) at (3,-1) {$A_2$};
\draw [<-] (B1) -- (A1) ;
\draw [<-] (B2) -- (A2) ;
\draw [<-] (B3) -- (A1) ;
\draw [<-] (B3) -- (A2) ;
\node at (1.5,-4){$\cC$};
\end{tikzpicture}
\end{equation} 
Then a $k$-diagram is the same thing as
a contravariant functor from $\cC$ to the category of
$k$-vector spaces, which is called, in \cite{sga4.1}, 
Expos\'e I, a
{\em presheaf} of $k$-vector spaces on $\cC$.
Then Proposition~3.1 and Corollaire~3.2 of
\cite{sga4.1}, Expos\'e I, Section~3 implies that for any
category, $\cC$, the notions of subdiagram, kernel, image, etc.,
agree with the value-by-value evaluation above
(we also refer the reader to
\cite{friedman_memoirs_hnc} 
and \cite{friedman_cohomology} for similar observations with
$\cC$ replaced with categories arising from graphs).

\subsubsection{$k$-diagrams and Sheaf Theory}

To see that ``value-by-value'' evaluation works in $k$-diagrams,
one can alternatively appeal to sheaf theory (e.g., \cite{hartshorne},
Section~II.1) on one particular
topological space.  Let us explain how.

If $X$ is a topological space on a finite set, say that an open
set $U\subset X$ is {\em irreducible} if $U$ is non-empty and
cannot be written as the union of proper open subsets of $X$.
The set of irreducible open subset, ${\rm Irred}(X)$, of $X$ becomes
a category under inclusion.
Each sheaf, $\cF$, of $k$-vector spaces on $X$ restricts to
a presheaf (in the sense of Grothendieck, i.e., a contravariant 
functor from ${\rm Irred}(X)$ to the category of $k$-vector spaces)
of vector spaces on ${\rm Irred}(X)$.
It is not hard to verify (see, e.g., \cite{friedman_cohomology}) that
this functor from the category of sheaves of $k$-vector spaces on $X$
to presheaves of $k$-vector spaces on ${\rm Irred}(X)$ is an
equivalence of categories; in other words, one can reconstruct---up to
isomorphism---a sheaf on $X$ by knowing its restriction to ${\rm Irred}(X)$,
and any presheaf on ${\rm Irred}(X)$ arises as the restriction of
a sheaf on $X$.

Let $X$ be the finite topological space on the points $A_1,A_2,B_1,B_2,B_3$
with a basis of open subsets
$$
\{A_1\}, 
\ \{A_2\},
\ \{B_1,A_1\},
\ \{B_2,A_2\},
\ \{B_3,A_1,A_2\}.
$$
Then on easily sees that ${\rm Irred}(X)$ is precisely the category
$$
\begin{tikzpicture}[scale=0.40]
\node (B1) at (0,2) {$\{B_1,A_1\}$};
\node (B2) at (0,-2) {$\{B_2,A_2\}$};
\node (B3) at (0,0) {$\{B_3,A_1,A_2\}$};
\node (A1) at (6,1) {$\{A_1\}$};
\node (A2) at (6,-1) {$\{A_2\}$};
\draw [<-] (B1) -- (A1) ;
\draw [<-] (B2) -- (A2) ;
\draw [<-] (B3) -- (A1) ;
\draw [<-] (B3) -- (A2) ;
\end{tikzpicture}
$$
(so, for example, $B_1,B_2,B_3$ are closed points, and $\{A_1\},\{A_2\}$
are open subsets, $B_1$ and $B_3$ are specializations of $A_1$, etc.).
Hence ${\rm Irred}(X)$ in this case is the same thing as the
category \eqref{eq_our_category_for_our_diagrams}, which is how
we build our diagrams.

We remark that finite topological spaces the property that each
point, $P$, of the space has a minimal open subset, $U_P$, that contains it.
It follows that the ``stalk at $P$'' of a sheaf is simply its
value at $U_P$.  This is an alternative way to see that
``value-by-value evaluation'' works in all finite topological spaces.

We also remark that if $P,P'$ are points in a topological space, we say
that 
$P'$ is a {\em specialization} of $P$ when
the closure of
$P$ contains $P'$
(see \cite{sga4.1}, Subsection~IV.4.2.2 or 
\cite{hartshorne}, Exercise~II.3.17).
Hence this notion agrees with the notion
in Definition~\ref{de_specialization}.

\subsection{The Usual Skyscraper $k$-Diagrams the Riemann-Roch
Theorem}
\label{su_skyscraper}

If $W\from\integers^2\to\integers$ is a perfect matching,
and $\mec d\in\integers^2$, then we easily check that there
is an exact sequence
\begin{equation}\label{eq_riemann_roch_skyscraper_mimic}
0 \to \cM_{W,\mec d}\to\cM_{W,\mec d+\mec e_i}\to{\rm Sky}_{A_i}(k)
\to 0.
\end{equation} 
Furthermore, $b^1$ of any skyscraper $k$-diagram is $0$ since any skyscraper
$k$-diagram is injective, and we easily check that
$$
b^0(  {\rm Sky}_{A_i}(k) \bigr) = 1.
$$
This therefore mimics the usual short exact sequence in the 
modern formulation of the Riemann-Roch theorem, 
e.g., \cite{hartshorne}, the proof of the Riemann-Roch theorem,
page 296, the sequence
$$
0\to \cL(D)\to \cL(D+P) \to k(P) \to 0.
$$
Hence 
$$
\chi\bigl( \cM_{W,\mec d+\mec e_i} \bigr) =
\chi\bigl( \cM_{W,\mec d} \bigr) +
\chi\bigl( {\rm Sky}_{A_i}(k) \bigr)
=
\chi\bigl( \cM_{W,\mec d} \bigr) + 1,
$$
which is another way of deriving
\eqref{eq_Euler_characteristic_fixed_W_varying_mec_d}.
Furthermore, the full strength of Lemma~\ref{le_codimension_one_cons}
as it applies to the Betti numbers of $\cM_{W,\mec d}$ and
$\cM_{W,\mec d+\mec e_i}$
(as in Corollary~\ref{co_cM_Euler_char}) can also be seen by considering
the long exact sequence that arises from
\eqref{eq_riemann_roch_skyscraper_mimic}, namely
$$
0 
\to H^0 \bigl(\cM_{W,\mec d} \bigr)
\to H^0 \bigl(\cM_{W,\mec d+\mec e_i} \bigr)
\to k
\to H^1 \bigl(\cM_{W,\mec d} \bigr)
\to H^1 \bigl(\cM_{W,\mec d+\mec e_i} \bigr) 
\to 0.
$$

\subsection{$\cO$-Modules and Periodic $k$-Diagrams}

The sheaf theory of algebraic geometry works with sheaves of
$\cO$-modules, where $(X,\cO)$ is a locally ringed space
(see, e.g., \cite{hartshorne}, Sections~II.2, page~72 and~III.2)
We remark that $k$-diagrams are nothing more than $\cO$-modules
over the ringed space $(X,\cO)$ where $X$ is the five-point topological
space described in the previous subsection, and $\cO=\underline k$.

If $W\from\integers^2\to\integers$ is a perfect matching that is also
$r$ periodic---which is the case in the Baker-Norine rank and related
rank functions \cite{baker_norine,amini_manjunath}---then the $k$-diagrams
$\cM_{W,\mec d}$ are $\cO$-modules for a much larger sheaf of rings
(i.e., $k$-diagram of rings) over the same space, namely the diagram
$\cO=\cO_{k,r}$ whose values are for $i=1,2$:
$$
\cO(A_i)=k[x_i,1/x_i], \ \cO(B_i)=k[y_i], 
\ \cO(B_3)=k[v,1/v],
$$
whose restriction maps take $v$ to $x_1^r,x_2^{-r}$ and take $y_i$ to $1/x_i$.
For $r=1$, $(X,\cO)$ is the Riemann sphere and the $\cM_{W,\mec d}$ are
line bundles, but for $r\ge 2$ this is a
much more mysterious space.  [This is not an orbifold; perhaps it is a 
cover of an orbifold reflecting a \v{C}ech cohomology computation or something
related.]
We believe there is a duality theorem akin to Serre duality, involving
the $k$-diagrams 
$\cM_{W,\mec d}$ as $\cO$-modules.
We plan to address this in a future work.
This may shed more light on the $\cM_{W,\mec d}$.  It is also related
to our discussion of Serre functors in the next subsection.

\subsection{The Serre Functor on Chains of $k$-Diagrams}
\label{su_Serre_functor}
\newcommand{\CatkDiag}{\mbox{{\bf $k$-diag}}}

In this section we briefly discuss
so-called {\em Serre functor}, $S$, and show that
$S(\underline k)=\underline k_{/B_1,B_2}[1]$,
which explains
Theorem~\ref{th_strong_duality}.

To motivate our discussion of the Serre functor, notice that
Theorem~\ref{th_strong_duality} yields an
isomorphism
$$
\forall i\in\integers,\quad
\Ext^i(\underline k,\cF)^* \isom 
H^i(\cF)^*  \to \Ext^{1-i}(\cF,\underline k_{/B_1,B_2})
$$
and for any 
$P=A_1,A_2,B_1,B_2,B_3$ we have an isomorphism
$$
\forall i\in\integers,\quad
\Ext^i \bigl({\rm CoSky}_P(k),\cF \bigr)^* 
\to \Ext^{0-i} \bigl(\cF,{\rm Sky}_P(k) \bigr).
$$
It follows that if $\cG$ is a sum $\underline k\oplus {\rm CoSky}_P(k)$,
there is no duality theorem that has a single $k$-diagram associated
to $\cG$ in a duality theorem involving
$$
\Ext^i(\cG,\cF)^*.
$$
So even if one seeks a duality theorem valid only for $k$-diagrams,
one is pretty much forced to express this by working in a larger
context.  The derived category is such a context, and it is a common
tool for expressing duality theorems.

Hence we assume that the reader is familiar with the
derived category (see
\cite{weibel}, Chapter~10, or
\cite{residues}, Chapter~I), and we will describe the so-called
{\em Serre functor} in these terms.
Since each $k$-diagram has a two-term injective and two-term projective
resolution, it is simplest to work in $\cD=\cD^{\rm b}$ of bounded
chains of $k$-diagrams, each of whose values are finite dimensional
$k$-vector spaces\footnote{
  As mentioned in
  \cite{friedman_cohomology}, since our $k$-diagrams involve only a finite
  number of values and restriction maps, to work in homological algebra
  we need to take only finite limits of our diagrams, which are again
  diagrams whose values are finite dimensional.  See also the last 
  paragraph of this section.
  }.
The {\em Serre functor}, $S$,
(see, e.g., \cite{bondal_kapranov,bondal_orlov} 
or \cite{friedman_cohomology}, Section~2.12
and the references therein) is the functor that takes 
an object $\cG\in\cD$ to the functor $S(\cG)\from\cD\to\cD'$
$$
\cF\mapsto \Hom_{\cD}(\cG,\cF)^* 
$$
(all Hom sets are assumed to be finite dimensional $k$-vector spaces);
if $S(\cG)$ is representable, then there is a $\cG'\in\cD$ such that
$$
\Hom_{\cD}(\cG,\cF)^* \isom \Hom_{\cD}(\cF,\cG'),
$$
and we write $S(\cG)=\cG'$; if so then $\cG'$ is uniquely
determined up to unique isomorphism.
The recipe to compute the Serre functor
in \cite{friedman_cohomology} (written ``left-to-right'' or
$!\to *$), in our terms is to observe that
$$
S\bigl( {\rm CoSky}_P(k) \bigr) = {\rm Sky}_P(k),
$$
since if $\cF$ equals
$\cdots\to\cF^{-1}\to \cF^0\to \cF^1\to\cdots$ is any bounded
chain of $k$-diagrams, then since ${\rm CoSky}_P(k)$ is projective, viewing
${\rm CoSky}_P(k)$ as a complex in degree 0 we have
$$
\Hom_{\cD}\bigl( {\rm CoSky}_P(k) , \cF \bigr) =
\Hom_{\cK}\bigl( {\rm CoSky}_P(k) , \cF \bigr) 
= H^0\bigl(\cF(P)\bigr) ,
$$
where $\Hom_\cK$ denotes chain maps modulo homotopy, and $H^0(\cF(P))$
denotes the $0$-th cohomology group of the exact sequence
$$
\cdots\to\cF^{-1}(P) \to \cF^0(P) \to \cF^1(P) \to\cdots
$$
Similarly, since ${\rm Sky}_P(k)$ is injective, we have
$$
\Hom_{\cD}\bigl( \cF, {\rm Sky}_P(k)  \bigr) =
\Hom_{\cK}\bigl( \cF, {\rm Sky}_P(k)  \bigr) =
H^0\bigl(\cF(P)\bigr)^*.
$$
It follows that any chain that if $\cF$ is $0$ outside of degree $0$, and
is a sum of $k$-diagrams of the form ${\rm CoSky}_P(k)$ in degree $0$,
then $S(\cF)$ is represented by exchanging each CoSky with a Sky.
It then follows by induction 
that if $\cG$
is a chain of sums of coskyscraper $k$-diagrams (each of finite dimension), 
then the recipe of exchanging the coskyscrapers with skyscrapers
computes the Serre functor: for the inductive step one can use
Lemma~I.7.2 of \cite{residues}, which states that 
for any complex $\cG$ equal 
$\cdots\to\cG^{-1}\to \cG^0\to \cG^1\to\cdots$ and $n\in\integers$ there
is a distinguished triangle
$$
\tau_{\ge n}(\cG) \to \cG^n \to \tau_{>n}(\cG) \to \tau_{\ge n}(\cG)[1],
$$
and to apply the five-lemma.

Lemma~\ref{le_projective_resolution_of_underline_k} shows that 
$\underline k$ is isomorphic to the chain 
$$
\bigoplus_{i=1}^2{\rm CoSky}_{A_i}(k)
\xrightarrow{\mu_1} \bigoplus_{j=1}^3{\rm CoSky}_{B_j}(k)
$$
with the $A_i$ coskyscrapers in degree $0$.  Hence
$S(\underline k)$ is given by
$$
\bigoplus_{i=1}^2{\rm Sky}_{A_i}(k)
\xrightarrow{\tilde \mu_1} \bigoplus_{j=1}^3{\rm Sky}_{B_j}(k)
$$
with the $A_i$ skyscrapers in degree $0$, and where $\tilde \mu_1$
is $\mu_1$ with each morphism of coskyscrapers replaced by the
corresponding one of skyscrapers.
The proof of Theorem~\ref{th_strong_duality} shows that
$\mu^1$ of 
\eqref{eq_injective_resolution_of_duality_sheaf} is the map
with the coefficients
\eqref{eq_alpha_ij} which are the same as for $\mu_1$;
hence $\tilde\mu_1=\mu^1$.  It follows that
$S(\underline k)$ is isomorphic to the element
of $\cD$ which consists of the single non-zero $k$-diagram
$\underline k_{/B_1,B_2}$ in degree $-1$, i.e.,
the element $\underline k_{/B_1,B_2}[1]$.
Hence
$$
S(\underline k) \isom \underline k_{/B_1,B_2}[1].
$$

%
%

We remark that---as mentioned in \cite{friedman_cohomology}---because
$k$-diagrams are build on a diagram with finitely many values and
morphisms, there is no problem in working with $k$-diagrams whose
values are finite dimensional vector spaces.
The problem is that this does not allow us to work with the
$\cM_{W,\mec d}$, and the Hom sets involving the $\cM_{W,\mec d}$ are
not finite dimensional.
One might still be able to work in this larger context and infer that
$S(\underline k)=\underline k_{/B_1,B_2}[1]$ in such a context, as
this equality is essentially the content of
Theorem~\ref{th_strong_duality}.
Finally, we remark that 
the study of $\cO$-modules with $\cO=\cO_{k,r}$ as in the
previous subsection, in the case where $W$ is $r$-periodic,
may allow for a calculation of the
Serre functor in some sense.




\appendix
\section{Remarks Related to the Definition of a Dual Virtual Vector
Space}
\label{ap_dual_virtual_vec_sp}

In this appendix we make some remarks to indicate why
Definition~\ref{de_dual_virtual_vector_space} may be part of a
larger notion of a ``morphism'' from one virtual $k$-vector space
to another.  However, our remarks do not yield a satisfactory
notion of a morphism that is compatible with our notion
of equivalence of two virtual $k$-vector spaces. 
This appendix is independent of the rest of this paper.

While we limit our discussions to virtual vector spaces, ideally
one would have similar remarks for virtual objects of more general
categories
(such as the category of $k$-Fredholm maps and of $k$-diagrams).

We hope that future work will give a better justification
for Definition~\ref{de_dual_virtual_vector_space}.
Ideally future work would construct a category (or bicategory) of 
virtual vector spaces such that a morphism from
$V_1\ominus V_2$ to $k=k\ominus 0$ is precisely
$V_1^*\ominus V_2^*$.

Note that if $V_1,V_3$ are finite dimensional $k$-vector spaces, then
$V_1\otimes_k V_3$ and $\Hom_k(V_1,V_3)$ are $k$-vector spaces, both
of dimension $\dim(V_1)\dim(V_3)$.  Hence the following definition
maintains this dimension under equivalence.

\begin{definition}\label{de_hom_tensor_virtual}
If $V_1,V_2,V_3,V_4$ are four finiite dimensional $k$-vector spaces, then
we define
$$
\Hom\bigl( V_1\ominus V_2 , V_3 \ominus V_4)
=
\Hom( V_1 , V_3 ) \oplus \Hom(V_2, V_4) 
\ominus
\Hom(V_1,V_4) \ominus \Hom(V_2,V_3)
$$
and
$$
(V_1,V_2) \otimes (V_3,V_4) 
=
( V_1 \otimes V_3 ) \oplus (V_2 \otimes V_4) 
\ominus
(V_1\otimes V_4) \ominus (V_2\otimes V_3) .
$$
\end{definition}
The problem with the above definition is that if
$V_1\ominus V_2$ is equivalent to $V_1'\ominus V_2'$, as virtual
vector spaces, and similarly with $V_3\ominus V_4$
and $V_3'\ominus V_4'$, it is not clear how to relate the two sets
\begin{equation}\label{eq_Vs_and_Vprimes_hom_sets}
\Hom\bigl( V_1\ominus V_2 , V_3 \ominus V_4\bigr), 
\ \Hom\bigl( V_1'\ominus V_2' , V_3' \ominus V_4'\bigr).
\end{equation} 
To make Definition~\ref{de_hom_tensor_virtual} compatible with
equivalence, one might be able to change the Hom sets to get
a category, analogous to how one constructs the derived category
(i.e., where one works with Homs of chain maps by first taking
chain maps modulo homotopy equivalence, and then one ``localizes''
the category so that quasi-isomorphisms become isomorphisms).

However, if we ignore the above difficulties, and merely work with
Definition~\ref{de_hom_tensor_virtual} as is, this definition gives
$$
\Hom( V_1\ominus V_2 , k )
=
\Hom( V_1\ominus V_2 , k\ominus 0 )
=
(V_1^*,V_2^*),
$$
which therefore gives that
Definition~\ref{de_dual_virtual_vector_space} agrees with the usual
definition.  We also note that an ``element'' of $V_1\ominus V_2$
would be an element of
$$
\Hom( k, V_1\ominus V_2 )
=
\Hom( k \ominus 0, V_1 \ominus V_2) =
\Hom(k, V_1) \ominus \Hom(k,V_2)
$$
which suggests that an element of $V_1\ominus V_2$---if such a thing
makes sense---would be a formal difference of an element of $V_1$
``minus'' an element of $V_2$.

We also note that for Hom in Definition~\ref{de_hom_tensor_virtual},
there is a natural composition of
$$
\mu\in \Hom(V_1\ominus V_2,V_3\ominus V_4),
\quad
\nu\in \Hom(V_3\ominus V_4,V_5\ominus V_6),
$$
namely if $\mu_{ij}\from\Hom(V_i,V_j)$ with $i=1,2$ and $j=3,4$,
and $\nu_{j\ell}\from\Hom(V_j,V_\ell)$ with $j=3,4$ and $\ell=5,6$,
then we could set $\nu\circ\mu$ to be the maps
$(\nu\circ\mu)_{i\ell}$ given by
$$
(\nu\circ\mu)_{i\ell} = \sum_{j=3,4} \nu_{j\ell}\circ \mu_{ij}.
$$
Under this composition law, $\Hom$ is associative, and has an 
identity morphism for
$V_1\ominus V_2$, namely ${\rm Id}_{V_1}\oplus{\rm Id}_{V_2}$.

A specific problem with the above definitions is that
the virtual vector $k\ominus k$ is equivalent to $0$,
and yet there is no isomorphism between these virtual vector spaces,
since any morphism in $k\ominus k$ that factors through $0$ must
be the zero map, and hence cannot equal the identity map of
$k\ominus k$.

We remark that an isomorphism of $k$-vector spaces $V_1^*\to V_3$
is the same as a perfect (or non-degenerate) pairing, $V_1\times V_3\to k$,
and such a pairing extends to a map $V_1\otimes V_3\to k$.
Hence, as an alternative
to working with ``dual spaces'' and making sense of what constitutes an
``isomorphism'' 
\begin{equation}\label{eq_dual_of_virtual_vs_to_virtual_vs}
(V_1 \ominus V_2)^* \to V_3\ominus V_4 ,
\end{equation}
one could try to define a reasonable notion of a ``pairing''
and ``perfect pairing'' 
\begin{equation}\label{eq_virtual_vector_space_pairing}
(V_1 \ominus V_2)\times (V_3\ominus V_4 ) \to k.
\end{equation}
In fact, the isomorphism
\eqref{eq_dual_of_virtual_vs_to_virtual_vs}
in Theorem~\ref{th_duality_virtual_two_dimensional} is really
based on a pairing $V_1\times V_3$ and one $V_2\times V_4$.
Therefore, if one accepts that two such pairings should give rise to
a pairing
\eqref{eq_virtual_vector_space_pairing}, then one can avoid any reference
to dual spaces and isomorphism
\eqref{eq_dual_of_virtual_vs_to_virtual_vs}.
Again the link between pairings and tensor products is that in an
ideal setting, a pairing \eqref{eq_virtual_vector_space_pairing} would
``extend'' to a map
$$
(V_1 \ominus V_2)\otimes (V_3\ominus V_4 ) \to k.
$$
Hence one starting point for finding an ideal setting is to
search for a notion of the tensor product of virtual vector spaces,
which may be related to Definition~\ref{de_hom_tensor_virtual} above.


Another approach to making the formal differences of $k$-vector spaces
(or of $k$-Fredholm maps, or of $k$-diagrams, etc.) is to declare
$\Hom(V_1\ominus V_2,V_3\ominus V_4)$ to equal any morphism
$$
f\from V_0 \oplus V_1 \oplus V_4 \to V_0 \oplus V_2 \oplus V_3,
$$
since equivalence of virtual $k$-vector spaces is the case
when a map as above is an isomorphism.
This notion of morphism does have a composition law, although
it seems that the composition law is not associative 
(hence we may get some type
of $2$-category or bicategory):
namely, if 
$g\in\Hom(V_3\ominus V_4,V_5\ominus V_6)$, i.e., 
$$
g\from 
V_0'\oplus V_3\oplus V_6\to 
V_0'\oplus V_4\oplus V_5 ,
$$
one could add to $f$ the identity map $\id_{V_0'\oplus V_6}$
and get a composition
$$
(V_0'\oplus V_6)\oplus (V_0\oplus V_1\oplus V_4)
\xrightarrow{\id_{V_0'\oplus V_6}\oplus f}
(V_0'\oplus V_6)\oplus (V_0\oplus V_2\oplus V_3)
$$
which can be composed with the morphism
$$
(V_0\oplus V_2)\oplus (V_0'\oplus V_3\oplus V_6)
\xrightarrow{\id_{V_0\oplus V_2}\oplus g}
(V_0\oplus V_2)\oplus (V_0'\oplus V_4\oplus V_5)
$$
by rearranging the direct sums of the domain of
$\id_{V_0\oplus V_2}\oplus g$, which upon rearranging 
direct sums is a map
$$
V_0'' \oplus V_1\oplus V_6 \to V_0'' \oplus V_2\oplus V_5,
\quad\mbox{where $V_0''=V_0\oplus V_0'\oplus V_4$.}
$$
However, it is not clear to us that this notion makes
the Hom set from a virtual $k$-vector space to another having
dimension equal to the product of the two vector spaces,
which seems like a desirable property.

\providecommand{\bysame}{\leavevmode\hbox to3em{\hrulefill}\thinspace}
\providecommand{\MR}{\relax\ifhmode\unskip\space\fi MR }
\providecommand{\MRhref}[2]{%
  \href{http://www.ams.org/mathscinet-getitem?mr=#1}{#2}
}
\providecommand{\href}[2]{#2}



\end{document}